\newcounter{hyp}
\def\a{\alpha}
\def\b{\beta}
\def\d{\delta}
\def\g{\gamma}
\def\vphi{\varphi}
\def\t{\tau}
\def\e{\varepsilon}
\newcommand{\cA}{{\mathcal A}}
\newcommand{\cB}{{\mathcal B}}
\newcommand{\cE}{{\mathcal E}}
\newcommand{\cF}{{\mathcal F}}
\newcommand{\cG}{{\mathcal G}}
\newcommand{\cK}{{\mathcal K}}
\newcommand{\cT}{{\mathcal T}}
\newcommand{\cP}{{\mathcal P}}
\newcommand{\cU}{{\mathcal U}}
\newcommand{\wt}{{\widetilde w}}
\newcommand{\at}{{\widetilde \a}}
\newcommand{\mt}{{\widetilde m}}
\newcommand{\R}{{\mathbb R}}
\newcommand{\T}{\mathbb{T}}
\newcommand{\N}{\mathbb{N}}
\newcommand{\M}{{\mathbb M}}
\newcommand{\ov}{\overline}
\newcommand{\weakly}{\ensuremath{\rightharpoonup}}
\newcommand{\weaklys}{\stackrel{\star}{\rightharpoonup}}
\newcommand{\sD}{\mathscr{D}}
\newcommand{\sM}{{\mathscr M}}
\newcommand{\sP}{{\mathscr P}}
\newcommand{\one}{\mathbbm{1}}
\newcommand{\ds}{\displaystyle}
\newcommand{\spt}{{\rm{spt}}}
\newcommand{\diver}{{\rm{div}}}
\newcommand{\loc}{{\rm{loc}}}
\newcommand{\dd}{\hspace{0.7pt}{\rm d}}
\DeclareMathOperator*{\esssup}{ess\,sup}
\newcommand{\Blue}{\color{black}}
\newcommand{\be}{\begin{equation}}
\newcommand{\ee}{\end{equation}}
\newcommand{\ba}{\begin{array}}
\newcommand{\ea}{\end{array}}
\newtheorem{theorem}{Theorem}[section]
\newtheorem{definition}[theorem]{Definition}
\newtheorem{lemma}[theorem]{Lemma}
\newtheorem{corollary}[theorem]{Corollary}
\newtheorem{proposition}[theorem]{Proposition}
\newtheorem{problem}[theorem]{Problem}
\newtheorem{example}[theorem]{Example}
\newtheorem{remark}[theorem]{Remark}
\providecommand{\customgenericname}{}
\newcommand{\newcustomtheorem}[2]{%
  \newenvironment{#1}[1]
  {%
   \renewcommand\customgenericname{#2}%
   \renewcommand\theinnercustomgeneric{##1}%
   \innercustomgeneric
  }
  {\endinnercustomgeneric}
}
\newtheorem{assumption}{\textbf{Assumption}}
\numberwithin{equation}{section}
\title[]{A variational approach to first order kinetic Mean Field Games with local couplings}  
\author[M. Griffin-Pickering]{Megan Griffin-Pickering} 
\address{Department of Mathematical Sciences, Durham University, Durham DH1 3LE, United Kingdom}
\email{megan.k.griffin-pickering@durham.ac.uk}
\author[A.R. M\'esz\'aros]{Alp\'ar R. M\'esz\'aros}  
\date{\today}
\address{Department of Mathematical Sciences, Durham University, Durham DH1 3LE, United Kingdom}
\email{alpar.r.meszaros@durham.ac.uk} 
\keywords{Mean Field Games; Optimal control of acceleration; Kinetic equations; Well-posedness theory}
\subjclass[2020]{49N80, 91A16, 49L12, 35Q91}
\begin{document}

\begin{abstract}
First order kinetic mean field games formally describe the Nash equilibria of deterministic differential games where agents control their acceleration, asymptotically in the limit as the number of agents tends to infinity. 
The known results for the well-posedness theory of mean field games with control on the acceleration assume either that the running and final costs are regularizing functionals of the density variable, or the presence of noise, i.e. a second-order system.
In this article we construct global in time weak solutions to a first order mean field games system involving kinetic transport operators, where the costs are local (hence non-regularizing) functions of the density variable with polynomial growth.
We show the uniqueness of these solutions on the support of the agent density.
This is achieved by characterizing solutions through two convex optimisation problems in duality. 
As part of our approach, we develop tools for the analysis of mean field games on a non-compact domain by variational methods. 
We introduce a notion of `reachable set', built from the initial measure, that allows us to work with initial measures with or without compact support.
In this way we are able to obtain crucial estimates on minimizing sequences for merely bounded and continuous initial measures. These are then carefully combined with $L^1$-type averaging lemmas from kinetic theory to obtain pre-compactness for the minimizing sequence. 
Finally,  under stronger convexity and monotonicity assumptions on the data, we prove higher order Sobolev estimates of the solutions.
\end{abstract}

\maketitle

\tableofcontents

\section{Introduction}

The aim of the theory of mean field games (MFG for short) is to characterize limits of Nash equilibria of stochastic or deterministic differential games when the number of agents tends to infinity. Such models were first proposed about 15 years ago, simultaneously by Lasry-Lions (\cite{LasLio06i, LasLio06ii, LasLio07}) and Huang-Malham\'e-Caines (\cite{HuaMalCai}).

This theory turned out to be extremely rich in applications and it provided excellent mathematical questions. Its literature has witnessed a huge increase in the last decade. From the theoretical viewpoint, there are two main approaches to the study of MFG. One is based on analytical and PDE techniques, while the other is a probabilistic approach. The first approach goes back to the original works of Lasry-Lions and has been extended in a great variety of directions in the subsequent years by many authors. If a non-degenerate idiosyncratic noise is present in the models, this typically yields a parabolic structure for the corresponding PDEs and one can expect (strong) classical solutions or a suitable regularity for weak solutions to the corresponding PDE systems, even when the corresponding Lagrangians are local functions of the density variable. For a non-exhaustive list of works in this direction we refer the reader to \cite{Amb:18, Amb:21, CirGof:20, CirGof:21, GomPimSan16, GomPimVos, GomPimSan15, Por15}. The probabilistic approach proved to be equally successful for problems involving Lagrangians that are nonlocal functions of the measure variable. This approach seems to be very powerful for handling different kinds of noises in combination with the non-degenerate idiosyncratic one, such as the common noise. For a non-exhaustive collection of works in this direction we refer to \cite{CarDel:13, CarDelLac, CarDel:vol1, CarDel:vol2}. 

When the model lacks a non-degenerate idiosyncratic noise, this clearly poses technical difficulties in the analysis. Typically, it means that additional structural assumptions need to be imposed on the data to be able to hope for (weak) solutions. Such conditions are, for instance, suitable notions of convexity/monotonicity (cf. \cite{AchCarDelPorSan,MesMou}), or the presence of a suitable variational structure, as in the case of potential games (\cite{Cardaliaguet2013, Cardaliaguet-Graber, CarGraPorTon, GraMes, GraMesSilTon, Gra:14}). In the case of local couplings, it was pointed out by Lions in \cite{Lions-course} that the MFG system (including the planning problem) can be transformed into a degenerate elliptic system in space-time with oblique boundary conditions. Relying on this idea, in a quite general setting, under suitable assumptions on the data (such as strict monotonicity and strong convexity of the Hamiltonians in the measure and momentum variables, respectively; regularity and positivity conditions on the initial data), it has been proven recently in \cite{Mun:1, Mun:2} that the corresponding first order MFG systems have smooth classical solutions.

For an excellent, relatively complete account on the subject and a summary of results to date we refer the reader to the collection \cite{AchCarDelPorSan}.

\medskip

In this manuscript we study a class of first order {\it kinetic} MFG systems, involving Lagrangians that are local functions of the density variable and that possess a variational structure, in the sense of \cite{Cardaliaguet2013, Cardaliaguet-Graber, CarGraPorTon}. 

In our setting, the MFG system can be formally written as 
\begin{equation}\label{eq:main}
\left\{
\ba{ll}
-\partial_t u(t,x,v) - v\cdot D_x u(t,x,v)+H(x,v,D_v u(t,x,v))=f(x,v,m), & {\rm{in}}\ (0,T)\times\M\times\R^{d},\\
\partial_t m(t,x,v) + v\cdot D_x m(t,x,v) - \diver_v(mD_{p}H(x,v,D_v u(t,x,v))) = 0, & {\rm{in}}\ (0,T)\times\M\times\R^{d},\\
m(0,x,v)=m_0(x,v),\ \ u(T,x,v)=g(x,v,m_T), & {\rm{in}}\ \M \times \R^{d}.
\ea
\right.
\end{equation}
Here $\M$ denotes either that $d$-dimensional flat torus $\T^d$ or the whole $d$-dimensional Euclidean space $\R^d$ and is the physical space for the position $x$ of the agents, while the velocity vector $v$ of the agents lies in $\R^d$. $T>0$ is an arbitrary time horizon, $H:\M\times\R^d\times\R^d\to\R$ is the Hamiltonian function, while $f,g:\M\times\R^d\to\R$ stand for the running and final costs of the agents, respectively.

Under suitable assumptions on the data, we obtain the global in time existence, uniqueness and Sobolev regularity of weak solutions to \eqref{eq:main}, relying on two convex optimisation problems in duality.  One of these problems can be seen as an optimal control problem for the Hamilton-Jacobi equation, while its dual is an optimal control problem for the continuity equation (cf. \cite{Cardaliaguet2013, Cardaliaguet-Graber, CarGraPorTon}).

\medskip

\noindent {\bf Review of the literature in connection to our work.}

\medskip

MFG systems of type \eqref{eq:main} have been introduced in the context of models when agents control their acceleration. It seems that such a model can be traced back to the work \cite{NouCaiMal} (in the engineering community), where the authors proposed a MFG model where agents control their acceleration. In the mathematical community, the first works in this framework seem to be the ones \cite{AchManMarTch:20, BarCar, CanMen}. These works consider Hamiltonians (with our notation $H-f$) and final cost functions that are nonlocal regularizing functions in the measure variable. Moreover, the Hamiltonians need to be either purely quadratic or have quadratic growth in the momentum variable. In addition, in \cite{AchManMarTch:20, BarCar} further conditions on the initial measure $m_0$ are also imposed. In \cite{AchManMarTch:20} $m_0$ is taken to be compactly supported and H\"older continuous, while in \cite{BarCar} $m_0$ is taken to be compactly supported. These two works construct weak solutions to the corresponding MFG system in the sense that the Hamilton-Jacobi equation has to be understood in the viscosity sense, while the continuity equation is understood in the sense of distributions. In \cite{CanMen} the initial measure $m_0$ can be quite general and the corresponding Hamiltonian does not need to have the so-called `separable structure' which was assumed in \cite{AchManMarTch:20, BarCar} and is also assumed in this manuscript. These more general hypotheses come at the price of obtaining a weaker notion of solution to the MFG system: the so-called mild solutions. However, the authors show that, under the additional separability assumption on the Hamiltonian, mild solutions become more standard weak solutions in the sense described above.

\smallskip

Several interesting new works are built on the models introduced in \cite{AchManMarTch:20, BarCar, CanMen}. In \cite{CarMen} the authors study the ergodic behaviour of MFG systems, for the case of Hamiltonians that are purely quadratic in the momentum variable and nonlocal regularizing coupling functions $f,g$, with additional growth assumption on $f$ in the $v$ variable. In \cite{AchManMarTch:21} the authors obtain mild solutions to MFG under acceleration control and state constraints, under assumptions similar to the ones in \cite{AchManMarTch:20} on the Hamiltonians, with the possibility to consider Hamiltonians that are power-like functions in the momentum variable. Lastly, in \cite{Men} the author studies a perturbation problem associated to MFG under acceleration control, where the (Lagrangian) cost associated to the acceleration vanishes. 

\smallskip

MFG models with \emph{degenerate} diffusion share some common features with kinetic type problems. In this context we can mention several works. In \cite{DraFel} and \cite{FelGomTad} the authors study time independent MFG systems with purely quadratic Hamiltonians and nonlocal regularizing coupling functions, where the diffusion operator is hypoelliptic or satisfies a suitable H\"ormander condition. It is also worth mentioning that our system \eqref{eq:main} shares some similarities with MFG models where agents interact also through their velocities. In this direction we refer to the works \cite{AchKob:21, GomVos, GraMulPfe, Kob, SanShi}.

\smallskip

Finally, a second order MFG system of type \eqref{eq:main} has been recently studied in \cite{Mimikos-Stamatopoulos}. In this work the author obtains weak and renormalized solutions (in the spirit of \cite{Por15}) to a MFG system that involves a non-degenerate diffusion in the $v$ direction. This seems to be the only work in the context of kinetic type MFG models where the coupling functions $f$ and $g$ are taken to be local functions of the density variable $m$. Here the Hamiltonian $H$ is assumed to depend only on the momentum variable and either to be globally Lipschitz continuous or to have quadratic/sub-quadratic growth. There are several summability properties and moment bounds imposed on the initial density $m_0$. In the case of Lipschitz continuous Hamiltonians, the coupling functions $f,g$ are supposed to fulfill several further assumptions: a strong uniform increasing property in the $m$ variable and their derivatives in the $(x,v)$ variable must have a linear growth condition in the $m$ variable.

In \cite{Mimikos-Stamatopoulos} the presence of the diffusion in the $v$ direction allows the author to use suitable De Giorgi type arguments to show that the solution to the Fokker-Planck equation is bounded and has fractional Sobolev regularity. These estimates seem to be instrumental to set up a fixed point scheme and to show that the MFG system has a weak solution. Furthermore, the presence of this diffusion allows to obtain second order Sobolev estimates for the MFG system.

\medskip

\noindent {\bf Description of our results.}

\medskip

As highlighted above, in this work we are inspired by \cite{Cardaliaguet2013, Cardaliaguet-Graber, CarGraPorTon} and we obtain existence and uniqueness of weak solutions to \eqref{eq:main} (in the sense of Definition \ref{def:notion_solution}) via two convex optimisation problems in duality (Problem \ref{prob:value} and Problem \ref{prob:density}). Compared to these works, several major differences arise which require new ideas. A first obvious difference is that in our setting (in contrast to the compact setting of the flat torus which is considered in the mentioned references) the velocity variable $v$ lives in the non-compact space $\R^d$. This clearly introduces technical issues in the analysis.

To prove our main results, the general outline of our programme is the same as the one of \cite{Cardaliaguet2013, Cardaliaguet-Graber, CarGraPorTon}: prove the duality for Problem \ref{prob:value} and Problem \ref{prob:density}; suitably relax Problem \ref{prob:value} (this will be Problem \ref{prob:relaxed}) and show that the value of this is the same as the original one; show existence of optimizers for the relaxed problem and apply the duality result again to obtain existence of solutions in a suitable weak sense. In this manuscript $H$ is supposed to have a superlinear growth in the momentum variable, and $f$ and $g$ are supposed to have polynomial growth in their last variables. The growth of $f,g$ may be taken independently of the growth of the Hamiltonian (we refer to the next section for the precise assumptions). 

To show that the value of the relaxed problem is the same as the original one, a standard approach used in \cite{Cardaliaguet2013, Cardaliaguet-Graber, CarGraPorTon} is to test the Hamilton-Jacobi inequality of any competitor by competitors of the dual problem (i.e. solutions to the continuity equation). To justify this computation a mollification argument was applied for solutions to the continuity equation. In our case, this mollification alone is not enough because of the non-compact setting. Therefore a delicate cut-off argument has to be also implemented. 

\medskip

The most delicate part, however, is to obtain existence of optimizers to the relaxed problem and in particular to obtain proper compactness results for the minimizing sequences. First, in our case the time trace of the solutions to the Hamilton-Jacobi inequality constraint in Problem \ref{prob:relaxed} is quite weak: $u(t,\cdot)$ has to be understood as a locally finite signed Radon measure. Since in this work $m_0$ may have {\it non-compact support}, it takes additional effort to give a meaning to $\int_{\M\times\R^d}m_0 u_0(\dd x\dd v)$ (a term that appears in the objective functional present in Problem \ref{prob:relaxed}). Our construction, although completely different, has some similarities in spirit with the one in \cite{OrrPorSav}, to define similar time boundary traces.

In order to obtain suitable estimates for the minimizing sequence of the relaxed problem, in \cite{Cardaliaguet2013, Cardaliaguet-Graber, CarGraPorTon} a typical trick was to test the Hamilton-Jacobi inequality constraint by the initial measure $m_0$. For this reason, it was necessary to impose enough regularity, and more importantly a \emph{uniform} positive lower bound of this density everywhere. Because of this, estimates on the quantity $\int_{\T^d}m_0 u_0\dd x$, would readily yield summability estimates on $u_0$ solely. We emphasize that in this manuscript we assume that $m_0$ is merely a bounded and continuous probability density and so we take a completely different route when obtaining such estimates. We introduce the {\it reachable set} $\cU_{m_0}$, a set of points in time, space and velocity that can be reached from $\spt(m_0)$ with arbitrary smooth admissible controls (cf. Definition \ref{def:reachable}). In fact, by the controllability of the underlying ODE system, which satisfies the Kalman rank condition, we have $\cU_{m_0}=\left(\{0\}\times\spt(m_0)\right)\cup\left((0,T)\times\M\times\R^d\right)$. In order to obtain our crucial estimates on the corresponding minimizing sequence we use well chosen test functions that are supported in $\cU_{m_0}$. This construction seems to be new in the literature on variational MFG and we believe that it could be instrumental also in other settings, to possibly relax regularity, positivity or compact support assumptions on $m_0$.

As there is no Hopf-Lax type representation formula available for solutions to our Hamilton-Jacobi equations (which was the case in \cite{Cardaliaguet2013, Cardaliaguet-Graber}), first, we obtain estimates on truncations of the solutions. These are similar in flavour to the corresponding estimates in \cite{CarGraPorTon}, and such ideas date back to \cite{Sta}. As our terminal data typically have merely local summability, this will be the source of additional technical issues (in contrast to \cite{CarGraPorTon}, where the terminal data was taken to be regular enough). 

Let us underline that the ideas and constructions that we have described so far allow us to obtain summability estimates on $u$ and $D_v u$, using the structure of the problem. 
This is not sufficient to yield weak precompactness for minimizing sequences due to the lack of regularity estimates in $x$. To recover the necessary compactness we make use of
 {\it averaging lemmas} available in kinetic theory. 
Averaging lemmas go back to the works \cite{GLPS, GPS} and provide improved regularity and compactness properties for velocity averages of solutions of kinetic transport equations (see Subsection~\ref{sec:compactness} for the precise definitions). For more details and a survey of results we refer the reader to the review \cite{Jabin} and the references cited therein. When regularity with respect to $v$ is additionally available, similar properties can be deduced for the full density function: we refer for instance to \cite{Bouchut2002} for regularity results in the $L^p$ case for $1 < p < +\infty$.
We carefully tailor this approach to our setting, combining our estimates on $D_v u$ with $L^1$ averaging lemmas \cite{GSR, GSR_NS, Han-Kwan} to deduce precompactness for minimizing sequences.
In this way we prove Theorem \ref{thm:existence} on the existence of a minimizer of Problem \ref{prob:relaxed}. This in turn implies Theorem \ref{thm:existence_MFG}, that system \eqref{eq:main} has a (unique) weak solution. As was similarly obtained in \cite{Cardaliaguet2013, Cardaliaguet-Graber, CarGraPorTon}, we show the uniqueness of $m$ and the uniqueness of $u$ on $\{m>0\}$.

A natural question that arises in the context of variational MFG is whether the variational structure and further strong monotonicity and convexity assumptions on the data would yield higher order Sobolev estimates on weak solutions. Such estimates were recently obtained in more classical frameworks in \cite{CarMesSan,GraMes, GraMesSilTon, GraMulPfe, ProSan, San}. In this manuscript we pursue similar Sobolev estimates, implied by taking stronger assumptions on the data. In comparison with the works \cite{GraMes, GraMesSilTon}, in our setting we need to work with a considerably weaker notion of time trace of $u$, which is not stable under perturbations of the initial measure $m_0$. Therefore, our Sobolev estimates remain local in time on $(0,T]$. Another delicate difference is due to the presence of the kinetic transport term. Because of this, a careful choice of  perturbations need to be used, which take into account the kinetic nature of the problem. As a result of this, interestingly, first we obtain estimates on differential operators of the form $(tD_x+D_v)$ applied to $m$ and $D_vu$. For the precise results in this direction we refer to Theorem \ref{thm:reg_v}, Corollary \ref{cor:cor_reg_1} and Corollary \ref{cor:cor_reg_2}.

\smallskip

The structure of the manuscript is as follows.  In Section \ref{sec:2} we state our standing assumptions and main results. In Section~\ref{sec:problems} we present the two variational problems in duality along with the relaxed problem of the primal problem. In Section \ref{sec:3} we have collected some preliminary estimates on weak solutions of the Hamilton-Jacobi inequality obtained on the reachable set $\cU_{m_0}$. In Section \ref{sec:4} we show that the relaxed problem has the same value as the primal problem and hence the duality result holds. Section \ref{sec:5} contains the existence result of a solution to the relaxed problem. Here we rely on the combination of the estimates derived in the previous sections and suitably tailored averaging lemmas from kinetic theory, applied in our context for distributional subsolutions to kinetic Hamilton-Jacobi equations. In Section \ref{sec:6} we show that optimizers of the variational problems in duality provide weak solutions to the MFG system and, conversely, weak solutions are also optimizers of the variational problems. Furthermore, strong convexity yields (partial) uniqueness of these solutions. Section \ref{sec:regularity} is devoted to the derivation of higher order Sobolev estimates for the weak solutions. These require further assumptions on the data. 

We end the paper with two appendix sections. In Appendix \ref{app:time_regularity} we discuss the time regularity of distributional subsolutions to kinetic Hamilton-Jacobi equations which allow us to construct suitable notions of time traces. Finally, in Appendix \ref{app:2} we show that truncations and maxima of distributional subsolutions to kinetic Hamilton-Jacobi equations remain distributional subsolutions to suitably modified equations. 

\section{Standing Assumptions and Main Results }\label{sec:2}

In this section we state our main results on the existence, uniqueness and Sobolev regularity of solutions to the MFG system.

We define $\cF$ and $\cG$ to be the anti-derivatives of the coupling functions $f$ and $g$ with respect to $m$:
\be
\cF(x,v,m) = \int_0^m f(x,v, m' ) \dd m ' \qquad \cG(x,v,m) = \int_0^m g(x,v, m' ) \dd m ' .
\ee
Throughout, we make the following assumptions on the Hamiltonian and coupling functions.

\begin{assumption} \label{hyp:Hamiltonian}
\begin{enumerate}
\item[(H\arabic{hyp})] The Hamiltonian $H(x,v, p)$ is continuous in all variables, and convex and differentiable with respect to $p$. Furthermore, for some $r > 1$, $H$ satisfies bounds of the form
\be \label{hyp:H}
\frac{1}{cr} |p|^r - C_H \leq H(x,v, p) \leq \frac{c}{r} |p|^r + C_H,
\ee
for all $(x,v,p) \in\M \times\R^d \times \R^d$ and some constants $c > 0$ and $C_H \geq 0$. 
Finally, the function $H_0(x,v) : = H(x,v,0)$ has positive part $(H_0)_+ \in C_0(\M\times\R^d)$, where $C_0(\M\times\R^d)$ denotes the closure of the space $C_c(\M\times\R^d)$ with respect to the uniform norm.
\stepcounter{hyp}
\item[(H\arabic{hyp})] $\cF(x,v,m)$ is continuous in all variables and strictly convex and differentiable with respect to $m$ for $m > 0$. Moreover, it satisfies the growth condition
\be \label{hyp:F-growth}
\frac{1}{cq} m^{q} - C_F(x,v) \leq \cF(x,v, m) \leq \frac{c}{q} m^{q} + C_F(x,v), \quad m \geq 0
\ee
where $q > 1$ and the function $C_F \in L^1(\M \times \R^d)$. 
For $m < 0$, we set $\cF(x,v,m) = + \infty$.
\stepcounter{hyp}
\item[(H\arabic{hyp})] $\cG(m)$ is continuous and strictly convex. Moreover, it satisfies the growth condition
\be \label{hyp:G-growth}
\frac{1}{c} m^{s} - C_G(x,v) \leq \cG(x,v, m) \leq {c} m^{s} + C_G(x,v), \quad m \geq 0 ,
\ee
for some $C_G \in L^1(\M \times \R^d)$ and $1 < s \leq q$. 
 For $m < 0$, we set $\cG(x,v,m) = + \infty$.
\stepcounter{hyp}

\item[(H\arabic{hyp})] The initial datum $m_0 \in C_b(\M \times \R^d)$ is a probability density.

\end{enumerate}

\end{assumption}
\stepcounter{hyp}

We note that since $m_0$ is imposed to be a bounded probability density, by interpolation, it is uniformly bounded in $L^\alpha(\M\times\R^d)$, for any $\alpha\in[1,+\infty].$ We emphasize that here we impose growth conditions on $\cF, \cG$ rather than on $f,g$.

\begin{example}
For any $q > 1$ and continuous bounded function $c$ {\color{black}such that $c \geq c_0$ with $c_0 > 0$ a strictly positive constant}, the function
\be
\cF(x,v,m) = \begin{cases} c(x,v) m^q & m \geq 0 \\
+ \infty & m < 0 ,
\end{cases}
\ee
satisfies the given assumptions.
\end{example}

\begin{definition}[Reachable set]\label{def:reachable}
It will be useful to define the set $\cU_{m_0}\subseteq[0,T]\times\M\times\R^d$ to be the set of points potentially reachable by a collection of agents initially distributed according to $m_0$ and evolving according to the control system
\be
\dot x = v, \quad \dot v = a,
\ee
for some control $a \in C([0,T] ; \R^d)$. Observe that the previous control system satisfies the classical Kalman rank condition, and so we have
\be
\cU_{m_0} = \{0\} \times \{ m_0  > 0 \} \cup (0, T] \times \M \times \R^d .
\ee
\end{definition}

Under these standing assumptions, we define the following notion of weak solution to the MFG system.

\begin{definition}\label{def:notion_solution}
We say that $(u,m)$ is a weak solution to \eqref{eq:main}, if the following are fulfilled:

(i) $u\in L^1_{\rm{loc}}(\cU_{m_0})$, $D_v u\in L^r_{\rm{loc}}(\cU_{m_0})$ and $m|D_v u|^r\in L^1((0,T)\times\M\times\R^d)$; 

(ii) $m\in L^q((0,T)\times\M\times\R^d)$ and $m_T\in L^s(\M\times\R^d)$;

(iii) $(u_0)_+\in (L^\infty + L^{q'})(\M \times \R^d)$ and $(u_0)_-$ is a locally finite Radon measure supported in $\{m_0>0\}$.

(iv) $$
\left\{
\begin{array}{l}
-\partial_t u - v\cdot D_x u +H(x,v,D_v u ) \leq f(x,v,m), \, {\rm{in}} \; \sD'((0,T) \times \M \times \R^d) \\  [3pt]
 \ u_T \leq g(\cdot,\cdot,m_T), \ \ {\rm{in\ }}\sD'(\M\times\R^d).
\end{array}
\right.
$$

(v) The continuity equation from \eqref{eq:main} holds in $\sD'((0,T) \times \M \times \R^d)$.

(vi) $\ds\int_{\M\times\R^d}m_0 u_0(\dd x\dd v)$ is finite.

(vii) The following energy equality holds:
\begin{align}\label{eq:energy_equality}
&\quad \int_{\M\times\R^d}m_0u_{0}(\dd x\dd v)-\int_{\M\times\R^d} g(x,v,m_T)m_T\dd x\dd v\\
& =\int_0^T\int_{\M\times\R^d}f(x,v,m) m\dd x\dd v\dd t+ \int_0^T\int_{\M\times\R^d}\left[D_{p_v}H(x,v,D_v u)\cdot  D_v u-H(x,v,D_v u)\right]m\dd x\dd v\dd t .
\end{align}
\end{definition}

\subsection{Existence and Uniqueness}

The first of our main results is the existence and uniqueness of these weak solutions.

\begin{theorem}\label{thm:existence_MFG}
Let Assumption~\ref{hyp:Hamiltonian} hold.
Then there exists a weak solution $(u,m)$ of the mean field game system \eqref{eq:main} in the sense of Definition~\ref{def:notion_solution}. This solution is unique, in the sense that if $(u_1, m_1)$ and $(u_2, m_2)$ are both weak solutions in the sense of Definition~\ref{def:notion_solution}, then $m_1 = m_2$ almost everywhere and $u_1=u_2$ almost everywhere on the set $\{m_1 > 0\}$.
\end{theorem}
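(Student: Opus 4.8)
The plan is to implement, in the kinetic and non-compact setting, the variational programme of Cardaliaguet--Graber--Porretta--Tonon. First I would set up the primal optimal control problem for the kinetic Hamilton--Jacobi equation (Problem~\ref{prob:value}) and its Fenchel--Rockafellar dual, an optimal control problem for the kinetic continuity equation with unknowns a density $m$ and a momentum $w$ and energy essentially $\int_0^T\!\int [\cF(x,v,m)+mL(x,v,-w/m)] + \int \cG(x,v,m_T)$, with $L=H^*$, the two values differing by a sign (Problem~\ref{prob:density}). The first step is the duality identity $\inf(\text{primal})=-\inf(\text{dual})$. The second is to relax the primal problem to Problem~\ref{prob:relaxed}, in which $u(t,\cdot)$ is only a locally finite signed Radon measure in $t$, $D_vu\in L^r_{\loc}(\cU_{m_0})$, and the Hamilton--Jacobi equation becomes a distributional inequality with right-hand side in $L^{q'}_{\loc}$; one shows $\inf(\text{relaxed})=\inf(\text{primal})$ by testing the Hamilton--Jacobi inequality of an arbitrary relaxed competitor against smooth solutions of the continuity equation, which in the unbounded velocity variable forces a combined mollification-plus-cutoff argument.

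The core is the existence of a minimizer for the relaxed problem (Theorem~\ref{thm:existence}). Given a minimizing sequence, the key new idea for the a priori bounds is to test the Hamilton--Jacobi inequality against test functions supported in the reachable set $\cU_{m_0}$ of Definition~\ref{def:reachable}: since $\dot x=v,\ \dot v=a$ satisfies the Kalman rank condition, $\cU_{m_0}=(\{0\}\times\{m_0>0\})\cup((0,T]\times\M\times\R^d)$, so one has ample room and, crucially, does not need $m_0$ bounded below. Combined with Stampacchia-type estimates on truncations of $u$ --- needed because $g(\cdot,\cdot,m_T)$ is only locally summable --- this yields uniform bounds for $m$ in $L^q$, $m_T$ in $L^s$, $m|D_vu|^r$ in $L^1$, $D_vu$ in $L^r_{\loc}(\cU_{m_0})$, and $u$ (an $L^\infty+L^{q'}$ positive part plus a Radon-measure negative part supported in $\{m_0>0\}$), together with finiteness of $\int m_0u_0$ along the sequence. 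These give weak-$\ast$ precompactness for all quantities except compactness in $x$ of the densities; to recover the latter one applies $L^1$ velocity-averaging lemmas to the kinetic transport operator $\partial_t+v\cdot D_x$ acting on $m$, using the $L^r_{\loc}$ bound on $D_vu$ to upgrade compactness of velocity averages to compactness of $m$ itself. Passing to the limit and checking lower semicontinuity of the functional --- the delicate terms being $\int mL(-w/m)$, jointly convex and l.s.c.\ in $(m,w)$, and $\int\cF(x,v,m)$, l.s.c.\ under weak $L^q$ convergence --- produces a minimizer $(\bar u,\bar m,\bar w)$. I expect this compactness step, reconciling the very weak time trace of $u$, the $L^1$ integrability natural for the averaging lemmas, the non-compactness in $v$, and the need to give a robust meaning to $\int m_0u_0$ when $\spt m_0$ is unbounded, to be the main obstacle.

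With such a minimizer, and --- by strict convexity of $\cF$ --- a unique minimizer of the dual problem, the duality identity shows there is no duality gap; this forces the pointwise optimality relations $\bar w=-\bar m\,D_pH(x,v,D_v\bar u)$ $\bar m$-a.e., saturation of the Hamilton--Jacobi inequality to an equality on $\{\bar m>0\}$, and $\bar u_T=g(\cdot,\cdot,\bar m_T)$ on $\{\bar m_T>0\}$. Together these say precisely that $(\bar u,\bar m)$ is a weak solution in the sense of Definition~\ref{def:notion_solution}, the energy equality~\eqref{eq:energy_equality} being exactly the ``no gap'' identity; conversely, any weak solution is an admissible pair for the two problems realizing equal values, hence optimal.

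For uniqueness, given two weak solutions $(u_1,m_1)$, $(u_2,m_2)$ I would run a Lasry--Lions crossing argument: test the Hamilton--Jacobi inequality for $u_1$ against $m_2$ and that for $u_2$ against $m_1$, integrate by parts using the two continuity equations, and use the energy equalities to dispose of all initial and terminal terms. What remains is
\[
\int_0^T\!\!\int m_1\left[H(D_vu_2)-H(D_vu_1)-D_pH(D_vu_1)\cdot(D_vu_2-D_vu_1)\right]\dd x\dd v\dd t + (1\leftrightarrow 2) + \int_0^T\!\!\int \left(f(m_1)-f(m_2)\right)(m_1-m_2)\dd x\dd v\dd t \le 0,
\]
plus an analogous nonnegative terminal term built from $g$ and $m_T$. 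Each term is nonnegative --- the first two by convexity of $H$, the monotonicity one because $\cF$ is convex in $m$ --- so all vanish. Strict convexity of $\cF$ then gives $m_1=m_2$ a.e., whence also $m_{T,1}=m_{T,2}$. The vanishing of the $H$-terms on $\{m_1>0\}$, combined with the saturated Hamilton--Jacobi equations and the matching terminal data, then forces $u_1=u_2$ a.e.\ on $\{m_1>0\}$ by a transport argument: $w=u_1-u_2$ satisfies $\partial_t w+v\cdot D_x w - D_pH(x,v,D_vu_1)\cdot D_v w=0$ on $\{m_1>0\}$, so $w$ is constant along the optimal flow $\dot x=v,\ \dot v=-D_pH(x,v,D_vu_1)$ of the common density, and vanishes there since it vanishes at the terminal time on $\{m_{T,1}>0\}$.
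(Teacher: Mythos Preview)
Your existence sketch follows the paper's programme faithfully, but one point is misattributed: the $L^1$ averaging lemmas are applied to the minimizing sequence $(u_n)_n$ of the \emph{relaxed primal} problem, treating the Hamilton--Jacobi inequality $-\partial_t u_n - v\cdot D_x u_n = \beta_n - H(x,v,D_v u_n)$ as a kinetic transport equation with $L^1_{\loc}$ right-hand side, not to $m$. The minimizer of the dual problem in $(m,w)$ is produced directly by Fenchel--Rockafellar (Lemma~\ref{lem:duality1}), so no compactness argument is needed there; the averaging machinery (Proposition~\ref{prop:HJ-compactness}) recovers strong $L^1_{\loc}(\cU_{m_0})$ compactness of $u_n$, with the $L^r_{\loc}$ bound on $D_v u_n$ used to verify equi-integrability in $v$ and then to upgrade compactness of velocity averages to compactness of the full function $u_n$.

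For uniqueness you take a genuinely different route than the paper. The paper argues variationally: both $(u_i,m_i)$ are shown to be minimizers of the two problems (Theorem~\ref{thm:consistency}), strict convexity of $\cB$ gives $m_1=m_2=:m$ and $w_1=w_2$, and then one sets $u=\max\{u_1,u_2\}$, checks (Lemma~\ref{lem:maximum}) that $u$ is again a subsolution with the same $\beta,\beta_T$, observes $\widetilde\cA(u,\beta,\beta_T)\le\widetilde\cA(u_i,\beta,\beta_T)$, deduces equality, and then the energy identity (Corollary~\ref{cor:energy_ineq}) gives $\int u_t m_t=\int (u_i)_t m_t$ for a.e.\ $t$, forcing $u_i=u$ $m$-a.e. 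Your Lasry--Lions cross-testing argument is the classical one and can be made to yield $m_1=m_2$ here (this is essentially what the testing in Corollary~\ref{cor:energy_ineq} and Corollary~\ref{cor:energy_bound} does), but your final step --- deducing $u_1=u_2$ on $\{m>0\}$ by a transport argument for $w=u_1-u_2$ along the optimal flow --- is problematic at this level of regularity: the Hamilton--Jacobi relation is only a distributional \emph{inequality}, $u_i$ is merely $L^1_{\loc}(\cU_{m_0})$ with $D_v u_i\in L^r_{\loc}$, and there is no flow regularity to speak of, so making ``constant along the optimal trajectories, zero at $t=T$'' rigorous is not straightforward. The paper's max-function device sidesteps this entirely and is the recommended route.
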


\subsection{Regularity}

Our second main result is Sobolev regularity for weak solutions of the mean field games system \eqref{eq:main}.
For this result we assume quadratic growth of the Hamiltonian ($r=2$) and stronger convexity and regularity hypotheses on the data, as follows.

\begin{assumption} \label{hyp:stronger}
\begin{equation}\label{hyp:coupling_strong}
	\tag{H\arabic{hyp}} \text{(Conditions on the coupling functions)
	There exists $C>0$ such that the functions $f,g$ satisfy}
\end{equation}
	\begin{equation}\label{hyp:f_Lipschitz_x}
	|f(x_1,v_1,m) - f(x_2,v_2,m)| \leq C(m^{q-1}+1)(|x_1-x_2|+|v_1-v_2|)\  \ \forall (x_1,v_1),(x_2,v_2) \in \M\times\R^d, \ m \geq 0.
	\end{equation}
	and
	\begin{equation}\label{hyp:g_Lipschitz_x}
	|g(x_1,v_1,m) - g(x_2,v_2,m)| \leq C(m^{s-1}+1)(|x_1-x_2|+|v_1-v_2|)\  \ \forall (x_1,v_1),(x_2,v_2) \in \M\times\R^d, \ m \geq 0.
	\end{equation}
Moreover, there exists $c_f,c_g > 0$ such that
	\begin{equation} \label{hyp:f_strongly_monotone}
	\left( f(x,v,\tilde m) - f(x,v,m)\right)(\tilde m - m) \geq c_f\min\{\tilde m^{q-2},m^{q-2}\}|\tilde m-m|^2 \ \forall \tilde m, m \geq 0, \ \tilde m \neq m.
	\end{equation}
	\begin{equation} \label{hyp:g_strongly_monotone}
	\left( g(x,v,\tilde m) - g(x,v,m)\right)(\tilde m - m) \geq c_g\min\{\tilde m^{s-2},m^{s-2}\}|\tilde m-m|^2 \ \forall \tilde m, m \geq 0, \ \tilde m \neq m.
	\end{equation}
 In the above assumptions, if $q < 2$ or $s<2$ one should interpret $0^{q-2}$ and $0^{s-2}$ as $+\infty$.
	In this way, when $\tilde m = 0$, for instance, \eqref{hyp:f_strongly_monotone} reduces to $f(x,v,m)m \geq c_f m^q$, as in the more regular case $q \geq 2$. Similar comments can be made for \eqref{hyp:g_strongly_monotone}. 
	
	\stepcounter{hyp}
\begin{multline}\label{hyp:H_strong_coercive}
	\tag{H\arabic{hyp}} \text{(Quadratic growth and strong coercivity assumption on }H\text{) Suppose that}\ r=2\ \text{and there exist }\\
	j_1,j_2:\R^d \to \R^d\text{ and }c_H > 0\text{ such that}
\end{multline}
	\begin{equation} \label{eq:Hcoercivity}
	H(x,v,P) + L(x,v,W) - P \cdot W \geq c_H|j_1(P) - j_2(W)|^2.
	\end{equation}
	In particular, and in light of our restriction \eqref{hyp:H}, we assume that $j_1$ and $j_2$ have linear growth.
	\stepcounter{hyp}

\begin{equation}\label{hyp:D_xv^2L}
L(\cdot,\cdot,W)\in C^2(\M\times\R^d)\ {\rm{and}}\ 
	|D_{xx}^2 L(x,v,W)|,|D_{xv}^2 L(x,v,W)|,|D_{vv}^2 L(x,v,W)| \leq C_0|W|^{2} + C_0, \tag{H\arabic{hyp}}
\end{equation}
$\forall (x,v,W)\in \M\times\R^d\times\R^d$.
\stepcounter{hyp}
\end{assumption}

Under these additional assumptions, we prove the following result. The proof is carried out in Section~\ref{sec:regularity}.

\begin{theorem}\label{thm:reg-main}
Suppose that $(u,m)$ is a weak solution to \eqref{eq:main} in the sense of Definition \ref{def:notion_solution} and that \eqref{hyp:coupling_strong}, \eqref{hyp:H_strong_coercive}, \eqref{hyp:D_xv^2L} hold.

Then, there exists $\ov C>0$ such that
$$\|m^{\frac{q}{2} - 1} D_{x,v}m\|_{L^2_\loc((0,T]\times\M\times\R^d)} \leq \ov C,\ \ \ \  \|m^{1/2} D_{x,v} D_v u\|_{L^2_\loc((0,T]\times\M\times\R^d)} \leq \ov C$$
and 
$$
\|m_T^{\frac{s}{2} - 1} D_{x,v} m_T\|_{L^2(\M\times\R^d)} \leq \ov C.
$$
\end{theorem}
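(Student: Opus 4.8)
The plan is to derive the Sobolev bounds by a perturbation (displacement) argument at the level of the two dual variational problems, exploiting the fact that $(u,m)$ is simultaneously an optimizer of the primal (value) problem and the dual (density) problem — this is the content of the equivalence established in Section \ref{sec:6}. The key observation, dictated by the kinetic transport term $v\cdot D_x$, is that the natural perturbations are not plain translations $x\mapsto x+he$ but the characteristic-adapted shifts $(x,v)\mapsto(x+the, v+he)$, which commute with the transport operator $\partial_t + v\cdot D_x$; differentiating in $h$ produces exactly the combined operator $(tD_x+D_v)$ that appears in the statement, and only afterwards does one trade the time weight to obtain separate $D_x$ and $D_v$ estimates (hence the localization away from $t=0$ and the appearance of Corollaries \ref{cor:cor_reg_1}--\ref{cor:cor_reg_2} in the paper's outline). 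Concretely, I would:

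\emph{Step 1 (Second-order incremental inequalities).} Use a competitor for the dual problem built from $(u,m)$ by the shift above (suitably cut off in $(x,v)$ to stay in the admissible class, and mollified), compare its energy to that of $(u,m)$, and pass to second differences in $h$. Optimality of $(u,m)$ forces the second difference of the objective to be $\geq 0$; using strong convexity of $\cF$ via \eqref{hyp:f_strongly_monotone}, of $\cG$ via \eqref{hyp:g_strongly_monotone}, and the coercivity gap $H+L-P\cdot W\geq c_H|j_1(P)-j_2(W)|^2$ from \eqref{hyp:H_strong_coercive} evaluated along the optimal coupling (so that $j_1(D_vu)=j_2(W)$ on $\{m>0\}$, from the energy equality \eqref{eq:energy_equality}), one extracts that the second incremental quotients of $m$, of $m_T$, and of $D_vu$ are controlled. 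The regularity hypotheses \eqref{hyp:f_Lipschitz_x}, \eqref{hyp:g_Lipschitz_x}, \eqref{hyp:D_xv^2L} are precisely what is needed to bound the ``error'' terms generated by the $(x,v)$-dependence of $f,g,L$ under the shift (the Lipschitz bounds absorb the linear-in-$m^{q-1}$, $m^{s-1}$ discrepancies, and the $C^2$ bound on $L$ with $|W|^2$ growth controls the Hessian contribution, matching the quadratic/linear-growth structure forced by $r=2$).

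\emph{Step 2 (From incremental quotients to weighted derivatives).} Let $h\to0$ in Step 1. Since $\min\{\tilde m^{q-2},m^{q-2}\}|\tilde m-m|^2$ behaves like $|D m^{q/2}|^2$ in the limit (and likewise $s$ for $m_T$, and $|\tilde m-m|^2$-type terms for $D_vu$ weighted by $m$), one obtains $\|m^{q/2-1}D_{x,v}m\|_{L^2_\loc}$, $\|m_T^{s/2-1}D_{x,v}m_T\|_{L^2}$ and $\|m^{1/2}D_{x,v}D_vu\|_{L^2_\loc}$ all finite — but at this stage in the combined form, i.e. with $D_{x,v}$ read as $(tD_x+D_v)$-type operators plus the genuine $D_v$-derivative coming from the non-transport part of the shift. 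A short interpolation / composition argument (take two shifts to generate first $(tD_x+D_v)$ and then $D_v$ separately, then solve the linear system for $tD_x$, finally divide by $t$ on a set $\{t\geq\delta\}$) upgrades this to the stated separate $D_x$ and $D_v$ bounds, which is why the estimates are only $L^2_\loc((0,T]\times\cdots)$ rather than up to $t=0$.

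\emph{Main obstacle.} The delicate point is Step 1: making the perturbed pair a \emph{legitimate} competitor and keeping the error terms integrable. The time trace $u_0$ is only a locally finite signed Radon measure supported in $\{m_0>0\}$ and is \emph{not} stable under perturbing $m_0$ — so one cannot naively shift the initial datum, which is exactly the reason the result must be local in time and why one shifts along characteristics (keeping $t=0$ fixed pointwise is impossible under $(x,v)\mapsto(x+the,v+he)$ at $t=0$, so the cutoff must vanish near $t=0$, consistent with the $(0,T]$ statement). Equally subtle is that $m$ has only $L^q$ (and $m_T$ only $L^s$) integrability with no a priori lower bound, so the weights $m^{q/2-1}$, $m_T^{s/2-1}$ may be singular where $m$ vanishes; this is handled by first proving the bounds with $m$ replaced by $m\vee\eta$ (or by truncating $f,g$ as in the treatment of Problem \ref{prob:relaxed}), getting constants independent of $\eta$ from \eqref{hyp:f_strongly_monotone}--\eqref{hyp:g_strongly_monotone} under the stated $0^{q-2}=+\infty$ convention, and letting $\eta\to0$ by lower semicontinuity. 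Controlling the cross terms between the mollification scale, the cutoff scale, and the shift parameter $h$ — so that they vanish in the correct order — is the technical heart of the argument.
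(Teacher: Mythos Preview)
Your overall strategy---perturb along kinetic shifts, exploit optimality in both problems, and harvest difference-quotient bounds from the strong convexity hypotheses---matches the paper's. Two technical points, however, are not quite right as stated and are precisely where the paper's argument does real work.

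\textbf{The shift must itself be cut off in time, not merely the test functions.} Your shift $(x,v)\mapsto(x+the,v+he)$ does commute with $\partial_t+v\cdot D_x$, but at $t=0$ it moves $v$ by $he$, so $m^\delta(0,\cdot)=m_0(\cdot,\cdot+he)\neq m_0$; the shifted pair is therefore not admissible for Problem~\ref{prob:density}, and multiplying by a time cutoff destroys the continuity equation. The paper instead takes a \emph{time-dependent} shift $(x,v)\mapsto(x+\eta(t)\delta,\,v+\zeta(t)\delta)$ subject to the structural constraint $\eta'=\zeta$ with $\zeta(0)=0$; this keeps $m^\delta(0)=m_0$, and the shifted pair still solves the continuity equation once one adds the correction $-\zeta'(t)\delta$ to the velocity field $V^\delta$. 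Different concrete choices of $\zeta$ then produce, for $t$ bounded away from $0$, the operators $(tD_x+D_v)$ (Corollary~\ref{cor:cor_reg_1}) and $D_x$ alone (Corollary~\ref{cor:cor_reg_2}); one finally recovers $D_v$ by the pointwise inequality $|D_vh|\le|(tD_x+D_v)h|+T|D_xh|$. Your proposed route of obtaining $D_v$ ``separately'' via a second shift runs into the same initial-data obstruction, since any shift that isolates $D_v$ must move $v$ at $t=0$.

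\textbf{The weak trace forces you to work through a smooth minimizing sequence.} You propose to compare energies of $(u,m)$ and its shifted versions directly; but $u$ is only in $L^1_{\rm loc}(\cU_{m_0})$ and $u_0$ is a signed Radon measure, so $u$ cannot serve as a test function in the continuity equation, and the map $n\mapsto -\int u_{0,n}m_0$ is not upper semicontinuous. The paper takes a smooth minimizing sequence $u_n\in C^1_c$ for Problem~\ref{prob:relaxed}, tests the equations for $(m,w)$ and $(m^\delta,w^\delta)$ with $u_n^\delta$ and $u_n$, combines with the energy equality, and then---this is the key trick---adds $2\widetilde\cA(u_n,\beta_n,\beta_{T,n})$ to both sides before sending $n\to\infty$, so that the bad boundary term is absorbed into the full energy, whose limit is known because the sequence is minimizing. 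Without this device the boundary contribution at $t=0$ cannot be controlled.

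Your remark about mollification/cutoff scales and the $m\vee\eta$ truncation is not needed in the paper's proof: the difference-quotient inequality \eqref{ineq:fundamental} is obtained directly, with the weights $\min\{(m^\delta)^{q-2},m^{q-2}\}$ arising from hypothesis \eqref{hyp:f_strongly_monotone} as written, and the paper leaves the estimate at the level of uniform $L^2$ bounds on these quotients (see the remark following the statement of Theorem~\ref{thm:reg-main}).
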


\begin{remark}
The estimates appearing in this statement are informal; we in fact obtain uniform $L^2$-type summability of differential quotients (see estimate \eqref{ineq:fundamental} below). The corresponding Sobolev estimates, however, are more delicate to obtain, because these would need to be understood in the sense of weighted Sobolev spaces or more generally in the sense of Sobolev spaces with respect to measures. 
Their precise versions would need to involve tangent spaces with respect to the measure $m$, but these are beyond the scope of the current manuscript. We refer to \cite{BouButSep} on this topic. 
\end{remark}

\section{Variational problems in duality} \label{sec:problems}

We will prove existence of a solution to the MFG system \eqref{eq:main} through a variational characterisation. In this section we set up the variational problems used to obtain solutions.
We recall that here and throughout the rest of the manuscript, we will work under Assumption~\ref{hyp:Hamiltonian}.

\subsection{Optimal control of the Hamilton-Jacobi equation: smooth setting}

We define the Fenchel conjugates of $\cF$ and $\cG$ respectively by
\be
\cF^\ast(x,v, \beta) : = \sup_{m \geq 0} \left\{ \beta m - \cF(x,v, m) \right\} \qquad \cG^\ast(x,v,u) : = \sup_{m \geq 0} \left\{ u m - \cG(x,v,m) \right\} .
\ee

Under our assumptions on $\cF$, we have the bounds
\be \label{eq:Fconj-lb}
\begin{cases}
c |\beta|^{q'} - C_F(x,v)  \leq \cF^\ast(x,v, \beta) \leq c^{-1} |\beta|^{q'} + C_F(x,v) & \beta > 0, \\
- \cF(x,v, 0) = 0 \leq \cF^\ast(x,v, \beta) \leq - \inf_{m \geq 0 } \cF(x,v,m) \leq C_F(x,v) & \beta \leq 0 ,
\end{cases}
\ee
where $q' = q/(q-1)$ denotes the H\"{o}lder conjugate exponent of $q$. Note also that $\cF^\ast$ is non-decreasing. Similar observations hold for $\cG^\ast$.

Using this, we define the following functional: for $u \in C_b^1([0,T] \times \M \times \R^d)$, let
\begin{align}\label{functional:variational_value}
\cA( u) : = \int_0^T\int_{\M\times\R^d}\cF^*(x,v, - \partial_t u &- v \cdot D_x u + H(x,v, D_v u))\dd x\dd v\dd t \\
&- \int_{\M\times\R^d}u(0,x,v)m_0(x,v)\dd x\dd v + \int_{\M\times\R^d}\cG^*(x,v,u(T,x,v)) \dd x \dd v,
\end{align}
whenever the integrals are meaningful, and set $\cA(u)=+\infty$ otherwise.
We define a first variational problem associated to this problem.

{\Blue
\begin{problem} \label{prob:value}
Minimize $\cA(u)$ over $u \in E_0$, where $E_0$ denotes the space
\be \label{def:E0}
E_0 : = \{ u \in C^1_b([0,T] \times \M \times \R^d) : |v| |D_x u| \in L^\infty([0,T] \times \M \times \R^d) \} .
\ee
\end{problem}

\begin{remark}
$E_0$ is a Banach space when equipped with the norm
\be
\| u \|_{E_0} : = \| u \|_{L^\infty} + \| D_v u \|_{L^\infty} + \| (1 + |v|) D_x u \|_{L^\infty}
\ee
\end{remark}

}

\subsection{Optimal control of the continuity equation}

To state the dual problem we define the the Lagrangian $L:\M\times\R^{2d}\to\R$, which is the Fenchel conjugate of the Hamiltonian $H$ in the last variable. In other words, for any $(x,v,\a)\in\M\times\R^{2d}$, we define
\be \label{def:Lagrangian}
L(x,v,\a):=\sup_{p\in\R^d}\left\{ \a\cdot p - H(x,v,p)\right\}.
\ee
Note that $L$ then satisfies upper and lower bounds of the form
\be
\frac{1}{c_L} |\a|^{r'} - C_L \leq L(x,v,\a) \leq c_L |\a|^{r'} + C_L ,
\ee
{\Blue where $r ' = r/(r-1)$ denotes the H\"older conjugate exponent of $r$. }

For pairs $(m,w) \in L^1([0,T]\times\M\times\R^d) \times L^1([0,T]\times\M\times\R^d)$, we define the functional
\begin{align*}
\cB(m, w) : = \int_0^T\int_{\M\times\R^d}\cF(x,v,  m)\dd x\dd v\dd t & + \int_0^T\int_{\M\times\R^d}L \left (x,v,- \frac{w}{m} \right)m\dd x\dd v\dd t \\ 
&+\int_{\M\times\R^d}\cG(x,v,m_T(x,v))\dd x\dd v ,
\end{align*}
with the convention that
\be
L \left (x,v,- \frac{w}{m} \right)m = \begin{cases}
0 & m = w = 0,\\
+ \infty & m=0, w \neq 0 .
\end{cases}
\ee
We then define a second variational problem, (formally) dual to the first.

\begin{problem} \label{prob:density}
{\Blue
Minimize $\cB(m,w)$ over the set $\cK_\cB$ of pairs $(m,w) \in L^1([0,T]\times\M\times\R^d) \times (L^1([0,T]\times\M\times\R^d))^d$ with $m \geq 0$, subject to $(m,w) $ satisfying the following continuity equation:
\be\label{eq:continuity_main}
\partial_t m + v \cdot D_x m + \diver_v w = 0,\ \  {\rm{in}}\ \   \sD'((0,T)\times\M\times\R^d) 
\ee
and $m \vert_{t=0} = m_0$ in the sense of a weak trace. 
}

{\Blue
\begin{remark} \label{rmk:m_trace}
Let us comment on the weak trace of $m$ with respect to the time variable. Since we are interested in competitors $(m,w)$ for which  $\cB(m,w)$ is finite, there must exist a vector-valued measurable function $V \in L^{r'}(m \dd x \dd v \dd t)$, i.e. for which
\be
\int_0^T \int_{\M \times \R^d} |V|^{r'} m \dd x \dd v \dd t < + \infty ,
\ee
such that $w=V \, m$ (i.e. $V$ is the density of $w$ with respect to $m$). So, we notice that the previous equation can be written as 
$$
\partial_t m + \diver_{x}(vm)+ \diver_v (Vm) = 0.
$$
Since $V m = w \in L^1([0,T]\times\M\times\R^d)$, we have $|V| m \in L^1([0,T]\times\M\times\R^d)$.
We are then able to prove that $m$ has a narrowly continuous representative $[0,T]\ni t\mapsto m_t\in\sP(\M\times\R^d)$, so that in particular $m|_{t=0}$ and $m|_{t=T}$ are meaningful.
This is essentially a consequence of \cite[Lemma 8.1.2]{AmbGigSav}, with minor modifications to account for the fact that $v m$ is only \emph{locally integrable}; we sketch this in the appendix in Lemma~\ref{app:lem:m_trace}.
\end{remark}
}

\end{problem}

\subsection{Duality}

\begin{lemma} \label{lem:duality1}
We have the following duality:
\be \label{eq:duality}
\inf_{u \in C^1_b} \cA(u) = - \min_{(m,w) \in \cK_B} \cB(m,w) .
\ee
\end{lemma}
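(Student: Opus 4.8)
The plan is to establish the equality \eqref{eq:duality} by recognizing it as an instance of the Fenchel--Rockafellar duality theorem, following the strategy pioneered in \cite{Cardaliaguet2013, Cardaliaguet-Graber, CarGraPorTon}, but adapted to the non-compact phase space $\M\times\R^d$. First I would reformulate $\cA$ as a sum $\mathcal{E}_1(\Lambda u) + \mathcal{E}_2(u)$, where $\Lambda$ is a bounded linear operator and $\mathcal{E}_1,\mathcal{E}_2$ are convex. Concretely, set $\mathcal{E}_2(u) = -\int_{\M\times\R^d} u(0,\cdot)m_0$, a bounded linear (hence convex, lower semicontinuous) functional on a suitable Banach space $\mathcal{C}$ of $C^1_b$ functions (or a completion thereof adapted to the growth of $H$), and let $\Lambda: \mathcal{C} \to C_b \times C_b$, $\Lambda u = (-\partial_t u - v\cdot D_x u + H(x,v,D_v u),\, u(T,\cdot))$ — note $\Lambda$ is \emph{affine} rather than linear because of the $H(x,v,D_v u)$ term; one handles this by the standard trick of writing the first slot as the linear part $-\partial_t u - v\cdot D_x u - a$ subject to the pointwise constraint $a \le H(x,v,D_v u)$ absorbed into the definition of $\mathcal{E}_1$. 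Then define $\mathcal{E}_1(\alpha,\gamma) = \inf\{\int_0^T\!\int \cF^*(x,v,\beta) + \int \cG^*(x,v,\gamma) : \beta \ge \alpha + (\text{correction})\}$, using that $\cF^*,\cG^*$ are nondecreasing, so that $\mathcal{A}(u) = \mathcal{E}_1(\Lambda u) + \mathcal{E}_2(u)$.

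Next I would verify the qualification hypothesis of Fenchel--Rockafellar: there must exist a point $u_0 \in \mathcal{C}$ at which $\mathcal{E}_2$ is finite (immediate) and $\mathcal{E}_1$ is finite and continuous at $\Lambda u_0$. Taking $u_0 \equiv \lambda$ a large negative constant, one has $-\partial_t u_0 - v\cdot D_x u_0 + H(x,v,0) = H_0(x,v)$, which by (H1) has $(H_0)_+ \in C_0$, so $\cF^*(x,v,H_0(x,v)) \le \cF^*(x,v,(H_0)_+) $ is controlled using the bound \eqref{eq:Fconj-lb} together with $C_F \in L^1$; likewise $\cG^*(x,v,\lambda)$ for $\lambda \le 0$ is bounded by $C_G \in L^1$, giving finiteness, and continuity follows from the growth bounds on $\cF^*,\cG^*$ and the uniform structure near $u_0$. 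Then the theorem yields $\inf_u \mathcal{A}(u) = -\min_{(\mu,\rho)} \big[\mathcal{E}_1^*(\mu,\rho) + \mathcal{E}_2^*(-\Lambda^*(\mu,\rho))\big]$ over the predual (measures on $[0,T]\times\M\times\R^d$ and on $\{T\}\times\M\times\R^d$). Computing $\mathcal{E}_2^*$ forces $-\Lambda^*(\mu,\rho)$ to equal $m_0$ at $t=0$; computing $\mathcal{E}_1^*$ via the convex conjugates of $\cF^*,\cG^*$ (which, since $\cF,\cG$ are convex l.s.c., return $\cF,\cG$ by Fenchel--Moreau) produces precisely $\int\int\cF(x,v,m) + \int\cG(x,v,m_T)$; and unwinding $\Lambda^*$ against the constraint $a \le H$ — i.e. dualizing $\sup_p\{w\cdot p - H(x,v,p)\}$ — produces exactly the Lagrangian term $\int\int L(x,v,-w/m)\,m$ together with the continuity equation constraint $\partial_t m + v\cdot D_x m + \diver_v w = 0$, $m|_{t=0}=m_0$. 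This identifies the right-hand side with $-\cB(m,w)$ over $\cK_\cB$. Finally one checks that the infimum over measures is attained (a minimizer exists), using the lower bounds in \eqref{hyp:F-growth}, \eqref{hyp:G-growth} and the coercivity of $L$ to get tightness/weak-* compactness of a minimizing sequence together with lower semicontinuity of $\cB$ — this is where the finite-first-moment condition $\int_0^T\!\int(1+|v|)m < \infty$ in $\cK_\cB$ enters, guaranteeing the narrowly continuous representative and hence a meaningful trace at $t=0$, as noted in the remark following Problem \ref{prob:density}.

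I expect the main obstacle to be the non-compactness of $\R^d$ in the velocity variable, which affects the argument at two points. First, in choosing the correct Banach space $\mathcal{C}$ on which $\Lambda$ maps continuously and on which $\mathcal{E}_1 \circ \Lambda$ has an interior point of finiteness: one cannot simply take $C^1_b$ with the sup norm because the dual is then a space of finitely additive measures, so one needs either to work with $C_0$-type spaces (exploiting the assumption $(H_0)_+ \in C_0$ precisely for this reason) or to argue the duality first on a compact truncation/exhaustion and pass to the limit. Second, verifying that $\cB$ attains its minimum requires a tightness argument on $\M\times\R^d$, not available for free as in the torus case; here the superlinear growth $r' > 1$ of $L$ in the momentum-conjugate variable, combined with the first-moment control, must be leveraged to rule out mass escaping to $|v| = \infty$. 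A secondary technical point is the careful treatment of the affine term $H(x,v,D_v u)$ inside the conjugation — making rigorous that dualizing the constraint $a \le H(x,v,D_v u)$ (with $\cF^*$ nondecreasing so the constraint can be relaxed to an inequality without changing the infimum) indeed reproduces $L$ via \eqref{def:Lagrangian} — but this is essentially the standard computation from \cite{Cardaliaguet2013} carried out pointwise in $(x,v)$. I would organize the proof so that these two non-compactness issues are isolated into clearly-stated lemmas, with the algebraic conjugate computations relegated to a routine verification mirroring the compact case.
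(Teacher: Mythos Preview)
Your framework has the right shape but the decomposition you propose for Fenchel--Rockafellar does not quite work, and the paper's choice differs in an essential way. You take $\Lambda u = (-\partial_t u - v\cdot D_x u + H(x,v,D_v u),\, u(T,\cdot))$, which as you note is only affine; your suggested fix via an auxiliary variable $a\le H(x,v,D_v u)$ is not a ``standard trick'' in this context and is left too vague to be usable. The paper instead takes the genuinely \emph{linear} operator $\Lambda u = (\partial_t u + v\cdot D_x u,\, D_v u)$ and pushes the Hamiltonian into the convex functional $\cA_1(\phi,\psi)=\int\cF^*\big(-\phi+H(x,v,\psi)\big)$; this is convex because $H$ is convex in $\psi$ and $\cF^*$ is convex and nondecreasing. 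The terminal and initial traces both go into the other functional $\cA_0(u)=-\int u(0)m_0+\int\cG^*(u(T))$. A second point you miss is the target space: since $v\cdot D_x u$ grows linearly in $v$ for $u\in C^1_b$, the first component of $\Lambda u$ does not land in $C^0_b$, so the paper works with the weighted space $E_2=\{\phi\in C^0:(1+|v|)^{-1}\phi\in L^\infty\}$. This weight is precisely what produces the finite-first-velocity-moment condition on $m$ in the dual.

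Two further gaps. Your qualification point $u_0\equiv\lambda$ gives $\cA_1(\Lambda u_0)=\int\cF^*(H_0)$, and the hypothesis $(H_0)_+\in C_0$ alone does not make this integrable over the infinite-measure set $\M\times\R^d$; the paper instead uses $u(t,x,v)=\zeta(x-vt,v)+C_H(t-T)$ with $\zeta$ decaying fast enough that $|D_v u|^r\in L^{q'}$, which forces $-\partial_t u - v\cdot D_x u + H(x,v,D_vu)\le c|D_v u|^r\in L^{q'}$. Finally, you correctly flag that the dual of $C^0_b$ consists of finitely additive set functions rather than measures, but your proposed remedies (pass to $C_0$, or exhaust by compacta) are not what the paper does: it keeps $C^0_b$ and proves a separate lemma showing that the supremum in the dual problem is unchanged when restricted to Radon measures, by comparing each $(m,w)\in E_1'$ with its Radon-measure part $(\tilde m,\tilde w)$. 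The assumption $(H_0)_+\in C_0$ enters exactly here, to control the sign of $\langle m-\tilde m, H_0\rangle$.
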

\begin{proof}
This is an application of the classical Fenchel-Rockafeller duality theorem.
{\Blue
Recall that we defined the Banach space $E_0$ above in \eqref{def:E0}.
Then let $E_1$ be defined by
\be 
E_1 : = C^0_b([0,T] \times \M \times \R^d ; \R) \times C^0_b([0,T] \times \M \times \R^d ; \R^d) ;
\ee
we will express elements of $E_1$ as pairs $(\phi, \psi)$ of continuous bounded functions, where $\phi$ is real-valued and $\psi$ is vector-valued. $E_1$ is a Banach space with respect to the uniform norm.
}
On these spaces we define the respective functionals
\be
\cA_0( u) : = - \int_{\M\times\R^d}u(0,x,v)m_0(x,v)\dd x\dd v + \int_{\M\times\R^d}\cG^*(x,v,u(T,x,v)) \dd x \dd v 
\ee
and
\be
\cA_1( \phi, \psi) : = \int_0^T\int_{\M\times\R^d}\cF^*(x,v, -\phi + H(x,v, \psi))\dd x\dd v\dd t .
\ee
{\Blue Note that these functionals are convex.}
We also define the bounded linear map $\Lambda: E_0 \to E_1$ by
\be
\Lambda u : = (\partial_t u + v \cdot D_x u, D_v u) .
\ee
Then
\be
\cA(u) = \cA_0(u) + \cA_1(\Lambda u) .
\ee

We wish to apply Fenchel-Rockafeller duality. In order to do this we must verify the existence of $u \in E_0$ such that $\cA_0(u), \cA_1(\Lambda u) < + \infty $ and $\cA_1$ is continuous at $\Lambda u$.
For example, we may take $u$ to be of the form
{\Blue
\be
u(t,x,v) := \zeta(x-vt, v) + 2 C_H (t-T),
\ee}
where $C_H$ denotes the constant from the bounds on the Hamiltonian \eqref{hyp:H}. We then take $\zeta \in C^1_b(\M \times \R^d)$ non-negative to have sufficiently strong decay at infinity so that
\be \label{eq:zeta-decay}
\zeta \in L^{s'}(\M \times \R^d), \quad D_{x,v} \zeta \in L^{r q'}(\M \times \R^d).
\ee
Explicitly, for the case $\M = \T^d$ we may take for example
\be
\zeta(x,v) = \zeta(v) = (1 + |v|^2)^{-k/2} ; \qquad k > \max \left\{ \frac{d}{s '}, \frac{d}{r q'} \right\} ,
\ee
{\Blue
in which case $|v| |D_x u| = 0$ and therefore $u \in E_0$. \\
}
For the case $\M = \R^d$ we may take
\be
\zeta(x,v) = \zeta(v) = (1 + |x|^2 + |v|^2)^{-k/2} ; \qquad k > \max \left\{ \frac{2d}{s '}, \frac{2d}{r q'} .\right\} .
\ee
{\Blue
In this case,
\be
D_x u = D_x \zeta(x-vt, v) = \frac{- k (x - vt) }{(1 + |x- vt|^2 +  |v|^2)^{1 + k/2}},
\ee
and so
\be
|v||D_x u| = \frac{ k |v| |x - vt| }{(1 + |x- vt|^2 +  |v|^2)^{1 + k/2}} \leq \frac{k}{2} (1 + |x-vt|^2 + |v|^2)^{-k/2} \leq \frac{k}{2},
\ee
which implies that $u \in {\color{black}E_0}$.
}

{\Blue
Then, in either case,
\be
- \partial_t u - v \cdot D_x u + H(x,v, D_v u) = {\Blue - 2 C_H} + H(x,v, D_v u) \leq c |D_v u |^r - C_H \ee
It follows that the positive part satisfies
\be
[- \partial_t u - v \cdot D_x u + H(x,v, D_v u) ]_+ \leq c |D_v u |^r \one_{\{c |D_v u |^r > C_H\}} \in L^{q'} ([0,T] \times \M \times \R^d).
\ee
and thus by the bounds on $\cF^\ast$ \eqref{eq:Fconj-lb} we obtain
\be
\int_0^T\int_{\M\times\R^d}\cF^*(x,v, - \partial_t u - v \cdot D_x u + H(x,v, D_v u))\dd x\dd v\dd t < + \infty .
\ee 
That is, $\cA_1(\Lambda u)$ is finite.}

Moreover, $u_T \in L^{s'}(\M \times \R^d)$ and thus 
\be
\int_{\M\times\R^d}\cG^*(x,v,u(T,x,v)) \dd x \dd v  < + \infty .
\ee
Finally, since $u_0 = \zeta {\Blue - 2 C_H T}$ and $m_0$ is a probability density,
\be
- \int_{\M\times\R^d}u(0,x,v)m_0(x,v)\dd x\dd v < + \infty .
\ee
{\Blue Thus $\cA_0(u)$ is finite.}

{\Blue Now we verify that $\cA_1$ is continuous at $\Lambda u$ with respect to convergence in $E_1$. Consider the sequence of pairs $(\phi_n, \psi_n) \in E_1$, $n\in\N$, such that
\be
\phi_n = \partial_t u + v \cdot D_x u + \delta_n, \quad \psi_n = D_v u + \e_n,
\ee
where $(\delta_n, \e_n) \in E_1$ satisfy $\| (\delta_n, \e_n)\|_{L^\infty} \leq 2^{-n}$. Then
\be
- \phi_n + H(x,v, \psi_n) = - 2 C_H - \delta_n + H(x,v, D_v u + \e_n) .
\ee
Using the bounds \eqref{hyp:H} on the Hamiltonian, we obtain
\begin{align}
- \phi_n + H(x,v, \psi_n) &\leq - 2 C_H - \delta_n + C_H + \frac{c}{r}|D_v u + \e_n|^r \\
& \leq - C_H - \delta_n + 2^{r-1} \frac{c}{r} |D_v u|^r + 2^{r-1} \frac{c}{r} |\e_n|^r \\
& \leq - C_H + C (2^{-n} + 2^{-r n}) + C |D_v u|^r,
\end{align}
{\color{black}for some constant $C>0$.} Therefore, for all $n$ large enough that $C (2^{-n} + 2^{-r n}) < C_H$, for the positive part we have
\be
[- \phi_n + H(x,v, \psi_n)]_+ \leq C |D_v u|^r .
\ee
Then the bounds \eqref{eq:Fconj-lb} on $\cF^\ast$ imply that
\be
\cF^\ast(- \phi_n + H(x,v, \psi_n)) \leq 2 C_F + C |D_v u|^{r q'} \in L^1([0,T] \times \M \times \R^d),
\ee
(where the constant $C>0$ has changed line to line). The right hand side is in $L^1$ because we constructed $\zeta$ to satisfy \eqref{eq:zeta-decay}. We may therefore use it as a dominating function: since $(\delta_n, \e_n)$ certainly converges to zero pointwise (in fact in uniform norm), and $\cF^\ast$ is continuous with respect to the variable $\beta$, by dominated convergence we may conclude that
\begin{align}
\cA_1[(\phi_n, \psi_n)] & = 
\lim_{n\to+\infty} \int_0^T\int_{\M\times\R^d}\cF^*(x,v, -\phi_n + H(x,v, \psi_n))\dd x\dd v\dd t \\
& = \int_0^T\int_{\M\times\R^d}\cF^*(x,v, - \partial_t u - v \cdot D_x u + H(x,v, D_v u))\dd x\dd v\dd t \\
& = \cA_1(\Lambda u) .
\end{align}
Thus $\cA_1$ is indeed continuous at $\Lambda u$.

}

It remains to check that $\cA$ is bounded below on $E_0$. Let $u\in E_0$ and set $\beta:=- \partial_t u - v \cdot D_x u + H(x,v, D_v u)$. Then, using the growth assumptions on $\cF^*$ and $\cG^*$, 
similarly to the inequality \eqref{estim:beta} below, we have
\begin{align}
\nonumber\cA(u)&\ge \|\beta_+\|_{L^{q'}}^{q'}+ \|u(T,\cdot,\cdot)_+\|_{L^{s'}}^{s'}-\int_{\M\times\R^d} [TC_F(x,v)+C_G(x,v)]\dd x\dd v \\
\nonumber&-\int_{\M\times\R^d} u(0,x,v)_+m_0(x,v)\dd x\dd v\\
&\ge \|\beta_+\|_{L^{q'}}^{q'}+ \|u(T,\cdot,\cdot)_+\|_{L^{s'}}^{s'} -C
-\left ( \| (u_T )_+ \|_{L^{s'}} + C_H T + T^{1/q} \| \beta_+ \|_{L^{q'}} \right ) {\Blue ( \| m_0 \|_{L^1} + \| m_0 \|_{ L^q} )} \\
&\ge \inf_{a,b\ge 0}\left\{a^{q'}-c_0 a + b^{s'}-c_0 b-c_0\right\}>-\infty,
\end{align}
where $c_0$ was set to be a large positive constant depending only on $m_0,T, C_H, C_F, C_G$.

Therefore, we are in position to apply the Fenchel-Rockafeller duality theorem (cf. \cite[Chapter 3, Theorem 4.1]{Ekeland-Temam}), to conclude
\be
\inf_{u \in E_0} \cA(u) = \max_{(m,w) \in E_1'} \{ - \cA^\ast_0(\Lambda^\ast(m,w)) - \cA^\ast_1(-(m,w)) \}.
\ee
{\Blue
Here $E_1'$ denotes the dual space of $E_1$.
By \cite[IV.6]{Dunford-Schwartz} the dual space of $C^0_b$ may be identified with the space of bounded, regular, finitely additive set functions. Thus $E_1 '$ is the space of pairs $(m,w)$, where $m$ is a real-valued regular finitely additive set function, and $w$ is a $\R^d$-valued regular finitely additive set function.
}

It remains to identify
\be
 \max_{(m,w) \in E_1'} \left\{ - \cA^\ast_0(\Lambda^\ast(m,w)) - \cA^\ast_1(-(m,w)) \right\} .
 \ee
In what follows, we are going to show that the above maximisation problem actually admits solutions in a better space than $E_1'$. So, we have 
\be
 \max_{(m,w) \in E_1'} \left\{ - \cA^\ast_0(\Lambda^\ast(m,w)) - \cA^\ast_1(-(m,w)) \right\} =  \max_{(m,w) \in \tilde E_1'} \left\{ - \cA^\ast_0(\Lambda^\ast(m,w)) - \cA^\ast_1(-(m,w)) \right\},
 \ee
where the set $\tilde E_1'$ stands for pairs $(m,w)$ such that {\Blue $m$ is a finite Radon measure on $[0,T] \times \M \times \R^d$}
and $w$ is a finite vector-valued Radon measure on $[0,T] \times \M \times \R^d$ taking values in $\R^d$. The proof of this is postponed to Lemma \ref{lem:max_equal} below.

Then, by arguing as in \cite[Section 3.3]{Cardaliaguet2013}, we may identify that 
\be
 \max_{(m,w) \in E_1'} \{ - \cA^\ast_0(\Lambda^\ast(m,w)) - \cA^\ast_1(-(m,w)) \} =  \max_{(m,w) \in E_1'} - \cB(m,w)
 \ee
where the maximum is taken over $(m,w) \in E_1'$ such that $(m,w)\in L^1([0,T]\times\M\times\R^d)\times L^1([0,T]\times\M\times\R^d;\R^d)$ and $m \geq 0$ almost everywhere, such that
\be
\partial_t m + v \cdot D_x m + \diver_v w = 0 \quad \text{in }  \sD'((0,T)\times\M\times\R^d) , \qquad m \vert_{t=0} = m_0 .
\ee
Thus
\be 
\inf_{u \in {\color{black}E_0}} \cA(u) = - \min_{(m,w) \in \cK_B} \cB(m,w) .
\ee
\end{proof}

\begin{lemma}\label{lem:max_equal}
Using the notations and assumptions from Lemma \ref{lem:duality1}, we have 
\be
 \max_{(m,w) \in E_1'} \left\{ - \cA^\ast_0(\Lambda^\ast(m,w)) - \cA^\ast_1(-(m,w)) \right\} =  \max_{(m,w) \in \tilde E_1'} \left\{ - \cA^\ast_0(\Lambda^\ast(m,w)) - \cA^\ast_1(-(m,w)) \right\},
 \ee
\end{lemma}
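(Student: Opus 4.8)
The plan is to prove the non-trivial inequality $\max_{(m,w)\in E_1'}\{\cdots\}\le\max_{(m,w)\in\tilde E_1'}\{\cdots\}$; the reverse is immediate since $\tilde E_1'\subseteq E_1'$ (a finite $\R^d$-valued Radon measure lies in $(C^0_b)'$, and a measure $m$ with $\int(1+|v|)\,\dd|m|<\infty$ defines a bounded functional on $E_2$). By the Fenchel--Rockafellar theorem applied in the proof of Lemma~\ref{lem:duality1}, the left-hand maximum is attained at some $(m,w)\in E_1'$ and equals $\inf_{u\in C^1_b}\cA(u)$, which is finite; hence $\cA_1^\ast(-(m,w))<+\infty$. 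Everything then reduces to showing that this finiteness forces $m,w$ to be genuine $L^1$ densities on $Q:=(0,T)\times\M\times\R^d$ with $m\ge0$ a.e. and $\int_Q(1+|v|)m<+\infty$, i.e. $(m,w)\in\tilde E_1'$; note that $\cA_0^\ast$ plays no role. To analyse $\cA_1^\ast$ I would transport it, via the isometry $\alpha:C^0_b(Q)\to E_2$, $\alpha\chi=(1+|v|)\chi$ from the proof of Lemma~\ref{lem:duality1}, into the conjugate of the convex integral functional $(\chi,\psi)\mapsto\int_Q j(t,x,v;\chi,\psi)$ on $C^0_b(Q;\R^{1+d})$, with $j(t,x,v;a,b):=\cF^\ast\bigl(x,v,-(1+|v|)a+H(x,v,b)\bigr)$, a Carath\'eodory integrand that is convex in $(a,b)$ (since $\cF^\ast(x,v,\cdot)$ is convex and non-decreasing and $(a,b)\mapsto-(1+|v|)a+H(x,v,b)$ is convex) and proper (it is finite at the competitor exhibited in Lemma~\ref{lem:duality1}). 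Under $\alpha$ the variable $m$ corresponds to the bounded regular finitely additive measure $\tilde\mu:=(1+|v|)m$, so $\cA_1^\ast(-(m,w))$ is this integral functional's conjugate evaluated at $-(\tilde\mu,w)$, with $(\tilde\mu,w)$ in the dual of $C^0_b(Q;\R^{1+d})$ described via \cite{Dunford-Schwartz}.

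Next I would invoke the classical representation of the conjugate of a convex integral functional, combined with the Yosida--Hewitt decomposition (see, e.g., \cite{Ekeland-Temam} together with the structure of $(C^0_b)'$ recalled above). Writing $(\tilde\mu,w)=(\rho,\sigma)\,\Leb+(\tilde\mu^s,w^s)$ with $(\rho,\sigma)\in L^1(Q;\R^{1+d})$ and $(\tilde\mu^s,w^s)$ the part singular with respect to Lebesgue measure on $Q$ (which absorbs any purely finitely additive component), one has
$$\cA_1^\ast(-(m,w))=\int_Q j^\ast\bigl(t,x,v;-\rho,-\sigma\bigr)\,\dd t\,\dd x\,\dd v+\int_Q (j^\ast)^\infty\!\Bigl(t,x,v;-\tfrac{\dd\tilde\mu^s}{\dd\theta},-\tfrac{\dd w^s}{\dd\theta}\Bigr)\dd\theta,\quad \theta:=|(\tilde\mu^s,w^s)|,$$
where $j^\ast$ and $(j^\ast)^\infty$ are the pointwise Fenchel conjugate of $j$ in $(a,b)$ and its recession function. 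A direct computation --- substitute $\beta=-(1+|v|)a+H(x,v,b)$ and use $(\cF^\ast)^\ast(x,v,\cdot)=\cF(x,v,\cdot)$ and the perspective identity $\sup_b\{-\lambda H(x,v,b)+\omega\cdot b\}=\lambda L(x,v,\omega/\lambda)$ for $\lambda\ge0$ --- gives, setting $p:=-\mu/(1+|v|)$, that $j^\ast(t,x,v;\mu,\omega)=\cF(x,v,p)+p\,L(x,v,\omega/p)$ if $\mu\le0$ and $j^\ast(t,x,v;\mu,\omega)=+\infty$ if $\mu>0$, with the usual $p=0$ convention. Plugging $(\mu,\omega)=-(\tilde\mu,w)=-\bigl((1+|v|)m,w\bigr)$ this reproduces $\cF(x,v,m)+m\,L(x,v,-w/m)$, the first two terms of $\cB$; in particular finiteness of the absolutely continuous integral already forces $\rho\ge0$, i.e. $m\ge0$ a.e.

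The crux is the recession function. Because $\cF(x,v,\cdot)$ has superlinear growth of order $q>1$ by \eqref{hyp:F-growth} and equals $+\infty$ on $(-\infty,0)$, and because the perspective function $p\mapsto pL(x,v,\,\cdot\,/p)$ inherits superlinear growth of order $r'>1$ in its second argument (from the lower bound on $L$), one checks that $(j^\ast)^\infty(t,x,v;\bar\mu,\bar\omega)=+\infty$ for every $(\bar\mu,\bar\omega)\ne(0,0)$ and $=0$ at the origin: for $\bar\mu<0$ the $\cF$-term blows up, for $\bar\mu>0$ the domain constraint $\mu\le0$ does, and for $\bar\mu=0$, $\bar\omega\ne0$ the $|\cdot|^{r'}$ growth of the perspective term does. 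Hence the singular integral above equals $+\infty$ unless $(\tilde\mu^s,w^s)\equiv0$; since $\cA_1^\ast(-(m,w))<+\infty$ we conclude $(\tilde\mu^s,w^s)=0$. Therefore $\tilde\mu=\rho\,\Leb$ is a genuine finite nonnegative measure, so $m=\rho/(1+|v|)\in L^1(Q)$ with $\int_Q(1+|v|)m=\int_Q\rho<+\infty$, and $w=\sigma\in L^1(Q;\R^d)$; that is, $(m,w)\in\tilde E_1'$, which is exactly what is needed.

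I expect the main obstacle to be the rigorous justification of the integral-functional conjugate formula in this precise functional-analytic setting: the reference measure (Lebesgue restricted to $Q$) is only $\sigma$-finite and of infinite total mass when $\M=\R^d$, the test space is $C^0_b$ rather than $L^\infty$ (so the ``singular mass at infinity'' must be tracked through the description of $(C^0_b)'$ as regular finitely additive set functions), the integrand depends on $(t,x,v)$ and carries the weight $(1+|v|)$, and the variable $b$ is vector-valued; one must check carefully that $j$ is an admissible (normal, proper) integrand for which the Rockafellar/Yosida--Hewitt decomposition applies, and then verify the recession computation cleanly in the presence of the perspective function $m\mapsto mL(x,v,-w/m)$ and its $m=0$ convention.
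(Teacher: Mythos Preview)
Your approach is conceptually appealing but has a genuine gap that you yourself flag at the end without resolving. The Rockafellar-type representation
\[
\cA_1^\ast(-(m,w))=\int_Q j^\ast(-\rho,-\sigma)\,\dd\Leb+\int_Q (j^\ast)^\infty\Bigl(-\tfrac{\dd\tilde\mu^s}{\dd\theta},-\tfrac{\dd w^s}{\dd\theta}\Bigr)\dd\theta
\]
is standard when the ambient space is $L^\infty(\mu)$ with $\mu$ finite (so the dual is $L^1(\mu)$ plus a purely finitely additive part), but here you are working on $C^0_b(Q)$ with $Q$ non-compact and $\Leb(Q)=+\infty$. In this setting the dual $(C^0_b)'$ contains functionals ``supported at spatial infinity'' that are neither absolutely continuous nor Lebesgue-singular countably additive measures; your line ``which absorbs any purely finitely additive component'' conflates three distinct objects (the Lebesgue-singular Radon part, the purely finitely additive Yosida--Hewitt part, and the part of a $C^0_b$-functional annihilating $C_0$), and the pointwise recession function $(j^\ast)^\infty$ has no obvious meaning when applied to the latter two. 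Without a precise reference or a separate argument handling this, the step ``singular integral $=+\infty$ unless $(\tilde\mu^s,w^s)=0$'' is unjustified.

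The paper sidesteps all of this by a direct comparison: for any $(m,w)\in E_1'$ it takes the Radon-measure part $(\tilde m,\tilde w)$ (the restriction to $C_c$) and shows, by restricting the defining suprema to compactly supported test functions and using cut-offs, that
\[
-\cA_1^\ast(-(m,w))\le -\cA_1^\ast(-(\tilde m,\tilde w)),\qquad -\cA_0^\ast(\Lambda^\ast(m,w))\le -\cA_0^\ast(\Lambda^\ast(\tilde m,\tilde w)).
\]
The first inequality is where the structural assumption $(H_0)_+\in C_0(\M\times\R^d)$ from (H1) enters: one obtains $-\cA_1^\ast(-(m,w))\le -\cA_1^\ast(-(\tilde m,\tilde w))+\langle m-\tilde m,H_0\rangle$, and since $m-\tilde m$ is a positive functional vanishing on $C_0$, the correction term is $\le 0$. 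Your argument never invokes this hypothesis, which is a further warning sign: without it the ``mass at infinity'' of $m$ can interact with $H_0$ and the inequality need not hold. So either you must establish a genuine Rockafellar--Yosida--Hewitt formula for integral functionals on $C^0_b$ over $\sigma$-finite spaces (a nontrivial project), or revert to the paper's elementary cut-off comparison, which uses $(H_0)_+\in C_0$ in an essential way.
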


\begin{proof}
Observe that any pair $(m,w) \in E_1'$ induces functionals on $C^0_c$ and $(C^0_c)^d$. Therefore, there exist a signed Radon measure $\tilde m$ with finite total variation and a finite vector-valued measure $\tilde w$ which coincide with, respectively, $m$ and $w$ on (the closure {\Blue with respect to the uniform norm} of) $C^0_c$ and $(C^0_c)^d$.
Then
\be
 \cA^\ast_1(-(m,w)) = \sup_{(\phi, \psi) \in E_1} \left\{ \langle - m, \phi \rangle + \langle -w, \psi \rangle - \int_0^t \int_{\M \times \R^d} \cF^\ast (x,v, -\phi + H(x,v, \psi)) \dd x \dd v \right\}
\ee

By considering functions of the form $\phi = l \chi + H_0$, for $H_0(x,v) : = H(x,v,0)$ (note that our assumptions on $H$ imply in particular that $H_0\in C_b$) and any non-negative $\chi \in C^0_b$ and $l > 0$, and $\psi = 0$, we find that $\cA^\ast_1(-(m,w)) = + \infty$ unless $m$ is a positive functional. Indeed, note that
\be
\int_0^t \int_{\M \times \R^d} \cF^\ast (x,v, -l \chi ) \dd x \dd v \leq \int_0^t \int_{\M \times \R^d} \sup_{\beta \leq 0} \cF^\ast (x,v, \beta) \dd x \dd v < + \infty ,
\ee
and $\sup_{l>0} \langle - m, l \chi \rangle = + \infty$ if $\langle m , \chi \rangle < 0$.

Next, by taking the supremum over the smaller set $(\phi, \psi) \in C^0_c \times (C^0_c)^d$ we have
\begin{align}
 \cA^\ast_1(-(m,w)) & \geq \sup_{(\phi, \psi) \in C^0_c \times (C^0_c)^d} \{ \langle - m, \phi + H_0 \rangle + \langle -w, \psi \rangle - \int_0^t \int_{\M \times \R^d} \cF^\ast (x,v, -\phi - H_0 + H(x,v, \psi)) \dd x \dd v \dd t \} \\
& =  \sup_{(\phi, \psi) \in C^0_c \times (C^0_c)^d} \{ \langle - \tilde m, \phi + H_0 \rangle + \langle - \tilde w, \psi \rangle - \int_0^t \int_{\M \times \R^d} \cF^\ast (x,v, -\phi - H_0 + H(x,v, \psi)) \dd x \dd v \dd t \} \\
& \qquad \qquad - \langle m - \tilde m, H_0 \rangle.
\end{align}
Let us underline that the assumption on $H_0$ plays a crucial role, otherwise the integral of $\cF^\ast$ might not be finite for compactly supported test functions.

Since $H$ is convex, for any $\chi_R \in C^0_b$ such that $0 \leq \chi_R \leq 1$,
\be
H(x,v, \chi_r \psi) \leq \chi_R H(x,v, \psi) + (1- \chi_R) H_0(x,v).
\ee
Thus
\be
- \phi \chi_R - H_0 + H(x,v, \chi_R \psi) \leq \chi_R \left (- \phi - H_0 + H(x,v,  \psi) \right ),
\ee
and in particular we can compare the positive parts:
\be
(- \phi \chi_R - H_0 + H(x,v, \chi_R \psi))_+ \leq (\chi_R \left (- \phi - H_0 + H(x,v,  \psi) \right ) )_+.
\ee
Since $\cF^\ast$ is non-decreasing,
\be
\cF^\ast \left (x,v, - \phi \chi_R - H_0 + H(x,v, \chi_R \psi) \right ) \leq \sup_{\beta < 0} \cF^\ast(x,v, \beta) + \cF^\ast \left (x,v, - \phi - H_0 + H(x,v, \psi) \right )  \in L^1 .
\ee

Hence, for all $\phi \in C^0_b$, $\psi \in (C^0_b)^d$ such that
\be
 \int_0^T \int_{\M \times \R^d} \cF^\ast (x,v, -\phi - H_0 + H(x,v, \psi)) \dd x \dd v \dd t < + \infty,
\ee
by dominated convergence we have
\begin{align*}
 \int_0^t \int_{\M \times \R^d} \cF^\ast (x,v, -\phi - H_0 &+ H(x,v, \psi)) \dd x \dd v \dd t\\ 
 &= \lim_{R \to +\infty}  \int_0^t \int_{\M \times \R^d} \cF^\ast (x,v, -\phi_R - H_0 + H(x,v, \psi_R)) \dd x \dd v \dd t ,
 \end{align*}
 where $\phi_R = \phi \chi_R$, $\psi_R = \psi \chi_R$ for some continuous $0 \leq \chi_R \leq 1$ converging pointwise to the constant function $1$ as $R$ tends to positive infinity.
We conclude that, for any $\tilde m$, $\tilde w$ (respectively signed, vector-valued) Radon measures with finite total variation,
\begin{multline}
\sup_{(\phi, \psi) \in C^0_c \times (C^0_c)^d} \{ \langle - \tilde m, \phi + H_0 \rangle + \langle - \tilde w, \psi \rangle - \int_0^t \int_{\M \times \R^d} \cF^\ast (x,v, -\phi - H_0 + H(x,v, \psi)) \dd x \dd v \dd t \} \\
= \sup_{(\phi, \psi) \in C^0_b \times (C^0_b)^d} \{ \langle - \tilde m, \phi \rangle + \langle - \tilde w, \psi \rangle - \int_0^t \int_{\M \times \R^d} \cF^\ast (x,v, -\phi + H(x,v, \psi)) \dd x \dd v \dd t \},
\end{multline}
where we have used that $H_0$ is also a $C^0_b$ function in order to relabel $\phi$. We have thus proved that
\be
 \cA^\ast_1(-(m,w)) \geq  \cA^\ast_1(-(\tilde m, \tilde w)) - \langle m - \tilde m , H_0 \rangle.
\ee

Next, note that if $m \in (C^0_b)'$ is a positive functional with Radon measure part $\tilde m$, then $m - \tilde m$ is also a positive functional: given $0 \leq \phi \in C^0_b$, let $0 \leq \chi_R \leq 1$ be a sequence of continuous functions, non-decreasing with $R$ and converging pointwise to the constant function 1 as $R$ tends to positive infinity. Then, since $0 \leq \phi \chi_R \leq \phi$, by dominated convergence and the positivity of $m$,
\be
\langle \tilde m , \phi \rangle = \lim_{R \to 0} \langle \tilde m , \phi \chi_R \rangle = \lim_{R \to 0} \langle m , \phi \chi_R \rangle \leq \langle m, \phi \rangle.
\ee
Since $(H_0)_+ \in C_0$, $\langle m - \tilde m , (H_0)_+ \rangle = 0 $ and thus $\langle m - \tilde m , H_0 \rangle \leq 0 $ for all $m$ such that $\cA^\ast_1(-(m,w,))$ is finite. Then
\be
- \cA^\ast_1(-(m,w)) \leq - \cA^\ast_1(-(\tilde m, \tilde w)) .
\ee

We now consider $\cA^\ast_0$. We assume from now on that $m \in (C^0_b)'$ is a \emph{positive} functional, since we only wish to consider $(m,w)$ for which $\cA^\ast_1(-(m,w)) < + \infty$.
\begin{align}
\cA^\ast_0(\Lambda^\ast(m,w)) & = \sup_{u \in {\Blue E_0}} \left \{ \langle \Lambda^\ast(m,w), u \rangle - \int_{\M\times\R^d}u(0,x,v)m_0(x,v)\dd x\dd v - \int_{\M\times\R^d}\cG^*(x,v,u(T,x,v)) \dd x \dd v  \right \} \\
& = \sup_{u \in {\Blue E_0}} \left \{ \langle (m,w), \Lambda u \rangle - \int_{\M\times\R^d}u(0,x,v)m_0(x,v)\dd x\dd v - \int_{\M\times\R^d}\cG^*(x,v,u(T,x,v)) \dd x \dd v  \right \} .
\end{align}
Then, taking supremum over the smaller set $u \in C^1_c$, we have
\be
\cA^\ast_0(\Lambda^\ast(m,w)) \geq \sup_{u \in C^1_c} \left \{ \langle (m,w), \Lambda u \rangle - \int_{\M\times\R^d}u(0,x,v)m_0(x,v)\dd x\dd v - \int_{\M\times\R^d}\cG^*(x,v,u(T,x,v)) \dd x \dd v  \right \} .
\ee
If $u \in C^1_c$, then $\Lambda u \in C^0_c$. Thus
\be
\cA^\ast_0(\Lambda^\ast(m,w)) \geq \sup_{u \in C^1_c} \left \{ \langle (\tilde m, \tilde w), \Lambda u \rangle - \int_{\M\times\R^d}u(0,x,v)m_0(x,v)\dd x\dd v - \int_{\M\times\R^d}\cG^*(x,v,u(T,x,v)) \dd x \dd v  \right \} .
\ee
We show that the right hand side is in fact equal to $\cA^\ast_0(\Lambda^\ast(\tilde m , \tilde w))$: given $u \in {\Blue E_0}$, let $\chi_R \in C^1_c(\M \times \R^d)$ be a sequence of cut-off functions such that $0 \leq \chi_R \leq 1$. {\Blue We construct $\chi_R$ such that their support is contained in $\overline{B}_{2R}(0) \subset \M \times \R^d$, the closed ball of radius $2R$, $\chi_R = 1$ on $\overline{B}_{R}(0)$, the closed ball of radius $R$, and $\| \nabla \chi_R \| \leq \frac{C}{R}$ for some constant $C > 0$ independent of $R$. Thus note in particular that $\chi_R \to 1$ and $\nabla \chi_R \to 0$ pointwise as $R \to + \infty$. Let $u_R : = u \chi_R$.} Then
\be
|\Lambda u_R | = |\partial_t u_R + v \cdot D_x u_R| \leq C \| u \|_{E_0}, \quad |u_R(0,x,v)| \leq |u(0,x,v)|, \quad [u_R(T,x,v)]_+ \leq [u(T,x,v)]_+.
\ee
Since $m\in (C^0_b)'$, we have $ \langle \tilde m , C \| u \|_{E_0} \rangle =  C \| u \|_{E_0} \langle \tilde m , 1 \rangle < + \infty$. Moreover $u(0, \cdot)$ is bounded and therefore integrable with respect to $m_0$. Finally, note that 
\be
\cG^*(x,v,u_R(T,x,v)) \leq \cG^*(x,v,u(T,x,v)) + \sup_{\beta_T < 0 } \cG^\ast(x,v, \beta_T).
\ee
Hence, if
\be
 \int_{\M\times\R^d}\cG^*(x,v,u(T,x,v)) \dd x \dd v < + \infty,
\ee
we may apply the dominated convergence theorem to find that
\begin{multline}
\langle (\tilde m, \tilde w), \Lambda u \rangle - \int_{\M\times\R^d}u(0,x,v)m_0(x,v)\dd x\dd v - \int_{\M\times\R^d}\cG^*(x,v,u(T,x,v)) \dd x \dd v \\
= \lim_{R \to +\infty} \langle (\tilde m, \tilde w), \Lambda u_R \rangle - \int_{\M\times\R^d}u_R(0,x,v)m_0(x,v)\dd x\dd v - \int_{\M\times\R^d}\cG^*(x,v,u_R(T,x,v)) \dd x \dd v .
\end{multline}
This completes the proof that the suprema over {\Blue $E_0$} and $C^1_c$ are equal for the Radon measure parts.
We conclude that
\be
- \cA^\ast_0(\Lambda^\ast(m,w)) \leq - \cA^\ast_0(\Lambda^\ast(\tilde m , \tilde w)) .
\ee

Now observe that, since the set $\tilde E_1'$ is contained in $E_1'$ (it is precisely the set of Radon measure parts of elements of $E_1'$),
\begin{align}
 \max_{(m,w) \in E_1'} \{ - \cA^\ast_0(\Lambda^\ast(m,w)) - \cA^\ast_1(-(m,w)) \} & \leq  \sup_{(m,w) \in E_1'} \{ - \cA^\ast_0(\Lambda^\ast(\tilde m, \tilde w)) - \cA^\ast_1(-(\tilde m, \tilde w)) \} \\
 & \leq  \sup_{(\tilde m, \tilde w) \in \tilde E_1 '} \{ - \cA^\ast_0(\Lambda^\ast(\tilde m, \tilde w)) - \cA^\ast_1(-(\tilde m, \tilde w)) \} \\
 & \leq  \sup_{(m,w) \in E_1'} \{ - \cA^\ast_0(\Lambda^\ast(m,w)) - \cA^\ast_1(-(m,w)) \} .
 \end{align}
 
 All of the above inequalities are therefore equalities. Moreover, since 
 \be
  - \cA^\ast_0(\Lambda^\ast(m,w)) - \cA^\ast_1(-(m,w)) \leq  - \cA^\ast_0(\Lambda^\ast(\tilde m, \tilde w)) - \cA^\ast_1(-(\tilde m,\tilde w)),
 \ee
 if $(m,w)$ attains the supremum then the same is true of the Radon measure part $(\tilde m, \tilde w)$. Thus, without loss of generality, the optimizer is given by some $(m,w) \in \tilde E_1'$, i.e. {\Blue a finite measure and a finite $\R^d$-valued measure.}

\end{proof}

\begin{remark}\label{rem:min_B}
Let us notice that the the minimizer of $\cB(m,w)$ is unique (by the convexity of $\cF,\cG$ and $L$ in their last variables). Moreover, the growth conditions on $\cF,\cG$ and $L$ imply that $m\in L^q((0,T)\times\M\times\R^d)$, $m_T\in L^s(\M\times\R^d)$ and $\frac{|w|^{r'}}{m^{r'-1}}\in L^1((0,T)\times\M\times\R^d)$. Furthermore, by H\"older's inequality, $w\in L^p((0,T)\times\M\times\R^d))$, with $p:=\frac{r'q}{r'+q-1}.$ These arguments are similar to the ones in \cite[Theorem 2.1]{Cardaliaguet-Graber} and \cite[Lemma 2]{Cardaliaguet2013}.
Furthermore, the equation satisfied by $m$ conserves mass, so that $m \in L^\infty_tL^1_{x,v,}$, and in fact $\| m_t\|_{L^1_{x,v}} = \|m_0\|_{L^1_{x,v}}$ for all $t \in [0,T]$.
\end{remark}

\subsection{The relaxed problem}

The third problem we define is a relaxation of Problem~\ref{prob:value}. Consider the functional
\begin{align}\label{functional:variational_relaxed}
\widetilde{\cA}(u, \beta,\beta_T) : = \int_0^T\int_{\M\times\R^d}\cF^*(x,v, \beta)\dd x\dd v\dd t &- \int_{\M\times\R^d}m_0(x,v)u_0(\dd x\dd v)  \\ 
&+ \int_{\M\times\R^d}\cG^*(\beta_T) \dd x \dd v .
\end{align}

\begin{problem}\label{prob:relaxed}
Minimize $\widetilde{\cA}(u,\beta,\beta_T)$
over the set $\cK_{\cA}$ of triples $(u, \beta,\beta_T) \in L^1_{\rm{loc}}(\cU_{m_0}) \times L^1_{\rm{loc}}(\cU_{m_0})\times L^1(\M\times\R^d)$ satisfying
\begin{itemize}
\item The positive part of $u$ satisfies $u_+ \in L^1_\loc([0,T] \times \M \times \R^d)$;
\item The positive part of $\beta$ satisfies $\beta_+ \in L^{q'}([0,T]\times \M \times \R^d)$.
\item The positive part of $\beta_T$ satisfies $(\beta_T)_+\in L^{s'}(\M\times\R^d)$; 
\item $D_v u \in L^r_{\loc}(\cU_{m_0})$,
\end{itemize}
and subject to \eqref{eq:weak_HJ}, understood in the sense of Definition \ref{def:solution}.
\end{problem}

\begin{definition}\label{def:solution}
We say that a triple $(u,\b,\b_T)$ that belongs to the spaces from Problem \ref{prob:relaxed} is a weak distributional solution to
\begin{equation}\label{eq:weak_HJ}
\left\{
\begin{array}{rcll}
-\partial_t u - v\cdot D_x u+H(x,v,D_v u) &\leq& \beta, & {\rm{in}}\ (0,T)\times\M\times\R^d,\\
u_T&\le& \beta_T, & {\rm{in\ }} \M\times\R^d,
\end{array}
\right.
\end{equation}
if 
\begin{equation}\label{eq:weak_HJ_test}
\int_0^T\int_{\M\times\R^d}u[\partial_t\phi+\diver(v\phi)] + \phi H(x,v,D_vu)\dd x\dd v\dd t\le \int_0^T\int_{\M\times\R^d}\beta\phi\dd x\dd v\dd t+\int_{\M\times\R^d}\beta_T\phi_T \dd x\dd v,
\end{equation}
for any $\phi\in C^1_c((0,T]\times\M\times\R^d)$ nonnegative. 
\end{definition}

\begin{remark}
(i) Let us emphasize that the weak form \eqref{eq:weak_HJ_test} encodes both inequalities from \eqref{eq:weak_HJ}, as we show this in Lemma \ref{lem:app_boundary_condition}.

(ii) $u_0$ is similarly understood as a certain notion of a trace at $t=0$ in a weak sense. In particular, the term
\be
- \int_{\M\times\R^d} m_0(x,v) u_0(\dd x\dd v) = -\langle u_0,m_0\rangle,
\ee
which appears in the definition of $\widetilde\cA$ is to be understood as in Definition~\ref{def:u0m0}. Moreover, we underline that this quantity is set to be $+\infty$, if there exist $\phi\in C^1_c(\{m_0>0\})$ nonnegative such that $\langle u_0,\phi\rangle=-\infty$.
\end{remark}

\section{The Hamilton-Jacobi Equation}\label{sec:3}

In this section, we analyse the equation \eqref{eq:weak_HJ}.
We take the assumptions appropriate to the minimisation problem we will consider. Therefore, we suppose throughout that $(u,\beta,\beta_T)\in \cK_A$ is such that $\widetilde \cA(u, \beta,\beta_T) < + \infty$. 
From the finiteness of the energy we deduce in particular that
\be
- \int_{\M \times \R^d} u_0 m_0 < + \infty .
\ee

\subsection{Upper bounds}

We prove upper bounds on $u$. First, we observe that for any constant $l \in \R$ the function $(u-l)_+: = \max\{ u- l, 0\}$ satisfies (see Lemma~\ref{lem:truncation})
\be\label{eq:HJ-co}
\left\{
\begin{array}{rcll}
- (\partial_t + v \cdot D_x ) (u - l)_+ + H(x,v, D_v u)  \one_{\{u > l\}} &\leq& \beta \one_{\{u > l\}}, & \text{ in } \sD'((0,T)\times\M\times\R^d),\\
(u_T-l)_+ &\le& (\beta_T-l)_+, & \text{ in } \sD'(\M\times\R^d).
\end{array}
\right.
\ee

We use the notation $L^\infty + L^{q'}$ to denote the set of functions
\be
\{ h = h_1 + h_2 : h_1 \in L^\infty, h_2 \in L^{q'}\},
\ee
which becomes a Banach space when equipped with the norm
\be
\| h \|_{L^\infty + L^{q'}} : = \inf \{ \| h_1 \|_{L^\infty} + \| h_2 \|_{L^{q'}} : h = h_1 + h_2\} .
\ee

We also use the notation $L^1 \cap L^q$ to denote the intersection of $L^1$ and $L^q$ made into a Banach space under the norm
\be
\| h \|_{L^1 \cap L^q} : = \max \{ \| h \|_{L^1},  \| h \|_{L^q}\} .
\ee
Note that the dual space is given by $(L^1 \cap L^q)^\ast = L^\infty + L^{q'}$.

\begin{lemma} \label{lem:value-ub}
Let $l\in\R$ be given and let $(u,\beta,\beta_T)\in\cK_A$
satisfy \eqref{eq:HJ-co}.

(i) Then $(u - l)_+\in L^\infty_t(L^\infty + L^{q'})_{x,v}$, with the a priori estimate
\begin{align}\label{ineq:beta_a_priori}
\| (u - l)_+ \|_{L^\infty_t(L^\infty + L^{q'})_{x,v} }& \leq \| (\beta_T - l)_+ \|_{(L^\infty + L^{q'})(\M \times \R^d) } + C_H T + T^{1/q} \| \beta_+ \|_{L^{q'}}\\
&\le \| (\beta_T - l)_+ \|_{L^{s'}(\M \times \R^d) } + C_H T + T^{1/q} \| \beta_+ \|_{L^{q'}}
\end{align}

(ii) Suppose in addition that $\widetilde\cA(u,\b,\b_T)<C_{\widetilde\cA}$. Then, there exists 
$$C=C\left(C_{\widetilde\cA},\|m_0\|_{L^1\cap L^q}, T, C_F, C_G\right)>0$$ 
such that
$$
\|\beta_+\|_{L^{q'}((0,T)\times\M\times\R^d)}+ \|(\b_T)_+\|_{L^{s'}(\M\times\R^d)}\le C.
$$
\end{lemma}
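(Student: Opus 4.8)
The plan is to prove part (i) by a duality-with-the-continuity-equation argument applied to the truncated function $(u-l)_+$, and then to deduce part (ii) by combining the bound from part (i) with the definition of $\widetilde\cA$ and the lower bounds on $\cF^\ast,\cG^\ast$ from \eqref{eq:Fconj-lb}.

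For part (i), fix $t_0\in(0,T]$ and a point $(x_0,v_0)$; the goal is to estimate $(u-l)_+(t_0,x_0,v_0)$ pointwise (in an averaged/weak sense, after mollification). The key idea is to test the first inequality in \eqref{eq:HJ-co} against the fundamental solution of the adjoint transport equation, i.e. to transport mass backwards from $(t_0,x_0,v_0)$ along the characteristics $\dot x = v$, $\dot v = 0$ of the free kinetic operator $\partial_t + v\cdot D_x$. Concretely, I would plug in (a mollified version of) the measure $\phi$ solving $\partial_t\phi + \diver(v\phi)=0$ with $\phi|_{t=t_0}=\delta_{(x_0,v_0)}$, which is just the push-forward $\delta_{(x_0 + (t-t_0)v_0,\,v_0)}$. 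Using \eqref{eq:weak_HJ_test} (or rather its analogue for $(u-l)_+$, noting that the $H$-term has a favourable sign since $H\ge -C_H$ by \eqref{hyp:H}), this yields
\begin{align*}
(u-l)_+(t_0,x_0,v_0) &\le (\beta_T - l)_+(x_0 + (T-t_0)v_0, v_0) + \int_{t_0}^T \beta_+(t, x_0+(t-t_0)v_0, v_0)\dd t + C_H(T-t_0).
\end{align*}
One then takes the $(L^\infty+L^{q'})$ norm in $(x_0,v_0)$: the first term contributes $\|(\beta_T-l)_+\|_{L^\infty+L^{q'}}$ (the relevant change of variables in $(x_0,v_0)\mapsto(x_0+(T-t_0)v_0,v_0)$ has unit Jacobian), the constant term contributes $C_H T$, and for the middle term one uses Minkowski's integral inequality together with Hölder in time over an interval of length $\le T$ to bound it by $T^{1/q}\|\beta_+\|_{L^{q'}_{t,x,v}}$ — here one uses that, at fixed $t$, the shear map preserves the $L^{q'}_{x,v}$ norm. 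Taking the supremum over $t_0$ gives the first line of \eqref{ineq:beta_a_priori}; the second line follows from the elementary embedding $L^{s'}\hookrightarrow L^\infty + L^{q'}$ (valid since $s'\ge q'$ when $s\le q$) applied to the nonnegative function $(\beta_T-l)_+$. Since the smooth/pointwise version of this computation requires mollification in all variables and a cut-off at large $|v|$ to stay in the class of admissible test functions (because $m_0$ need not have compact support and $\phi$ must lie in $C^1_c((0,T]\times\M\times\R^d)$), the \emph{main technical obstacle} here is making the test-function argument rigorous on the non-compact domain — one must mollify, truncate in $v$, and pass to the limit controlling the error from the truncation, using the sign of $H$ and the local integrability hypotheses on $u$ and $D_v u$. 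This is exactly the kind of delicate cut-off argument the introduction flags.

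For part (ii), take $l=0$ in \eqref{eq:HJ-co} and recall from part (iii) of Definition \ref{def:notion_solution} (or directly from the class $\cK_\cA$) that $u_+\in L^1_\loc$; the point is to extract an upper bound on $\|u_0\|$ or rather to exploit $-\int m_0 u_0 < +\infty$. The strategy is: from $\widetilde\cA(u,\beta,\beta_T)<C_{\widetilde\cA}$ and the lower bounds \eqref{eq:Fconj-lb}, namely $\cF^\ast(x,v,\beta)\ge c|\beta_+|^{q'} - C_F(x,v)$ and $\cG^\ast(x,v,\beta_T)\ge c|(\beta_T)_+|^{s'} - C_G(x,v)$, we get
\begin{align*}
c\|\beta_+\|_{L^{q'}}^{q'} + c\|(\beta_T)_+\|_{L^{s'}}^{s'} \le C_{\widetilde\cA} + T\|C_F\|_{L^1} + \|C_G\|_{L^1} + \int_{\M\times\R^d} m_0\, u_0(\dd x\dd v).
\end{align*}
It remains to bound $\int m_0 u_0$ from above in terms of $\|\beta_+\|_{L^{q'}}$ and $\|(\beta_T)_+\|_{L^{s'}}$ with small constants. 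For this one uses part (i) with $l=0$: since $m_0$ is a probability density, $\int m_0 u_0 \le \int m_0 (u_0)_+ \le \|(u_0)_+\|_{L^\infty+L^{q'}} \le \|(\beta_T)_+\|_{L^{s'}} + C_H T + T^{1/q}\|\beta_+\|_{L^{q'}}$, where the pairing of $m_0\in L^1\cap L^q$ with an $L^\infty+L^{q'}$ function is controlled using $\|m_0\|_{L^1\cap L^q}$ (recall $(L^1\cap L^q)^\ast = L^\infty + L^{q'}$). Substituting this back, and noting the left-hand side grows like $\|\beta_+\|_{L^{q'}}^{q'}$ with $q'>1$ while the right-hand side is linear in $\|\beta_+\|_{L^{q'}}$ and $\|(\beta_T)_+\|_{L^{s'}}$ (with $s'>1$), a Young-type absorption argument (exactly the $\inf_{a,b\ge0}\{a^{q'}-c_0a + b^{s'} - c_0 b - c_0\}>-\infty$ computation already used in the proof of Lemma \ref{lem:duality1}) yields the desired bound on $\|\beta_+\|_{L^{q'}} + \|(\beta_T)_+\|_{L^{s'}}$ with constant depending only on $C_{\widetilde\cA}$, $\|m_0\|_{L^1\cap L^q}$, $T$, $C_F$, $C_G$. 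The only subtlety in part (ii) is to ensure $\int m_0 u_0$ is genuinely finite and that the pairing interpretation in Definition \ref{def:u0m0} is consistent with the $L^\infty+L^{q'}$ bound on $(u_0)_+$ — but this is guaranteed by the standing assumption $\widetilde\cA(u,\beta,\beta_T)<+\infty$ and the definition of the admissible class.
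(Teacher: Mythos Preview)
Your proposal is correct and follows essentially the same approach as the paper for both parts. The one noteworthy difference is in the execution of part (i): you frame the argument as a pointwise estimate obtained by testing against a (mollified) transported Dirac mass $\delta_{(x_0+(t-t_0)v_0,\,v_0)}$, which then forces you to worry about the ``delicate cut-off argument'' for large $|v|$. The paper sidesteps this entirely by working in duality from the outset: it fixes $t$, takes an arbitrary nonnegative $\psi\in C^\infty_c(\M\times\R^d)$, and tests \eqref{eq:HJ-co} with $\zeta(\tau,x,v):=\psi(x+(t-\tau)v,v)$, which is automatically in $C^1_c$ and satisfies $\partial_\tau\zeta+v\cdot D_x\zeta=0$. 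The shear-invariance you invoke then becomes the computation $\|\zeta_\tau\|_{L^p_{x,v}}=\|\psi\|_{L^p_{x,v}}$, yielding directly $\int(u_t-l)_+\psi\le(\|(\beta_T-l)_+\|_{L^\infty+L^{q'}}+C_HT+T^{1/q}\|\beta_+\|_{L^{q'}})\|\psi\|_{L^1\cap L^q}$, and the $(L^\infty+L^{q'})$ bound follows by duality with $L^1\cap L^q$. No mollification or $v$-truncation is needed. Part (ii) in your proposal is line-for-line the paper's argument.
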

\begin{proof}
First, let us note that, since $(\beta_T)_+ \in L^{s'}(\M\times\R^d)$ and $s' \geq q'$ (by Assumption~\ref{hyp:Hamiltonian}), $(\beta_T)_+ \in (L^\infty + L^{q'})(\M \times \R^d)$ and thus also $(\beta_T - l)_+ \in (L^\infty + L^{q'})(\M \times \R^d)$.

(i) Let $t \in [0,T)$ be fixed. Let $0 \leq \psi \in C^\infty_c(\M \times \R^d) $
and consider
\be
\zeta(\tau, x,v) : = \psi(x+ (t - \tau)v, v).
\ee
Then $\zeta$ is smooth and compactly supported and satisfies
\be
\partial_\tau \zeta + v \cdot D_x \zeta = 0.
\ee
By using $\zeta$ as a test function for $(u - l)_+$ over $\tau \in [t,T]$, we obtain
\begin{align*}
\int_{\M \times \R^d} (u - l)_+(t,x,v) \psi(x,v) \dd x \dd v &\leq \int_{\M \times \R^d} (\b_T - l)_+ \zeta_T \dd x \dd v \\ 
&+ \int_t^T \int_{\M \times \R^d} \zeta \left [ \beta - H(x,v, D_v u)\right] \one_{\{ u > l \}} \dd x \dd v \dd \tau .
\end{align*}
Recall that when we write $(u - l)_+(t,\cdot, \cdot)$, we are always referring to the version of $u$ that is weakly right continuous with respect to time (cf. Appendix \ref{app:time_regularity}, Lemma \ref{lem:app_trace_1}).

Since $H \geq - C_H$,
we have
\be
\int_{\M \times \R^d}(u - l)_+(t,x,v) \psi(x,v) \dd x \dd v \leq \int_{\M \times \R^d} (\beta_T - l)_+ \zeta_T \dd x \dd v + \int_t^T \int_{\M \times \R^d} \zeta \left [\beta_+ + C_H \right] \dd x \dd v \dd \tau .
\ee
Then
\begin{align*}
\int_{\M \times \R^d}(u_t - l)_{+} \psi \dd x \dd v &\leq \| (\beta_T - l)_+ \|_{L^\infty + L^{q'}} \| \zeta_T \|_{L^1 \cap L^q} \\
&+ \| \zeta \|_{L^q([t,T] \times \M \times \R^d)} \| \beta_+ \|_{L^{q'}([t,T] \times \M \times \R^d)} + C_H \| \zeta \|_{L^1([t,T] \times \M \times \R^d)}.
\end{align*}
We compute
\begin{align}
\| \zeta \|_{L^1([t,T] \times \M \times \R^d)} &= \int_t^T \int_{\M \times \R^d} \psi(x+ (t - \tau)v, v) \dd x \dd v \dd \tau \\
& = \int_t^T \int_{\M \times \R^d} \psi(x, v) \dd x \dd v \dd \tau = (T - t) \| \psi\|_{L^1(\M \times \R^d)}
\end{align}
Similarly
\be
\| \zeta_T \|_{L^1(\M \times \R^d)} = \| \psi \|_{L^1(\M \times \R^d)}
\ee
and
\be
\| \zeta \|_{L^q([t,T] \times \M \times \R^d)} = (T - t)^{1/q}  \| \psi \|_{L^q(\M \times \R^d)} .
\ee
Thus
\be
\int_{\M \times \R^d}(u_t - l)_{+} \psi \dd x \dd v \leq \left ( \| (\beta_T - l)_+ \|_{L^\infty + L^{q'}} + C_H T + T^{1/q} \| \beta_+ \|_{L^{q'}} \right ) \| \psi \|_{L^1 \cap L^q}  .
\ee
This extends by density to all non-negative $\psi \in (L^1 \cap L^q)(\M\times\R^d)$, and general $\psi \in (L^1 \cap L^q)(\M\times\R^d)$ by non-negativity of $(u-l)_+$. We conclude by the fact that $ \| (\beta_T -l)_+ \|_{L^\infty + L^{q'}}\le \|(\beta_T-l)_+\|_{L^{s'}}$. The result follows.

\medskip

(ii) By the definition of $\tilde\cA$ and the assumptions on the data one has
\begin{align}\label{estim:beta}
\nonumber\widetilde\cA(u,\b,\b_T)&\ge \|\beta_+\|_{L^{q'}}^{q'}+ \|(\b_T)_+\|_{L^{s'}}^{s'}-\int_{\M\times\R^d} [TC_F(x,v)+C_G(x,v)]\dd x\dd v \\
\nonumber&-\int_{\M\times\R^d} u(0,x,v)_+m_0(x,v)\dd x\dd v\\
&\ge \|\beta_+\|_{L^{q'}}^{q'}+ \|(\b_T)_+\|_{L^{s'}}^{s'} -C
-\left ( \| (\b_T )_+ \|_{L^{s'}} + C_H T + T^{1/q} \| \beta_+ \|_{L^{q'}} \right ) \| m_0 \|_{L^1 \cap L^q},
\end{align}
where in the last inequality we used the estimate from (i). This further yields the claim in (ii).

\end{proof}

\begin{corollary}\label{rem:u_0}
Let $(u,\b,\b_T)$ be as in the statement of Lemma \ref{lem:value-ub} such that there exists $C_{\widetilde\cA}>0$ with $\widetilde\cA(u,\beta,\beta_T)<C_{\widetilde\cA}.$ Then, there exists $C=C\left(C_{\widetilde\cA},\|m_0\|_{L^1\cap L^q}, T, C_F, C_G\right)>0$ such that
the following hold.

(i) $\|(u_0)_+\|_{L^\infty+L^{q'}(\M\times\R^d)}\le C$;

\medskip

(ii) $\ds\int_{\M \times \R^d} m_0 (u_0)_-(\dd x \dd v) \le C.$
\end{corollary}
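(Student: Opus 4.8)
\emph{Proof strategy.} The plan is to deduce everything from Lemma~\ref{lem:value-ub}, with essentially no new analytic input. For part~(i), I would apply Lemma~\ref{lem:value-ub}(i) with $l=0$ and evaluate the a priori bound \eqref{ineq:beta_a_priori} at time $t=0$, using the weakly right continuous representative of $u$ from Appendix~\ref{app:time_regularity}; this gives
\[
\|(u_0)_+\|_{L^\infty+L^{q'}(\M\times\R^d)} \le \|(\beta_T)_+\|_{L^{s'}(\M\times\R^d)} + C_H T + T^{1/q}\|\beta_+\|_{L^{q'}}.
\]
Then, since $\widetilde\cA(u,\beta,\beta_T) < C_{\widetilde\cA}$ by hypothesis, Lemma~\ref{lem:value-ub}(ii) bounds $\|\beta_+\|_{L^{q'}}+\|(\beta_T)_+\|_{L^{s'}}$ by a constant of the advertised form, and combining the two estimates proves~(i).

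For part~(ii), the idea is to rearrange the energy functional. I would decompose $u_0=(u_0)_+-(u_0)_-$, where $(u_0)_+\in L^\infty+L^{q'}$ and $(u_0)_-$ is a locally finite Radon measure supported in $\{m_0>0\}$, so that
\[
-\langle u_0,m_0\rangle = \int_{\M\times\R^d} m_0\,(u_0)_-(\dd x\dd v) - \int_{\M\times\R^d} m_0\,(u_0)_+\dd x\dd v .
\]
Using the pointwise lower bounds $\cF^*(x,v,\beta)\ge -C_F(x,v)$ and $\cG^*(x,v,\beta_T)\ge -C_G(x,v)$, which follow from \eqref{eq:Fconj-lb} and its $\cG^*$-analogue together with $C_F,C_G\ge 0$, the assumption $\widetilde\cA(u,\beta,\beta_T)<C_{\widetilde\cA}$ gives
\[
-\langle u_0,m_0\rangle < C_{\widetilde\cA} + T\|C_F\|_{L^1} + \|C_G\|_{L^1}.
\]
Substituting the decomposition and estimating $\int m_0\,(u_0)_+ \le \|m_0\|_{L^1\cap L^q}\,\|(u_0)_+\|_{L^\infty+L^{q'}}$ via part~(i) then yields the claimed bound on $\int_{\M\times\R^d} m_0\,(u_0)_-(\dd x\dd v)$.

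I do not anticipate any real obstacle here: the analytic work has all been done in Lemma~\ref{lem:value-ub}, and the corollary amounts to specialising those estimates to $l=0$, $t=0$ and rearranging the energy. The only subtle point is to confirm that $\langle u_0,m_0\rangle$ is a genuine finite number rather than $\pm\infty$; finiteness of $\widetilde\cA$ excludes the value $-\infty$ (by the convention in the remark after Definition~\ref{def:solution}), while part~(i) excludes $+\infty$ for the positive part, so the rearrangement above is legitimate and, as a byproduct, shows that $\int m_0\,(u_0)_-$ is finite.
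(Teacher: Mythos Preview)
Your proposal is correct and follows essentially the same approach as the paper: part~(i) is obtained by specialising Lemma~\ref{lem:value-ub}(i)--(ii) to $l=0$, $t=0$, and part~(ii) by rearranging $-\langle u_0,m_0\rangle$ via the decomposition $u_0=(u_0)_+-(u_0)_-$ and duality with $m_0\in L^1\cap L^q$. Your treatment is in fact slightly more careful than the paper's, since you make explicit the lower bounds $\cF^*\ge -C_F$, $\cG^*\ge -C_G$ when passing from $\widetilde\cA<C_{\widetilde\cA}$ to a bound on $-\langle u_0,m_0\rangle$, whereas the paper writes this step tersely.
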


\begin{proof} We notice that (i) is a simple consequence of Lemma \ref{lem:value-ub}(i)-(ii), by setting $l=0$ and $t=0$ (in the sense of weak trace, given in Definition~\ref{def:trace}).

For (ii), we observe
\begin{align*}
\int_{\M \times \R^d} m_0 (u_0)_- (\dd x \dd v) &= - \int_{\M \times \R^d} m_0 u_0  (\dd x \dd v) + \int_{\M \times \R^d} (u_0)_+ m_0 \dd x \dd v\\
& \leq {\Blue \widetilde\cA(u,\beta,\beta_T) - \int_0^T\int_{\M\times\R^d}\cF^*(x,v, \beta)\dd x\dd v\dd t } \\
& {\Blue \qquad- \int_{\M\times\R^d}\cG^*(\beta_T) \dd x \dd v +\|(u_0)_+\|_{L^\infty+L^{q'}}\|m_0\|_{L^1\cap L^q} . }
\end{align*}
{ \Blue
By the bounds~\eqref{eq:Fconj-lb} on $\cF^\ast$ and the corresponding estimates for $\cG^\ast$,
\be
\sup_\beta \{ - \cF^\ast(x,v,\beta) \} \leq C_F(x,v), \quad \sup_{\beta_T} \{ - \cG^\ast(x,v,\beta_T) \} \leq C_G(x,v).
\ee
Hence, using the above bounds and (i), we obtain
\be
\int_{\M \times \R^d} m_0 (u_0)_- (\dd x \dd v) \leq  C_{\widetilde\cA}+\|C_F\|_{L^1} + \|C_G\|_{L^1} + C \|m_0\|_{L^1\cap L^q},
\ee
which completes the proof. }
\end{proof}

\subsection{Local $L^1$ bounds}

Next, we prove bounds on the negative parts of $u$ and $\beta$. We will obtain $L^1_\loc(\cU_{m_0})$ bounds, by use of a duality argument involving a certain class of test functions which satisfy the continuity equation associated to the control system.

\begin{lemma} \label{lem:duality-regular}
Let $a \in C^1_b([0,T] \times \M \times \R^d;\R^d)$  be a bounded control. Let $\phi_0 \in C^1_c(\M \times \R^d)$ satisfy $0 \leq \phi_0 \leq m_0$.
Let $\phi \in C^1_c(\cU_{m_0})$ be the solution of the continuity equation
\be
\partial_t \phi + v \cdot D_x \phi + \diver_v(a \phi) = 0 , \qquad \phi\vert_{t=0} = \phi_0 .
\ee
Then, for any $(u,\beta,\beta_T) \in \cK_A$ such that $\widetilde \cA(u, \beta,\beta_T) < + \infty$, the following hold:
\begin{itemize}
\item $u \in L^\infty_t L^1_{x,v} (\phi)$, that is,
\be
\esssup_{t \in [0,T]} \int_{\M \times \R^d} |u_t| \phi_t \dd x \dd v < + \infty .
\ee
\item The negative part of $\beta$ satisfies $\beta_- \in L^{1}_{t,x,v}(\phi)$.
\item The negative part of $\beta_T$ satisfies $(\beta_T)_-\in L^1_{x,v}(\phi_T)$.
\item The following estimate holds:
\begin{multline}
\| u \|_{L^\infty_t L^1_{x,v} (\phi)} + \| \beta_-\|_{L^{1}_{t,x,v}(\phi)} +\|(\b_T)_-\|_{L^1_{x,v}(\phi_T)}+ \| D_v u \|^r_{L^r_{t,x,v}(\phi)} \\ \leq C(a,\phi, m_0, H, T) \left ( 1 + \| \beta_+ \|_{L^{q'}} + \| (\beta_T)_+ \|_{L^\infty + L^{q'}} \right ) - \int_{\M \times \R^d} u_0 m_0 \dd x \dd v . 
\end{multline}
\end{itemize}
\end{lemma}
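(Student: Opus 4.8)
The plan is to use time‑truncations of the transported density $\phi$ as test functions in the weak Hamilton--Jacobi inequality \eqref{eq:weak_HJ_test}, exploiting the fact that $\phi$ solves the continuity equation with drift $a$ in the $v$ variable. First I would record the basic properties of $\phi$: since $a\in C^1_b$, the characteristic flow of $\dot x=v$, $\dot v=a(t,x,v)$ is global and of class $C^1$, so $\phi$ is a genuine nonnegative function in $C^1_c(\cU_{m_0})$ (trajectories issuing from $\{m_0>0\}$ stay in $\cU_{m_0}$ by Definition~\ref{def:reachable}), with mass conservation $\|\phi_t\|_{L^1}=\|\phi_0\|_{L^1}$ and with $\sup_{t\in[0,T]}\|\phi_t\|_{L^1\cap L^q}$ and $\|\phi\|_{L^q([0,T]\times\M\times\R^d)}$ bounded by a constant depending only on $\phi_0$, $a$, $T$ (the field $(v,a)$ has bounded $v$‑divergence, so the $L^q$ norms grow at most exponentially). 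Since $\partial_t\phi+\diver(v\phi)=-\diver_v(a\phi)$, plugging $\chi(t)\phi$ into \eqref{eq:weak_HJ_test} for a smooth time cut-off $\chi$ and integrating by parts in $v$ converts $\int u(-\diver_v(a\phi))$ into $\int a\phi\cdot D_v u$; this integration by parts is legitimate because $\phi$ is compactly supported inside $\cU_{m_0}$, so every integral runs over a fixed compact set on which $u\in L^1$ and $D_v u\in L^r$ (mollify $u$ in $v$ to justify the step rigorously).

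Next, choosing $\chi=\chi_\e$ approximating $\one_{[s,T]}$ (respectively $\one_{[0,s]}$) and letting $\e\to 0$, and using the weak one‑sided time‑continuity of distributional subsolutions from Appendix~\ref{app:time_regularity} to identify the boundary contributions, I would arrive at the two key inequalities, valid for a.e.\ $s\in[0,T]$:
\begin{multline*}
\int_{\M\times\R^d} u_s\phi_s\,\dd x\dd v+\int_s^T\int_{\M\times\R^d}\bigl(a\cdot D_v u+H(x,v,D_v u)\bigr)\phi\,\dd x\dd v\dd t\\
\le\int_s^T\int_{\M\times\R^d}\beta\phi\,\dd x\dd v\dd t+\int_{\M\times\R^d}\beta_T\phi_T\,\dd x\dd v,
\end{multline*}
\begin{multline*}
\langle u_0,\phi_0\rangle-\int_{\M\times\R^d} u_s\phi_s\,\dd x\dd v+\int_0^s\int_{\M\times\R^d}\bigl(a\cdot D_v u+H(x,v,D_v u)\bigr)\phi\,\dd x\dd v\dd t\\
\le\int_0^s\int_{\M\times\R^d}\beta\phi\,\dd x\dd v\dd t,
\end{multline*}
where $\langle u_0,\phi_0\rangle$ is the trace pairing of Definition~\ref{def:u0m0} (finite because $\widetilde\cA(u,\beta,\beta_T)<+\infty$). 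The case $s=0$ of the first inequality has $\langle u_0,\phi_0\rangle$ in place of $\int u_s\phi_s$ and retains the terminal term.

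Then I would insert two elementary convex‑duality bounds for the integrand $a\cdot D_v u+H(x,v,D_v u)$: from $L(x,v,\alpha)=\sup_p(\alpha\cdot p-H)$ one has $a\cdot D_v u+H(x,v,D_v u)\ge -L(x,v,-a)\ge -c_L|a|^{r'}-C_L$, which is bounded since $a$ is bounded; and from $H(x,v,p)\ge\tfrac1{cr}|p|^r-C_H$ together with $|a\cdot D_v u|\le\tfrac1{2cr}|D_v u|^r+C(a)$ one gets $a\cdot D_v u+H(x,v,D_v u)\ge\tfrac1{2cr}|D_v u|^r-C(a,H)$. Feeding the first bound into the $[s,T]$ inequality and estimating its right‑hand side by Hölder (using $\int\beta\phi\le\|\beta_+\|_{L^{q'}}\|\phi\|_{L^q}$ and $\int\beta_T\phi_T\le\|(\beta_T)_+\|_{L^\infty+L^{q'}}\|\phi_T\|_{L^1\cap L^q}$) gives an \emph{upper} bound on $\int u_s\phi_s$; the first bound in the $[0,s]$ inequality gives a \emph{lower} bound $\int u_s\phi_s\ge\langle u_0,\phi_0\rangle-C(a,\phi,H,T)(1+\|\beta_+\|_{L^{q'}})$. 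Combining these with the a priori bound $\esssup_s\int(u_s)_+\phi_s\le\|(u)_+\|_{L^\infty_t(L^\infty+L^{q'})}\,\sup_s\|\phi_s\|_{L^1\cap L^q}$ from Lemma~\ref{lem:value-ub}(i) controls $\esssup_s\int|u_s|\phi_s$, i.e.\ the $L^\infty_tL^1_{x,v}(\phi)$ norm. For the three remaining quantities, take $s=0$ in the $[s,T]$ inequality, split $\beta=\beta_+-\beta_-$ and $\beta_T=(\beta_T)_+-(\beta_T)_-$, and use the second Fenchel bound to obtain $\tfrac1{2cr}\|D_v u\|_{L^r(\phi)}^r+\|\beta_-\|_{L^1(\phi)}+\|(\beta_T)_-\|_{L^1(\phi_T)}$ bounded by the same right‑hand side, minus $\langle u_0,\phi_0\rangle$. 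Finally, I would pass from $-\langle u_0,\phi_0\rangle$ to $-\int m_0 u_0$: since $0\le\phi_0\le m_0$ and, by Lemma~\ref{lem:value-ub}(i), $(u_0)_+\in L^\infty+L^{q'}$ with $\|(u_0)_+\|_{L^\infty+L^{q'}}\le\|(\beta_T)_+\|_{L^\infty+L^{q'}}+C_HT+T^{1/q}\|\beta_+\|_{L^{q'}}$, we have $-\langle u_0,\phi_0\rangle\le\langle(u_0)_-,m_0\rangle=-\int m_0 u_0+\int m_0(u_0)_+\le -\int m_0 u_0+C(m_0,H,T)\bigl(1+\|\beta_+\|_{L^{q'}}+\|(\beta_T)_+\|_{L^\infty+L^{q'}}\bigr)$, and collecting constants yields the stated estimate.

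The main obstacle is the rigorous treatment of the time boundary. The weak formulation \eqref{eq:weak_HJ_test} only admits test functions supported in $(0,T]$, so recovering the slice integrals $\int u_s\phi_s$ and, above all, the trace pairing $\langle u_0,\phi_0\rangle$ forces the cut-off/limit procedure in $t$, and that limit is meaningful only because of the (genuinely nontrivial) weak one‑sided time‑continuity of distributional subsolutions established in Appendix~\ref{app:time_regularity}; one must also handle carefully that $u_0$ is only a locally finite signed measure, so $\langle u_0,\phi_0\rangle$ is a pairing in the sense of Definition~\ref{def:u0m0}, and its comparison with $\int m_0 u_0$ relies precisely on $0\le\phi_0\le m_0$ combined with the upper bound on $(u_0)_+$. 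The integration by parts in $v$ and all the Fenchel/Young estimates are routine once the compact support of $\phi$ inside $\cU_{m_0}$ is used to localise.
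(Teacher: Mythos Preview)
Your proposal is correct and follows essentially the same approach as the paper: test the Hamilton--Jacobi inequality against the transported density $\phi$, integrate by parts in $v$ using the compact support of $\phi$ in $\cU_{m_0}$, exploit the lower bound on $H$ together with Young's inequality, invoke Lemma~\ref{lem:value-ub} for the positive part of $u$, and finally pass from $-\langle u_0,\phi_0\rangle$ to $-\int m_0 u_0$ via $0\le\phi_0\le m_0$ and the bound on $(u_0)_+$. The only cosmetic difference is that you derive both the $[s,T]$ and $[0,s]$ inequalities and combine them, whereas the paper works directly with the single inequality on $[0,t]$ (your upper bound on $\int u_s\phi_s$ from the $[s,T]$ inequality is in fact redundant once the Lemma~\ref{lem:value-ub} bound on $(u_s)_+$ is available); this is a harmless elaboration, not a different route.
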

\begin{proof}
Note the following properties of $\phi$:
\begin{itemize}
\item $\phi$ is non-negative,
\item $\phi$ has compact support contained in $\cU_{m_0}$.
\item $\phi \in (L^1 \cap L^\infty)_{t,x,v}$.
\end{itemize}
In particular, since $\phi_t \in (L^1 \cap L^{q})(\M \times \R^d)$ for any $t \in [0,T]$, then
\be
\int_{\M \times \R^d} (u_t)_+ \phi_t \dd x \dd v \leq \| \phi_t \|_{(L^1 \cap L^{q})_{x,v}} \| u_+ \|_{L^\infty_t(L^\infty + L^{q'})_{x,v}} .
\ee
By Lemma~\ref{lem:value-ub},
\be \label{est:u-positive}
\| u_+ \|_{L^\infty_t(L^\infty + L^{q'})_{x,v}} \leq C(T,H) \left ( 1 + \| (\beta_T)_+ \|_{(L^\infty + L^{q'})(\M \times \R^d) } + \| \beta_+ \|_{L^{q'}_{t,x,v}} \right )
\ee
and thus for $t\in [0,T]$, we have
\be
\int_{\M \times \R^d} (u_t)_+ \phi_t \dd x \dd v \leq C(T,H, \phi) \left ( 1 + \| (\beta_T)_+ \|_{(L^\infty + L^{q'})(\M \times \R^d) } + \| \beta_+ \|_{L^{q'}_{t,x,v}} \right ),
\ee
where $(u_t)_+$ is understood in the sense of weak trace (cf. Lemma \ref{lem:app_trace_1}, Definition~\ref{def:trace}).

For the negative part we make use of the equation.
A density argument shows that $\phi$ is admissible as a test function in the weak form of the Hamilton-Jacobi inequality satisfied by $u$.
Thus for $0 \leq s < t \leq T$,
\begin{align*}
\int_{\M \times \R^d} u_s \phi_s \dd x \dd v - \int_{\M \times \R^d} u_t \phi_t \dd x \dd v  & + \int_s^t \int_{\M \times \R^d} u (\partial_t \phi + v \cdot D_x \phi ) \dd x \dd v \dd \tau \\
&+ \int_s^t \int_{\M \times \R^d} \phi H(x,v, D_v u)\dd x \dd v \dd \tau \leq \int_s^t \int_{\M \times \R^d} \beta \phi \dd x \dd v \dd \tau .
\end{align*}
We apply this in the case $s=0$, $t \in (0,T]$. Using the fact that $\phi$ satisfies the continuity equation in a pointwise sense,
\begin{align}\label{ineq:for_convergence}
\int_{\M \times \R^d} & u_0 \phi_0 \dd x \dd v - \int_{\M \times \R^d} u(t,x,v) \phi(t,x,v) \dd x \dd v \\ 
&- \int_0^t \int_{\M \times \R^d} u \diver_v(a \phi) \dd x \dd v \dd \tau + \int_0^t \int_{\M \times \R^d} \phi H(x,v, D_v u)\dd x \dd v \dd \tau \leq \int_0^t \int_{\M \times \R^d} \beta \phi \dd x \dd v \dd \tau .
\end{align}
Here, let us notice that we have used the existence of weak traces in the sense of Lemma \ref{lem:app_trace_1}. In particular the integral $\int_{\M \times \R^d} u_0 \phi_0 \dd x \dd v$ is meaningful and finite, since $\spt(\phi_0)\subseteq \spt(m_0)$ (Definition \ref{def:u0m0}).

Since $D_v u \in L^r_\loc(\cU_{m_0})$ and $a \phi \in C^1$ has compact support contained in $\cU_{m_0}$,
 we may integrate by parts to obtain
\be\label{eq:for_convergence}
- \int_0^t \int_{\M \times \R^d} u \, \diver_v(a \phi) \dd x \dd v \dd \tau = \int_0^t \int_{\M \times \R^d} a \cdot D_v u \, \phi \dd x \dd v \dd \tau .
\ee
Then estimate
\be
\left | \int_0^t \int_{\M \times \R^d} a \cdot D_v u \, \phi \dd x \dd v \dd \tau \right | \leq \| a \|_{L^{r'}(\phi)} \| D_v u \|_{L^r(\phi)} ,
\ee
{\Blue where, in order to lighten the notation, we have used the shorthand
\be
\| h\|_{L^p(\phi)} : = \left ( \int_0^T \int_{\M \times \R^d} |h|^p \phi \dd x \dd v \dd t \right )^{1/p}, \quad p \in [1, + \infty)
\ee
to denote $L^p$ norms with respect to the measure on $[0,T] \times \M \times \R^d$ with density $\phi$ with respect to Lebesgue measure.
}
Thus
\begin{multline}
 - \int_{\M \times \R^d} u_t \phi_t \dd x \dd v  + \int_0^t \int_{\M \times \R^d} \phi H(x,v, D_v u)\dd x \dd v \dd \tau \\ \leq \| a \|_{L^{r'}(\phi)} \| D_v u \|_{L^r(\phi)} - \int_{\M \times \R^d} u_0 \phi_0 \dd x \dd v + \int_0^t \int_{\M \times \R^d} \beta \phi \dd x \dd v \dd \tau .
\end{multline}
Using the lower bounds on the Hamiltonian $H$, rearranging terms and using Young's inequality for products (with a small parameter), we obtain
\begin{multline}
 - \int_{\M \times \R^d} u_t \phi_t \dd x \dd v  + \int_0^t \int_{\M \times \R^d} \beta_- \phi \dd x \dd v \dd \tau+ c \int_0^t \int_{\M \times \R^d} \phi |D_v u |^r \dd x \dd v \dd \tau \\ \leq C \| a \|_{L^{r'}(\phi)}^{r'} + C_H \int_0^t \int_{\M \times \R^d} \phi \dd x \dd v \dd \tau - \int_{\M \times \R^d} u_0 \phi_0 \dd x \dd v + \int_0^t \int_{\M \times \R^d} \beta_+ \phi \dd x \dd v \dd \tau .
\end{multline}
Then
\begin{align}\label{ineq:energy}
-  \int_{\M \times \R^d} &u_t \phi_t \dd x \dd v  + \int_0^t \int_{\M \times \R^d} \beta_- \phi \dd x \dd v \dd \tau+ c \int_0^t \int_{\M \times \R^d} \phi |D_v u |^r \dd x \dd v \dd \tau \\ 
 & \leq C(a,\phi) \left ( C_H + \| \beta_+ \|_{L^{q'}} \right ) - \int_{\M \times \R^d} u_0 m_0 \dd x \dd v  + \int_{\M \times \R^d} (u_0)_+ (m_0 - \phi_0) \dd x \dd v.
  \end{align}
Finally, since $0 \leq m_0 - \phi_0 \leq m_0 \in L^1 \cap L^q$, we use the $(L^\infty + L^{q'})_{x,v}$ bounds on the positive part $(u_0)_+$ (Equation~\eqref{est:u-positive}) to conclude that
\begin{multline}
 \int_{\M \times \R^d} |u_t| \phi_t \dd x \dd v  + \int_0^t \int_{\M \times \R^d} \beta_- \phi \dd x \dd v \dd \tau+ c \int_0^t \int_{\M \times \R^d} \phi |D_v u |^r \dd x \dd v \dd \tau \\ \leq C(a,\phi, m_0, H , T) \left ( 1 + \| \beta_+ \|_{L^{q'}} + \| (\beta_T)_+ \|_{L^\infty + L^{q'}} \right ) - \int_{\M \times \R^d} u_0 m_0 \dd x \dd v .  
 \end{multline}
Notice that by setting $t=T$, \eqref{ineq:energy} and the fact that $u_T\le \b_T$ (together with the bounds that we already have on $(\b_T)_+$) readily yield also that $(\b_T)_-\in L^1(\phi_T).$

This completes the proof.
\end{proof}

\begin{corollary}\label{cor:bounds_u}
Let $(u,\beta, \beta_T) \in \cK_A$ such that $\widetilde \cA(u, \beta,\beta_T) < + \infty$.
Then $u \in L^1_\loc(\cU_{m_0})$, $\beta_- \in L^1_\loc(\cU_{m_0})$, $(\b_T)_-\in L^1_{\rm{loc}}(\M\times\R^d)$ and $D_v u \in L^r_\loc(\cU_{m_0})$.
\end{corollary}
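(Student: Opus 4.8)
The plan is to reduce everything to the weighted estimate of Lemma~\ref{lem:duality-regular}, combined with the identification of $\cU_{m_0}$ as the reachable set (Definition~\ref{def:reachable}). The key step will be a \emph{localisation lemma}: for every $z_*=(t_*,x_*,v_*)\in\cU_{m_0}$ there are a bounded control $a\in C^1_b([0,T]\times\M\times\R^d;\R^d)$ and a datum $\phi_0\in C^1_c(\M\times\R^d)$ with $0\le\phi_0\le m_0$ so that the solution $\phi\in C^1_c(\cU_{m_0})$ of $\partial_t\phi+v\cdot D_x\phi+\diver_v(a\phi)=0$ with $\phi|_{t=0}=\phi_0$ satisfies $\phi(z_*)>0$. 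Granting this, I fix a compact $K\subseteq\cU_{m_0}$; for each $z\in K$ the associated weight $\phi^z$ is continuous and positive at $z$, hence $\phi^z\ge c^z>0$ on an open ball $V^z\ni z$, and a finite subcover $K\subseteq V^{z_1}\cup\dots\cup V^{z_N}$ (with $\phi_i:=\phi^{z_i}$, $c_i:=c^{z_i}$) yields, via $\one_K\le\sum_i c_i^{-1}\phi_i$,
\[
\int_K|u|\,\dd x\,\dd v\,\dd t\ \le\ \sum_{i=1}^N c_i^{-1}\int_0^T\!\!\int_{\M\times\R^d}|u|\,\phi_i\,\dd x\,\dd v\,\dd t\ \le\ \sum_{i=1}^N c_i^{-1}\,T\,\|u\|_{L^\infty_tL^1_{x,v}(\phi_i)}\ <\ +\infty
\]
by Lemma~\ref{lem:duality-regular}. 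The same computation with $|u|$ replaced by $\beta_-$ and by $|D_vu|^r$, and with the right-hand norms $\|\beta_-\|_{L^1_{t,x,v}(\phi_i)}$, $\|D_vu\|^r_{L^r_{t,x,v}(\phi_i)}$, gives $u\in L^1_\loc(\cU_{m_0})$, $\beta_-\in L^1_\loc(\cU_{m_0})$ and $D_vu\in L^r_\loc(\cU_{m_0})$. For $(\beta_T)_-$ I apply the localisation lemma with $t_*=T$, so that to every $(x_*,v_*)$ corresponds a weight with $\phi_T(x_*,v_*)>0$; since Lemma~\ref{lem:duality-regular} also provides $\|(\beta_T)_-\|_{L^1_{x,v}(\phi_T)}<+\infty$, the same covering argument on $\M\times\R^d$ gives $(\beta_T)_-\in L^1_\loc(\M\times\R^d)$.

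It remains to prove the localisation lemma, and this is where the controllability of the control system enters. If $t_*=0$ then $m_0(x_*,v_*)>0$; by continuity of $m_0$ I pick $\phi_0\in C^\infty_c$ supported near $(x_*,v_*)$ with $0\le\phi_0\le m_0$ and $\phi_0(x_*,v_*)>0$, take $a\equiv0$, and note that $\phi(t,x,v)=\phi_0(x-tv,v)$ is smooth, has support in $\cU_{m_0}$ (inside $\{0\}\times\{m_0>0\}$ at $t=0$, inside $(0,T]\times\M\times\R^d$ otherwise), and is positive at $z_*$. If $t_*\in(0,T]$, I fix any $(x_0,v_0)$ with $m_0(x_0,v_0)>0$ (such a point exists since $m_0$ is a continuous probability density) and use the Kalman rank condition for $\dot x=v,\ \dot v=a$: the double integrator can be steered from $(x_0,v_0)$ at time $0$ to $(x_*,v_*)$ at time $t_*$ by a control affine in time, $\bar a(t)=\alpha+\beta t$, where $(\alpha,\beta)\in\R^d\times\R^d$ solves the coordinatewise $2\times2$, hence uniquely solvable, linear system
\[
\alpha t_*+\tfrac12\beta t_*^2=v_*-v_0,\qquad \tfrac12\alpha t_*^2+\tfrac16\beta t_*^3=x_*-x_0-t_* v_0 .
\]
With $a(t,x,v):=\bar a(t)$ — bounded and $C^\infty$ on $[0,T]$ — the generating field $(v,\bar a(t))$ is divergence-free in $(x,v)$, so the flow $\Phi_t(x,v)=\bigl(x+tv+\int_0^t(t-s)\bar a(s)\,\dd s,\ v+\int_0^t\bar a(s)\,\dd s\bigr)$ is an explicit affine, volume-preserving diffeomorphism, jointly $C^1$ in $(t,x,v)$. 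Taking $\phi_0\in C^\infty_c$ a bump near $(x_0,v_0)$ with $0\le\phi_0\le m_0$ and $\phi_0(x_0,v_0)>0$, the function $\phi(t,\cdot):=\phi_0\circ\Phi_t^{-1}$ solves the continuity equation with $a$, is $C^1$ with support in $\cU_{m_0}$ (at $t=0$ inside $\{m_0>0\}$, for $t>0$ automatic, with the $v$-support — hence, when $\M=\R^d$, the $x$-support — staying bounded over $[0,T]$; on $\T^d$ compactness in $x$ is automatic), and satisfies $\phi(t_*,x_*,v_*)=\phi_0(x_0,v_0)>0$.

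The genuinely substantive point is the localisation lemma: one must convert the scalar, state-independent controllability of the double integrator into a smooth, compactly supported, \emph{exact} solution of the continuity equation which is strictly positive at a prescribed point of the reachable set $\cU_{m_0}$ — the affine-in-time choice of control is what makes the flow explicit and Jacobian-free and hence keeps $\phi$ in the admissible class $C^1_c(\cU_{m_0})$ required by Lemma~\ref{lem:duality-regular}. Everything after that, namely the finite subcover and the Chebyshev-type estimate on each ball, is routine.
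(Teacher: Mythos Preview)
Your proof is correct and follows essentially the same approach as the paper: a finite covering of a compact $K\subset\cU_{m_0}$ by sets $\{\phi_i>0\}$ built from Lemma~\ref{lem:duality-regular}, combined with controllability of the double integrator to guarantee that every point of $\cU_{m_0}$ lies in some such set. The only difference is cosmetic---you give an explicit affine-in-time control and the corresponding volume-preserving flow, whereas the paper invokes controllability abstractly; your version is slightly more self-contained but otherwise identical in structure.
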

\begin{proof}
First, consider a compact set
\be
K \subset \bigcup_{a \in C^1_b, \, 0 \leq \phi_0 \in C^1_c(\{m_0 > 0 \})} \{ \phi > 0 \},
\ee
where $\phi \in C^1$ denotes the solution of the continuity equation
\be \label{eq:phi-cty}
\partial_t \phi + v \cdot D_x \phi + \diver_v (a \phi) = 0, \qquad \phi \vert_{t=0} = \phi_0 .
\ee
By compactness of $K$, there exist finitely many $\phi_i$, $i = 1, \ldots, k$ such that
\be
K \subset \bigcup_{i=1}^k \{ \phi_i > 0 \} .
\ee
The function $\max_i \phi_i$ is continuous and so
\be
0 < \delta_K : = \inf_{K} \max_i \phi_i .
\ee
Then
\be
\| u \|_{L^1(K)} + \| \beta_- \|_{L^1(K)} + \| D_v u \|_{L^r(K)} \leq \delta_K^{-1} \sum_{i=1}^k \| u \|_{L^1(\phi_i)} + \| \beta_- \|_{L^1(\phi_i)} + \| D_v u \|_{L^r(\phi_i)} .
\ee
By Lemma~\ref{lem:duality-regular}, this leads to the estimate
\be
\| u \|_{L^1(K)} + \| \beta_- \|_{L^1(K)} + \| D_v u \|_{L^r(K)} \leq C,
\ee
where $C=C \left (K, \widetilde \cA(u, \beta,\beta_T) \right )$. We now claim that
\be
\cU_{m_0}  \subset \bigcup_{a \in C^1_b, \, 0 \leq \phi_0 \in C^1_c(\{m_0 > 0 \})} \{ \phi > 0 \}.
\ee
This follows from the controllability of the ODE system
\be \label{eq:ODE}
\dot x = v, \quad \dot v = a
\ee
on $\M \times \R^d$.
That is, for any initial datum $(x_0, v_0) \in \M \times \R^d$ and target $(t_\ast ,x_\ast,v_\ast) \in (0, T] \times \M \times \R^d$, there exists a control function $a$ such that the solution $(x(\tau), v(\tau))$ of the ODE \eqref{eq:ODE} with $(x(0), v(0)) = (x_0, v_0)$ satisfies $(x(t_\ast), v(t_\ast)) = (x_\ast, v_\ast)$.

Next, note that (since $m_0$ is continuous) $\{ m_0 > 0 \}$ contains a closed ball $\bar{B}_r(x_0, v_0)$ for some point $(x_0, v_0)$ and some $r > 0$. Thus there exists $0 \leq \phi_0 \in C^1_c(\{ m_0 >  0\})$ such that $\phi_0 > 0$ on $\bar{B}_r(x_0, v_0)$. Consider the solution $\phi$ of \eqref{eq:phi-cty} for the control $a$ found above and with this choice of $\phi_0$. It follows that $(t_\ast, x_\ast, v_\ast) \in \{ \phi > 0 \}$.

Finally, we notice that by the structure of the set $\cU_{m_0}$, we have the bound $(\b_T)_- \in L^1_{\rm{loc}}(\M\times\R^d)$.

\end{proof}

\section{Duality for the Relaxed Problem}\label{sec:4}

\begin{theorem} \label{thm:duality}
Problems \ref{prob:density} and \ref{prob:relaxed} are in duality:
\be
\inf_{(u,\beta,\beta_T) \in \cK_\cA} \widetilde \cA(u,\beta,\beta_T) = - \min_{(m,w) \in \cK_\cB} \cB(m,w) .
\ee
\end{theorem}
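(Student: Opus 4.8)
The plan is to combine the duality of Lemma~\ref{lem:duality1} with a relaxation argument. Since Lemma~\ref{lem:duality1} already gives $\inf_{u\in C^1_b}\cA(u)=-\min_{(m,w)\in\cK_\cB}\cB(m,w)$, it suffices to show that the relaxed problem has the same value. For the inequality ``$\le$'' I would argue directly: if $u\in C^1_b$ and $\cA(u)<+\infty$, set $\beta:=-\partial_t u-v\cdot D_x u+H(x,v,D_v u)$ and $\beta_T:=u_T$; then \eqref{eq:weak_HJ} holds with equality, the finiteness of $\cA(u)$ (through the bounds \eqref{eq:Fconj-lb} on $\cF^\ast$ and the analogous ones on $\cG^\ast$) forces $\beta_+\in L^{q'}$, $(\beta_T)_+\in L^{s'}$, and since $u_0$ is bounded one has $-\langle u_0,m_0\rangle=-\int_{\M\times\R^d}u_0 m_0$ and $(u,\beta,\beta_T)\in\cK_\cA$ with $\widetilde\cA(u,\beta,\beta_T)=\cA(u)$. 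Taking the infimum over such $u$ yields $\inf_{\cK_\cA}\widetilde\cA\le\inf_{C^1_b}\cA=-\min_{\cK_\cB}\cB$.

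For the reverse inequality I would fix $(u,\beta,\beta_T)\in\cK_\cA$ with $\widetilde\cA(u,\beta,\beta_T)<+\infty$ (otherwise there is nothing to prove) and let $(m,w)$ be the unique minimizer of $\cB$ over $\cK_\cB$ (Remark~\ref{rem:min_B}), and show $\widetilde\cA(u,\beta,\beta_T)+\cB(m,w)\ge0$. Applying the pointwise Fenchel--Young inequalities $\cF^\ast(x,v,\beta)+\cF(x,v,m)\ge\beta m$, $\cG^\ast(x,v,\beta_T)+\cG(x,v,m_T)\ge\beta_T m_T$, and (on $\{m>0\}$, using that $w=0$ a.e.\ where $m=0$) $L(x,v,-w/m)\,m+H(x,v,D_v u)\,m\ge -w\cdot D_v u$, this reduces to the \emph{testing inequality}
\[
\langle u_0,m_0\rangle+\int_0^T\!\!\int_{\M\times\R^d} w\cdot D_v u+\int_0^T\!\!\int_{\M\times\R^d} H(x,v,D_v u)\,m\ \le\ \int_0^T\!\!\int_{\M\times\R^d}\beta m+\int_{\M\times\R^d}\beta_T m_T,
\]
which is the rigorous form of ``testing \eqref{eq:weak_HJ_test} with $\phi=m$''.

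To prove the testing inequality---the core of the argument---I would regularise $m$. Because mollification in the $(t,x)$ variables commutes with the kinetic transport operator $\partial_t+v\cdot D_x$, a mollified density $m^\varepsilon$ still solves \eqref{eq:continuity_main} exactly; I would then multiply it by a spatial--velocity cut-off $\chi_R\in C^1_c(\M\times\R^d)$ ($0\le\chi_R\le1$, $\chi_R\equiv1$ on $B_R$, $|D_{x,v}\chi_R|\lesssim1/R$) and a time cut-off $\eta_\delta\in C^1([0,T])$ ($\eta_\delta\equiv0$ on $[0,\delta]$, $\eta_\delta\equiv1$ on $[2\delta,T]$), so that (after a standard approximation) the product $\phi:=m^\varepsilon\chi_R\eta_\delta$ is admissible in \eqref{eq:weak_HJ_test}. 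Inserting $\phi$, integrating by parts in $v$ (licit since $D_v u\in L^r_\loc(\cU_{m_0})$, the remaining terms being controlled by $m\in L^q$, $m_T\in L^s$, $|w|^{r'}m^{1-r'}\in L^1$, $w\in L^p$ from Remark~\ref{rem:min_B}, and by $\beta_+\in L^{q'}$, $\beta_-\in L^1_\loc(\cU_{m_0})$, $(u_0)_+\in L^\infty+L^{q'}$, $\int_{\M\times\R^d} m_0(u_0)_-\le C$ from Lemma~\ref{lem:value-ub} and Corollaries~\ref{cor:bounds_u}, \ref{rem:u_0}), and passing to the limit in the order $\varepsilon\to0$ (mollification), then $R\to+\infty$ (the cut-off errors $\int u\,\eta_\delta\,D_v\chi_R\cdot w$ and $\int u\,m\,\eta_\delta\,v\cdot D_x\chi_R$ vanish because $D_{x,v}\chi_R$ is $O(1/R)$ and supported in $\{R\le|(x,v)|\le2R\}$, using the first moment bound $\int_0^T\!\int(1+|v|)m<+\infty$ together with the global $L^\infty_t(L^\infty+L^{q'})_{x,v}$ bound on $u_+$ and the local bounds on $u_-$, $w$), then $\delta\to0$ (the term carrying $\eta_\delta'$ converges to $\langle u_0,m_0\chi_R\rangle$ by the time-regularity / weak-trace construction of Appendix~\ref{app:time_regularity}, then to $\langle u_0,m_0\rangle$ via Definition~\ref{def:u0m0} and Corollary~\ref{rem:u_0}), I would collect exactly the testing inequality; in the process, combining the coercivity $H(x,v,p)\ge\frac1{cr}|p|^r-C_H$ with Young's inequality $-w\cdot D_v u\le\tfrac1r m|D_v u|^r+\tfrac1{r'}|w|^{r'}m^{1-r'}$ forces $m|D_v u|^r\in L^1$, so all the integrals are finite. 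This gives $\widetilde\cA+\cB\ge0$, hence $\inf_{\cK_\cA}\widetilde\cA\ge-\min_{\cK_\cB}\cB$, and together with the first part, the asserted equality.

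I expect the main obstacle to be this last step: making the triple limit $\varepsilon\to0$, $R\to+\infty$, $\delta\to0$ rigorous while $u$ is merely locally integrable. The two features absent from the compact torus setting of \cite{Cardaliaguet2013,Cardaliaguet-Graber,CarGraPorTon} are the non-compactness of $\M\times\R^d$ (forcing an $x$-cut-off when $\M=\R^d$, whose error must be absorbed by the finite first velocity moment of $m$) and the very weak time trace $u_0$---a locally finite signed measure, paired with $m_0$ only through the ad hoc Definition~\ref{def:u0m0}; the regularisation of $m$ has to be chosen carefully (mollifying in $(t,x)$ only) precisely so as not to produce a commutator that would have to be paired against the irregular $u$, since the kinetic transport term cannot be integrated by parts onto $D_x u$.
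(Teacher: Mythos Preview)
Your overall architecture matches the paper's: use Lemma~\ref{lem:duality1} for the inequality $\inf_{\cK_\cA}\widetilde\cA\le -\min_{\cK_\cB}\cB$, and reduce the reverse inequality to the testing inequality $\widetilde\cA(u,\beta,\beta_T)+\cB(m,w)\ge0$ via the pointwise Fenchel--Young inequalities. The difference---and the gap---lies in how you justify the testing inequality.

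Your plan is to mollify $m$ in $(t,x)$ only, arguing that this commutes with $\partial_t+v\cdot D_x$ and therefore avoids any commutator that would have to be paired against the irregular $u$. But this produces an $m^\varepsilon$ that is smooth in $(t,x)$ yet still only $L^q$ in $v$, hence $m^\varepsilon\chi_R\eta_\delta\notin C^1_c$ and is \emph{not} admissible in \eqref{eq:weak_HJ_test}; nor can one extend \eqref{eq:weak_HJ_test} by density to such test functions, since $u_-$ is merely $L^1_\loc(\cU_{m_0})$ and $m^\varepsilon$ is not bounded in $v$. If instead you also mollify in $v$, the commutator $[v\cdot D_x,\chi_\e\ast_v]$ reappears and must be paired against $u$---precisely the obstruction you identified. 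Similarly, your cut-off error ``$\int u\,\eta_\delta\,D_v\chi_R\cdot w^\varepsilon$'' lives on the growing set $\{|v|\sim R\}$; the \emph{local} $L^1$ bound on $u_-$ from Corollary~\ref{cor:bounds_u} is not enough to send this to zero as $R\to\infty$.

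The paper's missing ingredient is a \emph{lower truncation}: replace $u$ by $u_l:=\max\{u,l\}$ (and $\beta$ by $\beta_k:=\max\{\beta,k\}$), which by Lemma~\ref{lem:truncation} still satisfies a Hamilton--Jacobi inequality, and crucially satisfies $u_l\in L^\infty_t(L^\infty+L^{q'})_{x,v}$ \emph{globally} by Lemma~\ref{lem:value-ub}. One may then mollify $m$ in all variables and localise; the resulting commutator and cut-off errors are shown to vanish in $L^1_t(L^1\cap L^q)_{x,v}$ (with a coupled regime $\e=\e(\delta)$, $R=R(\delta)$, $\eta=\eta(\delta)$), and the pairing $\int u_l\,\cE_{\eta,\delta,\e,R}$ is controlled by duality with $L^\infty_t(L^\infty+L^{q'})_{x,v}$. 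After passing to the limit in the regularisation one removes the truncations by sending $l,k\to-\infty$ via monotone convergence; this last step also handles the delicate time trace at $t=0$ without needing the separate $\eta_\delta$ argument you propose.
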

\begin{proof}
For $u \in C^1_b([0,T] \times \M \times \R^d)$ such that $\cA(u)<+\infty$, the triple $(u, -\partial_t u - v \cdot D_x u + H(x,v, D_vu),u_T)$ lies in $\cK_{\cA}$. Thus
\be
\inf_{(u,\beta) \in \cK_\cA} \widetilde \cA(u,\beta,\beta_T) \leq \inf_{u \in C^1_b} \cA(u) .
\ee
By the duality result of Lemma~\ref{lem:duality1},
\be 
\inf_{(u,\beta,\beta_T) \in \cK_\cA} \widetilde \cA(u,\beta,\beta_T) \leq \inf_{u \in C^1_b} \cA(u) = - \min_{(m,w) \in \cK_{\cB}} \cB(m,w) .
\ee
It therefore remains only to prove the reverse inequality. This follows from Lemma~\ref{lem:inequality} below, which states that for all $(u, \beta,\beta_T) \in \cK_\cA$ and $(m,w) \in \cK_\cB$,
\be
\widetilde \cA(u,\beta,\beta_T) \geq - \cB(m,w) .
\ee
Taking the infimum over $(u, \beta,\beta_T) \in \cK_\cA$ and supremum over $(m,w) \in \cK_\cB$ gives
\be
\inf_{(u,\beta,\beta_T) \in \cK_\cA} \widetilde \cA(u,\beta,\beta_T) \geq - \min_{(m,w) \in \cK_\cB} \cB(m,w) 
\ee
as required.
\end{proof}

\begin{lemma} \label{lem:inequality}
Let $(u, \beta,\beta_T) \in \cK_\cA$ and $(m,w) \in \cK_\cB$ such that $\widetilde \cA(u,\beta,\beta_T) ,  \cB(m,w) < + \infty$.
Then
\be
\widetilde \cA(u,\beta,\beta_T) + \cB(m,w) \geq 0 .
\ee
\end{lemma}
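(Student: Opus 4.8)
The plan is to combine the three Fenchel--Young inequalities attached to the conjugate pairs $(\cF,\cF^\ast)$, $(\cG,\cG^\ast)$ and $(H,L)$ with the fact that $(u,\beta,\beta_T)$ is a weak subsolution of the Hamilton--Jacobi inequality, tested against the competitor density $m$. We may assume $\widetilde\cA(u,\beta,\beta_T),\cB(m,w)<+\infty$. Then Remark~\ref{rem:min_B} and the trace discussion in Problem~\ref{prob:density} give $m\in L^q$, $m_T\in L^s$ and $w=Vm$ with $V\in L^{r'}(m\,\dd x\,\dd v\,\dd t)$, and Corollary~\ref{cor:bounds_u} gives $u\in L^1_\loc(\cU_{m_0})$, $D_vu\in L^r_\loc(\cU_{m_0})$, $\beta_-\in L^1_\loc(\cU_{m_0})$ and $-\int_{\M\times\R^d}m_0u_0<+\infty$. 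Pointwise a.e.\ one has
\[
\cF^\ast(x,v,\beta)+\cF(x,v,m)\ge\beta m,\qquad \cG^\ast(\beta_T)+\cG(x,v,m_T)\ge\beta_Tm_T,
\]
and, recalling the convention for $m=0$ and that $w=0$ a.e.\ on $\{m=0\}$ whenever $\cB(m,w)<+\infty$, also $L(x,v,-w/m)\,m+H(x,v,D_vu)\,m\ge-w\cdot D_vu$. Integrating these three inequalities and rearranging, the assertion $\widetilde\cA(u,\beta,\beta_T)+\cB(m,w)\ge0$ follows once we show that $R\ge0$, where
\begin{multline*}
R:=\int_0^T\!\int_{\M\times\R^d}\!\big(\beta m-w\cdot D_vu-mH(x,v,D_vu)\big)\,\dd x\,\dd v\,\dd t\\
+\int_{\M\times\R^d}\beta_Tm_T\,\dd x\,\dd v-\int_{\M\times\R^d}m_0u_0(\dd x\,\dd v).
\end{multline*}

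The quantity $R$ is a rearrangement of the weak Hamilton--Jacobi inequality tested against $m$, in the spirit of the computation in the proof of Lemma~\ref{lem:duality-regular}. Formally, inserting $\phi=m$ into \eqref{eq:weak_HJ_test} in the form that carries the $t=0$ trace $u_0$ (as in the proof of Lemma~\ref{lem:duality-regular}, cf.\ Appendix~\ref{app:time_regularity}), using the continuity equation $\partial_tm+v\cdot D_xm+\diver_vw=0$ to rewrite $\partial_tm+v\cdot D_xm=-\diver_vw$, integrating by parts in $v$ (no boundary contribution on $\R^d$), and using the terminal inequality $u_T\le\beta_T$ against $m_T\ge0$, one obtains
\begin{multline*}
\int_0^T\!\int_{\M\times\R^d}w\cdot D_vu\,\dd x\,\dd v\,\dd t+\int_0^T\!\int_{\M\times\R^d}mH(x,v,D_vu)\,\dd x\,\dd v\,\dd t\\
\le\int_0^T\!\int_{\M\times\R^d}\beta m\,\dd x\,\dd v\,\dd t+\int_{\M\times\R^d}\beta_Tm_T\,\dd x\,\dd v-\int_{\M\times\R^d}m_0u_0(\dd x\,\dd v),
\end{multline*}
which is precisely $R\ge0$.

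Making this rigorous is the main work: $m,w$ are only $L^q$, $L^p$ and $u,D_vu$ only locally integrable on the non-compact set $\cU_{m_0}$, so $m$ is not an admissible test function. I would approximate $(m,w)$ by smooth, compactly supported competitors for the continuity equation. First mollify in $(x,v)$: $m^\varepsilon=\rho_\varepsilon\ast_{x,v}m$, $w^\varepsilon=\rho_\varepsilon\ast_{x,v}w$ solve a mollified continuity equation with a DiPerna--Lions commutator error $\mathsf r^\varepsilon$ associated to the transport field $(v,0)$; since this field is smooth and divergence free, $\mathsf r^\varepsilon\to0$ in $L^1_\loc$. Then introduce a velocity cut-off $\chi_R(v)$, and a time cut-off/mollification near $t=0$ compatible with the trace framework of Appendix~\ref{app:time_regularity} (extending $m$ by $m_0$ for $t<0$ if convenient), so that the resulting function is admissible and one may integrate by parts as in Lemma~\ref{lem:duality-regular}, with every integral finite. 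Finally pass to the limit: as $\varepsilon\to0$ one uses $u\in L^1_\loc$, $D_vu\in L^r_\loc$ on the (compact) support together with $\mathsf r^\varepsilon\to0$; as $R\to+\infty$ one uses the finite first velocity moment $\int_0^T\!\int_{\M\times\R^d}(1+|v|)m<+\infty$ to kill the terms carrying $D_v\chi_R=O(1/R)$; and the one-sided bounds ($H\ge-C_H$; $\cF^\ast,\cG^\ast\ge0$ on negative arguments; $L\ge c_L^{-1}|\cdot|^{r'}-C_L$), the a priori estimates of Section~\ref{sec:3} (in particular $\beta_+\in L^{q'}$, $(\beta_T)_+\in L^{s'}$, $(u_0)_+\in L^\infty+L^{q'}$) and Fatou's lemma control the remaining terms without sign ambiguity --- which, along the way, also shows that the integrals defining $R$ are well defined. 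I expect the cut-off and limiting procedure on the non-compact domain, rather than the integration by parts itself, to be the main obstacle: one must simultaneously handle the velocity decay and the behaviour near $t=0$ and $t=T$ while retaining the precise trace term $-\int_{\M\times\R^d}m_0u_0$.
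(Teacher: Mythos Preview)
Your overall strategy---reduce to an energy inequality $R\ge0$ obtained by testing the Hamilton--Jacobi subsolution against $m$, with $m$ approximated by smooth compactly supported functions---is the same as the paper's. However, there is a genuine gap in the execution: you are missing the truncation of $u$ from below.

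The paper's key preliminary step is to replace $u$ by $u_l:=\max\{u,l\}$ (Lemma~\ref{lem:truncation}), which by Lemma~\ref{lem:value-ub} lies in $L^\infty_t(L^\infty+L^{q'})_{x,v}$, and $\beta$ by $\beta_k:=\max\{\beta,k\}$. This is what makes the pairing with the approximation errors controllable. Indeed, the error term $\cE_{\eta,\delta,\e,R}$ (commutator plus cut-off contributions) is shown to converge to zero in $L^1_t(L^1\cap L^q)_{x,v}$, and this pairs exactly with $u_l\in L^\infty_t(L^\infty+L^{q'})_{x,v}$. Your proposal instead invokes ``$u\in L^1_\loc$ on the (compact) support together with $\mathsf r^\varepsilon\to0$'', but two $L^1_\loc$ functions do not pair, and the commutator for the kinetic transport field does not converge in $L^\infty$ (only in $L^p$ for $p\le q$, governed by the summability of $m$). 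The same obstruction hits the cut-off error $(D_v\chi_R)\cdot w$: the first velocity moment of $m$ does not control $\int u\,(D_v\chi_R)\cdot w$, because it is $w$, not $m$, that appears, and $u$ has no global bound. After truncation, the paper passes to the limits $l\to-\infty$ and $k\to-\infty$ by monotone/dominated convergence, using the one-sided bounds on $\cF^\ast$, $\cG^\ast$.

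Two further points. First, your quantity $R$ is not known a priori to be well defined: $mH(x,v,D_vu)$ and $w\cdot D_vu$ are only $L^1_\loc(\cU_{m_0})$, not globally integrable. The paper avoids this by applying the Fenchel inequality $H(x,v,D_vu)+\tilde a\cdot D_vu\ge -L(x,v,-\tilde a)$ \emph{at the mollified level} (with $\tilde a=\tilde w/\tilde m$), so that only $\int L(\cdot,-\tilde w/\tilde m)\tilde m^{(R)}$ has to be passed to the limit, and this is handled by convexity and dominated convergence. Second, for $\M=\R^d$ you must also localise in $x$; the paper uses the kinetic scaling $\zeta_R\equiv1$ on $\{|x|\le R^2,\ |v|\le R\}$ so that $|v\cdot D_x\zeta_R|\le C/R$, and then chooses $R=R(\delta)$ and $\e=\e(\delta)$ in a coupled way to make all three error contributions vanish simultaneously.
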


In the proof of this lemma we require the following observation regarding the commutator between the operator $v \cdot D_x $ and the operator given by convolution with a fixed function.

\begin{lemma} \label{lem:commutator}
Let $\chi : \R^d \to [0, + \infty)$ be a function such that $(1 + |v|) \chi \in L^1_v(\R^d)$. Let $h \in L^p_v(\R^d)$ for $p \in [1, + \infty]$. Then
\be
v \chi \ast h - \chi \ast (vh) = (v \chi) \ast h .
\ee 
\end{lemma}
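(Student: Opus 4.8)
The plan is to reduce the identity to the pointwise algebraic fact $v = (v-w)+w$ carried out under the convolution integral, the only real content being the justification that every integral in sight converges absolutely for almost every $v$.

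First I would work componentwise, so that it suffices to treat a single scalar multiplier $v_j$, and rewrite the three objects as the (a priori only formally defined) integrals
\begin{align*}
v_j\,(\chi \ast h)(v) &= \int_{\R^d} v_j\, \chi(v-w)\, h(w)\dd w,\\
\big(\chi \ast (v_j h)\big)(v) &= \int_{\R^d} \chi(v-w)\, w_j\, h(w)\dd w,\\
\big((v_j\chi) \ast h\big)(v) &= \int_{\R^d} (v_j-w_j)\,\chi(v-w)\, h(w)\dd w.
\end{align*}

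Next I would establish absolute convergence. Since $\chi \ge 0$ and $(1+|v|)\chi \in L^1(\R^d)$, both $\chi$ and the function $w \mapsto |w|\chi(w)$ lie in $L^1(\R^d)$; combined with $h \in L^p(\R^d)$, Young's inequality gives that $v \mapsto \int_{\R^d} \chi(v-w)|h(w)|\dd w$ and $v \mapsto \int_{\R^d} |v-w|\chi(v-w)|h(w)|\dd w$ both belong to $L^p(\R^d)$, hence are finite for a.e. $v$. For every $v$ in the full-measure set where both are finite, the elementary bound $|w| \le |v-w| + |v|$ yields
\be
\int_{\R^d} \chi(v-w)\, |w|\, |h(w)|\dd w \le \int_{\R^d} |v-w|\chi(v-w)|h(w)|\dd w + |v|\int_{\R^d}\chi(v-w)|h(w)|\dd w < +\infty,
\ee
so $w \mapsto \chi(v-w)\, w_j\, h(w)$ is integrable; the integrands defining $v_j(\chi\ast h)(v)$ and $((v_j\chi)\ast h)(v)$ are dominated respectively by $|v|\,\chi(v-w)|h(w)|$ and $|v-w|\,\chi(v-w)|h(w)|$, hence are integrable as well.

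Finally, for such $v$ I would simply substitute $v_j = (v_j - w_j) + w_j$ in the first integral and split it using linearity of the now absolutely convergent integral, obtaining $v_j(\chi\ast h)(v) = ((v_j\chi)\ast h)(v) + (\chi\ast(v_j h))(v)$ for a.e. $v$; since all three terms lie in $L^p_{\loc}(\R^d)$ this is equivalently an identity of distributions. I do not expect a genuine obstacle: the only point deserving attention is that $vh$ need not belong to $L^p(\R^d)$, so Young's inequality does not apply directly to $\chi\ast(vh)$ — the splitting $|w|\le |v-w|+|v|$ circumvents this, and is the one step worth writing out explicitly.
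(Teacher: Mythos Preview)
Your proof is correct and follows essentially the same approach as the paper's own proof, which also reduces to the pointwise algebraic identity $v = (v-z) + z$ under the convolution integral (written there with the variable $z = v-w$). You are in fact more careful than the paper about justifying the absolute convergence of $\chi\ast(vh)$ via the splitting $|w|\le|v-w|+|v|$, a point the paper glosses over.
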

\begin{proof}
By direct computation, for all $v \in \R^d$,
\begin{align}
v \chi \ast h(v) - \chi \ast (vh) & = v \int_{\R^d} h(v - z) \chi(z) \dd z - \int_{\R^d} (v-z) h(v-z) \chi (z) \dd z \\
& = \int_{\R^d} z \chi (z) h(v-z) \dd z \\
& = (v \chi) \ast h .
\end{align}
\end{proof}

\begin{proof}[Proof of Lemma~\ref{lem:inequality}]
The overall idea of the proof is to use $m$ as a test function in the weak form of the inequality 
\be
-\partial_t u - v\cdot D_x u+H(x,v,D_v u) \leq \beta
\ee
and its terminal condition
$$
u_T\le \b_T.
$$
To make this valid, we must first introduce an approximation procedure.

First, we introduce a lower cut-off on $u$ and $\beta$. Let $l \leq 0$ and define $u_l : = \max \{ u, l \}$. Similarly, for $k \leq 0$, let $\beta_k : = \max \{ \beta, k \}$. Then by Lemma \ref{lem:truncation} we obtain
\be \label{eq:HJ-co2}
\left\{
\begin{array}{rcll}
-\partial_t u_l - v\cdot D_x u_l +H(x,v,D_v u) \one_{\{ u > l \}} &\leq& \beta_k \one_{\{ u > l \}}, & (0,T)\times\M\times\R^d\\[3pt]
(u_T)_l &\le& (\b_T)_l, & \M\times\R^d,
\end{array}
\right.
\ee
in the sense of distributions. By Lemma~\ref{lem:value-ub}, $u_l \in L^\infty_t (L^\infty + L^{q'})_{x,v}$. We emphasize that $k$ and $l$ are taken to be possibly independent at this point.

Next, we approximate $m$ by a function in $C^1_c$, which is then an admissible test function for the Hamilton-Jacobi equation \eqref{eq:HJ-co2}.
We regularize $m$ by convolution with a mollifier. For ease of presentation, it will be convenient to work with the time, space and velocity variables separately.
Fix $\chi \in C^\infty_c(\R^d)$ and define $\chi_\e$ for $\e > 0$ by
\be
\chi_\e \left ( v \right ) : = \e^{-d} \chi \left ( \frac{v}{\e} \right ) .
\ee
For the space variable, consider $\psi \in C^\infty_c(\R^{d})$ and for $\delta > 0$ let
\be
\psi_\delta(x) : = \delta^{-d} \psi \left ( \frac{x}{\delta} \right ) .
\ee
For the time variable, fix $\theta \in C^\infty_c(\R)$ 
and for $\eta > 0$ let
\be
\theta_\eta(t) : = \eta^{-1} \theta \left ( \frac{t}{\eta} \right ) .
\ee
We then define the full mollifier $\varphi$ by
\be
\varphi(t,x,v) = \theta_\eta(t) \psi_\delta(x) \chi_\e(v) .
\ee

Then define the smooth functions
\be
\mt : = \varphi \ast_{t,x,v} m \ \ {\rm{and}}\ \ \wt =\varphi \ast_{t,x,v} w.
\ee
Notice that for the convolution in time, $(m,w)$ needs to be extended. We choose the following extensions. We set $w(t,\cdot,\cdot)=0$ to for $t<0$ and $t>T$. Then, if $t<0$, we set $m(t,\cdot,\cdot)$ to be the solution to the problem
$$
\left\{
\begin{array}{ll}
\partial_t m+ v\cdot D_x m = 0, & {\rm{in}}\ (-\eta,0)\times\M\times\R^d,\\
m(0,\cdot,\cdot) = m_0, & {\rm{in}}\ \M\times\R^d.
\end{array}
\right.
$$
Similarly, for $t>T$ we set $m(t,\cdot,\cdot)$ to be the solution to 
$$
\left\{
\begin{array}{ll}
\partial_t m+ v\cdot D_x m = 0, & {\rm{in}}\ (T,T+\eta)\times\M\times\R^d,\\
m(T,\cdot,\cdot) = m_T, & {\rm{in}}\ \M\times\R^d,
\end{array}
\right.
$$
where $m_T$ is the trace of $m$ in time at $t=T$.

As the final step in the approximation, we localize $\mt$.
As localizers we consider smooth functions $\zeta_R \in C^\infty_c(\M \times \R^d)$ such that 
\be \label{hyp:zeta}
\zeta_R (x,v) = \begin{cases}
1 & |x| \leq R^2, |v| \leq R \\
0 & |x| > 2 R^2, |v| > 2R .
\end{cases} \qquad 
|D_x \zeta_R| \leq \frac{C}{R^2} , |D_v \zeta_R| \leq \frac{C}{R} \, .
\ee
We then define $\mt^{(R)} : = \zeta_R \mt$ and $\wt^{(R)} : = \zeta_R \wt$.

Then $\mt^{(R)}$ satisfies the equation
\be
\partial_t \mt^{(R)} + v \cdot D_x \mt^{(R)} + \diver_v \wt^{(R)} = \cE_{\eta, \delta, \e, R} ,
\ee
where the error term is given by
\be \label{def:error}
\cE_{\eta, \delta, \e, R} : =  \zeta_R \, \left [ v \cdot D_x, \chi_\e \ast_v \right ] \theta_\eta \ast_t \psi_\delta \ast_{x} m + (v \cdot D_x \zeta_R) \mt  + (D_v \zeta_R) \cdot \wt. 
\ee
Here, we use the standard commutator notation $[\Lambda_1,\Lambda_2]f:=\Lambda_1(\Lambda_2 f)-\Lambda_2(\Lambda_1 f)$, where $\Lambda_1,\Lambda_2$ are some  operators acting on the function $f$.

{\bf Convergence of the Error Term.}
We show that the error term $\cE_{\eta, \delta, \e, R}$ defined by \eqref{def:error} converges to zero in the space $L^1_t (L^1 + L^q)_{x,v}$, as $R \to + \infty$ and $\eta,\e,\delta \to 0$, under a certain relationship between these parameters.

For the first term, either for $p=1$ or $p=q$, using the explicit formula for the commutator we estimate
\begin{align}
\| \zeta_R \, \left [ v \cdot D_x, \chi_\e \ast_v \right ] \theta_\eta \ast_t \psi_\delta \ast_{x} m \|_{L^1_t L^p_{x,v}} & \leq \| \left [ v \cdot D_x, \chi_\e \ast_v \right ](\theta_\eta \psi_\delta) \ast_{t,x} m \|_{L^1_t L^p_{x,v}} \\
& \leq \| \left [ v, \chi_\e \ast_v \right ] \theta_\eta \ast_t (D_x \psi_\delta) \ast_{x} m \|_{L^1_t L^p_{x,v}} \\
& \leq \| (v \chi_\e) \ast_v \theta_\eta \ast_t (D_x \psi_\delta) \ast_{x} m \|_{L^1_t L^p_{x,v}} \\
& \leq \| v \chi_\e \|_{L^1} \| \theta_\eta \|_{L^1} \| D_x \psi_\delta \|_{L^1} \| m \|_{L^1_t L^p_{x,v}} \\
& \leq  \e \delta^{-1} \cdot C T^{1/p'} \| m \|_{ L^p_{t,x,v}},
\end{align}
where we have used Lemma \ref{lem:commutator} in the third inequality.
Thus by choosing $\e = \e(\delta)$ sufficiently small with respect to $\delta$, we may ensure that
 \be
 \lim_{\delta \to 0} \sup_{\eta, R} \| \zeta_R \, \left [ v \cdot D_x, \chi_{\e(\delta)} \ast_v \right ] \theta_\eta \ast_t \psi_\delta \ast_{x} m \|_{L^1_t L^p_{x,v}} = 0 .
 \ee

For the second term, observe that for all $R>0$, $|v \cdot D_x \zeta_R| \leq C R^{-1}$ and thus
\be
\| (v \cdot D_x \zeta_R) \mt \|_{L^1_t (L^1 \cap L^q)_{x,v}} \leq C R^{-1} \| \mt \|_{L^1_t(L^1 \cap L^q)_{x,v}} \leq C R^{-1} \| m \|_{L^1_t(L^1 \cap L^q)_{x,v}} \leq C_T R^{-1} \| m \|_{(L^1 \cap L^q)_{t,x,v}} .
 \ee
 It follows that
 \be
\lim_{R \to + \infty} \sup_{\eta, \delta, \e} \| (v \cdot D_x \zeta_R) \mt \|_{L^1_t (L^1 \cap L^q)_{x,v}} = 0.
 \ee

 For the third term, for either $p=1$ or $p=q$ we have
 \begin{align}
 \| (D_v \zeta_R)\cdot \wt \|_{L^1_t L^p_{x,v}} &\leq \frac{C}{R} \| \wt \|_{L^1_t L^p_{x,v}} \\
 & \leq \frac{C}{R}  \| \varphi \|_{L^1_t L^p_{x,v}} \| w \|_{L^1_{t,x,v}} \\
 & \leq \frac{C}{R} \delta^{-d/p'} \e^{-d/p'} \| w \|_{L^1_{t,x,v}}  .
 \end{align}
 Taking $\e = \e(\delta)$ as above, we can then ensure this term converges to zero by choosing $R = R(\delta)$ sufficiently large with respect to $\delta$ and $\e(\delta)$.
 Thus, for this choice of $\e(\delta), R(\delta)$, we have
\be
\lim_{\delta \to 0} \sup_\eta \| (D_v \zeta_R)\cdot \wt \|_{L^1_t L^p_{x,v}} = 0 .
\ee

Altogether, we have found that there exists a regime $R = R(\delta)$ and $\e = \e(\delta)$ such that
\be
\lim_{\delta \to 0} \sup_\eta \| \cE_{\eta, \delta, \e, R} \|_{L^1_t L^p_{x,v}} = 0 .
\ee

{\bf Testing the Equation.}
Using $\mt^{(R)}$ as a test function in the weak form of the equation for $u_l$, one obtains
\begin{align}
&\int_{\M \times \R^d} u_l(0,x,v) \mt^{(R)}(0,x,v) \dd x \dd v - \int_{\M \times \R^d} (\b_T)_l  \mt^{(R)}(T,x,v) \dd x \dd v \\ 
&+ \int_0^T \int_{\M \times \R^d} u_l (\partial_t \mt^{(R)} + v \cdot D_x \mt^{(R)} ) \dd x \dd v \dd t + \int_0^T \int_{\M \times \R^d} \mt^{(R)} H(x,v, D_v u)  \one_{\{u > l\}}  \dd x \dd v \dd t \\
&\leq \int_0^T \int_{\M \times \R^d} \beta_k \mt^{(R)}  \one_{\{u > l\}}  \dd x \dd v \dd t .
\end{align}
Using the equation satisfied by $\mt^{(R)}$, 
we have
\begin{align}\label{ineq:test_error}
&\int_{\M \times \R^d} u_l(0,x,v) \mt^{(R)}(0,x,v) \dd x \dd v - \int_{\M \times \R^d} (\b_T)_l \mt^{(R)}(T,x,v) \dd x \dd v \\ 
&- \int_0^T \int_{\M \times \R^d} u_l \diver_v \wt^{(R)} \dd x \dd v \dd t + \int_0^T \int_{\M \times \R^d} \mt^{(R)} H(x,v, D_v u)  \one_{\{u > l\}}  \dd x \dd v \dd t \\
&\leq \int_0^T \int_{\M \times \R^d} \beta_k \mt^{(R)}  \one_{\{u > l\}}  \dd x \dd v \dd t  - \int_0^T \int_{\M \times \R^d} u_l \, \cE_{\eta,\delta, \e, R} \dd x \dd v \dd t  .
\end{align}

Next, note that $D_v u \in L^r_\loc(\cU_{m_0})$. By the chain rule for Lipschitz functions composed with Sobolev-regular functions,
\be
D_v u_l = D_v u \one_{\{ u > l \}} .
\ee
Thus, using the definition of distributional derivative we may integrate by parts to obtain
\be
- \int_0^T \int_{\M \times \R^d} u_l \, \diver_v \wt^{(R)} \dd x \dd v \dd t = \int_0^T \int_{\M \times \R^d} \wt^{(R)} \cdot D_v u \one_{\{ u > l \}}  \dd x \dd v \dd t .
\ee

Since $\cB(m,w)$ is finite, $w$ is absolutely continuous with respect to $m$. It follows that there exists $\at\in L^{r'}([0,T]\times\M\times\R^d;\tilde m)$ such that
\be
\wt = \at \, \mt .
\ee
Thus
\begin{align}
 &\int_0^T \int_{\M \times \R^d} \left ( \wt^{(R)} \cdot D_v u + \mt^{(R)} H(x,v, D_v u) \right ) \one_{\{u > l\}}  \dd x \dd v \dd t \\
 & \qquad \qquad =  \int_0^T \int_{\M \times \R^d} \left ( \at \cdot D_v u +  H(x,v, D_v u) \right ) \mt^{(R)} \one_{\{u > l\}}  \dd x \dd v \dd t \\
 & \qquad \qquad \geq - \int_0^T \int_{\M \times \R^d} L(x,v, - \at) \mt^{(R)} \one_{\{u > l\}}  \dd x \dd v \dd t .
 \end{align}
 
 Substituting this, we obtain
 \begin{align}\label{ineq:energy-2}
&\int_{\M \times \R^d} u_l(0,x,v) \mt^{(R)}(0,x,v) \dd x \dd v - \int_{\M \times \R^d} (\b_T)_l \mt^{(R)}(T,x,v) \dd x \dd v \\ 
&- \int_0^T \int_{\M \times \R^d} L(x,v, - \at) \mt^{(R)} \one_{\{u > l\}}  \dd x \dd v \dd t \\
&\leq \int_0^T \int_{\M \times \R^d} \beta_k \mt^{(R)}  \one_{\{u > l\}}  \dd x \dd v \dd t  - \int_0^T \int_{\M \times \R^d} u_l \, \cE_{\eta, \delta, \e, R} \dd x \dd v \dd t  .
\end{align}

We have shown above that there exists a regime $R = R(\delta)$ and $\e = \e(\delta)$ such that the final term converges to zero uniformly in $\eta$ as $\delta$ tends to zero, since $u_l \in L^\infty_t (L^\infty + L^{q'})_{x,v}$. We now discuss the convergence of the other terms.

{\bf Boundary Terms.}
We consider the boundary terms at $t = 0,T$. Note that Lemma \ref{lem:value-ub} and Corollary \ref{rem:u_0} yield $u_l(0, \cdot) \in L^{q'} + L^\infty$ (since $(u_0)_+\in L^{\infty}+L^{q'}$ and $u_l(0,\cdot)$ is bounded below), while by Lemma \ref{lem:value-ub} and Corollary \ref{cor:bounds_u} we have $(\b_T)_l \in L^{s'} + L^\infty$

We first show that $\mt^{(R)}_0$ converges to $m_0$ in $L^1 \cap L^q$, and $\mt^{(R)}_T$ converges to $m_T$ in $L^1 \cap L^q$, in the limit as $\delta$ tends to zero for a certain regime $\eta = \eta(\delta)$ and $R = R(\delta), \e = \e(\delta)$ according to the regime already found above.

For $t = 0,T$, we write
\be \label{boundary-decomposition}
\mt^{(R)}_t - m_t = (\mt^{(R)}_t - \zeta_R \psi_\delta \ast_x \chi_\e \ast_v m_t) + \zeta_R (\psi_\delta \ast_x \chi_\e \ast_v m_t - m_t)  + m_t(\zeta_R - 1).
\ee

We first note that $m_0 \in (L^1 \cap L^q)_{x,v}$ by the assumption that it is a bounded probability density, while $m_T \in L^s_{x,v}$ since the energy $\cB(m,w)$ is finite, and $m_T \in L^1_{x,v}$ since the continuity equation conserves mass.

Then, since $|m_t(\zeta_R - 1)| \leq m_t$, if $m_t \in L^p$ (where $p\in\{1,q\}$ or $p\in\{1,s\}$) then by dominated convergence 
\be
\lim_{R \to + \infty} \| m_t(\zeta_R - 1)\|_{L^p_{x,v}} = 0 . 
\ee
Moreover, by continuity of translations in $L^p$,
\be
\lim_{\delta \to 0} \| \psi_\delta \ast_x \chi_{\e(\delta)} \ast_v m_t - m_t \|_{L^p_{x,v}} = 0 .
\ee
Since for all $R > 0$
\be
 \| \zeta_R (\psi_\delta \ast_x \chi_\e \ast_v m_t - m_t) \|_{L^p_{x,v}} \leq  \| \psi_\delta \ast_x \chi_\e \ast_v m_t - m_t \|_{L^p_{x,v}},
\ee
it follows that
\be
\lim_{\delta \to 0} \sup_R \| \zeta_R ( \psi_\delta \ast_x \chi_{\e(\delta)} \ast_v m_t - m_t ) \|_{L^p_{x,v}} = 0 .
\ee
Therefore, the latter two terms of \eqref{boundary-decomposition} converge to zero as $\delta$ tends to zero with $\e = \e(\delta)$ and $R = R(\delta)$ as already specified above, in $(L^1 \cap L^q)_{x,v}$ for $t = 0$ and in $(L^1 \cap L^s)_{x,v}$ for $t = T$.

It remains to estimate the difference $ \psi_\delta \ast_x \chi_\e \ast_v m_t - \tilde m_t$. For any function $f \in L^p_{x,v}$ ($p \in [1, +\infty]$ to be specified),
\be
\int_{\M \times \R^d} \left (  \psi_\delta \ast_x \chi_\e \ast_v m_t - \tilde m_t \right ) \zeta_R f \dd x \dd v = \int_{\M \times \R^d} \left ( m_t - (\theta_\eta \ast_t m)_t \right ) \psi_\delta \ast_x \chi_\e \ast_v (\zeta_R f) \dd x \dd v .
\ee
We use the notation $\tilde f : = \psi_\delta \ast_x \chi_\e \ast_v (\zeta_R f) $. Writing the time convolution explicitly, we obtain
\be
\int_{\M \times \R^d} \left ( m_t - (\theta_\eta \ast_t m)_t \right ) \tilde f \dd x \dd v = \int_{-\infty}^\infty \int_{\M \times \R^d} \theta_\eta(\tau) \left ( m_t - m_{t - \tau} \right ) \tilde f \dd x \dd v \dd \tau.
\ee
Next, we use estimates on $\partial_t m$: 
\be
\langle m_t - m_{t-\tau}, \tilde f\rangle = \int_{t-\tau}^t \langle \partial_t m , \tilde f \rangle_s  \dd s =  \int_{t-\tau}^t \langle m , v \cdot \nabla_x \tilde f \rangle_s +  \langle w , \nabla_v \tilde f \rangle_s  \dd s .
\ee
Then, since $m,w \in L^1$,
\be
|\langle m_t - m_{t-\tau}, \tilde f\rangle | \leq \left ( \int_{t-\tau}^t \| m_s \|_{L^1_{x,v}} + \| w_s \|_{L^1_{x,v}} \dd s  \right ) \left ( \| v \cdot \nabla_x \tilde f \|_{L^\infty} + \| \nabla_v \tilde f \|_{L^\infty} \right ).
\ee
We estimate $\tilde f$:
\begin{align}
 \| v \cdot \nabla_x \tilde f \|_{L^\infty} &=  \| v \cdot \nabla_x \psi_\delta \ast_x \chi_\e \ast_v (\zeta_R f) \|_{L^\infty} \\
 & \leq (R + C\e) \|\nabla_x \psi_\delta \|_{L^{p'}} \| \chi_\e \|_{L^{p'}} \| f \|_{L^p} \\
  & \lesssim (R + C\e) \delta^{-(1 + d/p)} \e^{-d/p} \| f \|_{L^p} 
\end{align}
and similarly
\begin{align}
 \| \nabla_v \tilde f \|_{L^\infty} &=  \|  \psi_\delta \ast_x \nabla_v \chi_\e \ast_v (\zeta_R f) \|_{L^\infty} \\
 & \leq \| \psi_\delta \|_{L^{p'}} \| \nabla_v \chi_\e \|_{L^{p'}} \| f \|_{L^p} \\
  & \lesssim  \delta^{-d/p} \e^{-(1+d/p)} \| f \|_{L^p} .
\end{align}
Thus
\be
|\langle m_t - m_{t-\tau}, \tilde f\rangle | \leq C(\delta, \e, R) \left ( \int_{t-\tau}^t \| m_s \|_{L^1_{x,v}} + \| w_s \|_{L^1_{x,v}} \dd s  \right )  \| f \|_{L^p_{x,v}}.
\ee
Finally
\begin{align}
\left | \int_{\M \times \R^d} \left ( m_t - (\theta_\eta \ast_t m)_t \right ) \tilde f \dd x \dd v \right | & \leq C(\delta, \e, R) \left ( \int_{t-\eta}^{t+\eta} \| m_s \|_{L^1_{x,v}} + \| w_s \|_{L^1_{x,v}} \dd s  \right )  \| f \|_{L^p_{x,v}} \\
& \leq \omega(\eta) \, C(\delta, \e, R) \,\| f \|_{L^p_{x,v}},
\end{align}
where $\lim_{\eta \to 0} \omega(\eta) = 0$. Thus it is possible to choose $\eta = \eta(\delta)$ depending on $\delta, \e(\delta), R(\delta)$ in such a way that
\be \label{eq:eta-constraint}
\lim_{\delta \to 0} \omega(\eta) \, C(\delta, \e, R) = 0, \quad \lim_{\delta \to 0} \eta(\delta) = 0 .
\ee 
We apply this in the case $f = f_i$ for $i = 1,2$, where
\be
(\beta_T)_l= f_1+f_2\ \ {\rm{or}}\ \ u_l(0) = f_1 + f_2, \quad f_1 \in \begin{cases} L^{s'}_{x,v} \text{ if } t = T \\ L^{q'}_{x,v} \text{ if } t = 0 \end{cases} \quad f_2 \in L^\infty_{x,v} .
\ee
Consequently, for $t = 0,T$,
\be
\int_{\M \times \R^d} \tilde m_0^{(R)} u_l(0) \dd x \dd v \to  \int_{\M \times \R^d} m_0 u_l(0) \dd x \dd v,
\ee
and
\be
\int_{\M \times \R^d} \tilde m_T^{(R)} (\b_T)_l \dd x \dd v \to  \int_{\M \times \R^d} m_T (\b_T)_l \dd x \dd v,
\ee
as $\delta \to 0$ with $\eta, R, \e$ chosen to depend on $\delta$ in the manner specified.

Finally, we take the limit $l \to - \infty$. For the term, $t=0$, convergence holds by monotonicity, and the limit is finite since
\be
- \int_{\M \times \R^d} u_0 m_0 \dd x \dd v < + \infty
\ee
by finiteness of $\cA(u,\beta,\b_T)$.
For the term $t = T$, we first note that
\be
\int_{\M \times \R^d} (\b_T)_l m_T(x,v) \dd x \dd v \leq \int_{\M \times \R^d} \cG(m_T) \dd x \dd v + \int_{\M \times \R^d} \cG^\ast((\b_T)_l) \dd x \dd v .
\ee
The second term on the right hand side converges due to the assumption on $\cG$ \eqref{hyp:G-growth}, since the integrand is dominated by
\be
\sup_{u < 0} \cG^\ast(u) \leq C_G \in L^1(\M \times \R^d) .
\ee

{\bf Term involving $\beta m$.}
Since $m \in L^1 \cap L^q$, by standard results on approximation by mollification in $L^p$ spaces we have
\be
\lim_{(\eta, \delta, \e) \to 0, R \to +\infty} \| \mt^{(R)} - m \|_{L^1 \cap L^q} = 0 ,
\ee
and thus the same limit holds with $\eta, \e, R$ chosen to depend on $\delta$ as described above.
Then, since $\beta_k \in L^\infty + L^{q'}$, we deduce that
\be
\lim_{(\delta, \e) \to 0, R \to +\infty}  \int_0^T \int_{\M \times \R^d} \beta_k \mt^{(R)}  \one_{\{u > l\}}  \dd x \dd v \dd t =  \int_0^T \int_{\M \times \R^d} \beta_k m  \one_{\{u > l\}}  \dd x \dd v \dd t .
\ee
By the definition of Fenchel conjugate,
\be
 \int_0^T \int_{\M \times \R^d} \beta_k m  \one_{\{u > l\}}  \dd x \dd v \dd t \leq  \int_0^T \int_{\M \times \R^d} \left ( \cF^\ast(x,v,\beta_k) + \cF(x,v, m) \right )  \one_{\{u > l\}}  \dd x \dd v \dd t .
\ee

We then take the limit $l \to - \infty$. Note $\cF$ and $\cF^\ast$ are both lower bounded by integrable functions (conditions \eqref{hyp:F-growth} and \eqref{eq:Fconj-lb}). Then, by monotonicity,
\begin{align*}
\lim_{l \to - \infty} \int_0^T \int_{\M \times \R^d} \left ( \cF(x,v, m) - \inf_m \cF(x,v,m) \right )  &\one_{\{u > l\}}  \dd x \dd v \dd t\\
& = \int_0^T \int_{\M \times \R^d} \left ( \cF(x,v, m) - \inf_m \cF(x,v,m) \right )   \dd x \dd v \dd t .
\end{align*}
Moreover
\be
\lim_{l \to - \infty} \int_0^T \int_{\M \times \R^d} \inf_m \cF(x,v,m) \one_{\{u > l\}}  \dd x \dd v \dd t = \int_0^T \int_{\M \times \R^d}  \inf_m \cF(x,v,m)   \dd x \dd v \dd t .
\ee
Since the lower bound is integrable and $\cF(\cdot,\cdot, m)$ has finite integral by finiteness of the energy, both of these limits are finite. Thus 
\be
\lim_{l \to -\infty} \int_0^T \int_{\M \times \R^d}  \cF( x,v,m)  \one_{\{u > l\}}  \dd x \dd v \dd t = \int_0^T \int_{\M \times \R^d}  \cF(x,v, m)  \dd x \dd v \dd t .
\ee
A similar argument shows that
\be
\lim_{l \to - - \infty} \int_0^T \int_{\M \times \R^d}   \cF^\ast(x,v,\beta_k)  \one_{\{u > l\}}  \dd x \dd v \dd t =  \int_0^T \int_{\M \times \R^d}   \cF^\ast(x,v,\beta_k)    \dd x \dd v \dd t 
\ee
where the right hand side is finite.

Finally, we consider $k \to - \infty$. Note that $\sup_{\beta \leq 0} \cF^\ast(x,v, \beta) \leq C_F(x,v) \in L^1$ by assumption (see \eqref{eq:Fconj-lb}). Since $\cF^\ast(\beta)$ is a continuous non-decreasing function of $\beta$, as $k$ decreases to negative infinity $\cF^\ast(\cdot,\cdot,\beta_k)$ is decreasing and converges almost everywhere to $\cF^\ast(\cdot,\cdot,\beta)$. Thus we deduce the convergence
\be
\lim_{k \to - \infty} \int_{\{\beta \leq 0 \}}  \cF^\ast(x,v,\beta_k)   \dd x \dd v \dd t = \int_{\{\beta \leq 0 \}}   \cF^\ast(x,v,\beta)    \dd x \dd v \dd t .
\ee
By the bounds \eqref{eq:Fconj-lb}, the right hand side is finite. Moreover, for any $k \leq 0$,
\be
\int_{\{\beta > 0 \}}  \cF^\ast(x,v,\beta_k)   \dd x \dd v \dd t = \int_{\{\beta > 0 \}}   \cF^\ast(x,v,\beta)    \dd x \dd v \dd t .
\ee
Thus we conclude that
\be
\lim_{k \to - \infty} \int_0^T \int_{\M \times \R^d} \cF^\ast(x,v,\beta_k)   \dd x \dd v \dd t = \int_0^T \int_{\M \times \R^d}  \cF^\ast(x,v,\beta)    \dd x \dd v \dd t .
\ee

{\bf Lagrangian Term.}
For the term involving the Lagrangian, we use a similar argument as was used in \cite{Cardaliaguet-Graber}. This argument is based on the joint convexity of $L(x,v,-w/m)m$ as a function of $(m,w)$. In our case we must additionally account for the convergence of the localizer $\zeta_R$.
By convexity, for all $(t,x,v)$, the integrand satisfies the inequality 
\begin{align*}
&L\left(x,v, - \frac{\wt}{\mt}\right) \mt \zeta_R \one_{\{u > l\}}\\ 
&\leq \int_0^T\int_{\M\times\R^d} \theta_\eta(t-t')\psi_\delta(x- x') \chi_\e(v - v') L\left(x,v, -\frac{w(t', x', v')}{m(t', x', v')} \right) m(t', x', v') \dd t' \dd x ' \dd v ' \, \zeta_R  \one_{\{u > l\}} \\
& = \theta_\eta\ast_t\psi_\delta \ast_{x} \chi_\e \ast_v \left[L\left(x,v, -\frac{w}{m}\right) m\right] \zeta_R \one_{\{u > l\}}.
\end{align*}
Then, note that
\begin{multline}
 \theta_\eta\ast_t\psi_\delta \ast_{x} \chi_\e \ast_v \left[L\left(x,v, -\frac{w}{m}\right) m\right](t,x,v) =  \theta_\eta\ast_t\psi_\delta \ast_{x} \chi_\e \ast_v \left[L\left( \cdot , -\frac{w}{m}\right) m\right] (t,x,v) \\
 +  \theta_\eta\ast_t\psi_\delta \ast_{x} \chi_\e \ast_v   \left [L\left(x,v, -\frac{w}{m}\right) m- L\left(\cdot, \cdot , -\frac{w}{m}\right) m \right ] (t,x,v) .
\end{multline}
Then, since $L(\cdot, \cdot , -\frac{w}{m}) m \in L^1((0,T)\times\M\times\R^d)$, $\theta_\eta\ast_t\psi_\delta \ast_{x} \chi_\e \ast_v [L(\cdot, \cdot , \frac{w}{m}) m] $ converges to $L(\cdot, \cdot , -\frac{w}{m}) m$ in $L^1((0,T)\times\M\times\R^d)$ as $\eta,\delta, \e$ tend to zero. Since $0 \leq \zeta_R \leq 1$,
\be
\lim_{\eta,\delta, \e \to 0} \sup_R \left\| \left \{ \theta_\eta\ast_t\psi_\delta \ast_{x} \chi_\e \ast_v \left[L\left(\cdot, \cdot , -\frac{w}{m}\right) m\right] - L\left(\cdot, \cdot , -\frac{w}{m}\right) m \right \} \zeta_R \one_{\{u > l\}} \right\|_{L^1} = 0 .
\ee
It follows that if we take the regime $R = R(\delta), \e = \e(\delta),\eta=\eta(\delta)$ established above, then
\be
\lim_{\delta\to 0} \left\| \left \{ \theta_\eta\ast_t\psi_\delta \ast_{x} \chi_\e \ast_v \left[L\left(\cdot, \cdot , -\frac{w}{m}\right) m\right] - L\left(\cdot, \cdot , -\frac{w}{m}\right) m \right \} \zeta_R \one_{\{u > l\}} \right\|_{L^1} = 0 .
\ee
We stay with this regime and consider the remaining term
\be
\int_0^T\int_{\M\times\R^d} \theta_\eta\ast_t\psi_\delta \ast_{x} \chi_\e \ast_v   \left \{L\left(x,v, -\frac{w}{m}\right) m- L\left(\cdot, \cdot , -\frac{w}{m}\right) m \right \} (t,x,v) \zeta_R(x,v) \one_{\{u > l\}} \dd x \dd v \dd t .
\ee
The integrand converges to zero almost everywhere: note then that
\be
\left| \theta_\eta\ast_t\psi_\delta \ast_{x} \chi_\e \ast_v   \left \{L\left(x,v, -\frac{w}{m}\right) m- L\left( \cdot,\cdot , -\frac{w}{m}\right) m \right \} \zeta_R \one_{\{u > l\}} \right| \leq \theta_\eta\ast_t\psi_\delta \ast_{x} \chi_\e \ast_v \left [ \left ( C + \frac{|w|^{r'}}{m^{r'}} \right ) m \right ] .
\ee
The right hand side converges in $L^1$ to $Cm + |w|^{r'} m^{1- r'}$ as $\delta$ tends to zero. Thus by dominated convergence we conclude that
\be
\lim_{\delta \to 0} \int_0^T \int_{\M \times \R^d} L \left (x,v, - \frac{\wt}{\mt} \right) \mt \zeta_R \one_{\{u > l\}}  \dd x \dd v \dd t \leq \int_0^T \int_{\M \times \R^d} L\left(x,v, - \frac{w}{m}\right) m \one_{\{u > l\}}  \dd x \dd v \dd t.
\ee
The reverse inequality follows from Fatou's lemma. Thus
\be
\lim_{\delta \to 0} \int_0^T \int_{\M \times \R^d} L \left (x,v, - \frac{\wt}{\mt} \right) \mt \zeta_R \one_{\{u > l\}}  \dd x \dd v \dd t = \int_0^T \int_{\M \times \R^d} L\left(x,v, - \frac{w}{m}\right) m \one_{\{u > l\}}  \dd x \dd v \dd t.
\ee

Finally, we take the limit $l \to - \infty$. Since $L\left(x,v, - \frac{w}{m}\right) m \in L^1$, in the limit we obtain
\be
\int_0^T \int_{\M \times \R^d} L\left(x,v, - \frac{w}{m}\right) m  \dd x \dd v \dd t .
\ee

{\bf Conclusion.}
From the discussion above, we have obtained
 \begin{align}
&\int_{\M \times \R^d} u_0 m_0 \dd x \dd v - \int_{\M \times \R^d} \cG(m_T) \dd x \dd v - \int_{\M \times \R^d} \cG^\ast(u_T) \dd x \dd v - \int_0^T \int_{\M \times \R^d} L \left (x,v, - \frac{w}{m} \right) m  \dd x \dd v \dd t \\
&\leq  \int_0^T \int_{\M \times \R^d} \left ( \cF^\ast(x,v,\beta) + \cF(x,v, m) \right )  \dd x \dd v \dd t  ,
\end{align}
where all terms are finite. Rearranging this inequality, we obtain the statement $\widetilde \cA(u,\beta) + \cB(m,w) \geq 0$.
\end{proof} 

\begin{corollary}\label{cor:energy_ineq}
Let $(u,\b,\b_T)\in\cK_\cA$ and $(m,w)\in\cK_\cB$ be such that $\widetilde\cA(u,\b,\b_T)<+\infty$ and $\cB(m,w)<+\infty$.

Then
$$\b_- m\in L^1((0,T)\times\M\times\R^d)\ \  {\rm{and}}\ \ (\b_T)_-m_T\in L^1(\M\times\R^d).$$
Moreover, for almost all $t \in [0,T]$,
\begin{align}\label{ineq:energy_limit-tT}
\int_{\M\times\R^d}\left[u_tm_t-\b_Tm_T\right]\dd x\dd v-\int_t^T\int_{\M\times\R^d}L\left(x,v,-\frac{w}{m}\right)m\dd x\dd v\dd t\le \int_t^T\int_{\M\times\R^d}\b m\dd x\dd v\dd t ,
\end{align}
and
\begin{align}\label{ineq:energy_limit-0t}
\int_{\M\times\R^d}\left[u_0m_0-u_tm_t\right]\dd x\dd v-\int_0^t\int_{\M\times\R^d}L\left(x,v,-\frac{w}{m}\right)m\dd x\dd v\dd t\le \int_0^t\int_{\M\times\R^d}\b m\dd x\dd v\dd t.
\end{align}

In particular,
\begin{align}\label{ineq:energy_limit}
\int_{\M\times\R^d}\left[u_0m_0-\b_Tm_T\right]\dd x\dd v-\int_0^T\int_{\M\times\R^d}L\left(x,v,-\frac{w}{m}\right)m\dd x\dd v\dd t\le \int_0^T\int_{\M\times\R^d}\b m\dd x\dd v\dd t.
\end{align}
\end{corollary}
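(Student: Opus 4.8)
The plan is to re-run the approximation scheme from the proof of Lemma~\ref{lem:inequality} with two changes. First, rather than integrating over all of $(0,T)$, I would test the (truncated) Hamilton--Jacobi inequality \eqref{eq:HJ-co2} against the regularised, localised density $\mt^{(R)}$ over a sub-interval $[t_1,t_2]\subseteq(0,T]$ --- or over $[0,t_2]$ --- using the integrated weak formulation that the time-trace theory of Appendix~\ref{app:time_regularity} provides (exactly as in Lemma~\ref{lem:duality-regular}, inequality \eqref{ineq:energy}). Second, where Lemma~\ref{lem:inequality} bounds $\int\b_k\mt^{(R)}\one_{\{u>l\}}$ by the Fenchel pair $\cF^\ast(\b_k)+\cF(m)$, I would instead keep the term $\int\b m$ and pass to the limit in it directly; this is only legitimate once $\b_-m\in L^1$ is known, so that integrability statement is proved first.

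\textbf{Step 1: $\b_-m\in L^1$ and $(\b_T)_-m_T\in L^1$.} I would start from inequality \eqref{ineq:energy-2}, after passing $\delta\to0$ along the regime $\eta=\eta(\delta)$, $\e=\e(\delta)$, $R=R(\delta)$ already constructed in the proof of Lemma~\ref{lem:inequality} (so that the error term vanishes, $\mt^{(R)}_0\to m_0$, $\mt^{(R)}_T\to m_T$ in $L^1\cap L^q$, and the Lagrangian term converges), but keeping $l,k\le0$ fixed. This yields
\begin{equation*}
\int_{\M\times\R^d}u_l(0)m_0\dd x\dd v-\int_{\M\times\R^d}(\b_T)_l m_T\dd x\dd v-\int_0^T\!\int_{\M\times\R^d}L\Big(x,v,-\tfrac{w}{m}\Big)m\,\one_{\{u>l\}}\dd x\dd v\dd t\le\int_0^T\!\int_{\M\times\R^d}\b_k m\,\one_{\{u>l\}}\dd x\dd v\dd t.
\end{equation*}
Writing $\b_k=\b_+-(\b_k)_-$ (valid since $(\b_k)_+=\b_+$ for $k\le0$) and isolating $\int_0^T\!\int_{\M\times\R^d}(\b_k)_- m\,\one_{\{u>l\}}$, the resulting right-hand side is bounded uniformly in $l,k\le0$ by a constant depending only on $\|\b_+\|_{L^{q'}}$, $\|m\|_{L^q}$, $\|(\b_T)_+\|_{L^{s'}}$, $\|m_T\|_{L^s}$, $\int m_0(u_0)_-$, $\int\int L(x,v,-w/m)m$, $C_L$ and $T$; here I use $-\int u_l(0)m_0\le\int m_0(u_0)_-$ by Corollary~\ref{rem:u_0}(ii), $(\b_T)_l\le(\b_T)_+$ together with $m_T\in L^s$ (Remark~\ref{rem:min_B}), and $L(x,v,-w/m)m\ge-C_Lm$ so that $L(x,v,-w/m)m\,\one_{\{u>l\}}\le L(x,v,-w/m)m+C_Lm$. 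Monotone convergence as $k\to-\infty$ (so $(\b_k)_-\uparrow\b_-$) and then $l\to-\infty$ (so $\one_{\{u>l\}}\uparrow1$) gives $\b_-m\in L^1$. Isolating instead $-\int(\b_T)_l m_T$ in the same inequality and using $\int\int\b_k m\,\one\le\|\b_+\|_{L^{q'}}\|m\|_{L^q}$ gives $\int\min\{(\b_T)_-,|l|\}m_T\le C$ uniformly in $l$, and monotone convergence in $l$ yields $(\b_T)_-m_T\in L^1$.

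\textbf{Step 2: the localised inequalities.} Fix $t$ in the full-measure set of times for which $m_t\in L^q(\M\times\R^d)$, the weak trace $u_t$ is defined with the properties of Appendix~\ref{app:time_regularity}, and $\mt^{(R)}_t\to m_t$ in $(L^1\cap L^q)_{x,v}$ (the last being the time-slice version of the computation already carried out at $t=0,T$ in Lemma~\ref{lem:inequality}, which only uses $m_t\in L^q$). Repeating the computation of that lemma verbatim --- truncations $u_l,\b_k$ (Lemma~\ref{lem:truncation}), mollification, localisation by $\zeta_R$, the integration by parts in $v$, and the inequality $\at\cdot D_vu+H(x,v,D_vu)\ge-L(x,v,-\at)$ --- but now on $[t,T]$ with the integrated weak form, and treating the boundary term at $T$ via $u_T\le\b_T$ together with $(\b_T)_-m_T\in L^1$ from Step~1, I would reach, after $\delta\to0$,
\begin{equation*}
\int_{\M\times\R^d}\big[(u_l)_t m_t-(\b_T)_l m_T\big]\dd x\dd v-\int_t^T\!\int_{\M\times\R^d}L\Big(x,v,-\tfrac{w}{m}\Big)m\,\one_{\{u>l\}}\dd x\dd v\dd t\le\int_t^T\!\int_{\M\times\R^d}\b_k m\,\one_{\{u>l\}}\dd x\dd v\dd t.
\end{equation*}
Then $l\to-\infty$: the term $\int(u_l)_t m_t$ decreases (dominated above by $(u_t)_+m_t\in L^1$) and the $L$-term converges as in Lemma~\ref{lem:inequality}, while $\int\int\b_k m\,\one_{\{u>l\}}\to\int\int\b_k m$ by dominated convergence since $|\b_k m|\le|\b|m\in L^1$ by Step~1; finally $k\to-\infty$ gives $\int\int\b_k m\to\int\int\b m$ and $(\b_T)_l m_T\to\b_T m_T$, again by dominated convergence. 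This is \eqref{ineq:energy_limit-tT}; \eqref{ineq:energy_limit-0t} follows identically on $[0,t]$, the boundary term at $0$ being treated exactly as in Lemma~\ref{lem:inequality} (in particular $\int u_l(0)m_0\downarrow\int u_0m_0>-\infty$ by Corollary~\ref{rem:u_0}). Adding \eqref{ineq:energy_limit-0t} and \eqref{ineq:energy_limit-tT} at a $t$ where both hold and at which $\int u_t m_t$ is finite --- which \eqref{ineq:energy_limit-0t} forces, since it bounds $\int u_t m_t$ from below, combined with $\int(u_t)_+m_t<\infty$ --- the terms $\int u_t m_t$ cancel and \eqref{ineq:energy_limit} follows.

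\textbf{Expected main obstacle.} The delicate point is the intermediate-time boundary term $\int u_t m_t$. Unlike the endpoint traces, it carries no a priori control; one can only assert it is integrable a posteriori, and only for a.e.\ $t$, which is why the whole argument must be run along the narrowly continuous representative of $m$ and restricted to the full-measure set of times where $\|m_t\|_{L^q}<\infty$ and where $\mt^{(R)}_t\to m_t$ strongly enough to pass to the limit in the product $u_t\mt^{(R)}_t$. Keeping the order of limits straight ($\delta\to0$, then $l\to-\infty$, then $k\to-\infty$) and checking the appropriate monotonicity or domination at each stage is the bulk of the work; the genuinely new analytic input beyond Lemma~\ref{lem:inequality} is Step~1 and the verification that $\int\b m$ survives the limiting procedure.
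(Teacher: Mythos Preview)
Your proposal is correct and follows essentially the same approach as the paper: both re-run the approximation scheme of Lemma~\ref{lem:inequality}, first isolating $(\b_k)_-m\one_{\{u>l\}}$ and $((\b_T)_-)_l m_T$ in the $\delta\to0$ limit of \eqref{ineq:energy-2} and using monotone convergence to obtain the $L^1$ statements, then treating the intermediate time $t$ via $u_l(t)\in L^{q'}+L^\infty$, $m_t\in L^1\cap L^q$ and the boundary-term argument already developed for $t=0,T$. The only organisational difference is that the paper obtains \eqref{ineq:energy_limit} directly as the full-interval instance of Step~1 (rather than by adding the two half-interval inequalities), and notes that finiteness of $\int u_t m_t$ follows because all other terms in \eqref{ineq:energy_limit-0t} are finite; your route via addition is equivalent.
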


\begin{proof}
This is a consequence of the proof of Lemma \ref{lem:inequality} and in particular the inequality \eqref{ineq:energy-2}. First, let us show the first part of the statement, i.e. that $\b_- m\in L^1((0,T)\times\M\times\R^d)\ \  {\rm{and}}\ \ (\b_T)_-m_T\in L^1(\M\times\R^d)$.

As in the mentioned proof, let us first pass to the limit with $\eta,\d,\e,R$ in the inequality \eqref{ineq:energy-2}. Then, we pass to the limit as $l\to-\infty$ and $k\to-\infty$ the remaining terms.

All the terms, except the ones involving $((\b_T)_-)_l m_T$ and $ (\beta_-)_k m \one_{\{u > l\}}$ pass to the limit, as in the proof. 
After rearranging, we also find that both $\int_{\M \times \R^d} ((\b_T)_-)_l m_T \dd x \dd v$ and $\int_0^T \int_{\M \times \R^d} (\beta_-)_k m \one_{\{u > l\}}  \dd x \dd v \dd t$ are uniformly bounded, independently of $l$ and $k$. Therefore, the monotone convergence theorem yields
$$
\lim_{l\to-\infty}\int_{\M \times \R^d} ((\b_T)_-)_l m_T \dd x \dd v =  \int_{\M \times \R^d} (\b_T)_- m_T \dd x \dd v
$$
and
$$
\lim_{l\to-\infty}\lim_{k\to-\infty}\int_0^T \int_{\M \times \R^d} (\beta_-)_k m \one_{\{u > l\}}  \dd x \dd v \dd t = \int_0^T \int_{\M \times \R^d} \beta_- m  \dd x \dd v \dd t.
$$
The summability results follow, and so does \eqref{ineq:energy_limit}.

For the case of general $t\in[0,T]$, we begin by testing the equation to find that, for example,
 \begin{align}
&\int_{\M \times \R^d} u_l(t,x,v) \mt^{(R)}(t,x,v) \dd x \dd v - \int_{\M \times \R^d} (\b_T)_l \mt^{(R)}(T,x,v) \dd x \dd v \\ 
&- \int_t^T \int_{\M \times \R^d} L(x,v, - \at) \mt^{(R)} \one_{\{u > l\}}  \dd x \dd v \dd t \\
&\leq \int_t^T \int_{\M \times \R^d} \beta_k \mt^{(R)}  \one_{\{u > l\}}  \dd x \dd v \dd t  - \int_t^T \int_{\M \times \R^d} u_l \, \cE_{\eta, \delta, \e, R} \dd x \dd v \dd t  .
\end{align}
We note that we are referring to the version of $u_l$ that is weakly right continuous in time (see Appendix~\ref{app:time_regularity}). The only term that requires attention is
\be
\int_{\M \times \R^d} u_l(t,x,v) \mt^{(R)}(t,x,v) \dd x \dd v .
\ee
For almost all $t\in[0,T]$, $u_l(t) \in (L^{q'} + L^\infty)(\M\times\R^d)$ and $m_t \in L^1 \cap L^q$. Thus the arguments for the boundary terms in Lemma~\ref{lem:inequality} show that
\be
\lim_{\delta \to 0} \int_{\M \times \R^d} u_l(t,x,v) \mt^{(R)}(t,x,v) \dd x \dd v = \int_{\M \times \R^d} u_l(t,x,v) m(t,x,v) \dd x \dd v .
\ee
The limit $l \to - \infty$ is then taken by monotone convergence, noting that $(u_t)_+ m_t \in L^1(\M\times\R^d)$. Note that the argument for the case \eqref{ineq:energy_limit-0t} shows that limit is not negative infinity, since all other terms have finite limits.

\end{proof}

\begin{corollary}\label{cor:energy_bound}
Let $(u,\b,\b_T)\in\cK_\cA$ and $(m,w)\in\cK_\cB$ be such that $\widetilde\cA(u,\b,\b_T)<+\infty$ and $\cB(m,w)<+\infty$. Then
\begin{enumerate}
\item $m|D_v u|^{r}$ is uniformly bounded in $L^1((0,T)\times\M\times\R^d)$, by a constant that depends only on the data and $\widetilde\cA(u,\b,\b_T)$ and $\cB(m,w)$. 
\item The following estimate holds:
\begin{align}
&\int_0^T \int_{\M \times \R^d} m H(x,v, D_v u) + m L \left (x,v, - \frac{w}{m} \right ) + D_v u \cdot w \, \dd x \dd v \dd t\\
& \qquad \le \int_{\M \times \R^d} \b_Tm_T \dd x \dd v - \int_{\M \times \R^d} u_0 m_0 \dd x \dd v + \int_0^T \int_{\M \times \R^d} \beta m \dd x \dd v \dd t \\
& \qquad + \int_0^T \int_{\M \times \R^d} m L \left (x,v, - \frac{w}{m} \right )  \dd x \dd v \dd t . 
\end{align} 
\end{enumerate}
\end{corollary}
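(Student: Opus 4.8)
The plan is to revisit the proof of Lemma~\ref{lem:inequality} and carry it out more economically, keeping the Fenchel--Young defect $mH(x,v,D_vu)+mL(x,v,-w/m)+D_vu\cdot w\ge 0$ rather than discarding it. I start from the inequality obtained there after using the localized mollification $\mt^{(R)}$ as a test function in the weak Hamilton--Jacobi inequality for $u_l$ and integrating by parts in $v$, but \emph{before} the Fenchel--Young step, namely
\begin{align*}
\int_{\M\times\R^d} u_l(0)\mt^{(R)}(0) - \int_{\M\times\R^d}(\b_T)_l\mt^{(R)}(T) &+ \int_0^T\!\!\int_{\M\times\R^d}\big(\wt^{(R)}\cdot D_vu + \mt^{(R)} H(x,v,D_vu)\big)\one_{\{u>l\}}\\
&\le \int_0^T\!\!\int_{\M\times\R^d}\beta_k\mt^{(R)}\one_{\{u>l\}} - \int_0^T\!\!\int_{\M\times\R^d} u_l\,\cE_{\eta,\delta,\e,R}.
\end{align*}
Adding $\int_0^T\int_{\M\times\R^d}\mt^{(R)}L(x,v,-\wt^{(R)}/\mt^{(R)})\one_{\{u>l\}}$ to both sides makes the left-hand integrand equal to $\mt^{(R)}$ times a Fenchel--Young defect, hence $\ge 0$ pointwise (with the convention used for $\cB$ when $\mt^{(R)}=0$; recall $\mt^{(R)}=\zeta_R\mt$, $\wt^{(R)}=\zeta_R\wt$, so the quotient is $\wt/\mt$).

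Next I pass to the limit along the same regime $R=R(\delta)$, $\e=\e(\delta)$, $\eta=\eta(\delta)$ as in Lemma~\ref{lem:inequality}, extracting a subsequence along which $\mt^{(R)}\to m$ and $\wt^{(R)}\to w$ a.e.\ (also using the uniform domination $|\wt^{(R)}|^{r'}(\mt^{(R)})^{1-r'}\le\varphi\ast(|w|^{r'}m^{1-r'})$ to control the Lagrangian piece, in particular on $\{m=0\}$). All terms already treated in Lemma~\ref{lem:inequality} converge exactly as there: the boundary terms to $\int u_l(0)m_0$ and $\int(\b_T)_l m_T$; the term $\int\beta_k\mt^{(R)}\one_{\{u>l\}}$ to $\int\beta_k m\one_{\{u>l\}}$; the added Lagrangian term, by joint convexity of $(m,w)\mapsto L(\cdot,-w/m)m$ and mollification, to $\int mL(\cdot,-w/m)\one_{\{u>l\}}$; and $\int u_l\,\cE_{\eta,\delta,\e,R}\to 0$. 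For the nonnegative left-hand integrand I only claim lower semicontinuity, which follows from Fatou's lemma and the a.e.\ convergence above. This yields, for every fixed $l,k\le 0$, the same inequality with $\mt^{(R)},\wt^{(R)}$ replaced by $m,w$ and both $\mt^{(R)}L$ terms replaced by $mL(\cdot,-w/m)\one_{\{u>l\}}$.

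Now I let $l\to-\infty$ and then $k\to-\infty$. By monotone convergence $\int u_l(0)m_0\to\int u_0m_0$, finite since $\widetilde\cA(u,\beta,\b_T)<+\infty$; by monotone convergence $\int(\b_T)_lm_T\to\int\b_Tm_T$, finite because $(\b_T)_-m_T\in L^1$ (Corollary~\ref{cor:energy_ineq}); by dominated convergence $\int\beta_km\one_{\{u>l\}}\to\int\beta m$ and $\int mL(\cdot,-w/m)\one_{\{u>l\}}\to\int mL(\cdot,-w/m)$, with dominating functions $\beta_+m+\beta_-m\in L^1$ (Corollary~\ref{cor:energy_ineq}) and $c_L|w|^{r'}m^{1-r'}+C_Lm\in L^1$ (Remark~\ref{rem:min_B}); and the nonnegative integrand $w\cdot D_vu+mH+mL(\cdot,-w/m)$ has an integral passing to the limit monotonically, a priori in $[0,+\infty]$ but in fact finite because the right-hand side is. Rearranging is exactly the estimate of part~(2). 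For part~(1), subtract the common finite quantity $\int_0^T\int mL(\cdot,-w/m)$ from both sides of~(2) to get $\int_0^T\int\big(mH(x,v,D_vu)+D_vu\cdot w\big)\le\cC:=\int\b_Tm_T-\int u_0m_0+\int_0^T\int\beta m$; here $\cC$ is bounded by a constant depending only on the data, $\widetilde\cA(u,\beta,\b_T)$ and $\cB(m,w)$, using Lemma~\ref{lem:value-ub}(ii) to bound $\|\beta_+\|_{L^{q'}}$ and $\|(\b_T)_+\|_{L^{s'}}$, Corollary~\ref{rem:u_0}(ii) to bound $\int m_0(u_0)_-$, and Remark~\ref{rem:min_B} to bound $\|m\|_{L^q}$, $\|m_T\|_{L^s}$. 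Then $mH\ge\frac1{cr}m|D_vu|^r-C_Hm$ and Young's inequality give $D_vu\cdot w=m\,\at\cdot D_vu\ge-\frac1{2cr}m|D_vu|^r-C(c,r)m|\at|^{r'}$, so that $\frac1{2cr}\|m|D_vu|^r\|_{L^1}\le\cC+C_H\|m_0\|_{L^1}+C(c,r)\int|w|^{r'}m^{1-r'}\le C(\mathrm{data},\widetilde\cA,\cB)$, which is part~(1).

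The new difficulty relative to Lemma~\ref{lem:inequality} is that $\wt^{(R)}\cdot D_vu$ is not, on its own, controlled in a space dual to $D_vu\in L^r_\loc(\cU_{m_0})$; it must stay bundled with $\mt^{(R)}H$ and $\mt^{(R)}L(\cdot,-\wt^{(R)}/\mt^{(R)})$ into a single nonnegative quantity so that Fatou's lemma applies, which forces one to work with a.e.\ convergence of the mollifications (available only along subsequences) and to deal with $\{m=0\}$, where the quotient $\wt^{(R)}/\mt^{(R)}$ is delicate. One must likewise check that the cancellation of the $\int mL$ terms and the monotone passages $l,k\to-\infty$ never create an indeterminate $\infty-\infty$, which is precisely why the Fenchel defect is kept bundled until the end.
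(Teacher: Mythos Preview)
Your proof is correct and follows essentially the same route as the paper's: both start from the testing inequality \eqref{ineq:test_error} from Lemma~\ref{lem:inequality}, add the Lagrangian term to produce a nonnegative Fenchel--Young defect on the left, apply Fatou there while passing to the limit in the mollification parameters exactly as in Lemma~\ref{lem:inequality}, and then send $l,k\to-\infty$ using Corollary~\ref{cor:energy_ineq}. The one organisational difference is that the paper proves part~(1) first---applying Young's inequality to $-\int D_vu\cdot\wt^{(R)}\one_{\{u>l\}}$ at the regularized level, dropping the negative parts of $\beta$ and $\b_T$, and only then passing to the limit---whereas you establish part~(2) first and then deduce~(1) by subtracting the finite $\int mL(\cdot,-w/m)$ and applying Young at the limit; both orderings are valid, and your version is marginally more economical since it avoids running the limit twice.
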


\begin{proof}
This is a consequence of \eqref{ineq:test_error}. Using the same notation as in the proof of Lemma \ref{lem:inequality}, we rewrite \eqref{ineq:test_error} as
\begin{align}
&\int_0^T \int_{\M \times \R^d} \mt^{(R)} H(x,v, D_v u)  \one_{\{u > l\}}  \dd x \dd v \dd t\\
&\le \int_{\M \times \R^d} (\b_T)_l \mt^{(R)}(T,x,v) \dd x \dd v - \int_{\M \times \R^d} u_l(0,x,v) \mt^{(R)}(0,x,v) \dd x \dd v\\ 
&+ \int_0^T \int_{\M \times \R^d} \beta_k \mt^{(R)}  \one_{\{u > l\}}  \dd x \dd v \dd t -  \int_0^T \int_{\M \times \R^d} D_v u_l \cdot \wt^{(R)}\one_{\{u > l\}}  \dd x \dd v \dd t\\
&- \int_0^T \int_{\M \times \R^d} u_l \, \cE_{\eta,\delta, \e, R} \dd x \dd v \dd t \\
&\le \int_{\M \times \R^d} (\b_T)_+ \mt^{(R)}(T,x,v) \dd x \dd v + \int_{\M \times \R^d} [u_l(0,x,v)]_- \mt^{(R)}(0,x,v) \dd x \dd v\\ 
&+ \int_0^T \int_{\M \times \R^d} \beta_+ \mt^{(R)} \dd x \dd v \dd t -  \int_0^T \int_{\M \times \R^d} D_v u_l \cdot \wt^{(R)}\one_{\{u > l\}}  \dd x \dd v \dd t\\
&- \int_0^T \int_{\M \times \R^d} u_l \, \cE_{\eta,\delta, \e, R} \dd x \dd v \dd t.
\end{align}
Let us observe that for some $\theta>0$ parameter that we choose later, Young's inequality yields
\begin{align*}
&-  \int_0^T \int_{\M \times \R^d} D_v u_l \cdot \wt^{(R)}\one_{\{u > l\}}  \dd x \dd v \dd t\\ 
&= -  \int_0^T \int_{\M \times \R^d} \theta (\mt^{(R)})^{\frac{1}{r}}D_v u_l \cdot \wt^{(R)}\theta^{-1}(\mt^{(R)})^{-\frac{1}{r}}\one_{\{u > l\}}  \dd x \dd v \dd t\\
&\le\frac{1}{r}\theta^{r}\int_0^T \int_{\M \times \R^d}\mt^{(R)}|D_v u_l|^r \one_{\{u > l\}}  \dd x \dd v \dd t + \frac{1}{r'}\theta^{-r'} \int_0^T \int_{\M \times \R^d}|\wt^{(R)}|^{r'}(\mt^{(R)})^{-\frac{r'}{r}}\one_{\{u > l\}}  \dd x \dd v \dd t.
\end{align*}
We notice that $-\frac{r'}{r}=1-r'$. Thus, by using the growth condition \eqref{hyp:H} on the Hamiltonian and choosing $\theta$ appropriately, we can conclude that there exists a constant $C>0$ (independent of the parameters $l,k,R,\eta,\e,\d$), such that after passing to the limit with $R,\eta,\e,\d$, as in the proof of Lemma \ref{lem:inequality}, we obtain
\begin{align*}
\int_0^T \int_{\M \times \R^d} m |D_v u_l|^r \one_{\{u > l\}}  \dd x \dd v \dd t&\le C +\int_{\M \times \R^d} (\b_T)_+ m(T,x,v) \dd x \dd v\\
&+ \int_{\M \times \R^d} [u_l(0,x,v)]_- m_0(x,v)\dd x \dd v\\ 
&+ \int_0^T \int_{\M \times \R^d} \beta_+ m \dd x \dd v \dd t + C \int_0^T \int_{\M \times \R^d} |w|^{r'}m^{1-r'}  \dd x \dd v \dd t.
\end{align*}
Since the right hand side of this inequality is uniformly bounded, independently of $l$ (by Lemma \ref{lem:value-ub}(ii), Corollary \ref{rem:u_0}(ii) and Remark \ref{rem:min_B}), the result follows by Fatou's lemma by sending $l\to-\infty$.

Using \eqref{ineq:test_error}, we have
\begin{align}
&\int_0^T \int_{\M \times \R^d} \mt^{(R)} H(x,v, D_v u)  \one_{\{u > l\}}  \dd x \dd v \dd t + \int_0^T \int_{\M \times \R^d} \mt^{(R)} L \left (x,v, - \frac{\wt}{\mt} \right )  \one_{\{u > l\}}  \dd x \dd v \dd t \\
&+  \int_0^T \int_{\M \times \R^d} D_v u \cdot \wt^{(R)}\one_{\{u > l\}}  \dd x \dd v \dd t\\
& \qquad \le \int_{\M \times \R^d} (\b_T)_l \mt^{(R)}(T,x,v) \dd x \dd v - \int_{\M \times \R^d} u_l(0,x,v) \mt^{(R)}(0,x,v) \dd x \dd v\\ 
& \qquad + \int_0^T \int_{\M \times \R^d} \beta_k \mt^{(R)}  \one_{\{u > l\}}  \dd x \dd v \dd t + \int_0^T \int_{\M \times \R^d} \mt^{(R)} L \left (x,v, - \frac{\wt}{\mt} \right )  \one_{\{u > l\}}  \dd x \dd v \dd t\\
& \qquad - \int_0^T \int_{\M \times \R^d} u_l \, \cE_{\eta,\delta, \e, R} \dd x \dd v \dd t .
\end{align}
Passing to the limit with $R,\eta,\e,\d$, as in the proof of Lemma \ref{lem:inequality}, by Fatou's lemma we obtain
\begin{align}
&\int_0^T \int_{\M \times \R^d} m H(x,v, D_v u)  \one_{\{u > l\}}  \dd x \dd v \dd t + \int_0^T \int_{\M \times \R^d} m L \left (x,v, - \frac{w}{m} \right )  \one_{\{u > l\}}  \dd x \dd v \dd t \\
&+  \int_0^T \int_{\M \times \R^d} D_v u \cdot w \one_{\{u > l\}}  \dd x \dd v \dd t\\
& \qquad \le \int_{\M \times \R^d} (\b_T)_l m(T,x,v) \dd x \dd v - \int_{\M \times \R^d} u_l(0,x,v) m(0,x,v) \dd x \dd v\\ 
& \qquad + \int_0^T \int_{\M \times \R^d} \beta_k m \one_{\{u > l\}}  \dd x \dd v \dd t + \int_0^T \int_{\M \times \R^d} m L \left (x,v, - \frac{w}{m} \right )  \one_{\{u > l\}}  \dd x \dd v \dd t .
\end{align}
Finally, taking the limit $l, k \to -\infty$ as in the proofs of Lemma~\ref{lem:inequality} and Corollary \ref{cor:energy_ineq}, we obtain
\begin{align}
&\int_0^T \int_{\M \times \R^d} m H(x,v, D_v u) + m L \left (x,v, - \frac{w}{m} \right ) + D_v u \cdot w \, \dd x \dd v \dd t\\
& \qquad \le \int_{\M \times \R^d} \b_Tm_T \dd x \dd v - \int_{\M \times \R^d} u_0 m_0 \dd x \dd v + \int_0^T \int_{\M \times \R^d} \beta m \dd x \dd v \dd t \\
& \qquad + \int_0^T \int_{\M \times \R^d} m L \left (x,v, - \frac{w}{m} \right )  \dd x \dd v \dd t .
\end{align}

\end{proof}

\section{Existence of a Solution of the Relaxed Problem}\label{sec:5}

In this section we prove the existence of a solution for the relaxed problem. Consider a minimizing sequence $(u_n, \beta_n,(\beta_T)_n)_n$.
We will extract a convergent subsequence, and show that the limit constitutes a minimizer of the objective functional.

\subsection{Compactness of the Minimizing Sequence} \label{sec:compactness}

The following proposition enables the extraction of a convergent subsequence.

\begin{proposition} \label{prop:HJ-compactness}

Let $(u_n)_n$ be a sequence of solutions to the Hamilton-Jacobi equations
 \be \label{eq:HJ-n}
-\partial_t u_n - v\cdot D_x u_n+H(x,v,D_v u_n) = \beta_n, 
\ee
Assume that:
\begin{itemize}
\item The family $(u_n)_n$ is uniformly bounded in $L^\infty_tL^1_{x,v,\loc}(\cU_{m_0})$.
\item The family $(D_v u_n)_n$ is uniformly bounded in $L^r_{\loc}(\cU_{m_0})$.
\item The family $(\beta_n)_n$ is uniformly bounded in $L^1_{\loc}(\cU_{m_0})$.
\end{itemize}

Then there exists a subsequence $(u_{n_j})_j$ that is strongly convergent in $L^1_{\loc}(\cU_{m_0})$.

\end{proposition}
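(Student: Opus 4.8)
The plan is to exploit the kinetic transport structure of \eqref{eq:HJ-n} together with $L^1$-averaging lemmas and the velocity regularity $D_v u_n \in L^r_\loc$. Rewriting the equation as
\[
\partial_t u_n + v \cdot D_x u_n = H(x,v,D_v u_n) - \beta_n,
\]
the right-hand side is bounded in $L^1_\loc(\cU_{m_0})$: by the upper bound in \eqref{hyp:H} one has $0 \le H(x,v,D_v u_n)+C_H \le c|D_v u_n|^r + 2C_H$, which is bounded in $L^1_\loc$ since $D_v u_n$ is bounded in $L^r_\loc$, while $\beta_n$ is bounded in $L^1_\loc$ by hypothesis. Thus each $u_n$ solves a free kinetic transport equation with source bounded in $L^1_\loc$ and, in addition, carries a uniform bound on the full velocity gradient. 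This is precisely the configuration in which an $L^1$-averaging lemma produces strong compactness of the velocity averages, and the extra $v$-regularity then promotes this to compactness of $u_n$ itself.

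First I would localise. Fix an exhaustion of $\cU_{m_0}$ by compact sets $K$ contained in $(0,T)\times\M\times\R^d$ (the slice $\{0\}\times\{m_0>0\}$ is Lebesgue-null, hence irrelevant for $L^1_\loc$) and pick $\chi \in C^\infty_c((0,T)\times\M\times\R^d)$, $0\le\chi\le1$, with $\chi\equiv1$ on $K$. Then $f_n := \chi u_n$ solves $\partial_t f_n + v\cdot D_x f_n = G_n$ with $G_n := \chi\big(H(x,v,D_v u_n)-\beta_n\big) + \big(\partial_t\chi + v\cdot D_x\chi\big)u_n$, and one checks that $(f_n)_n$ is bounded in $L^\infty_t L^1_{x,v}$ with supports in a fixed compact set, $(G_n)_n$ is bounded in $L^1((0,T)\times\M\times\R^d)$, and $(D_v f_n)_n$ is bounded in $L^1_\loc$ with its ``$L^r_\loc$ part'' given by $\chi D_v u_n$. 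An $L^1$-averaging lemma (\cite{GSR, GSR_NS, Han-Kwan}) then yields, for every $\psi\in C^\infty_c(\R^d)$, that the velocity averages $\rho^\psi_n(t,x) := \int_{\R^d} f_n(t,x,v)\psi(v)\,\dd v$ form a relatively compact family in $L^1_\loc((0,T)\times\M)$.

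To pass from averages to $f_n$ itself I would run a Fr\'echet--Kolmogorov argument fed by the velocity regularity. Mollifying in $v$, $f^\delta_n := f_n \ast_v \rho_\delta$, the bound on $D_v f_n$ gives $\sup_n\|f_n - f^\delta_n\|_{L^1}\le C\delta\to0$ and makes the $v$-translates of $f_n$ uniformly equicontinuous. For fixed $\delta$, $f^\delta_n(t,x,v) = \rho^{\rho_\delta(v-\cdot)}_n(t,x)$ is a velocity average whose weight runs over a precompact subset of $C^\infty_c$ as $v$ runs over a ball $\{|v|\le R\}$; since $\|\rho^\psi_n - \rho^{\psi'}_n\|_{L^1_{t,x}}\le\|\psi-\psi'\|_\infty\sup_n\|f_n\|_{L^1}$, the family $\{\rho^{\rho_\delta(v-\cdot)}_n : n,\ |v|\le R\}$ is relatively compact in $L^1_{t,x}$, hence uniformly equicontinuous under $(t,x)$-translations, which (integrating in $v$) transfers to $f^\delta_n$ and, through $\sup_n\|f_n-f^\delta_n\|_{L^1}\to0$, to $f_n$. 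Together with the uniform $L^1$ bound, the uniform compact support, and the $v$-translation estimate, the Fr\'echet--Kolmogorov--Riesz theorem gives relative compactness of $(f_n)_n$ in $L^1$; since $\chi\equiv1$ on $K$, $(u_n)_n$ is relatively compact in $L^1(K)$, and a diagonal extraction over the exhaustion produces a subsequence convergent in $L^1_\loc(\cU_{m_0})$.

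The \textbf{main obstacle} is verifying the hypotheses of the $L^1$-averaging lemma, namely the \emph{local equiintegrability} of $(f_n)_n$ and of the source $(G_n)_n$: the term $\chi H(x,v,D_v u_n)$ in $G_n$ is controlled only in $L^1_\loc$ by $|D_v u_n|^r$, which is not a priori equiintegrable, and the same concern applies to the negative part of $u_n$, bounded only in $L^1_\loc$ by Corollary~\ref{cor:bounds_u}. I would handle this by truncation: replacing $u_n$ by $(u_n-l)_+\wedge M$ (or $u_n$ capped at $\pm M$), which is bounded — hence trivially equiintegrable — and, by Lemma~\ref{lem:truncation}, satisfies a kinetic inequality with a source of the same type; the averaging lemma then applies at each truncation level, and the truncation is removed using the a priori bounds. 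Here $u_{n,+}$ is already equiintegrable on compacts, being bounded in $L^\infty_t(L^\infty+L^{q'})_{x,v}$ with $q'>1$ (Lemma~\ref{lem:value-ub}), so only the negative part requires the Stampacchia-type truncation estimates on $u$ alluded to in the introduction. Checking that the capping is compatible both with the averaging lemma (the capped equation retains an $L^1_\loc$-bounded source) and with the velocity regularity (capping does not enlarge $\|D_v(\cdot)\|_{L^r_\loc}$) is where the real work lies; an alternative would be to invoke directly the ``averaging plus $v$-regularity implies compactness of the density'' principle in the spirit of \cite{Bouchut2002}, adapted to the $L^1$ framework, which packages the second and third paragraphs into a single statement.
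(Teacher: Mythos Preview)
Your overall architecture --- localise, apply an $L^1$ averaging lemma to get compactness of velocity averages, then upgrade to full compactness via the $D_v u_n$ bound and $v$-mollification --- is exactly what the paper does. The gap is in what you call the ``main obstacle''.

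First, the version of the $L^1$ averaging lemma used here (Golse--Saint-Raymond, Theorem~\ref{thm:GSR}) does \emph{not} require any equi-integrability of the source $G_n$; it only requires $G_n$ bounded in $L^1_\loc$, which you already checked. So your concern about $\chi H(\cdot,\cdot,D_v u_n)$ is unfounded.

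Second, and more importantly, you overlook that equi-integrability in $v$ of $f_n$ follows \emph{immediately} from the hypothesis you already have in hand: the uniform bound on $D_v u_n$ in $L^r_\loc$. By Sobolev embedding in the $v$-variable (bootstrapped from $L^1$ up to $L^r$), the localised functions $\zeta_K u_n$ are bounded in $L^1_{t,x} L^\alpha_{v}$ for some $\alpha>1$ (namely $1/\alpha = 1/r - 1/d$ if $r<d$, any $\alpha<\infty$ otherwise). H\"older's inequality then gives equi-integrability in $v$ directly. This is the paper's Lemma preceding Lemma~\ref{lem:full-strong-conv}, and it makes the ``obstacle'' disappear without any truncation.

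Your proposed truncation route is not only unnecessary but problematic on its own terms: truncating $u_n$ turns the \emph{equation} into an \emph{inequality} (Lemma~\ref{lem:truncation}), i.e.\ adds a nonnegative Radon measure to the source, and the averaging lemma as stated needs an $L^1$ source, not a measure-valued one. Moreover, removing the truncation at the end --- showing $\sup_n\|u_n - u_n^M\|_{L^1(K)}\to0$ as $M\to\infty$ --- is exactly the equi-integrability of $(u_n)_n$ you were trying to avoid, so the argument is circular. The Sobolev route closes both issues at once.
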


The proof of this result will be a consequence of some intermediate results that we detail below.

\begin{remark}
Let us notice that the assumptions of Proposition \ref{prop:HJ-compactness} hold true by Corollary \ref{cor:bounds_u} and Lemma \ref{lem:value-ub}.
\end{remark}

Proposition~\ref{prop:HJ-compactness} is proved by treating the Hamilton-Jacobi equation
as a kinetic transport equation with right hand side bounded in $L^1_{\loc}(U_{m_0})$:
 \be %\label{eq:HJ-n}
-\partial_t u_n - v\cdot D_x u_n = \beta_n - H(x,v,D_v u_n). 
\ee
A form of compactness for the solutions can be obtained by using an \emph{averaging lemma}.
Averaging lemmas are results in kinetic theory showing that, for $L^p$-bounded families of solutions to the kinetic transport equation, with $L^p$-bounded source terms, the velocity averages
\be
\rho_\phi [u](t,x) : = \int_{\R^d} u(t,x,v) \phi(v) \dd v \qquad \phi \in C^\infty_c(\R^d),
\ee
enjoy additional fractional Sobolev regularity and/or strong $L^p$-compactness. 
In our case we are in the setting $p=1$, and we use an $L^1$ averaging result from \cite{GSR}. It is necessary to assume a certain equi-integrability condition on the solution $u_n$. This condition is defined below.

\begin{definition}[Equi-integrability in velocity]
Let $(u_\lambda)_{\lambda \in \Lambda}$ be a bounded family in $L^1_{\rm{loc}}([0,T] \times \M \times \R^d)$. The family is locally equi-integrable in $v$ if, for all $\e > 0$ and all compact sets $K \subset [0,T] \times \M \times \R^d$, there exists $\eta > 0$ such that for all measurable families $(A_{t,x})_{(t,x) \in [0,T] \times \M}$ of measurable subsets of $\R^d$ for which $\sup_{(t,x) \in [0,T] \times \M} |A_{t,x}| < \eta$,
\be
\int_0^T \int_{\M} \int_{A_{t,x}} \one_K |u_\lambda(t,x,v)| \dd v \dd x \dd t < \e \qquad \text {for all } \lambda \in \Lambda .
\ee

\end{definition}

The required averaging lemma is quoted below. This result was proved in \cite{GSR} for the stationary case, i.e. the equation $v \cdot D_xu = \beta$.
The result can be adapted to the time dependent equation by standard techniques; see \cite{GSR_NS} or \cite{Han-Kwan} for statements in the time dependent setting.

\begin{theorem} \label{thm:GSR}
Let $(u_\lambda)_{\lambda \in \Lambda}$ be a bounded family in $L^\infty([0,T]; L^1_\loc(\M \times \R^d))$ satisfying
\be \label{eq:kinetic-transport}
\partial_t  u_\lambda + v \cdot D_x u_\lambda = \beta_\lambda,
\ee
where $(\beta_\lambda)_{\lambda \in \Lambda}$ is a bounded family in $L^1_\loc([0,T] \times \M \times \R^d)$.
Assume that $(u_\lambda)_{\lambda \in \Lambda}$ is equi-integrable in $v$. Then 
\begin{itemize}
\item The family $(u_\lambda)_{\lambda \in \Lambda}$ is locally equi-integrable in all variables in $[0,T] \times \M \times \R^d$.
\item For each $\phi \in C_c(\R^d)$, the family of averages $(\rho_\phi[u_\lambda])_{\lambda \in \Lambda}$ is relatively (strongly) compact in $L^1_\loc([0,T] \times \M)$.
\end{itemize}
\end{theorem}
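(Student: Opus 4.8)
The plan is not to reprove this from scratch --- it is by now a classical statement --- but rather to reduce the time-dependent formulation to the stationary $L^1$ averaging lemma of Golse--Saint-Raymond and to recall the structure of that argument. First I would pass from the stationary operator $v\cdot D_x$ to $\partial_t+v\cdot D_x$ in the standard way: either treat $(t,x)$ as the space variable and $(1,v)$ as a (partially frozen) velocity, so that $\partial_t+v\cdot D_x$ becomes a genuine free transport operator on $\R^{1+d}$, or --- cleaner, in order to avoid discussing the time endpoints --- multiply $u_\lambda$ by a smooth cut-off $\theta\in C^\infty_c((0,T))$, so that $\theta u_\lambda$ solves $\partial_t(\theta u_\lambda)+v\cdot D_x(\theta u_\lambda)=\theta\beta_\lambda+\theta'u_\lambda$ with no boundary data, a source still bounded in $L^1_\loc$, and with $(\theta u_\lambda)$ still equi-integrable in $v$; one then runs the argument on $\{\theta=1\}$ and exhausts $(0,T)$. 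Adding a damping term $\mu u_\lambda$ to both sides, absorbed into the source (which stays $L^1_\loc$-bounded and equi-integrable), one may also insert a factor $e^{-\mu|t-s|}$ into all characteristics integrals below.

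The technical heart is the first bullet, the transfer of equi-integrability from $v$ to all variables. Writing $u_\lambda$ along characteristics,
\[
u_\lambda(t,x,v)=\int_{-\infty}^{t}e^{-\mu(t-s)}\,\beta_\lambda\bigl(s,x-(t-s)v,v\bigr)\,\dd s ,
\]
the dispersive smoothing becomes visible after the change of variables $y=x-(t-s)v$ at fixed $s$ (Jacobian $(t-s)^{-d}$): for $\phi\in C_c(\R^d)$ one finds that, away from the small-velocity region, $\rho_\phi[u_\lambda]$ is a convolution in $x$ of an averaged version of $\beta_\lambda$ against a kernel locally comparable to $|x-y|^{1-d}$. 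The crux of \cite{GSR} is that this representation, combined with the assumed equi-integrability of $(u_\lambda)$ in $v$ and the $L^1_\loc$ bound on $(\beta_\lambda)$, forces $(u_\lambda)$ itself to be locally equi-integrable in $(t,x,v)$, the small-velocity part being controlled directly by the equi-integrability hypothesis. I would isolate this as the one genuinely hard lemma and otherwise quote it. It then follows immediately that the averages $\rho_\phi[u_\lambda]$ are locally equi-integrable in $(t,x)$, since $\int|\rho_\phi[u_\lambda]|\one_A\,\dd x\,\dd t\le\|\phi\|_{\infty}\int|u_\lambda|\one_{A\times\supp\phi}$ and $A\times\supp\phi$ is small in $(t,x,v)$ whenever $A$ is small in $(t,x)$.

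Granting the first bullet, the second --- strong $L^1_\loc$ compactness of the averages --- I would obtain by combining it with the classical Hilbert-space averaging lemma. The averages are already weakly relatively compact in $L^1_\loc$ by Dunford--Pettis; to upgrade this, fix $\e>0$ and a compact $K$ and, as in \cite{GSR} and exploiting the equi-integrability of $(u_\lambda)$ just established together with a truncation of the source, split $u_\lambda=u_\lambda^{(1)}+u_\lambda^{(2)}$ so that $u_\lambda^{(1)}$ is bounded in $L^2_\loc$ and solves a transport equation with $L^2_\loc$ source --- whence the $L^2$ averaging lemma yields a uniform $H^{1/2}_\loc$ bound on $\rho_\phi[u_\lambda^{(1)}]$ and hence relative compactness in $L^1_\loc$ by Rellich --- while $u_\lambda^{(2)}$ is driven by a source $\beta_\lambda^{(2)}$ for which the elementary dispersion estimate, read off from the characteristics formula,
\[
\|\rho_\phi[u_\lambda^{(2)}]\|_{L^1(K)}\le C\,\|\phi\|_{\infty}\,\|\beta_\lambda^{(2)}\|_{L^1_\loc}\le C\e ,
\]
holds. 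Thus $\rho_\phi[u_\lambda]$ lies within $C\e$ in $L^1(K)$ of a relatively compact family; since $\e$ is arbitrary it is totally bounded, hence relatively compact, in $L^1_\loc$. A diagonal extraction over a countable dense set of test functions $\phi$ and an exhaustion of $[0,T]\times\M$ by compacts complete the argument.

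The hard part, as indicated, is the equi-integrability transfer. In $L^1$ there is no Plancherel identity, so the smoothing cannot be read off from a Fourier multiplier as in the $L^p$ theory for $1<p<\infty$, and must be extracted by hand from the dispersive structure of free transport; moreover one must ensure that the cut-off near $v=0$, where the Jacobian $(t-s)^{-d}$ degenerates, and the non-compactness of $\M\times\R^d$ --- recall that $\M$ may be all of $\R^d$ --- do not spoil the uniformity of the estimates. Everything downstream is either the classical $L^2$ averaging lemma or soft functional analysis (Dunford--Pettis, Rellich, diagonalisation). For the full argument in the stationary case I would cite \cite{GSR}, and \cite{GSR_NS, Han-Kwan} for the time-dependent statement as used here.
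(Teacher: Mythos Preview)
Your proposal is correct and entirely consistent with the paper's treatment: the paper does not prove this theorem at all but simply quotes it, noting that the stationary case is due to \cite{GSR} and that the adaptation to the time-dependent equation is standard, referring to \cite{GSR_NS} and \cite{Han-Kwan} for statements in that setting. Your sketch goes further than the paper does, outlining both the reduction to the stationary case and the structure of the Golse--Saint-Raymond argument (equi-integrability transfer via the dispersive representation along characteristics, followed by the $L^2$ averaging lemma plus truncation to upgrade weak to strong $L^1_\loc$ compactness), but this is added value rather than a deviation --- the references you cite are exactly those the paper invokes.
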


In our setting we expect to have local summability estimates in $\cU_{m_0}$ rather than $[0,T] \times \M \times \R^d$. To deal with this technicality we make use of a localisation procedure: given a compact set $K$, consider a smooth bump function $\zeta$ supported in $K$. If $u_\lambda$ satisfies the kinetic transport equation \eqref{eq:kinetic-transport}, then $u_\lambda \zeta$ satisfies
\be %\label{eq:kinetic-transport}
\partial_t  (u_\lambda \zeta) + v \cdot D_x (u_\lambda \zeta ) = \beta_\lambda \zeta + (\partial_t \zeta + v \cdot \nabla_x \zeta ) u_\lambda.
\ee
The right hand side of the above equation is bounded in $L^1([0,T] \times \M \times \R^d)$, uniformly in $\lambda$, as long as $u_\lambda$ and $\beta_\lambda$ are bounded in $L^1_\loc(\cU_{m_0})$.

We wish to apply this to the solutions of the Hamilton-Jacobi equation. To do this, we verify the equi-integrability condition. To prove equi-integrability, we make use of the $L^r$ estimates available for the $v$-derivative $D_v u$.

\begin{lemma}
Let $(u_\lambda)_{\lambda \in \Lambda}$ be a bounded family in $L^\infty_t L^1_{x,v, \loc}(\cU_{m_0})$.
Assume that $(D_v u_\lambda)_{\lambda \in \Lambda}$ is a bounded family in $L^r_{\loc}(\cU_{m_0})$.
Then:
\begin{itemize}
\item $(u_\lambda)_{\lambda \in \Lambda}$ is bounded in $L^1_{t,x}L^{\alpha}_{v,\loc}(\cU_{m_0})$, where $1/\alpha = 1/r - 1/d$ if $r < d$, or any $\alpha < + \infty$ if $r \geq d$.
\item $(u_\lambda)_{\lambda \in \Lambda}$ is equi-integrable in $v$, locally in $\cU_{m_0}$.
\end{itemize}
\end{lemma}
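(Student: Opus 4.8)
The plan is to treat this as a slice-wise Sobolev (Gagliardo--Nirenberg) estimate in the velocity variable, powered by the uniform $L^r$ bound on $D_v u_\lambda$, and then to deduce equi-integrability in $v$ for free from the resulting gain of integrability. Concretely: fix a compact $K\subseteq\cU_{m_0}$. Since the slice $\{t=0\}$ is Lebesgue-null, for all the assertions it suffices to estimate on compact subsets of $(0,T]\times\M\times\R^d\subseteq\cU_{m_0}$, and any such set sits inside $[\tau,T]\times K_x\times\overline B_R$ for some $\tau\in(0,T]$, a compact $K_x\subseteq\M$ and a closed ball $\overline B_R\subseteq\R^d$, with $[\tau,T]\times K_x\times\overline B_R\subseteq\cU_{m_0}$. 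From $u_\lambda\in L^1_\loc$, $D_v u_\lambda\in L^r_\loc$ on $\cU_{m_0}$ and Fubini's theorem (distributional $v$-derivatives restrict to a.e.\ slice), for a.e.\ $(t,x)$ the function $v\mapsto u_\lambda(t,x,v)$ lies in $W^{1,1}(B_R)$ with $v$-gradient $D_v u_\lambda(t,x,\cdot)\in L^r(B_R)$. Applying the Poincar\'e--Sobolev inequality on $B_R$ to the mean-zero part (valid for $W^{1,1}$ functions with $L^r$ gradient, via the pointwise bound $|w(v)-\bar w|\lesssim\int_{B_R}|\nabla w(y)|\,|v-y|^{1-d}\dd y$ together with Hardy--Littlewood--Sobolev when $r<d$, and Morrey's embedding / the $L^d$-endpoint estimate when $r\ge d$) and estimating the mean by the $L^1$ norm, one gets, for a.e.\ $(t,x)$,
\[
\|u_\lambda(t,x,\cdot)\|_{L^\alpha(B_R)}\le C\Big(\|D_v u_\lambda(t,x,\cdot)\|_{L^r(B_R)}+\|u_\lambda(t,x,\cdot)\|_{L^1(B_R)}\Big),\qquad C=C(d,r,R),
\]
with $\alpha$ exactly as in the statement.

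Integrating this inequality over $(t,x)\in[\tau,T]\times K_x$ gives both conclusions. The first term on the right is controlled by H\"older's inequality in $(t,x)$ by $C\,\|D_v u_\lambda\|_{L^r([\tau,T]\times K_x\times\overline B_R)}$, which is uniformly bounded since $[\tau,T]\times K_x\times\overline B_R$ is a compact subset of $\cU_{m_0}$; the second term is $\int_\tau^T\|u_\lambda(t,\cdot,\cdot)\|_{L^1_{x,v}(K_x\times\overline B_R)}\dd t\le T\,\|u_\lambda\|_{L^\infty_t L^1_{x,v,\loc}(\cU_{m_0})}$, again uniformly bounded. Hence $(u_\lambda)_\lambda$ is bounded in $L^1_{t,x}L^\alpha_{v,\loc}(\cU_{m_0})$, which is the first claim. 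Since $\alpha>1$, equi-integrability follows at once: with $M:=\sup_\lambda\|u_\lambda\|_{L^1_{t,x}L^\alpha_v([\tau,T]\times K_x\times\overline B_R)}<\infty$ from what we just proved, and any measurable family $(A_{t,x})$ with $\sup_{t,x}|A_{t,x}|<\eta$, H\"older in $v$ gives $\int_{A_{t,x}}\one_K|u_\lambda(t,x,v)|\dd v\le\eta^{1/\alpha'}\|u_\lambda(t,x,\cdot)\one_K\|_{L^\alpha_v}$, so $\int_0^T\int_\M\int_{A_{t,x}}\one_K|u_\lambda|\dd v\dd x\dd t\le\eta^{1/\alpha'}M$ uniformly in $\lambda$; choosing $\eta:=(\e/(M+1))^{\alpha'}$ finishes the proof.

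The main point requiring care --- there is no deep difficulty here --- is that the hypotheses only control the full integral $\iint|u_\lambda|\dd x\dd v$ uniformly in $t$, and \emph{not} a slice-wise $L^r_v$ norm of $u_\lambda$ itself; naively multiplying by a $v$-cut-off to produce a compactly supported function would reintroduce a term $u_\lambda\,D_v(\text{cut-off})$ that cannot be bounded in $L^r_v$. This is precisely why one uses the bounded-domain form of the Sobolev inequality on $B_R$, in which the zero-order term enters with its $L^1$ norm (equivalently, by splitting off the mean and invoking Poincar\'e--Sobolev). The only other routine technicality is the Fubini argument identifying, for a.e.\ $(t,x)$, $D_v u_\lambda(t,x,\cdot)$ with the weak $v$-gradient of $u_\lambda(t,x,\cdot)$.
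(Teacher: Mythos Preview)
Your proof is correct and takes a genuinely more direct route than the paper. The paper localises with a smooth cut-off $\zeta_K$, computes $D_v(u_\lambda\zeta_K)=\zeta_K D_v u_\lambda+u_\lambda D_v\zeta_K$, and observes that this is bounded only in $L^1$ (because the second term carries just the $L^1$ control on $u_\lambda$); applying the Gagliardo--Nirenberg embedding $W^{1,1}_v\hookrightarrow L^{1^\ast}_v$ then yields $L^1_{t,x}L^{1^\ast}_{v,\loc}$ and the target exponent $\alpha=r^\ast$ is reached only after a bootstrap, re-localising with the newly gained integrability at each step. You sidestep this by working directly on a ball $B_R$ in $v$ and invoking the Poincar\'e--Sobolev inequality in the form $\|w\|_{L^\alpha(B_R)}\le C(\|D_vw\|_{L^r(B_R)}+\|w\|_{L^1(B_R)})$, whose zero-order term is an $L^1$ norm; this delivers the sharp exponent in one shot. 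Your remark explaining \emph{why} a cut-off approach forces a bootstrap (the term $u_\lambda\,D_v(\text{cut-off})$ cannot be placed in $L^r_v$) is exactly the point. The paper's route has the mild advantage of not needing the slice-wise identification of $D_v u_\lambda(t,x,\cdot)$ with the weak gradient (it works entirely with compactly supported functions on $\R^d_v$), but your Fubini argument for this is standard. The equi-integrability step is handled identically in both.
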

\begin{proof}

We obtain higher integrability in the velocity variable by using Sobolev embedding.
We first apply a localisation procedure. Given a compact set $K \subset \cU_{m_0}$, let $\zeta_K$ denote a smooth bump function with compact support contained in $\cU_{m_0}$, such that $\zeta_K$ takes values contained in $[0,1]$ and $\zeta_K \equiv 1$ on $K$. Then
\be
D_v (u_\lambda \zeta_K) = D_v u_\lambda \, \zeta_K + u_\lambda \, D_v \zeta_K .
\ee
Let $K'$ denote the support of $\zeta_K$. Then
\be
\| D_v (u_\lambda \zeta_K) \|_{L^1} \leq C(\zeta_K) \left ( \| D_v u_\lambda \|_{L^r(K')} + \| u_\lambda \|_{L^\infty_t L^1_{x,v} (K')} \right ),
\ee
thus $D_v (u_\lambda \zeta_K)$ is bounded in $L^1$, uniformly in $\lambda$. Moreover it is compactly supported.

We then apply Sobolev embedding in the $v$ variable. Letting $1^\ast : = \frac{d}{d-1}$, we have
\be
\| u_\lambda \|_{L^1_{t,x} L^{1^\ast}_v(K)} \leq \| u_\lambda \zeta_K \|_{L^1_{t,x} L^{1^\ast}_v} \leq C_d \| D_v (u_\lambda \zeta_K) \|_{L^1} \leq C(d,\zeta_K) \left ( \| D_v u_\lambda \|_{L^r(K')} + \| u_\lambda \|_{L^\infty_t L^1_{x,v} (K')} \right ) .
\ee
Thus $(u_\lambda)_\lambda$ is uniformly bounded in $L^1_{t,x} L^{1^\ast}_{v,\loc}$.

We now apply a bootstrap argument: it follows from the above that $D_v (u_\lambda \zeta_K)$ is bounded in $L^1_{t,x} L^{1^\ast}_{v}$, and thus $(u_\lambda)_\lambda$ is bounded in $L^1_{t,x} L^{\alpha}_{v,\loc}$ for $\alpha = \left ( 1 - 2/d\right )^{-1}$.
This process can be repeated until we obtain that $(u_\lambda)_\lambda$ is bounded in $L^1_{t,x} L^{r^\ast}_{v,\loc}$, for $1/r_* = 1/r - 1/d$, if $r < d$; otherwise we may obtain $L^1_{t,x} L^{\alpha}_{v,\loc}$ for any $\alpha < + \infty$.

We now prove local equi-integrability.
Let $(A_{t,x})_{(t,x) \in [0,T] \times \M}$ be a measurable family of measurable subsets of $\R^d$, such that $|A_{t,x}| < \eta$ for all $t,x$. Then
\be
\int_0^T \int_{\M} \int_{A_{t,x}} \one_K |u_\lambda | \dd v \dd x \dd t \leq \| \mathbbm{1}_{A_{t,x}}  \mathbbm{1}_K \|_{L^{\infty}_{t,x} L^{\alpha'}_v} \| u_\lambda \one_K \|_{L^{1}_{t,x} L^{\alpha }_v},
\ee
where $'$ denotes a H\"older conjugate exponent. From the condition on the measure of $A_{t,x}$, we have
\be
\int_0^T \int_{\M} \int_{A_{t,x}} \one_K |u_\lambda(t,x,v)| \dd v \dd x \dd t \leq C(K) \sup_{t,x} |A_{t,x}|^{1/\alpha'} \| u_\lambda \|_{L^{r}_{t,x} L^{\alpha }_v (K)}  \leq C(K)  \| u_\lambda \|_{L^{1}_{t,x} L^{\alpha }_v (K)} \, \eta^{1/\alpha'} ,
\ee
which proves equi-integrability.

\end{proof}

It follows that, under the assumptions of Proposition~\ref{prop:HJ-compactness}, Theorem~\ref{thm:GSR} can be applied to $(\zeta_K u_n)_n$ for any $\zeta_K \in C^\infty_c(\cU_{m_0})$. We deduce strong $L^1_\loc$-compactness for the averages $(\rho_\phi[u_n \zeta_K])_n$.
We now use this to prove strong compactness for the full solutions $u_n$.

\begin{lemma} \label{lem:full-strong-conv}
Assume that the family $(u_n)_n$ satisfies the following:
\begin{itemize}
\item $(u_n)_n$ is uniformly bounded in $L^\infty_tL^1_{x,v}$
\item $(u_n)_{n}$ is equi-integrable in all variables.
\item $(u_n)_{n}$ share the same compact support $K$.
\item $(D_v u_n)_n$ is uniformly bounded in $L^1_{t,x}L^r_v$.
\item For each $\phi \in C^\infty_c(\R^d)$, the family of averages $(\rho_\phi[u_n])_{n}$ is relatively (strongly) compact in $L^1_\loc$.
\end{itemize}
Then the family $(u_n)_n$ is relatively (strongly) compact in $L^1$.
\end{lemma}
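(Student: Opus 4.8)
The plan is to reduce the assertion to \emph{total boundedness} of $(u_n)_n$ in $L^1((0,T)\times\M\times\R^d)$ --- which, $L^1$ being complete, is equivalent to relative compactness --- and to obtain total boundedness by approximating each $u_n$, uniformly in $n$, by a member of a relatively compact family. The approximant will be a mollification in velocity: fix $\chi\in C^\infty_c(\R^d)$ with $\chi\ge0$ and $\int\chi=1$, put $\chi_\delta(v):=\delta^{-d}\chi(v/\delta)$ and $u_n^\delta:=u_n\ast_v\chi_\delta$. I would then prove (a) $\|u_n-u_n^\delta\|_{L^1}\le C\delta$ for all $n$ and all small $\delta$, and (b) for each fixed $\delta>0$ the family $(u_n^\delta)_n$ is relatively compact in $L^1$. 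Granting these, for every $\delta>0$ the set $(u_n)_n$ sits in the $C\delta$-neighbourhood of a totally bounded set, hence is itself totally bounded; letting $\delta\to0$ finishes the proof.

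For (a) I would use the velocity regularity together with the common compact support. Since all $u_n$ vanish off a fixed compact $K$, their velocity sections are supported in a fixed ball $\overline{B}_\rho\subset\R^d$, and since $D_vu_n$ is bounded in $L^1_{t,x}L^r_v$, for a.e.\ $(t,x)$ the section $u_n(t,x,\cdot)$ belongs to $W^{1,r}(\R^d)$. The fundamental theorem of calculus in $v$ and H\"older's inequality over the fixed support then give, for $|w|\le\delta$, the slicewise bound $\|u_n(t,x,\cdot)-u_n(t,x,\cdot-w)\|_{L^1_v}\le C|w|\,\|D_vu_n(t,x,\cdot)\|_{L^r_v}$; integrating in $(t,x)$ and against $\chi_\delta(w)\,\dd w$ yields $\|u_n-u_n^\delta\|_{L^1}\le C\delta\,\|D_vu_n\|_{L^1_{t,x}L^r_v}\le C'\delta$, uniformly in $n$. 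I would also record here that $\|u_n^\delta\|_{L^1}\le\|u_n\|_{L^1}\le C$ and that the $u_n^\delta$ have common compact support $K+(\{0\}\times\{0\}\times\overline{B}_1)$.

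For (b) the key point is that $u_n^\delta(t,x,v)=\rho_{\chi_\delta(v-\cdot)}[u_n](t,x)$ is, for each fixed $v$, a velocity average with test function $\chi_\delta(v-\cdot)\in C^\infty_c(\R^d)$. By hypothesis, for each fixed $\phi$ the family $(\rho_\phi[u_n])_n$ is relatively compact in $L^1_\loc$, hence --- by the common compact support in $(t,x)$ --- in $L^1_{t,x}$. I would then exploit the Lipschitz dependence $\|\rho_{\phi_1}[u_n]-\rho_{\phi_2}[u_n]\|_{L^1_{t,x}}\le\|u_n\|_{L^1}\|\phi_1-\phi_2\|_{L^\infty}\le C\|\phi_1-\phi_2\|_{L^\infty}$ and the uniform continuity of $v\mapsto\chi_\delta(v-\cdot)$ from $\overline{B}_\rho$ into $(C_c(\R^d),\|\cdot\|_\infty)$: given $\eta>0$, choosing a finite $\eta$-net $v_1,\dots,v_N$ of $\overline{B}_\rho$, the family $\mathcal{F}_\delta:=\{\rho_{\chi_\delta(v-\cdot)}[u_n]:n\in\N,\ v\in\overline{B}_\rho\}$ lies within $C\eta$ of the finite union $\bigcup_{j=1}^N\{\rho_{\chi_\delta(v_j-\cdot)}[u_n]:n\}$ of totally bounded sets; letting $\eta\to0$ shows $\mathcal{F}_\delta$ is relatively compact in $L^1_{t,x}$. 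Finally I would verify the Fr\'echet--Kolmogorov criterion for $(u_n^\delta)_n$ in $L^1$: boundedness and tightness are the common-support facts above; $v$-translations are uniformly small since $\|u_n^\delta(\cdot,\cdot,\cdot+h_v)-u_n^\delta\|_{L^1}\le\|u_n\|_{L^1}\|\chi_\delta(\cdot+h_v)-\chi_\delta\|_{L^1}$, which is $n$-independent and tends to $0$; and $(t,x)$-translations satisfy $\|u_n^\delta(\cdot+h_t,\cdot+h_x,\cdot)-u_n^\delta\|_{L^1}\le|\overline{B}_\rho|\,\sup_{n,v}\|\rho_{\chi_\delta(v-\cdot)}[u_n](\cdot+h_t,\cdot+h_x)-\rho_{\chi_\delta(v-\cdot)}[u_n]\|_{L^1_{t,x}}\to0$ as $(h_t,h_x)\to0$, because a relatively compact subset of $L^1$ (namely $\mathcal{F}_\delta$) has equicontinuous translations. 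Hence $(u_n^\delta)_n$ is relatively compact in $L^1$, proving (b).

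I expect the main obstacle to be this last step of (b): transferring the equicontinuity of $(t,x)$-translations from the velocity \emph{averages} to the mollified functions $u_n^\delta$. This requires relative compactness of the averages \emph{uniformly over the velocity parameter} $v$, which is precisely what the Lipschitz estimate in $\|\phi_1-\phi_2\|_{L^\infty}$ together with the finite-net argument for $\mathcal{F}_\delta$ is designed to supply. The equi-integrability assumed in the statement, while available, is subsumed along this route by boundedness, tightness and translation-equicontinuity; alternatively one could instead check that $(u_n^\delta)_n$ inherits equi-integrability from $(u_n)_n$ (via $\int_E|u_n^\delta|\le\int\chi_\delta(w)\int_{E-(0,0,w)}|u_n|\,\dd w$) and conclude by Dunford--Pettis combined with almost-everywhere convergence along subsequences.
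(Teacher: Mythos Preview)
Your proof is correct and rests on the same two pillars as the paper's: mollify in velocity to produce $u_n^\delta=u_n\ast_v\chi_\delta$, control $\|u_n-u_n^\delta\|_{L^1}$ uniformly in $n$ via the $L^1_{t,x}L^r_v$ bound on $D_vu_n$, and obtain compactness of $(u_n^\delta)_n$ by recognising each $u_n^\delta(\cdot,\cdot,v)$ as a velocity average.

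The organisation differs. The paper first extracts a weak $L^1$ limit $u$ via Dunford--Pettis (this is where the equi-integrability hypothesis enters), then shows $u_{n,\e}\to u\ast_v\phi_\e$ strongly in $L^1$ by an Arzel\`a--Ascoli argument: the pointwise-in-$v$ strong $L^1_{t,x}$ convergence of the averages is upgraded to locally-uniform-in-$v$ convergence using the equicontinuity estimate $\|u_{n,\e}(\cdot,\cdot,v+h)-u_{n,\e}(\cdot,\cdot,v)\|_{L^\infty_tL^1_x}\le C_\e|h|\sup_m\|u_m\|_{L^\infty_tL^1_{x,v}}$. You instead bypass the weak limit entirely and work with total boundedness, checking Fr\'echet--Kolmogorov for $(u_n^\delta)_n$; your finite-net argument showing $\mathcal F_\delta$ is relatively compact in $L^1_{t,x}$ is the paper's Arzel\`a--Ascoli step repackaged (your Lipschitz bound $\|\rho_{\phi_1}[u_n]-\rho_{\phi_2}[u_n]\|_{L^1_{t,x}}\le C\|\phi_1-\phi_2\|_\infty$ plays the role of the equicontinuity estimate). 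Your route is marginally more economical in that the equi-integrability hypothesis is not invoked, as you correctly observe; the paper's route has the mild expository benefit of naming the limit $u$ from the outset, which fits the surrounding narrative where $u$ is the candidate minimiser.
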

\begin{proof}
First, note that the first two assumptions imply the weak $L^1$ sequential compactness of $(u_n)_n$. We pass (without relabelling) to a weakly convergent subsequence $u_n$, and let $u$ denote the weak limit.
In the remainder of the proof we improve the mode of convergence of $u_n$ to $u$ to strong convergence in $L^1$.

{\bf Step 1: Approximation by smoothing in $v$.}
We approximate $u_n$ by a function that is smooth with respect to the $v$ variable.
Fix $\phi \in C^\infty_c(\R^d)$ and define, for $\e > 0$,
\be
\phi_\e (v) : = \e^{-d} \phi \left ( \frac{v}{\e} \right ).
\ee
Let
\be
u_{n, \e}(t,x,v) : = \int_{\R^d} u_n(t,x, v') \phi_\e (v - v' ) \dd v' .
\ee

{\bf Step 2: Compactness for the approximations.}
Fix $v_* \in \R^d$. For any $\psi \in L^\infty_{t,x}$ with compact support we consider testing the sequence $(u_n)_n$ against the test function
\be
\psi(t,x) \phi_\e(v_* - v) .
\ee
Since
\be
\langle u_n, \psi \phi_\e(v_* - \cdot) \rangle \to \langle u, \psi \phi_\e(v_* - \cdot) \rangle,\ \ n\to+\infty,
\ee
we deduce that $u_{n,\e}(\cdot, \cdot, v_*)$ converges weakly in $L^1_{t,x,\loc}$ to $u \ast_v \phi_\e(\cdot, \cdot, v_*)$ as $n\to+\infty$.

Note moreover for each fixed $v \in \R^d$, $u_{n,\e}(t,x,v)$ is a velocity average with respect to the test function $\phi_\e(v - \cdot)$.
Therefore, by Theorem \ref{thm:GSR} the convergence in fact holds strongly in $L^1_{t,x, \loc}$ for each $v \in \R^d$.

Furthermore, for fixed $\e > 0$, the family $(u_{n, \e})_n$ is equi-continuous in $v$ into $L^\infty_tL^1_x$: indeed
\begin{align}
|u_{n, \e}(t,x,v+h) - u_{n, \e}(t,x,v)| &= \left | \int_{\R^d} u_n(t,x,v') \left ( \phi_\e(v - v' + h) - \phi_\e(v- v') \right ) \dd v' \right | \\
& \leq |h| \| \nabla \phi_\e \|_{L^\infty} \int_{\R^d} |u_n(t,x,v')| \dd v ' .
\end{align}
Thus
\be
\| u_{n, \e}(\cdot,\cdot,v+h) - u_{n, \e}(\cdot,\cdot,v) \|_{L^\infty_t L^1_x} \leq C_\e \sup_{m} \| u_m \|_{L^\infty_tL^1_{x,v}} \, |h| .
\ee
By an Arzel\`{a}-Ascoli argument the convergence therefore holds locally uniformly in $v$, with respect to the strong topology on $L^1_{t,x,\loc}$: that is, for all compact sets $K_v \subset \R^d$ and $K_{t,x} \subset [0,T] \times \R^d$,
\be
\lim_{n \to \infty } \sup_{v \in K_v} \| u_{n, \e}(v) - u \ast \phi_\e(v) \|_{L^1_{t,x}(K_{t,x})}  = 0 .
\ee
Consequently, the convergence holds in $L^1_\loc$; in fact, since $u_{n, \e} - u \ast \phi_\e$ is supported for all $n$ in $K + B_{C\e}(0)$, the convergence holds in $L^1$.

{\bf Step 3: Removing the approximation.}

The bound on $D_v u_n$ implies that, for any $h \in \R^d$,
\be
\| u_n(t,x, \cdot + h) - u_n(t,x, \cdot) \|_{L^r_v} \leq |h| \| D_v u_n(t,x,\cdot) \|_{L^r_v} .
\ee
It follows that
\be
\| u_{n, \e} - u_n \|_{L^1_{t,x} L^r_v} \lesssim \e \sup_m \| D_v u_m \|_{L^1_{t,x} L^r_v} .
\ee

Indeed, by definition of $u_{n,\e}$,
\be
[u_{n, \e} - u_n](t,x,v) = \int_{\R^d} [u_n(t,x,v-h) - u_n(t,x,v)] \phi_\e (h) \dd h .
\ee
Thus, for any $g \in L^{r'}_v(\R^d)$,
\begin{align}
\int_{\R^d} [u_{n, \e} - u_n](t,x,v) g(v) \dd v &= \int_{\R^d} \int_{\R^d} [u_n(t,x,v-h) - u_n(t,x,v)] \phi_\e (h) g(v) \dd h \dd v \\
& \leq \|g \|_{L^{r'}_v} \int_{\R^d} \| u_n(t,x, \cdot + h) - u_n(t,x, \cdot) \|_{L^r_v} | \phi_\e (h)| \dd h \\
& \leq \|g \|_{L^{r'}_v} \| D_v u_n(t,x,\cdot) \|_{L^r_v} \int_{\R^d} |h| | \phi_1 \left ( \frac{h}{\e}\right)| \e^{-d} \dd h \\
& \leq C_\phi \e  \| D_v u_n(t,x,\cdot) \|_{L^r_v}  \|g \|_{L^{r'}_v} .
\end{align}
That is,
\be
\| u_{n, \e}(t,x,\cdot) - u_n(t,x,\cdot) \|_{L^r_v} \lesssim \e \| D_v u_n(t,x,\cdot) \|_{L^r_v},
\ee
then, one integrates in $t,x$ and takes supremum.

Finally, estimate
\begin{align}
\| u_n  - u \|_{L^1_{t,x,v}} & \leq \| u_n  - u_{n, \e} \|_{L^1_{t,x,v}} + \| u_{n, \e}  - u \ast _v \phi_{\e} \|_{L^1_{t,x,v} } + \| u \ast _v \phi_{\e}  - u \|_{L^1_{t,x,v}} \\
& \leq C_K \| u_n  - u_{n, \e} \|_{L^1_{t,x} L^r_v} + \| u_{n, \e}  - u \ast _v \phi_{\e} \|_{L^1_{t,x,v}} + C_K \| u \ast _v \phi_{\e}  - u \|_{L^1_{t,x} L^r_v} \\
& \lesssim C_K \e \sup_m \| D_v u_m \|_{L^1_{t,x} L^r_v} +  \| u_{n, \e}  - u \ast _v \phi_{\e} \|_{L^1_{t,x,v} } + \| u \ast _v \phi_{\e}  - u \|_{L^1_{t,x} L^r_v} .
\end{align}

Thus
\be
\limsup_{n \to \infty} \| u_n  - u \|_{L^1_{t,x,v}} \lesssim_K \e \sup_m \| D_v u_m \|_{L^1_{t,x} L^r_v} + \| u \ast _v \phi_{\e}  - u \|_{L^1_{t,x} L^r_v} \to 0
\ee
as $\e \to 0$, which completes the proof.
\end{proof}

\begin{proof}[Proof of Proposition \ref{prop:HJ-compactness}]
The proof of this proposition follows by applying the Lemma \ref{lem:full-strong-conv} to $(\zeta_K u_n)_n$.
\end{proof}

It remains to obtain the necessary convergence of $(\beta_n)_n$ and $(\b_{T,n})_n$. 

\begin{lemma}\label{lem:modif}
Let $(u_n, \beta_n,\beta_{T,n})$ be a minimizing sequence for Problem \ref{prob:relaxed}. There exists a modification $(u_n, \tilde \beta_n,\tilde \beta_{T,n})$ of this sequence that is also minimizing such that $(\tilde \beta_n)_n$ is weakly precompact in $L^1_\loc(\cU_{m_0})$ and $(\tilde\b_{T,n})_n$ is weakly precompact in $L^1_{\loc}(\M\times\R^d)$.
\end{lemma}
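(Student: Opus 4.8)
The plan is to leave $u_n$ and the positive parts of the sources essentially alone and repair only the negative parts. By Lemma~\ref{lem:value-ub}(ii) the positive parts $(\beta_n)_+$ and $(\beta_{T,n})_+$ are bounded in $L^{q'}((0,T)\times\M\times\R^d)$ and $L^{s'}(\M\times\R^d)$ respectively, hence already weakly precompact in $L^1_\loc$; any modification that only enlarges the sources preserves this. The genuine difficulty is the negative parts: Corollary~\ref{cor:bounds_u} provides only a uniform $L^1_\loc(\cU_{m_0})$ bound on $(\beta_n)_-$, and the functional $\widetilde\cA$ adds nothing, since by \eqref{eq:Fconj-lb} one has $0\le\cF^\ast(x,v,\beta)\le C_F(x,v)$ for $\beta\le 0$, so $\widetilde\cA$ is essentially insensitive to how negative $\beta_n$ is. Thus a modification that shrinks $(\beta_n)_-$ is both necessary and, a priori, almost free of cost.

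\emph{The modification.} Let $(\bar m,\bar w)$ denote the unique minimizer of $\cB$ (Remark~\ref{rem:min_B}). Since $\widetilde\cA(u_n,\beta_n,\beta_{T,n})$ is bounded along the minimizing sequence, Corollary~\ref{cor:energy_ineq} applied with this fixed competitor gives a constant $C>0$ with $\int_0^T\!\!\int_{\M\times\R^d}(\beta_n)_-\bar m\le C$ and $\int_{\M\times\R^d}(\beta_{T,n})_-\bar m_T\le C$ for all $n$. For $M>0$ put $g_M:=M\,C_F/\bar m$ on $\{\bar m>0\}$ and $g_M:=+\infty$ elsewhere, and similarly $g_M^T:=M\,C_G/\bar m_T$; define
\[
\tilde\beta_n^{M}:=\max\{\beta_n,\,-g_M\},\qquad \tilde\beta_{T,n}^{M}:=\max\{\beta_{T,n},\,-g_M^T\}.
\]
Since $\tilde\beta_n^{M}\ge\beta_n$ and the test functions in \eqref{eq:weak_HJ_test} are nonnegative, $(u_n,\tilde\beta_n^M,\tilde\beta_{T,n}^M)\in\cK_\cA$ still satisfies \eqref{eq:weak_HJ}, and the remaining membership conditions are immediate (the positive parts are unchanged). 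Because $\cF^\ast$ is nondecreasing and bounded by $C_F$ on $(-\infty,0]$, and on $\{\beta_n<-g_M\}$ one has $(\beta_n)_->g_M=M C_F/\bar m$ (and likewise at the terminal time),
\begin{align*}
\widetilde\cA(u_n,\tilde\beta_n^M,\tilde\beta_{T,n}^M)-\widetilde\cA(u_n,\beta_n,\beta_{T,n})
&\le\int_{\{\beta_n<-g_M\}}C_F+\int_{\{\beta_{T,n}<-g_M^T\}}C_G\\
&\le\frac1M\Big(\int_0^T\!\!\int(\beta_n)_-\bar m+\int(\beta_{T,n})_-\bar m_T\Big)\le\frac{C}{M}.
\end{align*}
Hence, letting $M=M_n\to+\infty$, the sequence $(u_n,\tilde\beta_n^{M_n},\tilde\beta_{T,n}^{M_n})$ is again minimizing for Problem~\ref{prob:relaxed}.

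\emph{Equi-integrability, and the main obstacle.} For a fixed $M$ the negative part $(\tilde\beta_n^{M})_-$ is dominated by the single function $g_M$, which lies in $L^1_\loc(\cU_{m_0})$: by Definition~\ref{def:reachable} and the controllability argument behind Corollary~\ref{cor:bounds_u}, on every compact $K\subset\cU_{m_0}$ one establishes $\bar m\ge\sigma_K>0$, whence $g_M\le M\sigma_K^{-1}C_F\in L^1(K)$ since $C_F\in L^1$. Thus, for each fixed $M$, the Dunford--Pettis criterion yields weak $L^1_\loc$-precompactness of $(\tilde\beta_n^M)_n$, and the same reasoning with $\bar m_T$, $C_G$ in place of $\bar m$, $C_F$ gives weak $L^1_\loc(\M\times\R^d)$-precompactness of $(\tilde\beta_{T,n}^M)_n$. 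The obstacle is that the minimizing property forces $M_n\to+\infty$, along which the dominating functions $g_{M_n}$ degenerate; one must therefore let $M_n$ grow slowly enough -- quantified by the defect $\widetilde\cA(u_n,\beta_n,\beta_{T,n})-\inf_{\cK_\cA}\widetilde\cA$ together with the modulus of integrability of $C_F$ on compact subsets of $\cU_{m_0}$ -- and combine this with a diagonal extraction over $M$, so that the resulting single modified sequence retains both the minimizing property and weak $L^1_\loc$-precompactness of $(\tilde\beta_n)_n$ and $(\tilde\beta_{T,n})_n$. Reconciling these two competing requirements is the delicate part of the argument; once it is in place, the hypotheses of Proposition~\ref{prop:HJ-compactness} (supplied by Lemma~\ref{lem:value-ub} and Corollary~\ref{cor:bounds_u}) allow the proof of existence of a minimizer to proceed.
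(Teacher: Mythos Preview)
Your proposal takes a route that is genuinely different from the paper's, and it has two gaps, one of which is serious.

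\textbf{The serious gap.} Your equi-integrability argument hinges on the claim that the optimal density $\bar m$ is bounded below by some $\sigma_K>0$ on every compact $K\subset\cU_{m_0}$. Nothing in the paper establishes this, and there is no reason it should hold. The controllability argument behind Corollary~\ref{cor:bounds_u} shows that $\cU_{m_0}$ is covered by supports of solutions $\phi$ to the continuity equation driven by \emph{arbitrary} smooth controls; it says nothing about the particular optimal flow driven by $-D_pH(\cdot,\cdot,D_v u)$. If $m_0$ has compact support, $\bar m$ may well vanish on large subsets of $(0,T]\times\M\times\R^d$, in which case $g_M=MC_F/\bar m$ is not locally integrable and your domination argument collapses. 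The same issue applies at $t=T$: nothing guarantees $\bar m_T>0$ on compact subsets of $\M\times\R^d$. (Indeed, the uniqueness statement in Theorem~\ref{thm:existence_MFG} is restricted to $\{m>0\}$ precisely because $\{\bar m=0\}$ may be nontrivial.)

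\textbf{The acknowledged gap.} You concede that letting $M_n\to\infty$ destroys the fixed dominating function, and you describe the reconciliation only in words (``let $M_n$ grow slowly enough \ldots\ combine with a diagonal extraction''). Even granting $\bar m\ge\sigma_K>0$, this step is not carried out, and there is no quantitative handle in your setup that makes it routine.

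\textbf{What the paper does instead.} The paper avoids both issues by using only the $L^1_{\loc}(\cU_{m_0})$ bound on $(\beta_n)_-$ already available from Corollary~\ref{cor:bounds_u}. Via a compact exhaustion of $\cU_{m_0}$ and a diagonal argument, it invokes a biting-type result (Kosygina--Varadhan) to produce thresholds $J_n\in\R$ such that, setting $(\tilde\beta_n)_-:=(\beta_n)_-\one_{\{(\beta_n)_-\le J_n\}}$, one has local uniform integrability of $(\tilde\beta_n)_-$ \emph{and} $|\{(\beta_n)_->J_n\}\cap K|\to0$ for every compact $K$. The latter, together with $|\cF^\ast(\cdot,\cdot,0)-\cF^\ast(\cdot,\cdot,\beta_n)|\le 2C_F\in L^1$ on $\{\beta_n\le 0\}$, gives $\widetilde\cA(u_n,\tilde\beta_n,\tilde\beta_{T,n})-\widetilde\cA(u_n,\beta_n,\beta_{T,n})\to 0$ directly, with no competing requirement on a growth rate. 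The same construction handles $\beta_{T,n}$. This is both simpler and does not need any positivity of the optimal density.
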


\begin{proof}
We replace $\beta_n$ with some $\tilde \beta_n \geq \beta_n$ and $(\b_{T,n})_n$ with $\tilde \b_{T,n}\ge \b_{T,n}$ such that $(\tilde \beta_n,\tilde \beta_{T,n})$ is uniformly integrable, and $(u_n, \tilde \beta_n,\tilde \beta_{T,n})$ is still a minimizing sequence. We do this in a similar manner to \cite{Cardaliaguet-Graber}: since $(\beta_n)_-$ is bounded in $L^1_\loc(\cU_{m_0})$, using a compact exhaustion of $\cU_{m_0}$ and a diagonal argument, by \cite{Kosygina-Varadhan} it is possible to pass to a subsequence such that the following holds for some $J_n \in \R$. We define $\tilde \beta_n$ by
\be(\tilde \beta_n)_- : = (\beta_n)_- \one_{\{ (\beta_n)_- \leq J_n \}} , \quad (\tilde \beta_n)_+ = (\beta_n)_+
\ee
Then it is possible to choose $J_n$ in such a way that:
\begin{itemize}
\item For each compact set $K \subset \cU_{m_0}$, the sequence $(\tilde \beta_n)_- \one_K $ is uniformly integrable.
\item The measure of the set $\{ (\beta_n)_- > J_n \} \cap K$ converges to zero as $n$ tends to infinity.
\end{itemize}
{We use the exact same construction for $\tilde\b_{T,n}$, and we can get the same properties (now taking $K\subset \M\times\R^d$).}

We notice, that by construction the constraints
\be
-\partial_t u_n - v\cdot D_x u_n+H(x,v,D_v u_n) \leq \tilde \beta_n
\ee
and 
$$u_{T,n}\le \tilde \b_{T,n}$$
are still satisfied.
Finally, 
\begin{align*}
&\left | \int_0^T\int_{\M\times\R^d} \cF^\ast(x,v,\tilde \beta_n) \dd x \dd v \dd t - \int_0^T\int_{\M\times\R^d} \cF^\ast(x,v,\beta_n) \dd x \dd v \dd t \right |\\ 
&\leq \int_0^T\int_{\M\times\R^d} |\cF^\ast(x,v,0) - \cF^\ast(x,v,\beta_n)| \one_{\{\beta_n \leq - J_n\}} \dd x \dd v \dd t .
\end{align*}
By the estimate \eqref{eq:Fconj-lb}, the integrand on the right hand side is dominated by $2 C_F \in L^1$, and thus the right hand side converges to zero as $n$ tends to infinity. 

The exact same arguments apply to $\cG^\ast$ and $\tilde \b_{T,n}$ too.
Thus $(u_n, \tilde \beta_n,\tilde\b_{T,n})$ is a minimizing sequence. Moreover, there exists $(u,\beta,\beta_T)$ such that up to passing to a subsequence, $(u_n)_n$ converges to $u$ strongly in $L^1_\loc(\cU_{m_0})$, $(\tilde \beta_n)_n$ converges weakly to $\beta$ in $L^1_\loc(\cU_{m_0})$ and $(\tilde\b_{T,n})_n$ converges to $\b_T$ weakly in $L^1_{\loc}(\M\times\R^d)$.

\end{proof}

\subsection{Existence of a minimizer of $\widetilde{\cA}$ over $\cK_\cA$}

In this subsection, we prove that there exists a minimizer $(u,\beta, \b_T)$ by passing to the limit in the functional
\begin{align*}
\widetilde{\cA}(u_n, \tilde \beta_n,\tilde\b_{T,n}) : = \int_0^T\int_{\M\times\R^d}\cF^*(x,v, \tilde \beta_n)\dd x\dd v\dd t& - \int_{\M\times\R^d}u_n(0,x,v)m_0(x,v)\dd x\dd v  \\ 
&+ \int_{\M\times\R^d}\cG^*(\tilde\b_{T,n}(x,v)) \dd x \dd v .
\end{align*}

\begin{theorem}\label{thm:existence}
Under our standing assumptions, the functional $\widetilde{\cA}$ admits a minimizer over $\cK_\cA$.
\end{theorem}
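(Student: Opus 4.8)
The plan is to run the direct method of the calculus of variations on $\widetilde{\cA}$. We start from a minimizing sequence $(u_n,\beta_n,(\beta_T)_n)_n\subset\cK_\cA$; by Theorem~\ref{thm:duality} the infimum equals $-\min_{\cK_\cB}\cB$, which is finite by Remark~\ref{rem:min_B}, so we may assume $\widetilde{\cA}(u_n,\beta_n,(\beta_T)_n)\le C_{\widetilde\cA}$ for all $n$. Applying Lemma~\ref{lem:modif}, we replace the sequence by an equivalent minimizing one (not relabelled) for which, after passing to a subsequence, $(\beta_n)_n$ is weakly precompact in $L^1_\loc(\cU_{m_0})$ and $((\beta_T)_n)_n$ is weakly precompact in $L^1_\loc(\M\times\R^d)$.

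Next I would collect the uniform bounds. From Lemma~\ref{lem:value-ub}(ii), $\|(\beta_n)_+\|_{L^{q'}}$ and $\|((\beta_T)_n)_+\|_{L^{s'}}$ are uniformly bounded; from Corollary~\ref{rem:u_0}, $\|(u_n(0,\cdot))_+\|_{L^\infty+L^{q'}}$ and $\int_{\M\times\R^d} m_0\,(u_n(0,\cdot))_-$ are uniformly bounded; from Lemma~\ref{lem:value-ub}(i), Lemma~\ref{lem:duality-regular} and Corollary~\ref{cor:bounds_u}, $(u_n)_n$ is uniformly bounded in $L^\infty_tL^1_{x,v,\loc}(\cU_{m_0})$, $(D_v u_n)_n$ in $L^r_\loc(\cU_{m_0})$, and $((\beta_n)_-)_n$ in $L^1_\loc(\cU_{m_0})$. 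These are precisely the hypotheses of Proposition~\ref{prop:HJ-compactness}, so up to a subsequence $u_n\to u$ strongly in $L^1_\loc(\cU_{m_0})$. Passing to further subsequences, $D_v u_n\weakly D_v u$ weakly in $L^r_\loc(\cU_{m_0})$ (the weak limit being $D_vu$ by uniqueness of the distributional derivative), $\beta_n\weakly\beta$ weakly in $L^1_\loc(\cU_{m_0})$ with $\beta_+\in L^{q'}$, and $(\beta_T)_n\weakly\beta_T$ weakly in $L^1_\loc(\M\times\R^d)$ with $(\beta_T)_+\in L^{s'}$. The integrability conditions defining $\cK_\cA$ are then inherited in the limit from these uniform bounds, from weak lower semicontinuity of the norms, and from Corollary~\ref{cor:bounds_u} for the negative parts.

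Then I would pass to the limit in the constraint. Fix $0\le\phi\in C^1_c((0,T]\times\M\times\R^d)$ and use \eqref{eq:weak_HJ_test} for $u_n$: the term $\int_0^T\!\int_{\M\times\R^d}u_n[\partial_t\phi+\diver(v\phi)]$ converges by strong $L^1_\loc$ convergence, the right-hand side $\int_0^T\!\int\beta_n\phi+\int_{\M\times\R^d}(\beta_T)_n\phi_T$ converges by the weak $L^1_\loc$ convergences (here $\phi$ and $\phi_T$ are bounded with compact support), and for the Hamiltonian term one uses that $p\mapsto H(x,v,p)$ is convex and $\phi\ge0$, so by the Ioffe-type lower semicontinuity theorem for convex integrands $\int\phi\,H(x,v,D_vu)\le\liminf_n\int\phi\,H(x,v,D_vu_n)$. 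Combining these yields \eqref{eq:weak_HJ_test} for $(u,\beta,\beta_T)$, hence $(u,\beta,\beta_T)\in\cK_\cA$. For the two integral terms of $\widetilde\cA$, one uses that $\cF^\ast$ and $\cG^\ast$ are convex, lower semicontinuous and bounded below by $-C_F\in L^1$, resp. $-C_G\in L^1$: restricting to a compact exhaustion of the domain, using weak $L^1$-lower semicontinuity of $\mu\mapsto\int_K\cF^\ast(x,v,\mu)$ on each piece and then monotone convergence of the nonnegative integrand $\cF^\ast+C_F$ on the full domain, one obtains $\int_0^T\!\int\cF^\ast(x,v,\beta)\le\liminf_n\int_0^T\!\int\cF^\ast(x,v,\beta_n)$, and likewise $\int\cG^\ast(\beta_T)\le\liminf_n\int\cG^\ast((\beta_T)_n)$.

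The main obstacle is the boundary term $-\int_{\M\times\R^d}m_0\,u_n(0,\cdot)=-\langle u_n(0,\cdot),m_0\rangle$: the trace $u(0,\cdot)$ is only a weak time trace — a locally finite signed Radon measure whose negative part is controlled merely in $L^1(m_0)$ and whose positive part merely in $L^\infty+L^{q'}$ — and since $m_0$ may have non-compact support, stability of this pairing under the available convergences is delicate. I would split $u_n(0,\cdot)=(u_n(0,\cdot))_+-(u_n(0,\cdot))_-$: for the negative part, the weak time-trace theory of Appendix~\ref{app:time_regularity} (applied to the transport equation with $L^1_\loc$ right-hand side $\beta_n-H(x,v,D_vu_n)$ solved by $u_n$) yields the lower bound $\langle(u(0,\cdot))_-,m_0\rangle\le\liminf_n\langle(u_n(0,\cdot))_-,m_0\rangle$; for the positive part, the uniform $L^\infty+L^{q'}$ control together with the strong $L^1_\loc$ convergence of the $u_n$ in the interior (and the weak-$\ast$ convergence of the time traces that it forces along a further subsequence) lets one pass to the limit, giving $\langle(u_n(0,\cdot))_+,m_0\rangle\to\langle(u(0,\cdot))_+,m_0\rangle$. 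Together these give $-\langle u(0,\cdot),m_0\rangle\le\liminf_n\bigl(-\langle u_n(0,\cdot),m_0\rangle\bigr)$, and adding the three lower semicontinuity estimates yields $\widetilde\cA(u,\beta,\beta_T)\le\liminf_n\widetilde\cA(u_n,\beta_n,(\beta_T)_n)=\inf_{\cK_\cA}\widetilde\cA$, so $(u,\beta,\beta_T)$ is the desired minimizer.
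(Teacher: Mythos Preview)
Your overall strategy matches the paper's, and your handling of the uniform bounds, the compactness via Proposition~\ref{prop:HJ-compactness} and Lemma~\ref{lem:modif}, the passage to the limit in the constraint \eqref{eq:weak_HJ_test}, and the lower semicontinuity of the $\cF^\ast$ and $\cG^\ast$ terms is essentially correct. One technical point you skip: Proposition~\ref{prop:HJ-compactness} is stated for sequences satisfying \emph{equality} in the Hamilton--Jacobi equation, so that $\partial_t u_n+v\cdot D_x u_n$ is an $L^1_\loc$ function rather than a measure (this is what the averaging lemma needs). The paper secures this by taking the minimizing sequence in $C^1_b$, which is legitimate since $\inf_{C^1_b}\cA=\inf_{\cK_\cA}\widetilde\cA$ by Lemma~\ref{lem:duality1} and Theorem~\ref{thm:duality}.

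The genuine gap is in your treatment of the boundary term $-\langle u_0,m_0\rangle$. You assert that Appendix~\ref{app:time_regularity} ``yields'' $\langle(u_0)_-,m_0\rangle\le\liminf_n\langle(u_{0,n})_-,m_0\rangle$ and that the positive parts converge to $\langle(u_0)_+,m_0\rangle$, but the appendix only constructs the trace for a \emph{single} $u$; it contains no stability statement comparing the trace of the limit $u_0$ to limits of the traces $u_{0,n}$. Strong $L^1_\loc$ convergence in the interior does not force any such relation. The paper fills this by an explicit two-step construction. First it introduces an intermediate object $\overline u_0$: the weak-$\ast$ limit of $(u_{0,n})_+$ in $L^\infty+L^{q'}$ gives $(\overline u_0)_+$, while $(u_{0,n})_-m_0$ converges weak-$\ast$ as Radon measures to some $\nu$, and one sets $(\overline u_0)_-:=\nu/m_0$ on $\{m_0>0\}$. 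Second, it passes to the limit in the weak form using test functions $\phi\in C^\infty_c(\cU_{m_0})$ (which may be nonzero at $t=0$ on $\{m_0>0\}$), obtaining an inequality for $u$ with boundary datum $\overline u_0$; Lemma~\ref{lem:app_u0m0_mean} then gives $u_0\ge\overline u_0$ on $\{m_0>0\}$. Only from this comparison does one obtain $-\langle u_0,m_0\rangle\le-\langle\overline u_0,m_0\rangle$, and the final inequality $-\langle\overline u_0,m_0\rangle\le\liminf_n(-\langle u_{0,n},m_0\rangle)$ still requires a cut-off and monotone convergence over an exhaustion of $\{m_0>0\}$ by compactly supported functions, since $m_0$ need not have compact support. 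Without the intermediate $\overline u_0$ and the comparison via Lemma~\ref{lem:app_u0m0_mean}, your lower-semicontinuity claim for the trace term is unjustified.
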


\begin{proof}
Let $(u_n,\b_n,\b_{T,n})_{n\in\N}$ be a minimizing sequence. Without loss of generality, for example by considering $u_n \in C^1$, we may assume equality in the Hamilton-Jacobi equation:
\be
-\partial_t u_n - v\cdot D_x u_n +H(x,v,D_v u_n ) = \beta_n , \quad u_{n}(T,x,v) = \b_{T,n}(x,v) .
\ee
For this minimizing sequence we have, for some constant $C>0$,
\be
\sup_n \widetilde{\cA}(u_n , \beta_n, \b_{T,n}) \leq C .
\ee
We have discussed that this implies uniform in $n$ bounds on the following quantities:
\begin{align}
& \| (\b_{T,n})_+ \|_{L^{s'}(\M \times \R^d)}, \; \int_{\M \times \R^d} (u_{0,n})_- m_0 \dd x \dd v , \\
& \| (\b_{T,n})_- \|_{ L^1_{x,v,\loc}(\M\times\R^d)} , \; \| (u_{0,n} )_+ \|_{(L^\infty + L^{q'})_{x,v}},  \\
& \| D_v u_n \|_{L^r_\loc(\cU_{m_0})}, 
\| (\beta_n)_+ \|_{L^{q'}_{t,x,v}}, \; \| (\beta_n)_- \|_{L^1_\loc(\cU_{m_0})} .
\end{align}
To get the uniform integrability on  $(\beta_n)_-$ and $(\b_{T,n})_-$, we perform the surgery argument as in Lemma \ref{lem:modif}. So, let $(u_n,\tilde\b_n,\tilde\b_{T,n})_{n\in\N}$ be the modification of the minimizing sequence (which will still have uniformly bounded energy). By Proposition \ref{prop:HJ-compactness} we know that $(u_n)_{n\in\N}$ is strongly precompact in $L^1_{\rm{loc}}(\cU_{m_0})$, while Lemma \ref{lem:modif} yields that $(\tilde\b_n)_{n\in\N}$ and $(\tilde\b_{T,n})_{n\in\N}$ are weakly precompact in $L^1_{\rm{loc}}(\cU_{m_0})$ and $L^1_{\rm{loc}}(\M\times\R^d)$, respectively. In particular, after passing to a subsequence let us denote by $u$ the strong $L^1_\loc(\cU_{m_0})$ limit of $(u_n)_n$. In what follows, to ease the notation, we drop the tilde symbol, but whenever we write $\b_n$ and $\b_{T,n}$, we mean  the corresponding modified versions.

Passing to further subsequences (that we do not relabel), there exist limit functions so that we may also assume the following weak convergences:
\begin{itemize}
\item $(\beta_n)_+ \rightharpoonup \beta_+$, weakly in $L^{q'}_{t,x,v}([0,T]\times\M\times\R^d)$, as $n\to+\infty$.
\item $\beta_n \weakly \beta$, weakly in $L^1_{\rm{loc}}(\cU_{m_0})$, as $n\to+\infty$.
\item $(\beta_{T,n})_+ \rightharpoonup (\b_T)_+$, weakly in $L^{s'}(\M \times \R^d)$, as $n\to+\infty$.
\item $\b_{n,T} \weakly \b_T$,  weakly in $L^1_{\rm{loc}}(\M \times \R^d)$, as $n\to+\infty$.
\item $D_v u_n \rightharpoonup D_v u$, weakly in $L^r_\loc(\cU_{m_0})$, as $n\to+\infty$.
\end{itemize}

\medskip

With these convergences in hand, we are ready to pass to the limit in the Hamilton-Jacobi inequality constraint and the functional. Note that the weak form of the inequality (Definition \ref{def:solution}) implies that, for all $n$ and all test functions $\phi \in C^\infty_c((0,T] \times \M \times \R^d)$ such that $\phi \geq 0$,

\begin{align}\label{ineq:test_exist}
\int_0^T\int_{\M \times \R^d}(\partial_t \phi + v \cdot D_x \phi) u_n &+ \phi H(x,v, D_v u_n) \dd x \dd v \dd t  \leq 
\int_0^T \int_{\M\times\R^d} \phi \beta_n \dd x \dd v \dd t\\
& \nonumber+ \int_{\M\times\R^d} \phi(T,x,v) \b_{T,n}  \dd x \dd v .
\end{align}

Note again that $\phi$ is compactly supported in $\cU_{m_0}$.  By the weak convergence of $D_v u_n$ in $L^{r}_{\rm{loc}}(\cU_0)$ and the convexity of $H$ it follows that
\begin{align*}
&\int_0^T\int_{\M \times \R^d} \phi H(x,v, D_v u) \dd x \dd v \dd t = \int_{\cU_{m_0}} \phi H(x,v, D_v u) \dd x \dd v \dd t\\
& \leq \liminf_n\int_{\cU_{m_0}} \phi H(x,v, D_v u_n) \dd x \dd v \dd t =\liminf_n\int_0^T\int_{\M \times \R^d} \phi H(x,v, D_v u_n) \dd x \dd v \dd t.
\end{align*}
All the other convergences stated above are sufficient to guarantee convergence against $\phi$. So, we obtain that the limit $(u,\b,\b_T)$ satisfies \eqref{eq:weak_HJ_test}.

\medskip
\medskip

Next, we consider the convergence in the functional. In addition to the previous convergences, along the previously chosen subsequence, we have
\begin{itemize}
\item $(u_{0,n})_+ \weaklys (\overline u_0)_+$, weakly-$\ast$ in $(L^\infty + L^{q'})(\M \times \R^d)$, as $n\to+\infty$.
\end{itemize}

The convergence of the sequence $((u_{0,n})_-m_0)_n$ requires special attention. The boundedness of this sequence in $L^1(\M\times\R^d)$ lets us conclude that there exists a nonnegative Radon measure $\nu$ 
such that after passing to a subsequence (that we do not relabel)
$$
(u_{0,n})_-m_0\weaklys \nu,\ \ {\rm{as}}\ n\to+\infty.
$$
This means in particular that for all $\phi\in C_c(\M\times\R^d)$, we have
$$
\int_{\M\times\R^d}\phi (u_{0,n})_-m_0\dd x\dd v \to \int_{\M\times\R^d} \phi\nu(\dd x\dd v),\ \  {\rm{as}}\ n\to+\infty.
$$
Since the the sequence $((u_{0,n})_-m_0)_{n\in\N}$ is supported in the open set $\{m_0>0\}$, we get that $\spt(\nu)\subseteq\spt(m_0)$. Now, let us take $\phi\in C_c(\{m_0>0\})$ arbitrary and define $\psi:=\phi/m_0$. Since $m_0\in C(\M\times\R^d)$, by assumption, we have
that $\psi\in C_c(\{m_0>0\})$ and so
\begin{align*}
\int_{\M\times\R^d}\psi (u_{0,n})_-m_0\dd x\dd v&=\int_{\M\times\R^d}(\phi/m_0) (u_{0,n})_-m_0\dd x\dd v \\
&=\int_{\M\times\R^d}\phi (u_{0,n})_-\dd x\dd v \to \int_{\M\times\R^d} (\phi/m_0)\nu(\dd x\dd v),\ \  {\rm{as}}\ n\to+\infty.
\end{align*}
Thus, this means that as $n\to+\infty$, $(u_{n,0})_-$ converges weakly-$\ast$ to the nonnegative Radon measure $(\overline u_0)_-:=\frac{1}{m_0}\cdot\nu$, i.e. $(u_0)_-$ has density $\frac{1}{m_0}$ with respect to $\nu$. We notice that this means that $(\overline u_0)_-$ is absolutely continuous with respect to $\nu$. In fact, we also have that $\nu$ is absolutely continuous with respect to $(\overline u_0)_-$, and so we can write $\nu = m_0\cdot (\overline u_0)_-.$ 

\medskip

Let us take now $\phi\in C_c^\infty(\cU_{m_0})$, and test the inequalities satisfied by $(u_n,\b_n,\b_{T,n})$, similarly to \eqref{ineq:test_exist}, to obtain

\begin{align*}
\int_0^T\int_{\M \times \R^d}(\partial_t \phi + v \cdot D_x \phi) u_n &+ \phi H(x,v, D_v u_n) \dd x \dd v \dd t  \leq 
\int_0^T \int_{\M\times\R^d} \phi \beta_n \dd x \dd v \dd t\\
& \nonumber+ \int_{\M\times\R^d} \phi(T,x,v) \b_{T,n}  \dd x \dd v -  \int_{\M\times\R^d} \phi(0,x,v) u_{0,n}  \dd x \dd v.
\end{align*} 
Incorporating also the previously described convergence of $(u_{0,n})_n$, we can pass to the limit along the chosen subsequence and obtain
\begin{multline}
\int_0^T\int_{\M \times \R^d}(\partial_t \phi + v \cdot D_x \phi) u + \phi H(x,v, D_v u) \dd x \dd v \dd t \\ \leq \int_{\M\times\R^d} \b_T \phi_T\dd x\dd v - \int_{\M\times\R^d} \phi_0 {\overline u_0}(\dd x\dd v)  + \int_0^T \int_{\M\times\R^d} \phi \beta \dd x \dd v \dd t,
\end{multline}
where $\overline u_0 : = (\overline u_0)_+ - (\overline u_0)_-$. We notice that $\overline u_0$ is a signed Radon measure, supported in $\spt(m_0)$.

Having in hand this last inequality, we readily check that the assumptions of Lemma \ref{lem:app_u0m0_mean} are fulfilled with the choice of $\b_0=\overline u_0$ and $\b_T$ as before. This means in particular that $u$ satisfies
$$
\left\{
\begin{array}{l}
-\partial_t u - v\cdot D_x u +H(x,v,D_v u ) \leq \beta, \, \text{in} \; \sD'((0,T) \times \M \times \R^d) \\  
u_0 \geq \overline u_0 \; \text{in} \; \sD'(\{ m_0 > 0 \}); \ \ u_T \leq \b_T \; \text{in} \; \sD'(\M\times\R^d),
\end{array}
\right.
$$
where when writing the traces $u_0$ and $u_T$, we are referring to the right continuous version of $u$ in time. Since by construction, $\langle \overline u_0,m_0\rangle=\int_{\M\times\R^d} (\overline u_0)_+m_0\dd x\dd v - \langle (\overline u_0)_-,m_0\rangle$ is finite, we have that $\langle u_0,m_0\rangle$ is meaningful and finite, with 
$$-\langle u_0,m_0\rangle \le - \langle \overline u_0,m_0\rangle .$$

\medskip

{\bf Lower semicontinuity of the term involving $-\int_{\M\times\R^d} m_0 u_0(\dd x\dd v)$.}

\medskip

\noindent {\it Claim.} $\int_{\{m_0>0\}}  m_0 (u_0)_-(\dd x\dd v)\le \int_{\M \times \R^d} m_0 (\overline u_0)_- (\dd x \dd v) \leq \liminf_{n \to \infty} \int_{\M \times \R^d} (u_{0,n})_- m_0 \dd x \dd v.$

\medskip

{\it Proof of Claim.} First, notice that since $u_0 - \overline u_0$ is a positive distribution, it can be represented by a Radon measure. We may therefore write, for some $\nu_0\in \sM_+(\M \times \R^d)$, such that $\spt(\nu_0)\subseteq\spt(m_0)$ and 
\be
u_0 = \overline u_0 + \nu_0 = [(\overline u_0)_+ + \nu_0] - (\overline u_0)_-.
\ee
It follows that the Hahn-Jordan decomposition of $u_0$ satisfies
\be
(u_0)_+ \leq (\overline u_0)_+ + \nu_0, \; (u_0)_- \leq (\overline u_0)_-.
\ee

Now consider any compactly supported function $\zeta \in C_c(\{ m_0 > 0 \})$, such that $0 \leq \zeta \leq 1$. Then
\be
\int_{\M \times \R^d} \zeta m_0 (\overline u_0)_-(\dd x \dd v) = \lim_{n \to \infty} \int_{\M \times \R^d} \zeta (u_{0,n})_- m_0 \dd x \dd v \leq \liminf_{n \to \infty} \int_{\M \times \R^d} (u_{0,n})_- m_0 \dd x \dd v .
\ee
Since $(u_0)_- \leq (\overline u_0)_-$ as measures,
\be
\int_{\M \times \R^d} \zeta m_0 (u_0)_-(\dd x \dd v) \leq \liminf_{n \to \infty} \int_{\M \times \R^d} (u_{0,n})_- m_0 \dd x \dd v .
\ee
Then take a non-decreasing sequence of functions $\zeta_k$ such that $\zeta_k$ converges pointwise to the indicator function of the set $\{ m_0 > 0\}$ as $k$ tends to infinity: consider for example functions such that
\be
\zeta_k(x,v) = \begin{cases}
1 & {\rm{if}} \; m_0(x,v) > 2^{-k}\\
0 & {\rm{if}} \; m_0(x,v) \leq 2^{-(k+1)} .
\end{cases}
\ee 
This is always possible since $m_0$ is continuous. Then, by monotone convergence, we indeed have
\be
\int_{\M \times \R^d} m_0 (u_0)_-(\dd x \dd v) = \lim_{k \to +\infty} \int_{\M \times \R^d} \zeta_k m_0 (u_0)_-(\dd x \dd v) \leq \liminf_{n \to \infty} \int_{\M \times \R^d} (u_{0,n})_- m_0 \dd x \dd v,
\ee
as desired and the claim follows.

\medskip

By the weak star convergence of $(u_{0,n})_+$ to $(\overline u_0)_+$ in $(L^\infty + L^{q'})(\M \times \R^d)$, we also have that, for $\overline u_0$, the positive part $(u_{0,n})_+ m_0$ converges to $(\overline u_0)_+ m_0$ strongly in $L^1(\M \times \R^d)$. Since $- u_0 \leq - \overline u_0$ as signed measures, we deduce that

\be \label{tracezeroliminf}
- \int_{\M \times \R^d} m_0 u_0(\dd x \dd v) \leq - \int_{\M \times \R^d} m_0 \overline u_0( \dd x \dd v) \leq \liminf_n - \int_{\M \times \R^d} u_{0,n} m_0 \dd x \dd v \,, 
\ee
as required.

\medskip

{\bf The term involving $\cG^\ast$.}

\medskip

For the term involving $\cG^\ast$, we notice that by convexity
\be \label{Gliminf}
 \int_{\M\times\R^d}\cG^*(x,v,\b_{T} ) \dd x \dd v \leq \liminf_{n \to + \infty}  \int_{\M\times\R^d}\cG^*(x,v,\b_{T,n} ) \dd x \dd v.
\ee
Indeed, by classical results (cf. \cite[Proposition I.2.3, Corollary I.2.2]{Ekeland-Temam}), this is a consequence of the convexity of the integrand in the last variable and Fatou's lemma that yields the lower semi-continuity with respect to the strong topology on $L^1_{\rm{loc}}(\M\times\R^d)$.

\medskip

{\bf The term involving $\cF^\ast$. }

\medskip

First note that, for functions $\beta$ such that $\beta_+ \in L^{q'}([0,T] \times \M \times \R^d)$ and $\beta_- \in L^1_\loc([0,T]\times\M\times\R^d)$, by \eqref{eq:Fconj-lb} the following inequality holds:
\be
\left | \cF^\ast(x,v,\beta) \right | \leq c^{-1} |\beta_+|^{q'} + C_F(x,v) \in L^1([0,T] \times \M \times \R^d) .
\ee

Thus
\be \label{F-remove-zero}
 \int_0^T \int_{\M\times\R^d}\cF^*(x,v,\beta ) \dd x \dd v \dd t =  \int_{\cU_{m_0}}\cF^*(x,v,\beta ) \dd x \dd v \dd t.
\ee
Indeed, since $\cU_{m_0}=\{0\}\times\{m_0>0\}\cup (0,T)\times\M\times\R^d$, for all $\d>0$ we have
\be
\left| \int_0^T \int_{\M\times\R^d}\cF^*(x,v,\beta ) \dd x \dd v \dd t -  \int_{\cU_{m_0}}\cF^*(x,v,\beta ) \dd x \dd v \dd t \right|\le \int_0^\d\int_{\{m_0>0\}}|\cF^*(x,v,\beta )| \dd x \dd v \dd t
\ee
The integrand is bounded by the $L^1$ function $|\cF^*(x,v,\beta )|$ and converges to zero almost everywhere as $\delta$ tends to zero. Thus, taking $\delta \to 0$ we obtain \eqref{F-remove-zero}. A similar equality holds for all $\beta_n$.

Therefore, by the convexity of $\cF^*$ (and by arguments similar to the one for $\cG^*$), we conclude that
\begin{align} \label{Fliminf}
\int_0^T \int_{\M\times\R^d}\cF^*(x,v,\beta ) \dd x \dd v \dd t &= \int_{\cU_{m_0}}\cF^*(x,v,\beta ) \dd x \dd v \dd t  \leq \liminf_{n \to + \infty}\int_{\cU_{m_0}}\cF^*(x,v,\beta_n ) \dd x \dd v \dd t\\
&= \liminf_{n \to + \infty}\int_0^T \int_{\M\times\R^d}\cF^*(x,v,\beta_n ) \dd x \dd v \dd t.
\end{align}

Thus, collecting all the previous arguments, one deduces that
\be
\widetilde{\cA}(u, \beta, \beta_T) \leq \liminf_{n \to + \infty} \widetilde{\cA}(u_n, \beta_n, w_n).
\ee
The thesis of the theorem follows.
\end{proof}

\begin{corollary}
In the setting and notation of the previous theorem, in fact $u_0=\overline u_0$ on $\{m_0>0\}$.
\end{corollary}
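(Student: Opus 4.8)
The plan is to leverage the optimality of the minimizer $(u,\beta,\beta_T)$ produced in Theorem~\ref{thm:existence}, together with the one-sided lower semicontinuity bounds already isolated in its proof. Recall that there we obtained the decomposition $u_0 = \overline u_0 + \nu_0$ with $\nu_0 \in \sM_+(\M\times\R^d)$ and $\spt(\nu_0) \subseteq \spt(m_0)$, that $\langle u_0, m_0\rangle$ is finite, and the inequalities \eqref{Fliminf}, \eqref{Gliminf}, \eqref{tracezeroliminf}; moreover $(u,\beta,\beta_T) \in \cK_\cA$ realizes $\inf_{\cK_\cA}\widetilde\cA$. Since $m_0$ is continuous and strictly positive on the open set $\{m_0 > 0\}$, it suffices to prove that $\langle \nu_0, m_0\rangle = \int_{\M\times\R^d} m_0\,\dd\nu_0 = 0$: this forces $m_0 = 0$ $\nu_0$-a.e., hence $\nu_0 \mres \{m_0 > 0\} = 0$, i.e. $u_0 = \overline u_0$ on $\{m_0 > 0\}$.

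To establish $\langle\nu_0, m_0\rangle = 0$, I would introduce
\[
\widehat S := \int_0^T\int_{\M\times\R^d}\cF^*(x,v,\beta)\,\dd x\dd v\dd t + \int_{\M\times\R^d}\cG^*(x,v,\beta_T)\,\dd x\dd v - \langle\overline u_0, m_0\rangle,
\]
and note, using $\langle u_0, m_0\rangle = \langle\overline u_0, m_0\rangle + \langle\nu_0, m_0\rangle$, that $\widehat S = \widetilde\cA(u,\beta,\beta_T) + \langle\nu_0, m_0\rangle \ge \widetilde\cA(u,\beta,\beta_T) = \inf_{\cK_\cA}\widetilde\cA$. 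For the reverse bound, one adds \eqref{Fliminf}, \eqref{Gliminf}, \eqref{tracezeroliminf} and uses superadditivity of $\liminf$ along the subsequence fixed in the proof of Theorem~\ref{thm:existence}:
\begin{align*}
\widehat S &\le \liminf_n \int_0^T\int_{\M\times\R^d}\cF^*(x,v,\beta_n)\,\dd x\dd v\dd t + \liminf_n \int_{\M\times\R^d}\cG^*(x,v,\beta_{T,n})\,\dd x\dd v + \liminf_n\left(-\langle u_{0,n}, m_0\rangle\right) \\
&\le \liminf_n \widetilde\cA(u_n,\beta_n,\beta_{T,n}) = \inf_{\cK_\cA}\widetilde\cA,
\end{align*}
the last equality because $(u_n,\beta_n,\beta_{T,n})$ is a minimizing sequence. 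Here the three $\liminf$'s are finite: they are bounded below by the a priori estimates of Lemma~\ref{lem:value-ub}, Corollary~\ref{rem:u_0} and the lower bound in \eqref{eq:Fconj-lb}, and bounded above because their sum converges; this is precisely what makes superadditivity of $\liminf$ applicable without indeterminate forms. Comparing the two bounds yields $\widehat S = \inf_{\cK_\cA}\widetilde\cA = \widetilde\cA(u,\beta,\beta_T)$, hence $\langle\nu_0, m_0\rangle = 0$, and the corollary follows.

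The argument is essentially bookkeeping on top of Theorem~\ref{thm:existence}, so I do not anticipate a genuine obstacle; the only points meriting care are the finiteness of the three $\liminf$'s (so that superadditivity of $\liminf$ is legitimate) and the fact that $\nu_0$ is carried by $\spt(m_0)$, which guarantees that only its restriction to $\{m_0 > 0\}$ is relevant to the statement. No ingredient beyond those already used to prove Theorem~\ref{thm:existence} is needed.
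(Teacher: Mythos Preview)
Your proof is correct and follows essentially the same route as the paper: both squeeze using the minimality of $(u,\beta,\beta_T)$ together with the three lower-semicontinuity inequalities \eqref{Fliminf}, \eqref{Gliminf}, \eqref{tracezeroliminf} to force $\langle u_0,m_0\rangle=\langle\overline u_0,m_0\rangle$, and then use that $u_0-\overline u_0=\nu_0\ge 0$ combined with strict positivity of $m_0$ on $\{m_0>0\}$ to conclude $\nu_0\mres\{m_0>0\}=0$. The only cosmetic difference is that you package the sandwich via the auxiliary quantity $\widehat S$, whereas the paper writes the chain of inequalities directly.
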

\begin{proof}
Since $(u, \beta,\beta_T)$ is a minimizer,
\begin{align}%\label{functional:variational_relaxed}
\widetilde{\cA}(u, \beta,\beta_T) &\leq - \int_{\M\times\R^d}m_0 \overline u_{0}(\dd x\dd v) + \liminf_{n\to+\infty} \int_0^T\int_{\M\times\R^d}\cF^*(x,v, \beta_n)\dd x\dd v\dd t\\
& + \liminf_{n\to \infty} \int_{\M\times\R^d}\cG^*(x,v,\beta_{T,n}) \dd x \dd v . \\
&\leq \lim_{n \to +\infty} \left ( - \int_{\M\times\R^d}m_0 u_{0,n}(\dd x\dd v) + \int_0^T\int_{\M\times\R^d}\cF^*(x,v, \beta_n)\dd x\dd v\dd t  + \int_{\M\times\R^d}\cG^*(\beta_{T,n}) \dd x \dd v \right ) \\
& = \widetilde{\cA}(u, \beta,\beta_T),
\end{align}
where in the last equality we have used that $(u_n,\b_n,\b_{T,n})$ is a minimizing sequence.

All the above inequalities are therefore equalities. From the inequalities \eqref{tracezeroliminf}, \eqref{Gliminf} and \eqref{Fliminf} for each of the terms, we deduce that
\be
- \int_{\M\times\R^d}m_0 u_{0}(\dd x\dd v) = - \int_{\M\times\R^d}m_0 \overline u_{0}(\dd x\dd v) .
\ee
It follows that $u_0 = \overline u_{0}$ as signed measures on $\{ m_0 > 0 \}$. 
Indeed, first note that $u_0 \geq \overline u_0$ as signed measures, or in other words $u_0 - \overline u_0$ is a nonnegative measure. For any non-negative test function $\phi_0 \in C_c(\{m_0 > 0\})$ we have $m_0 \geq \e > 0$ on the support of $\phi_0$, for some $\e > 0$. Thus there exists a constant $C$ such that $\phi_0 \leq C m_0$. Thus
\be
0 \leq \int_{\M \times \R^d} \phi_0 (u_0 - \overline u_0)(\dd x \dd v) \leq C \int_{\M \times \R^d} m_0 (u_0 - \overline u_0)(\dd x \dd v) = 0 .
\ee
Thus $u_0 = \overline u_0$ as signed measures on $\{ m_0 > 0 \}$.
\end{proof}

\section{Existence and uniqueness of a solution to the MFG system}\label{sec:6}

In this section we prove Theorem~\ref{thm:existence_MFG}. First, we show that the minimizers of Problems~\ref{prob:density} and \ref{prob:relaxed} that we have obtained in the previous sections provide weak solutions $(u,m)$ of the MFG.

\begin{theorem}\label{thm:exist_sol}
Let $(u,\b,\b_T)$ be a minimizer of $\widetilde\cA$ over $\cK_\cA$ and let $(m,w)$ be a minimizer of $\cB$ over $\cK_\cB$. Then

(i) $\b(t,x,v,)=f(x,v,m(t,x,v))$ for a.e. $(t,x,v)\in(0,T)\times\M\times\R^d$, $\b_T(x,v)=g(x,v,m_T(x,v))$ for a.e. $(x,v)\in\M\times\R^d$; 

(ii) $w(t,x,v)=-m(t,x,v)D_{p_v}H(x,v,D_v u(t,x,v))$ for a.e. $(t,x,v)\in(0,T)\times\M\times\R^d$. 

As a consequence, $(u,m)$ is a weak solution to \eqref{eq:main} in the sense of Definition \ref{def:notion_solution}.
\end{theorem}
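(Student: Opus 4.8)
The plan is to exploit the fact that the two duality results (Lemma~\ref{lem:duality1} and Theorem~\ref{thm:duality}) force the minimizers of $\widetilde\cA$ and $\cB$ to satisfy all the Fenchel--Young inequalities appearing in the energy estimates as equalities. Concretely, combining the duality $\inf\widetilde\cA = -\min\cB$ with the inequality $\widetilde\cA(u,\beta,\beta_T)+\cB(m,w)\ge 0$ from Lemma~\ref{lem:inequality}, we get $\widetilde\cA(u,\beta,\beta_T)+\cB(m,w)=0$. On the other hand, tracing back through the proof of Lemma~\ref{lem:inequality} and Corollary~\ref{cor:energy_bound}, the nonnegative quantity $\widetilde\cA+\cB$ is bounded below by a sum of nonnegative integrands, namely
\[
\int_0^T\!\!\int_{\M\times\R^d}\!\!\big[\cF^\ast(x,v,\beta)+\cF(x,v,m)-\beta m\big]\dd x\dd v\dd t,
\quad
\int_{\M\times\R^d}\!\!\big[\cG^\ast(\beta_T)+\cG(m_T)-\beta_T m_T\big]\dd x\dd v,
\]
and
\[
\int_0^T\!\!\int_{\M\times\R^d}\Big[H(x,v,D_vu)+L\Big(x,v,-\tfrac{w}{m}\Big)+D_vu\cdot\tfrac{w}{m}\Big]m\,\dd x\dd v\dd t,
\]
each of which is $\ge 0$ pointwise by the definition of the Fenchel conjugates. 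Since their sum vanishes, each integrand must vanish a.e.

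From the first equality, $\cF^\ast(x,v,\beta)+\cF(x,v,m)=\beta m$ a.e., which is exactly the equality case in Fenchel--Young; by strict convexity of $\cF$ in $m$ (Assumption~\ref{hyp:Hamiltonian}(H2)) and differentiability for $m>0$, this forces $\beta = \partial_m\cF(x,v,m) = f(x,v,m)$ a.e. on $\{m>0\}$, and one checks the case $m=0$ separately using $\cF^\ast(x,v,\beta)=0$ for $\beta\le 0$ together with the convention $f(x,v,0)m=0$. The same argument with $\cG$ gives $\beta_T=g(x,v,m_T)$ a.e. This proves (i). For (ii), the third equality gives $H(x,v,D_vu)+L(x,v,-w/m)=-D_vu\cdot(w/m)$ $m$-a.e.; since $L(x,v,\cdot)$ is the Legendre transform of the convex, differentiable function $H(x,v,\cdot)$, the equality case in the Fenchel--Young inequality forces $-w/m = D_pH(x,v,D_vu)$, i.e. $w = -m\,D_{p_v}H(x,v,D_vu)$ a.e. on $\{m>0\}$, while on $\{m=0\}$ one has $w=0$ by the convention built into $\cB$; this yields (ii).

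It remains to verify that $(u,m)$ satisfies all of the items (i)--(vii) of Definition~\ref{def:notion_solution}. The summability properties (i)--(iii) follow from Corollary~\ref{cor:bounds_u}, Lemma~\ref{lem:value-ub}, Corollary~\ref{rem:u_0}, Remark~\ref{rem:min_B}, and the Corollary preceding this theorem (which identifies $u_0=\overline u_0$ on $\{m_0>0\}$ and gives that $(u_0)_-$ is a locally finite Radon measure supported in $\{m_0>0\}$). Item (iv) is the Hamilton--Jacobi inequality, which holds for the minimizer by construction (Definition~\ref{def:solution} and Lemma~\ref{lem:app_boundary_condition}), now with $\beta=f(\cdot,\cdot,m)$ and $\beta_T=g(\cdot,\cdot,m_T)$ substituted via part~(i). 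Item (v), the continuity equation, holds because $(m,w)\in\cK_\cB$ and $w=-mD_{p_v}H(x,v,D_vu)$ by part~(ii). Item (vi) is Definition~\ref{def:u0m0} together with the finiteness of $\langle u_0,m_0\rangle$ established in the proof of Theorem~\ref{thm:existence}. The main remaining obstacle is item (vii), the \emph{energy equality}: here I would combine the two one-sided inequalities. The inequality ``$\le$'' in \eqref{eq:energy_equality} is precisely \eqref{ineq:energy_limit} of Corollary~\ref{cor:energy_ineq} after substituting $\beta=f(\cdot,\cdot,m)$, $\beta_T=g(\cdot,\cdot,m_T)$, and using $L(x,v,-w/m)m = -m\,D_{p_v}H(x,v,D_vu)\cdot D_vu + mH(x,v,D_vu)$ from the equality case of Fenchel--Young (so the $L$-term rewrites as the bracketed expression in \eqref{eq:energy_equality}). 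The reverse inequality ``$\ge$'' follows from Corollary~\ref{cor:energy_bound}(2): substituting the pointwise identities from (i) and (ii) there, the term $mH(x,v,D_vu)+mL(x,v,-w/m)+D_vu\cdot w$ vanishes (again the Fenchel equality case), and the inequality of Corollary~\ref{cor:energy_bound}(2) collapses to exactly the reverse of \eqref{eq:energy_equality}. Combining the two directions gives the energy equality, completing the verification that $(u,m)$ is a weak solution.
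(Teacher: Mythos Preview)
Your overall strategy matches the paper's: use duality to obtain $\widetilde\cA(u,\beta,\beta_T)+\cB(m,w)=0$, then extract the pointwise identities as equality cases in the Fenchel--Young inequalities. Your ``three gaps'' packaging is a clean reorganisation of the paper's two-step argument; the inequality $\widetilde\cA+\cB\ge(\text{sum of three gaps})$ is exactly Corollary~\ref{cor:energy_bound}(2) rewritten, so this part is fine.

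There is, however, a genuine slip in your derivation of the energy equality (item (vii)). You claim that substituting the pointwise identities into Corollary~\ref{cor:energy_bound}(2) and using that its left-hand side vanishes yields the ``$\ge$'' direction of \eqref{eq:energy_equality}. It does not: setting the left side to zero gives
\[
0\le \int_{\M\times\R^d}\beta_T m_T\,\dd x\dd v-\int_{\M\times\R^d}u_0 m_0\,\dd x\dd v+\int_0^T\!\!\int_{\M\times\R^d}\beta m\,\dd x\dd v\dd t+\int_0^T\!\!\int_{\M\times\R^d}mL\big(x,v,-\tfrac{w}{m}\big)\,\dd x\dd v\dd t,
\]
which, after substituting $\beta=f$, $\beta_T=g$ and $mL=m[D_{p_v}H\cdot D_vu-H]$, is again the ``$\le$'' direction of \eqref{eq:energy_equality}, not its reverse. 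You therefore have the same inequality twice and no equality.

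The fix is immediate and is what the paper does: from $\widetilde\cA+\cB=0$ and the two Fenchel gaps for $\cF,\cG$ being nonnegative, one obtains directly that
\[
\int_0^T\!\!\int_{\M\times\R^d}\beta m\,\dd x\dd v\dd t+\int_{\M\times\R^d}\beta_T m_T\,\dd x\dd v-\int_{\M\times\R^d}m_0 u_0\,\dd x\dd v+\int_0^T\!\!\int_{\M\times\R^d}mL\big(x,v,-\tfrac{w}{m}\big)\,\dd x\dd v\dd t\le 0,
\]
while Corollary~\ref{cor:energy_ineq} gives $\ge 0$. Hence this quantity is \emph{equal} to zero, and substituting $\beta=f$, $\beta_T=g$, $mL=m[D_{p_v}H\cdot D_vu-H]$ yields \eqref{eq:energy_equality} as an identity. (Equivalently: in your chain $0=\widetilde\cA+\cB\ge\text{gaps}\ge 0$, the \emph{first} inequality being an equality is precisely equality in Corollary~\ref{cor:energy_bound}(2), which combined with the third gap vanishing gives the same conclusion.)
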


\begin{proof}
By Theorem~\ref{thm:duality},
\be
\widetilde \cA(u,\beta,\beta_T) + \cB(m,w) = 0.
\ee
Substituting the definitions of the functionals, we obtain
\begin{multline}
\int_0^T\int_{\M\times\R^d}\cF(x,v,  m)+ \cF^*(x,v, \beta)\dd x\dd v\dd t- \int_{\M\times\R^d}m_0u_0(\dd x\dd v)
\\+ \int_{\M\times\R^d} \cG(x,v,m_T) + \cG^*(x,v, \beta_T) \dd x \dd v + \int_0^T\int_{\M\times\R^d}L \left (x,v,- \frac{w}{m} \right)m\dd x\dd v\dd t = 0.
\end{multline}
Fenchel's inequality then implies that
\be
\int_0^T\int_{\M\times\R^d} \beta m \dd x\dd v\dd t + \int_{\M\times\R^d} \beta_T m_T - m_0 u_0 \dd x\dd v
+ \int_0^T\int_{\M\times\R^d}L \left (x,v,- \frac{w}{m} \right)m\dd x\dd v\dd t \leq  0.
\ee
By Corollary~\ref{cor:energy_ineq}, the left hand side is non-negative, and therefore equality holds:
\be \label{energy-equality-beta}
\int_0^T\int_{\M\times\R^d} \beta m \dd x\dd v\dd t + \int_{\M\times\R^d} \beta_T m_T - m_0 u_0 \dd x\dd v
+ \int_0^T\int_{\M\times\R^d}L \left (x,v,- \frac{w}{m} \right)m\dd x\dd v\dd t =  0.
\ee
Moreover, equality also holds almost everywhere in the applications of Fenchel's inequality. Thus the following hold almost everywhere in $[0,T] \times\M\times\R^d$:
\be \label{energy-equality-fg}
\beta = f(x,v,m(t,x,v)), \qquad \beta_T = g(x,v, m(T,x,v)) .
\ee
By \eqref{energy-equality-beta} and Corollary~\ref{cor:energy_bound},
\be
\int_0^T \int_{\M \times \R^d} m H(x,v, D_v u) + m L \left (x,v, - \frac{w}{m} \right ) + D_v u \cdot w \, \dd x \dd v \dd t \leq 0 .
\ee
By Fenchel's inequality, the integrand on the right hand side is non-negative; we deduce that equality holds in the above estimate and thus the integrand is equal almost everywhere to zero. It follows that
\be
\frac{w}{m} = - D_{p_v}H(x,v,D_v u)
\ee
almost everywhere on the support of $m$. Moreover,
\be \label{energy-equality-Lagrangian}
 m L \left (x,v, - \frac{w}{m} \right ) =  m \left ( D_v u \cdot D_{p_v}H(x,v,D_v u)  -  H(x,v, D_v u) \right ) .
 \ee
The energy equality then follows from substituting \eqref{energy-equality-fg} and \eqref{energy-equality-Lagrangian} into \eqref{energy-equality-beta}.

\end{proof}

We show now, conversely, that weak solutions to the MFG system are in fact minimizers in the corresponding variational problems. The proof of this result follows similar ideas as the corresponding ones from \cite{Cardaliaguet-Graber, CarGraPorTon}. 
\begin{theorem} \label{thm:consistency}
Let $(u,m)$ be a weak solution to \eqref{eq:main} in the sense of Definition~\ref{def:notion_solution}. Then by setting $\b:=f(\cdot,\cdot,m)$, $\b_T:=g(\cdot,\cdot,m_T)$ and $w:=-mD_{p_v}H(\cdot,\cdot,D_v u)$, we find that $(m,w)$ is a solution of Problem \ref{prob:density}, while $(u,\b,\b_T)$ is a solution of Problem \ref{prob:relaxed}.
\end{theorem}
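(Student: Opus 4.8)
The plan is to derive the statement from the duality already established: setting $\beta:=f(\cdot,\cdot,m)$, $\beta_T:=g(\cdot,\cdot,m_T)$, $w:=-mD_{p_v}H(\cdot,\cdot,D_vu)$, I will first check that $(m,w)\in\cK_\cB$ and $(u,\beta,\beta_T)\in\cK_\cA$ with $\cB(m,w),\widetilde\cA(u,\beta,\beta_T)<+\infty$, and then show, using the energy equality \eqref{eq:energy_equality}, that
\[
\widetilde\cA(u,\beta,\beta_T)+\cB(m,w)=0 .
\]
Since Lemma~\ref{lem:inequality} gives $\widetilde\cA(\cdot)+\cB(\cdot)\ge 0$ on all admissible pairs and Theorem~\ref{thm:duality} gives $\inf_{\cK_\cA}\widetilde\cA=-\min_{\cK_\cB}\cB$, this identity forces $(m,w)$ to be a minimizer of $\cB$ over $\cK_\cB$ and $(u,\beta,\beta_T)$ a minimizer of $\widetilde\cA$ over $\cK_\cA$.

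For admissibility of $(m,w)$: Definition~\ref{def:notion_solution} already provides $m\in L^q$, $m_T\in L^s$, the finite first velocity moment of $m$, the continuity equation \eqref{eq:continuity_main} (which with $w=-mD_{p_v}H(\cdot,\cdot,D_vu)$ is exactly the second line of \eqref{eq:main}), and $m|_{t=0}=m_0$. The growth bound \eqref{hyp:H} yields $|D_{p_v}H(x,v,p)|\lesssim|p|^{r-1}+1$, hence $|w|\lesssim m|D_vu|^r+m$ and $\tfrac{|w|^{r'}}{m^{r'-1}}=m|D_{p_v}H(\cdot,\cdot,D_vu)|^{r'}\lesssim m|D_vu|^r+m$, both in $L^1$ since $m|D_vu|^r,m\in L^1$; thus $w\in L^1$, so $(m,w)\in\cK_\cB$, and since $\cF(\cdot,\cdot,m),\cG(\cdot,\cdot,m_T)\in L^1$ (growth conditions together with $m\in L^q$, $m_T\in L^s$) and $L(x,v,-w/m)m\lesssim \tfrac{|w|^{r'}}{m^{r'-1}}+C_Lm\in L^1$, we get $\cB(m,w)<+\infty$.

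For admissibility of $(u,\beta,\beta_T)$: since $f(x,v,\cdot)$ is the $m$-derivative of $\cF(x,v,\cdot)$ and $g(x,v,\cdot)$ that of $\cG(x,v,\cdot)$, the equality case of the Fenchel inequality gives $\cF^\ast(x,v,\beta)=\beta m-\cF(x,v,m)$ and $\cG^\ast(x,v,\beta_T)=\beta_Tm_T-\cG(x,v,m_T)$ a.e. On $\{\beta>0\}$, combining the lower bound in \eqref{eq:Fconj-lb} with $\cF^\ast(x,v,\beta)\le\beta m$ gives $c|\beta|^{q'}\le\beta m+C_F$, and Young's inequality absorbs $\beta m$ into $\tfrac c2|\beta|^{q'}+C\,m^q$; hence $\beta_+^{q'}\lesssim m^q+C_F\in L^1$, so $\beta_+\in L^{q'}$, and the same argument gives $(\beta_T)_+\in L^{s'}$. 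With this, Lemma~\ref{lem:value-ub} yields $u_+\in L^\infty_t(L^\infty+L^{q'})_{x,v}$; the properties $u\in L^1_\loc(\cU_{m_0})$, $D_vu\in L^r_\loc(\cU_{m_0})$, $(u_0)_+\in L^\infty+L^{q'}$ and $-\int_{\M\times\R^d}u_0m_0<+\infty$ are part of Definition~\ref{def:notion_solution}. Finally, the local integrability of $\beta_-$ on $\cU_{m_0}$ and of $(\beta_T)_-$ is obtained by rerunning the argument of Lemma~\ref{lem:duality-regular}: testing the Hamilton--Jacobi inequality satisfied by $u$ against nonnegative $\phi\in C^1_c(\cU_{m_0})$ that solve the transport--continuity equation along $\dot x=v,\ \dot v=a$, integrating $D_vu$ by parts, and using $H\ge-C_H$ together with the bounds just established on $\beta_+,(\beta_T)_+,(u_0)_+$ and the finiteness of $\int u_0m_0$; the controllability covering argument of Corollary~\ref{cor:bounds_u} then upgrades this to $\beta_-\in L^1_\loc(\cU_{m_0})$ and $(\beta_T)_-\in L^1_\loc$. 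Hence $(u,\beta,\beta_T)\in\cK_\cA$, and since $\cF^\ast(\cdot,\cdot,\beta),\cG^\ast(\cdot,\cdot,\beta_T)\in L^1$ and the trace term is finite, $\widetilde\cA(u,\beta,\beta_T)<+\infty$.

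To close: adding the two functionals and using the Fenchel equalities $\cF^\ast(x,v,\beta)+\cF(x,v,m)=\beta m$, $\cG^\ast(x,v,\beta_T)+\cG(x,v,m_T)=\beta_Tm_T$, together with $-w/m=D_{p_v}H(x,v,D_vu)$ and the Hamiltonian/Lagrangian equality $L(x,v,D_{p_v}H(x,v,p))=D_{p_v}H(x,v,p)\cdot p-H(x,v,p)$, one finds that $\widetilde\cA(u,\beta,\beta_T)+\cB(m,w)$ equals precisely the combination of terms in the energy equality \eqref{eq:energy_equality} with $\beta=f(\cdot,\cdot,m)$, $\beta_T=g(\cdot,\cdot,m_T)$, namely $\int\!\beta m+\int\!\beta_Tm_T-\int\!m_0u_0+\int\! m\big(D_{p_v}H(\cdot,\cdot,D_vu)\cdot D_vu-H(\cdot,\cdot,D_vu)\big)$, which vanishes by \eqref{eq:energy_equality}. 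The conclusion follows from Lemma~\ref{lem:inequality} and Theorem~\ref{thm:duality} as described above. The main obstacle is the admissibility verification of the previous two paragraphs, and within it the two nontrivial points are the bound $\beta_+\in L^{q'}$ (the Young-inequality bootstrap off the Fenchel equality) and the local integrability of the negative parts $\beta_-,(\beta_T)_-$, which requires reproducing the duality-with-special-test-functions argument of Section~\ref{sec:3}; once these are in place, the rest is a direct consequence of the energy equality and the already-proven duality.
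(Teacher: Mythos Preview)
Your argument is correct and uses the same ingredients as the paper, but packages the conclusion differently. Both proofs begin with the same admissibility verification (the paper establishes $\beta_+\in L^{q'}$ and $(\beta_T)_+\in L^{s'}$ via the Fenchel identity $\cF^\ast(\cdot,\cdot,f(\cdot,\cdot,m))=\beta m-\cF(\cdot,\cdot,m)$ and the growth bounds, essentially your Young-inequality step, and likewise obtains $m\beta_-,\,m_T(\beta_T)_-\in L^1$). Where the proofs diverge is in deducing minimality. You compute $\widetilde\cA(u,\beta,\beta_T)+\cB(m,w)=0$ directly from the Fenchel equalities and the energy equality~\eqref{eq:energy_equality}, then invoke Lemma~\ref{lem:inequality} to conclude that both $(u,\beta,\beta_T)$ and $(m,w)$ attain their respective infima; note that Theorem~\ref{thm:duality} is not actually needed here, Lemma~\ref{lem:inequality} alone suffices. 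The paper instead takes an arbitrary competitor $(\bar u,\bar\beta,\bar\beta_T)\in\cK_\cA$, applies the convexity (subdifferential) inequality for $\cF^\ast,\cG^\ast$ at $\beta,\beta_T$ (where $\partial_\beta\cF^\ast(\cdot,\cdot,f(\cdot,\cdot,m))=m$), rewrites via the energy equality, and closes with Corollary~\ref{cor:energy_ineq} applied to $(\bar u,\bar\beta,\bar\beta_T)$ and $(m,w)$. Your route is effectively the proof of Theorem~\ref{thm:exist_sol} run in reverse and is arguably more transparent as a ``zero duality gap'' argument; the paper's route is a direct first-order optimality computation. Both rest on the same estimate from Section~\ref{sec:4}.
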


\begin{proof}
First let us notice that by Fenchel's equality one has 
$$\cF^\ast(\cdot,\cdot,f(\cdot,\cdot, m)) = \cF(\cdot,\cdot,m) - mf(\cdot,\cdot,m).$$
We define the Borel set $B:=\{(t,x,v)\in[0,T]\times\M\times\R^d: f(x,v,m(t,x,v))\ge 0\}.$ Restricted to this set, we find
$$- C_F\leq\cF^\ast(\cdot,\cdot,f(\cdot,\cdot, m)) \le \cF(\cdot,\cdot,m),\ \ {\rm{a.e.\ in\ }}B,$$
where in the first inequality we have used our assumptions \eqref{eq:Fconj-lb}. Since, $C_F$, $\cF^*(\cdot,\cdot,0)$ and $\cF(\cdot,\cdot,m)$ are summable, this implies in particular that $\cF^\ast(\cdot,\cdot,f(\cdot,\cdot, m)_+)\in L^1([0,T]\times\M\times\R^d).$ Using the growth condition on $\cF^*$ we find furthermore that  $f(\cdot,\cdot, m)_+=\b_+\in L^{q'}([0,T]\times\M\times\R^d).$

Now, on $B^c$, i.e. when $f(\cdot,\cdot,m)\le 0$, we find 
\begin{align*}
0\le mf(\cdot,\cdot,m)_-=-mf(\cdot,\cdot,m)=\cF^\ast(\cdot,\cdot,f(\cdot,\cdot, m)) - \cF(\cdot,\cdot,m)\le \sup_{\beta <0} \cF^\ast(\cdot,\cdot,\beta)-\cF(\cdot,\cdot,m).
\end{align*}
Again, the summability of the right hand side, we find that $m f(\cdot,\cdot,m)_- \in L^1[0,T]\times\M\times\R^d.$ Using the exact same arguments for $\cG^*$, we find similarly that $(\b_T)_+\in L^{s'}(\M\times\R^d)$ and $m_T(\b_T)_-\in L^1(\M\times\R^d)$.

Moreover, we have that $D_v u\in L^r_{\rm{loc}}(\cU_{m_0}),$ $m\in L^1([0,T]\times\M\times\R^d)$ and $w\in L^1([0,T]\times\M\times\R^d;\R^d)$, so $(m,w)$ and $(u,\b,\b_T)$ are admissible competitors for the two optimisation problems.

Now, take $(\ov{u},\ov{\b},\ov{\b}_T)$ as an admissible competitor for the problem involving the functional $\widetilde\cA$. By the convexity and differentiability of $\cF^*$ and $\cG^*$ in their last variable we have
\begin{align*}
\widetilde\cA(\ov{u},\ov{\b},\ov{\b}_T)&=\int_0^T\int_{\M\times\R^d}\cF^*(x,v, \ov\beta)\dd x\dd v\dd t - \int_{\M\times\R^d}m_0\ov u_0(\dd x\dd v)  + \int_{\M\times\R^d}\cG^*(\ov\beta_T) \dd x \dd v\\
&\ge \int_0^T\int_{\M\times\R^d}\cF^*(x,v, \beta)\dd x\dd v\dd t+\int_0^T\int_{\M\times\R^d}\partial_\b\cF^*(x,v, \beta)(\ov \b-\b)\dd x\dd v\dd t\\ 
&- \int_{\M\times\R^d}m_0 u_0(\dd x\dd v)+\int_{\M\times\R^d}m_0(u_0-\ov u_0)(\dd x\dd v)\\
&+ \int_{\M\times\R^d}\cG^*(\beta_T) \dd x \dd v+\int_{\M\times\R^d}\partial_{\b_T}\cG^*(\beta_T)(\ov\b_T-\b_T) \dd x \dd v\\
&=\widetilde\cA(u,\b,\b_T)+\int_0^T\int_{\M\times\R^d}m(\ov \b-f(\cdot,\cdot,m))\dd x\dd v\dd t+\int_{\M\times\R^d}m_0(u_0-\ov u_0)(\dd x\dd v)\\
&+\int_{\M\times\R^d}m_T(\ov\b_T-g(\cdot,\cdot,m_T)) \dd x \dd v
\end{align*}
where we have used the fact that $mf(\cdot,\cdot,m)\in L^1([0,T]\times\M\times\R^d)$ and $m_Tg(\cdot,\cdot,m_T)\in L^1(\M\times\R^d)$ (by the arguments at the beginning of this proof). Moreover, $m\ov\b\in L^1([0,T]\times\M\times\R^d)$ and $m_T\ov\b_T\in L^1(\M\times\R^d)$ (cf. Corollary \ref{cor:energy_ineq}) and
$$\partial_\b\cF^*(x,v, \beta)=\partial_\b\cF^*(x,v, f(x,v,m))=\partial_\b\cF^*(x,v, \partial_m\cF(x,v,m))=m,$$ 
$$\partial_{\b_T}\cG^*(x,v, \beta_T)=\partial_\b\cG^*(x,v, g(x,v,m_T))=\partial_{\b_T}\cG^*(x,v, \partial_{m_T}\cG(x,v,m_T))=m_T.$$
Now, using \eqref{eq:energy_equality}, one obtains
\begin{align*}
\widetilde\cA(\ov{u},\ov{\b},\ov{\b}_T)&\ge\widetilde\cA(u,\b,\b_T)+\int_0^T\int_{\M\times\R^d}m\ov \b\dd x\dd v\dd t +\int_{\M\times\R^d}m_T\ov\b_T \dd x \dd v
-\int_{\M\times\R^d}m_0\ov u_0(\dd x\dd v)\\
&+\int_0^T\int_{\M\times\R^d}L(\cdot,\cdot,-w/m)m\dd x\dd v\dd t,
\end{align*}
where in the last line we have used $$D_{p_v}H(\cdot,\cdot,D_v u)\cdot D_v u-H(\cdot,\cdot,D_vu)=L(\cdot,\cdot,D_{p_v}H(\cdot,\cdot,D_v u)).$$
By Corollary \ref{cor:energy_ineq} we conclude that $\widetilde\cA(\ov{u},\ov{\b},\ov{\b}_T)\ge \widetilde\cA(u,\b,\b_T)$, as desired.

\medskip

Using the very same ideas and the convexity of $\cF$ and $\cG$, we can conclude similarly that $(m,w)$ must be a minimizer in Problem \ref{prob:density}.

\end{proof}

Finally, we show that solutions in the sense of Definition~\ref{def:notion_solution} are unique, again following similar ideas as the corresponding ones from \cite{CarGraPorTon}. One major difference, however, is that we develop a suitable comparison principle for the distributional solutions to the corresponding Hamilton-Jacobi inequalities.
This completes the proof of Theorem~\ref{thm:existence_MFG}.

\begin{proof}[Proof of Theorem~\ref{thm:existence_MFG}]
The existence of a weak solution $(u,m)$ follows from combining Theorem~\ref{thm:existence} (existence of a minimizer for $\tilde \cA$), Theorem~\ref{thm:duality} (duality, and the fact that the infimum for $\tilde \cB$ is attained) and Theorem~\ref{thm:exist_sol} (minimizers are weak solutions in the sense of Definition~\ref{def:notion_solution}).

For the uniqueness, we first apply Theorem~\ref{thm:consistency} to obtain that for $i=1,2$, $(u_i, f(\cdot, \cdot, m_i), g(\cdot, \cdot, m_i(T)))$ are minimizers of $\tilde \cA$ over $\cK_{\tilde \cA}$ and $(m_i, - m_i D_pH(\cdot, \cdot, D_v u_i))$ are minimizers of $\tilde \cB$ over $\cK_{\tilde \cB}$. Since the minimizer of $\tilde \cB$ is unique by strict convexity, $m_1 = m_2 = : m$ almost everywhere and $- m_1 D_pH(\cdot, \cdot, D_v u_1) = - m_2 D_pH(\cdot, \cdot, D_v u_2) = : w$ almost everywhere.

To show that $u_1 = u_2$ almost everywhere on the set $\{ m > 0\}$, we first define $u = \max\{ u_1, u_2\}$. By Lemma~\ref{lem:maximum}, $u$ also satisfies the Hamilton-Jacobi inequality, with $\beta = f(\cdot, \cdot, m)$ and $\beta_T = g(\cdot, \cdot, m_T)$. Since $u_i \leq u$ for $i=1,2$, we have
\be
- \int_{\M\times\R^d} m_0 u_0(\dd x \dd v)  \leq - \int_{\M\times\R^d} m_0 [u_i]_0 (\dd x \dd v),
\ee
and thus $\widetilde \cA(u, \beta, \beta_T) \leq \widetilde \cA(u_i, \beta, \beta_T)$. Since $u_i$ is a minimizer, equality holds. By duality, equality then holds in the energy inequalities of Corollary~\ref{cor:energy_ineq} for $u$ and $m$, with $\beta, \beta_T, w$ as defined previously.
Thus, for almost all $t \in [0,T]$,
\begin{align}\label{eq:energy_limit-tT}
\int_{\M\times\R^d}u_tm_t\dd x\dd v = \int_t^T\int_{\M\times\R^d}L\left(x,v,-\frac{w}{m}\right)m\dd x\dd v\dd t + \int_t^T\int_{\M\times\R^d}\b m\dd x\dd v\dd t + \int_{\M\times\R^d} \b_Tm_T \dd x\dd v .
\end{align}
The same is true replacing $u$ by $u_i$, and so 
\be
\int_{\M\times\R^d}u_tm_t\dd x\dd v = \int_{\M\times\R^d}(u_i)_tm_t\dd x\dd v , \qquad i=1,2.
\ee
Thus, since also $u_i \leq u$, we deduce that $u_i = u$ almost everywhere on the set $\{ m> 0 \}$.

\end{proof}

\section{Sobolev estimates on the solutions}\label{sec:regularity}

In this section, we obtain Sobolev estimates on the optimizers of the variational problems, and hence on weak solutions for the MFG system \eqref{eq:main}. The general idea is to ``compare'' the optimality of the optimizers in the variational problems with their carefully chosen translates. Then using strong convexity of the data one can deduce differential quotient estimates.

These results are inspired by \cite{GraMes,GraMesSilTon}. However, because of the kinetic nature of the model we need completely new ideas when we consider perturbations. So, the estimates that we obtain are on suitable kinetic differential operators applied to the solutions. Another crucial difference between our results and the ones in \cite{GraMes,GraMesSilTon} is that our Sobolev estimates in the $x$ and $v$ variables are local in time on $(0,T]$. The main reason behind this is that we have a weaker notion of trace for $u_0$, that we cannot ensure to be stable under perturbations. This imposed further technical complications that require us to work in the case of $r=2$.

We emphasize that these estimates are consequences of the {\it stronger} convexity and regularity assumptions on the data stated in Assumption~\ref{hyp:stronger}.

\medskip

\subsection{Local in time Sobolev estimates}\label{subsec:v-reg}

Let $\zeta:[0,T]\to\R$ be a smooth cut-off function such that {\Blue $\zeta(0)=0$ and $\zeta(t) > 0$ for all $t > 0$.} We define $\eta:[0,T]\to\R$ as $\eta(t):=\int_0^t\zeta(s)\dd s$.

For competitors $(m,w)$ in Problem \ref{prob:density}, without loss of generality one might assume the representation $w=Vm$, for a suitable vector field $V$. Let $\d\in\R^d$ with $|\d|\ll 1$ and define 
$$m^\d(t,x,v):=m(t,x+\eta(t)\d,v+\zeta(t)\d) \ \ {\rm{and}}\ \ V^\d(t,x,v):=V(t,x+\eta(t)\d,v+\zeta(t)\d)-\zeta'(t)\d.$$
We use the notation $w^\d:=V^\d m^\d$.

We notice that by construction, if $(m,w)=(m,Vm)$ is a distributional solution to \eqref{eq:continuity_main}, so is $(m^\d,w^\d)=(m^\d,V^\d m^\d)$ and $m^\d(0,\cdot,\cdot)=m_0$.

Similarly, for competitors $(u,\b,\b_T)$ in Problem \ref{prob:relaxed} we define 
\begin{align*}
&u^\d(t,x,v):=u(t,x+\eta(t)\d,v+\zeta(t)\d),\ \  \b^\d(t,x,v):=\b(t,x+\eta(t)\d,v+\zeta(t)\d),\\ 
&{\rm{and}}\  \ \b^\d_T(x,v):=\b_T(x+\eta(t)\d,v+\zeta(t)\d).
\end{align*}
Furthermore, we define 
\begin{align*}
&H^\d(x,v,\xi):=H(x+\eta(t)\d,v+\zeta(t)\d,\xi)+\zeta'(t)\d\cdot \xi,\\
&\cF^\d(x,v,\theta):=\cF(x+\eta(t)\d,v+\zeta(t)\d,\theta),\\
&\cG^\d(x,v,\theta):=\cG(x+\eta(t)\d,v+\zeta(t)\d,\theta).
\end{align*}
When computing the Legendre transforms of these functions in their last variables we obtain
\begin{align*}
&(H^\d)^*(x,v,\xi):=H^*(x+\eta(t)\d,v+\zeta(t)\d,\xi-\zeta'(t)\d),\\
&(\cF^\d)^*(x,v,\theta):=\cF^*(x+\eta(t)\d,v+\zeta(t)\d,\theta),\\
&(\cG^\d)^*(x,v,\theta):=\cG^*(x+\eta(t)\d,v+\zeta(t)\d,\theta).
\end{align*}
Let us notice that $H^\d$ satisfies in particular the hypotheses imposed in Assumptions \ref{hyp:Hamiltonian}. 
Correspondingly, we define the functionals $\widetilde\cA^\d$ and $\cB^\d$ and the constraint sets $\cK_\cA^\d$ and $\cK_\cB^\d$, using the shifted versions of the data functions. By construction, as a consequence of a change of variable formula, the proof of the following lemma is immediate.
\begin{lemma}
$(m,w)$ is an optimizer of $\cB$ over $\cK_\cB$ if and only if $(m^\d,w^\d)$ is an optimizer of $\cB^\d$ over $\cK_\cB^\d$. Similarly, $(u,\b,\b_T)$ is an optimizer of $\widetilde \cA$ over $\cK_\cA$ if and only if $(u^\d,\b^\d,\b_T^\d)$ is an optimizer of $\widetilde\cA^\d$ over $\cK_\cA^\d$.
\end{lemma}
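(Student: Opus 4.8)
The plan is to observe that the shifted objects are nothing but the pullbacks of the unshifted ones along an explicit measure-preserving diffeomorphism, which intertwines every piece of structure entering the two variational problems; the claimed equivalence of optimizers is then immediate, since that diffeomorphism induces a bijection between the two admissible sets under which the two objective functionals coincide. Concretely, I would introduce $\Psi_\d(t,x,v) := (t, x+\eta(t)\d, v+\zeta(t)\d)$. For each fixed $t$ the map $(x,v)\mapsto(x+\eta(t)\d,v+\zeta(t)\d)$ is a translation of $\M\times\R^d$ (well defined on $\T^d$ as well as on $\R^d$), hence a bijection with unit Jacobian; so $\Psi_\d$ is a $C^\infty$-diffeomorphism of $[0,T]\times\M\times\R^d$ onto itself with inverse $\Psi_{-\d}$ that preserves Lebesgue measure on each time slice. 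Since $\eta(0)=\zeta(0)=0$, $\Psi_\d$ is the identity on $\{t=0\}$; together with the description $\cU_{m_0}=(\{0\}\times\{m_0>0\})\cup((0,T]\times\M\times\R^d)$ from Definition~\ref{def:reachable}, this gives $\Psi_\d(\cU_{m_0})=\cU_{m_0}$, so pullback along $\Psi_\d$ preserves all function spaces in Problems~\ref{prob:density} and \ref{prob:relaxed} ($L^p$, $L^p_\loc(\cU_{m_0})$, $L^\infty_t$, finiteness of the first velocity moment, etc., up to harmless constants in the last two).

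Next I would record the algebraic identity behind everything: a direct chain-rule computation, using $\eta'=\zeta$, gives for smooth $\varphi$, with $y=x+\eta(t)\d$, $\xi=v+\zeta(t)\d$,
\be
(\partial_t + v\cdot D_x)(\varphi\circ\Psi_\d)(t,x,v) = \big[(\partial_t + \xi\cdot D_y)\varphi\big](\Psi_\d(t,x,v)) + \zeta'(t)\d\cdot (D_\xi\varphi)(\Psi_\d(t,x,v)).
\ee
The spurious term $\zeta'(t)\d\cdot D_\xi\varphi$ is exactly what is compensated by the $\zeta'(t)\d$-corrections built into $H^\d$ and $V^\d$. From this one checks: (a) if $(m,w=Vm)$ solves the continuity equation \eqref{eq:continuity_main} with $m|_{t=0}=m_0$, then $(m^\d,w^\d=V^\d m^\d)$ solves \eqref{eq:continuity_main} again, with $m^\d|_{t=0}=m_0$ (the latter because $\Psi_\d=\id$ on $\{t=0\}$); (b) if $(u,\b,\b_T)$ is a weak distributional solution of \eqref{eq:weak_HJ} in the sense of Definition~\ref{def:solution}, then $(u^\d,\b^\d,\b_T^\d)$ is a weak distributional solution of the $\d$-shifted inequality, i.e.\ with $H$ replaced by $H^\d$: one tests against $\phi\in C^1_c((0,T]\times\M\times\R^d)$ and changes variables $\phi\mapsto\phi\circ\Psi_{-\d}$, which again lies in $C^1_c((0,T]\times\M\times\R^d)$, the transport commutator term being absorbed by the extra term $\zeta'(t)\d\cdot\xi$ in $H^\d$. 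The (measure-valued) time traces transform by $u^\d_0=u_0$ (since $\Psi_\d$ degenerates to the identity at $t=0$) and $u^\d_T=u_T\circ\Psi_\d|_{t=T}$; in particular $\langle u^\d_0,m_0\rangle=\langle u_0,m_0\rangle$, finite iff $\langle u_0,m_0\rangle$ is. Hence $\Phi_\d:(m,w)\mapsto(m^\d,w^\d)$ and $(u,\b,\b_T)\mapsto(u^\d,\b^\d,\b_T^\d)$ send $\cK_\cB$ bijectively onto $\cK_\cB^\d$ and $\cK_\cA$ bijectively onto $\cK_\cA^\d$, with inverse $\Phi_{-\d}$.

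Then I would verify invariance of the functionals and conclude. Using the Fenchel-conjugate identities already recorded for $H^\d,\cF^\d,\cG^\d$ — in particular $(H^\d)^*(x,v,a)=H^*(y,\xi,a-\zeta'(t)\d)$, so that $L^\d(x,v,-V^\d)\,m^\d = L(y,\xi,-V)\,m$ evaluated at $\Psi_\d(t,x,v)$ — together with the unit Jacobian of $\Psi_\d$ on each time slice, a term-by-term change of variables yields
\be
\cB^\d(m^\d,w^\d)=\cB(m,w), \qquad \widetilde\cA^\d(u^\d,\b^\d,\b_T^\d)=\widetilde\cA(u,\b,\b_T),
\ee
where for the terminal terms one uses $m^\d_T=m_T\circ\Psi_\d|_{t=T}$ against the correspondingly shifted argument of $\cG^\d$, and for the $\langle m_0,u_0\rangle$ term the trace identity above. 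Since $\Phi_\d$ is a bijection between the admissible sets under which the objective functionals agree, $(m,w)$ minimizes $\cB$ over $\cK_\cB$ if and only if $(m^\d,w^\d)$ minimizes $\cB^\d$ over $\cK_\cB^\d$, and likewise $(u,\b,\b_T)$ minimizes $\widetilde\cA$ over $\cK_\cA$ if and only if $(u^\d,\b^\d,\b_T^\d)$ minimizes $\widetilde\cA^\d$ over $\cK_\cA^\d$, which is the assertion.

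As the authors indicate, the statement carries no essential difficulty — it is a bookkeeping verification of a change of variables — but the point demanding the most care is the weak/distributional formulation of the Hamilton--Jacobi constraint: one must ensure the shifted test functions stay in the admissible class $C^1_c((0,T]\times\M\times\R^d)$ (in particular, they do not touch $t=0$), that the transport commutator term is matched precisely by the $\zeta'(t)\d$-shifts built into $H^\d$ and $V^\d$, and that the weak time trace $u_0$ — only a locally finite signed Radon measure — is literally unchanged under $\Psi_\d$ because the shift collapses to the identity at $t=0$, so that the delicate term $\langle u_0,m_0\rangle$ in $\widetilde\cA$ is preserved.
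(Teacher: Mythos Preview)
Your proposal is correct and follows essentially the same change-of-variables idea as the paper's proof, which simply unwinds the definition of $\cB^\d(m^\d,w^\d)$ and applies the translation $(x,v)\mapsto(x-\eta(t)\d,v-\zeta(t)\d)$ to recover $\cB(m,w)$. Your version is considerably more thorough---you make the diffeomorphism $\Psi_\d$ explicit, verify the bijection between constraint sets (including the distributional HJ constraint and the delicate trace term $\langle u_0,m_0\rangle$), and record the transport-operator identity that explains the $\zeta'(t)\d$ corrections---whereas the paper treats the lemma as an immediate consequence and only sketches one direction of the $\cB$ statement.
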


\begin{proof}
We provide the proof of one of the statements only, the other ones follow similar steps. Suppose that $(m^\d,w^\d)$ is an optimizer of $\cB^\d$ over $\cK_\cB^\d$. This means in particular the minimality of the quantity
\begin{align*}
&\int_0^T\int_{\M\times\R^d}\cF^\d(x,v,  m^\d)\dd x\dd v\dd t + \int_0^T\int_{\M\times\R^d}(H^\d)^* \left (x,v,- \frac{w^\d}{m^\d} \right)m\dd x\dd v\dd t\\
& +\int_{\M\times\R^d}\cG^\d(x,v,m_T^\d(x,v))\dd x\dd v\\
&=\int_0^T\int_{\M\times\R^d}\cF(x+\eta(t)\d,v+\zeta(t)\d, m(t,x+\eta(t)\d,v+\zeta(t)\d))\dd x\dd v\dd t\\
& + \int_0^T\int_{\M\times\R^d}H^* \left (x+\eta(t)\d,v+\zeta(t)\d,- \frac{w(t,x+\eta(t)\d,v+\zeta(t)\d)}{m(t,x+\eta(t)\d,v+\zeta(t)\d)} \right)m\dd x\dd v\dd t\\
& +\int_{\M\times\R^d}\cG(x+\eta(t)\d,v+\zeta(t)\d,m_T(x+\eta(t)\d,v+\zeta(t)\d))\dd x\dd v\\
&=\int_0^T\int_{\M\times\R^d}\cF(x,v,  m)\dd x\dd v\dd t + \int_0^T\int_{\M\times\R^d}H^* \left (x,v,- \frac{w}{m} \right)m\dd x\dd v\dd t\\
& +\int_{\M\times\R^d}\cG(x,v,m_T(x,v))\dd x\dd v, 
\end{align*}
where in the last equality we have used the change of variables $(x,v)\mapsto(x-\eta(t)\d,v-\zeta(t)\d)$. So, this means that the minimality of $(m^\d,w^\d)$, after a change of variables, yields the minimality of $(m,w)$.
\end{proof}

Now we are ready to state the main result of this subsection.

\begin{theorem}\label{thm:reg_v}
Suppose that $(u,m)$ is a weak solution to \eqref{eq:main} in the sense of Definition \ref{def:notion_solution} and that \eqref{hyp:coupling_strong}, \eqref{hyp:H_strong_coercive}, \eqref{hyp:D_xv^2L} hold.

Then, there exists $\ov C>0$ such that
$$\|m^{\frac{q}{2} - 1}(\eta D_x+ \zeta D_v)m\|_{L^2((0,T]\times\M\times\R^d)} \leq \ov C,\ \ \ \  \|m^{1/2}(\eta D_x+ \zeta D_v)D_v u\|_{L^2((0,T]\times\M\times\R^d)} \leq \ov C$$
and 
$$
\|m_T^{\frac{s}{2} - 1}(\eta(T)D_x+ \zeta(T)D_v)m_T\|_{L^2(\M\times\R^d)} \leq \ov C.
$$
\end{theorem}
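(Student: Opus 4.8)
The plan is to exploit the variational characterisation of $(u,m)$ via a shift-and-compare argument, in the spirit of \cite{GraMes,GraMesSilTon}, adapted to the kinetic structure. By Theorem~\ref{thm:consistency} I may work with the optimizers: $(m,w)$ with $w=-mD_{p_v}H(\cdot,\cdot,D_vu)$ is the (unique) minimizer of $\cB$ over $\cK_\cB$, and $(u,\beta,\beta_T)$ with $\beta=f(\cdot,\cdot,m)$, $\beta_T=g(\cdot,\cdot,m_T)$ is a minimizer of $\widetilde\cA$ over $\cK_\cA$; the energy equality holds, and $-w/m=D_{p_v}H(\cdot,\cdot,D_vu)$, so that (since $r=2$) increments of $D_vu$ and of $w/m$ are comparable through \eqref{eq:Hcoercivity}. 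For $\delta\in\R^d$ with $|\delta|\ll1$ I use the $\pm\delta$-shifted data and solutions as defined above. The structural facts I would record first are: $(m^{\pm\delta},w^{\pm\delta})\in\cK_\cB$ — the continuity equation is preserved by the shift (the $\zeta'(t)\delta$-correction built into $H^\delta$ is exactly what makes the transport term consistent) and $m^{\pm\delta}|_{t=0}=m_0$ because $\eta(0)=\zeta(0)=0$; $\cK_\cB^{\pm\delta}=\cK_\cB$, since this constraint set does not depend on the data; by the change-of-variables lemma above, $(u^{\pm\delta},\beta^{\pm\delta},\beta_T^{\pm\delta})$ minimizes $\widetilde\cA^{\pm\delta}$ over $\cK_\cA^{\pm\delta}$; and $u^{\pm\delta}|_{t=0}=u_0$, again because $\eta(0)=\zeta(0)=0$ — this last point makes the (unstable) $u_0$-trace drop out of the comparison and is the reason the estimates are only local in time on $(0,T]$.

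The heart of the argument is a pair of comparisons. On the one hand, applying the $\pm\delta$-shifted form of Corollary~\ref{cor:energy_ineq}, eq.~\eqref{ineq:energy_limit}, to the admissible pair $\big((u^{\pm\delta},\beta^{\pm\delta},\beta_T^{\pm\delta}),(m,w)\big)$, subtracting the unshifted energy equality, using $u^{\pm\delta}|_{t=0}=u_0$, and adding the $+\delta$ and $-\delta$ inequalities gives
\[
\int_0^T\!\!\int_{\M\times\R^d}\!\big[\beta^{\delta}+\beta^{-\delta}-2\beta\big]m
+\int_{\M\times\R^d}\!\big[\beta_T^{\delta}+\beta_T^{-\delta}-2\beta_T\big]m_T
+\int_0^T\!\!\int_{\M\times\R^d}\!\big[(H^{\delta})^{*}+(H^{-\delta})^{*}-2H^{*}\big]\!\left(x,v,-\tfrac{w}{m}\right)m\ \ge\ 0 .
\]
On the other hand, testing the minimality of $(m,w)$ over $\cB$ against the competitor $\tfrac12\big((m^{\delta},w^{\delta})+(m^{-\delta},w^{-\delta})\big)\in\cK_\cB$ and using the strong convexity of $\cF$, $\cG$ in $m$ and the (uniform, since $r=2$) convexity of the Lagrangian perspective functional $(m,w)\mapsto L(x,v,-w/m)m$ yields a second inequality in which the convexity gap appears with a definite sign. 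In both, I change variables to re-centre every term at the optimal $m$ (writing $m^{(\pm)}(x,v):=m(x\pm\eta\delta,v\pm\zeta\delta)$, and similarly for $m_T$, $D_vu$, $w/m$), rewrite $\int\!\!\int f(\cdot,\cdot,m)\,[m^{(+)}+m^{(-)}-2m]$ as $-\tfrac12\sum_{\pm}\int\!\!\int[f(\cdot,\cdot,m^{(\pm)})-f(\cdot,\cdot,m)](m^{(\pm)}-m)$ plus errors of the form $\int\!\!\int[f(\cdot\mp\eta\delta,\cdot\mp\zeta\delta,m)-f(\cdot,\cdot,m)](m^{(\mp)}-m)$, bound the first group below by $c_f\sum_{\pm}\int\!\!\int\min\{(m^{(\pm)})^{q-2},m^{q-2}\}|m^{(\pm)}-m|^2$ via \eqref{hyp:f_strongly_monotone}, and absorb the errors — using the Lipschitz bound \eqref{hyp:f_Lipschitz_x} and Young's inequality — into a fraction of that gain plus a remainder of size $O(|\delta|^2)$. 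The terminal term is treated identically with $g$, \eqref{hyp:g_Lipschitz_x}, \eqref{hyp:g_strongly_monotone}; the Hamiltonian contributions are second differences of $L=H^{*}$ along a direction of length $\lesssim|\delta|$, hence $\lesssim|\delta|^2\int\!\!\int(|w/m|^2+1)m\lesssim|\delta|^2$ by \eqref{hyp:D_xv^2L}, and — combined with \eqref{eq:Hcoercivity} — transfer the control of $(w/m)^{(\pm)}-(w/m)^{(\mp)}$ to that of $(D_vu)^{(\pm)}-(D_vu)^{(\mp)}$.

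Collecting all this yields, uniformly in $\delta$,
\[
c_f\!\int_0^T\!\!\int_{\M\times\R^d}\! m^{q-2}\Big|\tfrac{m^{(+)}-m^{(-)}}{|\delta|}\Big|^2
+c_H\!\int_0^T\!\!\int_{\M\times\R^d}\! m\Big|\tfrac{(D_vu)^{(+)}-(D_vu)^{(-)}}{|\delta|}\Big|^2
+c_g\!\int_{\M\times\R^d}\! m_T^{s-2}\Big|\tfrac{m_T^{(+)}-m_T^{(-)}}{|\delta|}\Big|^2\ \le\ \ov C ,
\]
with $\ov C$ depending only on the data, $\widetilde\cA$ and $\cB$. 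Since $m^{(+)}-m^{(-)}$ is a centred increment of $m$ over the step $2(\eta\delta,\zeta\delta)$ and $m^{q-2}|\nabla_{x,v}m|^2=|m^{\frac q2-1}\nabla_{x,v}m|^2$, letting $\delta\to0$ along the coordinate directions identifies the left-hand side, in the usual difference-quotient sense, with the squared weighted seminorms in the statement (for the middle term also using $-w/m=D_{p_v}H(\cdot,\cdot,D_vu)$ and \eqref{eq:Hcoercivity}); as pointed out in the Remark after Theorem~\ref{thm:reg-main}, the conclusion is precisely this uniform difference-quotient estimate. The main obstacle is the error control in the previous paragraph: the second difference in $(x,v)$ of the couplings is not $O(|\delta|^2)$ from the merely Lipschitz hypotheses \eqref{hyp:f_Lipschitz_x}–\eqref{hyp:g_Lipschitz_x} alone, so the comparison must be organised so that every error carries a factor $m^{(\mp)}-m$ (resp. $m_T^{(\mp)}-m_T$) absorbable by the strong-monotonicity gain; making the absorption lossless forces one to keep track of the degenerate weights $\min\{(m^{(\pm)})^{q-2},m^{q-2}\}$ near $\{m=0\}$ and is exactly why the result is stated for $r=2$. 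A second, genuinely kinetic, difficulty is that transport does not commute with a $t$-independent shift, which compels the $\zeta'(t)\delta$-correction in $H^{\delta}$ and hence $\zeta(0)=0$, i.e. weights $\eta,\zeta$ that vanish at $t=0$ and estimates that are local on $(0,T]$.
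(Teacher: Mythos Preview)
Your strategy is sound and reaches the same difference--quotient estimate, but the route differs from the paper's in one essential respect. The paper does \emph{not} split the argument into a shifted Corollary~\ref{cor:energy_ineq} comparison plus a separate $\cB$--minimality comparison. Instead it works with a smooth minimizing sequence $u_n\in C^1_c$ and \emph{cross--tests}: it uses $u_n^{\delta}$ as a test function in the continuity equation for $(m,w)$ and $u_n$ in the equation for $(m^{\delta},w^{\delta})$, then combines each with the corresponding energy equality. This produces directly the Fenchel gap $H(\cdot\pm\eta\delta,\cdot\pm\zeta\delta,D_vu_n^{\pm\delta})+H^{*}(\cdot\pm\eta\delta,\cdot\pm\zeta\delta,-w/m)+D_vu_n^{\pm\delta}\cdot(w/m)$ integrated against $m$, so that \eqref{eq:Hcoercivity} and the parallelogram inequality yield $c_0\int m\,|D_vu_n^{\delta}-D_vu_n^{-\delta}|^{2}$ in one step; the price is the delicate passage $n\to\infty$, where the unstable term $-\int u_{0,n}m_0$ is handled by adding $2\widetilde\cA(u_n,\beta_n,\beta_{T,n})$ to both sides. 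Your first inequality (shifted \eqref{ineq:energy_limit}) contains only the $L$--side and therefore cannot see the $D_vu$ increment; you recover it from the perspective--function gap in the $\cB$--minimality comparison, but the $\cF,\cG$ second $(x,v)$--differences on the right of that comparison are only $O(|\delta|)$ under \eqref{hyp:f_Lipschitz_x}--\eqref{hyp:g_Lipschitz_x}, and must be bounded by first feeding back the $m,m_T$ estimates already obtained from the first inequality --- this works, but makes your argument two--stage. A cleaner variant of your idea would be to invoke the shifted form of Corollary~\ref{cor:energy_bound}(2) rather than Corollary~\ref{cor:energy_ineq}: that estimate already contains the full Fenchel gap $mH+mL+D_vu\cdot w$, and then your $\cB$--minimality step becomes unnecessary. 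In exchange, your organisation avoids reintroducing a smooth minimizing sequence (the approximation is already absorbed into Lemma~\ref{lem:inequality}/Corollary~\ref{cor:energy_ineq}), and the verification that $u_0^{\pm\delta}=u_0$ you sketch is exactly what replaces the paper's Step~5.
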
 

\begin{remark}
As for Theorem~\ref{thm:reg-main}, this is an informal statement: the result we obtain is on suitable difference quotients as in estimate \eqref{ineq:fundamental} below.
\end{remark}

\begin{proof}[Proof of Theorem \ref{thm:reg_v}]

Let $(u_n,\b_n,\b_{T,n})_{n\in\N}$ be a minimizing sequence for Problem \ref{prob:relaxed} such that $u_n\in C_c^1([0,T]\times\M\times\R^d)$, 
$$
\beta_n=-\partial_t u_n - v\cdot D_x u_n + H(x,v,D_v u_n),\ \ \b_{T,n}=u(T,\cdot,\cdot).
$$
Let us recall that after passing to a subsequence, that we do not relabel, as a consequence of Proposition \ref{prop:HJ-compactness}, Lemma \ref{lem:modif} and by Claim 2 in the proof of Theorem \ref{thm:exist_sol}, we have that
\begin{itemize}
\item $(\beta_n)_+ \rightharpoonup \beta_+$, weakly in $L^{q'}([0,T]\times\M\times\R^d)$, as $n\to+\infty$.
\item $(\beta_n)_- \rightharpoonup \beta_-$, weakly in $L^1_{\rm{loc}}(\cU_{m_0})$, as $n\to+\infty$.
\item $(\beta_{T,n})_+ \rightharpoonup (\b_T)_+$, weakly in $L^{s'}(\M \times \R^d)$, as $n\to+\infty$.
\item $(\b_{T,n})_- \rightharpoonup (\b_T)_-$, weakly in $L^1_{\rm{loc}}(\M \times \R^d)$, as $n\to+\infty$.
\item $(u_{0,n})_+ \weaklys (\tilde u_0)_+$, weakly-$\ast$ in $(L^\infty + L^{q'})(\M \times \R^d)$, as $n\to+\infty$.
\item $D_v u_n \rightharpoonup D_v u$, weakly in $L^r_m([0,T]\times\M\times\R^d)$, as $n\to+\infty$.
\end{itemize}

Notice that the previous arguments imply also that the subsequence can be chosen such that for all $M<0$
\begin{align}\label{conv:beta_truncated}
\b_n\one_{\{\b_n\ge M\}}\weaklys \b\one_{\{\b\ge M\}}, \text{weakly-$*$ in }(L^\infty+L^{q'})([0,T]\times\M\times\R^d)\ \text{as } n\to+\infty
\end{align}
and
\begin{align}\label{conv:beta_T_truncated}
\b_{T,n}\one_{\{\b_{T,n}\ge M\}}\weaklys \b_T\one_{\{\b_T\ge M\}}, \text{weakly-$*$ in }(L^\infty+L^{s'})(\M\times\R^d)\ \text{as } n\to+\infty.
\end{align}

Furthermore, by Theorem \ref{thm:exist_sol}, we have that $\b=f(\cdot,\cdot,m)$ and $\b_T=g(\cdot,\cdot,m_T)$. Let {\Blue $w= - mD_{p_v}H(\cdot,\cdot,D_v u)$. }

Fix $\delta \in \R^{d}$ such that  $|\d|\ll 1$ and $\zeta:[0,T]\to\R$ as described at the beginning of this subsection.

\medskip

Now, the main idea is to use $u^\delta_n$ as a test function in the weak formulation of the equation satisfied by $(m,w)$ and and $u_n$ as test function in the weak form of the equation satisfied by $(m^\d,w^\d)$. Then we combine these inequalities with the energy equality \eqref{eq:energy_equality} written for $(m,w)$ and $(m^\d, w^\d)$, respectively, and rely on the strong convexity and regularity properties of the data to deduce a differential quotient estimate.

Following these steps, we obtain

\begin{align} \label{eq:phi_n^delta-test-m}
\int_{\M\times\R^d} [\b_{T,n}^\delta m_T &- u_{0,n}^\delta m_0]\dd x\dd v \geq \int_0^T \int_{\M\times\R^d} (H^\d(x,v,D_v u_n^\delta)-\b_n^\delta)m + D_v u_n^\delta \cdot w \dd x\dd t.
\end{align}
We combine this with the energy equality \eqref{eq:energy_equality} for $(m,w)$ to get
\begin{multline} \label{eq:space-regularity1'}
	\int_{\M\times\R^d} (\b_{T,n}^\delta - g(\cdot,\cdot,m_T)) m_T - (u^\delta_{0,n}-u_0)m_0 
	\\
	\geq
	\int_0^T \int_{\M\times\R^d} (H^\d(x,v,D_v u^\delta_n) + H^*(x,v,-w/m) + D_v u^\delta_n \cdot w/m - \b_n^\delta + f(\cdot,\cdot,m))m  \dd x\dd v\dd t.
\end{multline}
	Similarly, using $u_n$ as a test function in the weak form of the equation for $(m^\delta,w^\delta)$ and combining with \eqref{eq:energy_equality} for $(m^\d,w^\d)$,
	\begin{multline} \label{eq:space-regularity2'}
	\int_{\M\times\R^d}\left[ (\b_{T,n} - g^\d(x,v,m_T^\d)) m_T^\delta - (u_{0,n}-u_0^\delta)m^\delta_0 \right]\dd x\dd v
	\\
	\geq
	\int_0^T \int_{\M\times\R^d} (H(x,v,D_v u_n) + (H^\d)^*(x,v,-w^\delta/m^\delta) + D_v u_n \cdot w^\delta/m^\delta - \b_n + f^\delta(m^\delta))m^\delta  \dd x\dd t
	\end{multline}
Adding \eqref{eq:space-regularity1'} and \eqref{eq:space-regularity2'}, after some changes of variables (translations) and a Taylor expansion of $L$, we deduce
\begin{align} \label{eq:space-regularity1}
&\int_0^T \int_{\M\times\R^d} (H(x+\eta(t)\d,v+\zeta(t)\d,D_v u^\delta_n) + H^*(x+\eta(t)\d,v+\zeta(t)\d,-w/m) + D_v u^\delta_n \cdot w/m)m  \dd x\dd v\dd t\\
&+ \int_0^T \int_{\M\times\R^d} (H(x-\eta(t)\d,v-\zeta(t)\d,D_v u^{-\delta}_n) + H^*(x-\eta(t)\d,v-\zeta(t)\d,-w/m) + D_v u_n^{-\delta} \cdot w/m)m  \dd x\dd v\dd t\\
&\leq \int_{\M\times\R^d} \left[\b_{T,n}(m_T^\delta + m_T^{-\delta}) - 2g(x,v,m_T)m_T\right]\dd x\dd v-  \int_{\M\times\R^d} 2(u_{0,n} - u_0)m_0\dd x\dd v\\
&+ \int_0^T \int_{\M\times\R^d} \left[\b_n^\delta + \b_n^{-\delta} - 2f(m)\right]m \dd x\dd v \dd t,\\
&+ \int_0^T \int_{\M\times\R^d}\left[-H^*(x,v,-w/m)+H^*(x+\eta(t)\d,v+\zeta(t)\d,-w/m)-\zeta'(t)\d\cdot D_v u^\delta_n\right]m\dd x\dd v\dd t\\
&+\int_0^T \int_{\M\times\R^d}\left[-H^*(x,v,-w/m)+H^*(x-\eta(t)\d,v-\zeta(t)\d,-w/m)+\zeta'(t)\d\cdot D_vu_n^{-\d}\right]m \dd x\dd v\dd t\\
&=\int_{\M\times\R^d} \left[\b_{T,n}(m_T^\delta + m_T^{-\delta}) - 2g(x,v,m_T)m_T\right]\dd x\dd v-  \int_{\M\times\R^d} 2(u_{0,n} - u_0)m_0\dd x\dd v\\
&+ \int_0^T \int_{\M\times\R^d} \left[\b_n^\delta + \b_n^{-\delta} - 2f(m)\right]m \dd x\dd v \dd t,\\
&+\int_0^T \int_{\M\times\R^d} \left[\zeta'(t)\d\cdot D_vu_n^{-\d}-\zeta'(t)\d\cdot D_v u^\delta_n \right]m \dd x\dd v \dd t\\
&+ \int_0^T \int_{\M\times\R^d} \int_0^1\int_s^{-s}\langle \left[\eta^2(t)D^2_{xx}H^*+\zeta^2(t)D^2_{vv}H^*\right](x+\tau\eta(t)\d,v+\t\zeta(t)\d,-w/m)\d,\d\rangle m\dd\tau\dd s\dd x\dd v\dd t\\
&+ \int_0^T \int_{\M\times\R^d} \int_0^1\int_s^{-s}\langle2\eta(t)\zeta(t)D^2_{xv}H^*(x+\tau\eta(t)\d,v+\t\zeta(t)\d,-w/m)\d,\d\rangle m\dd\tau\dd s\dd x\dd v\dd t.
\end{align} 
where we have also used the facts that by the choice of $\eta$ and $\zeta$, we have $u_{0,n}^\d=u_{0,n}$, $u_0^\d=u_0$ and $m_0^\d=m_0$.

Our aim now is to pass to the limit $n\to+\infty$ in \eqref{eq:space-regularity1} and derive a differential quotient estimate. For this, we consider each of the terms separately.

\medskip

{\bf Step 1.} First, we notice that by \eqref{hyp:D_xv^2L} and by the fact that $\frac{|w|^{r'}}{m^{r'-1}}\in L^1([0,T]\times\M\times\R^d)$, there exists $C>0$ such that
\begin{align*}
& \int_0^T \int_{\M\times\R^d} \int_0^1\int_s^{-s}\langle \left[\eta^2(t)D^2_{xx}H^*+\zeta^2(t)D^2_{vv}H^*\right](x+\tau\eta(t)\d,v+\t\zeta(t)\d,-w/m)\d,\d\rangle m\dd\tau\dd s\dd x\dd v\dd t\\
&+ \int_0^T \int_{\M\times\R^d} \int_0^1\int_s^{-s}\langle2\eta(t)\zeta(t)D^2_{xv}H^*(x+\tau\eta(t)\d,v+\t\zeta(t)\d,-w/m)\d,\d\rangle m\dd\tau\dd s\dd x\dd v\dd t\\
&\le C_0|\d|^2\int_0^T \int_{\M\times\R^d}\left(\frac{|w|^{r'}}{m^{r'-1}}+m\right)\dd x\dd v\dd t\le C|\d|^2.
\end{align*}

\medskip

{\bf Step 2.} Second, let us notice that by the fact that $m\in (L^1\cap L^q)([0,T]\times\M\times\R^d)$ and by \eqref{conv:beta_truncated}, for any $M<0$ we have
\begin{align*}
\lim_{n\to+\infty}\int_0^T \int_{\M\times\R^d} \b_n^{\pm\delta}\one_{\{\b_n^{\pm\delta}\ge M\}}m \dd x\dd v \dd t=\int_0^T \int_{\M\times\R^d} f^{\pm\delta}(m^{\pm\d})\one_{\{f^{\pm\delta}(m^{\pm\d})\ge M\}}m \dd x\dd v \dd t.
\end{align*}
Therefore,
\begin{align*}
\limsup_{n\to+\infty} \int_0^T \int_{\M\times\R^d}& \left[\b_n^\delta + \b_n^{-\delta} - 2f(m)\right]m \dd x\dd v \dd t\\
&\le\lim_{n\to+\infty}\int_0^T \int_{\M\times\R^d}\left[ \b_n^\delta\one_{\{\b_n^\delta\ge M\}}+\b_n^{-\delta}\one_{\{\b_n^{-\delta}\ge M\}} - 2f(m)\right] m\dd x\dd v \dd t\\
&=\int_0^T \int_{\M\times\R^d}\left[ f^\delta(m^\d)\one_{\{f^\delta(m^\d)\ge M\}}+f^{-\delta}(m^{-\d})\one_{\{f^{-\delta}(m^{-\d})\ge M\}}-2f(m)\right]m \dd x\dd v \dd t.
\end{align*}
Now, sending $M\to-\infty$, we conclude that
\begin{align}\label{ineq:f_shifted}
\limsup_{n\to+\infty} \int_0^T \int_{\M\times\R^d}& \left[\b_n^\delta + \b_n^{-\delta} - 2f(m)\right]m \dd x\dd v \dd t\\
&\le\int_0^T \int_{\M\times\R^d}\left[ f^\delta(m^\d)+f^{-\delta}(m^{-\d})-2f(m)\right]m \dd x\dd v \dd t,
\end{align}
where we have used the fact that $f(m)m, (f^\delta(m^\delta))_+, (f^{-\delta}(m^{-\delta}))_+ \in L^1$ so that the integrand is upper bounded by an $L^1$ function to allow us to apply the monotone convergence theorem. Since the left hand side of inequality \eqref{eq:space-regularity1} is bounded from below by zero, it follows the right hand side of \eqref{ineq:f_shifted} is not negative infinity.

By the very same arguments
one can conclude that

\begin{align}\label{ineq:g_shifted}
\limsup_{n\to+\infty}\int_{\M\times\R^d} &\left[\b_{T,n}(m_T^\delta + m_T^{-\delta}) - 2g(x,v,m_T)m_T\right]\dd x\dd v\\
&\le\int_{\M\times\R^d}\left[ g^\delta(m_T^\d)+g^{-\delta}(m_T^{-\d})-2g(m_T)\right]m_T \dd x\dd v.
\end{align}

\medskip

{\bf Step 3.} By Young's inequality, we have 
\begin{align*}
\int_0^T \int_{\M\times\R^d} \left[\zeta'(t)\d\cdot D_vu_n^{-\d}-\zeta'(t)\d\cdot D_v u^\delta_n \right]m \dd x\dd v \dd t &\le C|\d|^2\\
&+c\int_0^T \int_{\M\times\R^d}\left|D_vu_n^{-\d}- D_v u^\delta_n \right|^2m \dd x\dd v \dd t,
\end{align*}
where $c>0$ is an arbitrary constant, and $C=C(c,T,\zeta')>0$.

\medskip

{\bf Step 4.} By the previous steps we can conclude that
\begin{align*}
&\int_0^T \int_{\M\times\R^d} (H(x\pm\eta(t)\d,v\pm\zeta(t)\d,D_v u^{\pm\delta}_n) + H^*(x\pm\eta(t)\d,v\pm\zeta(t)\d,-w/m) + D_v u^{\pm\delta}_n \cdot w/m)m  \dd x\dd v\dd t\\
&-c\int_0^T \int_{\M\times\R^d}\left|D_vu_n^{-\d}- D_v u^\delta_n \right|^2m \dd x\dd v \dd t
\end{align*}
is uniformly bounded above, independently of $n\in\N$. Let us recall that $\frac{|w|^{2}}{m}\in L^1((0,T)\times\M\times\R^d)$, and so is $H^*(x\pm\eta(t)\d,v\pm\zeta(t)\d,-w/m)m\in L^1((0,T)\times\M\times\R^d)$.
Using the growth condition on $H$, by choosing $c>0$ small enough in our application of Young's inequality we deduce that $D_v u^{\pm\delta}_n$ is uniformly bounded in $L^2_m((0,T)\times\M\times\R^d;\R^d)$. By a change of variable, one can similarly deduce that $D_v u_n$ is uniformly bounded in $L^2_{m^{\pm\d}}((0,T)\times\M\times\R^d;\R^d)$. 
\medskip

{\it Claim}. After passing to a subsequence that we do not relabel, we have $D_v u^{\pm\delta}_n\weakly D_v u^{\pm\delta}$ weakly in $L^2_m((0,T)\times\M\times\R^d;\R^d)$, as $n\to+\infty$.

{\it Proof of Claim}. By the uniform boundedness of the sequence, we know that there exists a subsequence of it (that we do not relabel) and $\xi\in L^2_m((0,T)\times\M\times\R^d;\R^d)$, as weak limit, i.e. 
$$
\int_0^T\int_{\M\times\R^d}D_v u^{\pm\delta}_n\cdot\phi m\dd x\dd v\dd t\to \int_0^T\int_{\M\times\R^d}\xi\cdot\phi m\dd x\dd v\dd t,\ \ \forall \phi\in L^{2}_m((0,T)\times\M\times\R^d;\R^d),\ \ {\rm{as}}\ n\to+\infty.
$$
Thus, we aim to show that $\xi=D_v u^{\pm\d}$. As $D_v u^{\pm\delta}_n\weakly D_v u^{\pm\delta}$, weakly in $L^2_{\rm{loc}}(\cU_{m_0})$, as $n\to+\infty$, we can argue similarly as in the proof of Claim 2, in the proof of Theorem \ref{thm:exist_sol} to deduce the claim.

\medskip

\medskip

{\bf Step 5.} By summarizing, \eqref{eq:space-regularity1} implies that
\begin{align*}  
&\int_0^T \int_{\M\times\R^d} (H(x+\eta(t)\d,v+\zeta(t)\d,D_v u^\delta_n) + H^*(x+\eta(t)\d,v+\zeta(t)\d,-w/m) + D_v u^\delta_n \cdot w/m)m  \dd x\dd v\dd t\\
&+ \int_0^T \int_{\M\times\R^d} (H(x-\eta(t)\d,v-\zeta(t)\d,D_v u^{-\delta}_n) + H^*(x-\eta(t)\d,v-\zeta(t)\d,-w/m) + D_v u_n^{-\delta} \cdot w/m)m  \dd x\dd v\dd t\\
&-c\int_0^T \int_{\M\times\R^d}\left|D_vu_n^{-\d}- D_v u^\delta_n \right|^2m \dd x\dd v \dd t\\
&\leq \int_{\M\times\R^d} \left[\b_{T,n}(m_T^\delta + m_T^{-\delta}) - 2g(x,v,m_T)m_T\right]\dd x\dd v-  \int_{\M\times\R^d} 2(u_{0,n} - u_0)m_0\dd x\dd v\\
&+ \int_0^T \int_{\M\times\R^d} \left[\b_n^\delta + \b_n^{-\delta} - 2f(m)\right]m \dd x\dd v \dd t\\
&+ C|\d|^2.
\end{align*}
Using the additional assumption \eqref{eq:Hcoercivity} and the inequality $|a+b|^2 \leq 2\left( a^2 +b^2\right)$, for $c>0$ sufficiently small, one can conclude that there exists $c_0>0$ depending only on the data, such that
\begin{align}\label{ineq:reg_v_last}
&c_0\int_0^T \int_{\M\times\R^d}\left|D_vu_n^{-\d}- D_v u^\delta_n \right|^2m \dd x\dd v \dd t\\
\nonumber&\leq \int_{\M\times\R^d} \left[\b_{T,n}(m_T^\delta + m_T^{-\delta}) - 2g(x,v,m_T)m_T\right]\dd x\dd v-  \int_{\M\times\R^d} 2(u_{0,n} - u_0)m_0\dd x\dd v\\
\nonumber&+ \int_0^T \int_{\M\times\R^d} \left[\b_n^\delta + \b_n^{-\delta} - 2f(m)\right]m \dd x\dd v \dd t\\
\nonumber&+ C|\d|^2.
\end{align}
Now, our aim is to pass to the limit with $n\to+\infty$ first in \eqref{ineq:reg_v_last}. For this we take $\liminf_{n\to+\infty}$ of the left hand side and $\limsup_{n\to+\infty}$ of the right hand side. We notice that the term $- \int_{\M\times\R^d} 2u_{0,n}m_0\dd x\dd v$ needs special attention, since we do not have upper semicontinuity of it. Because of this, we add to both sides of \eqref{ineq:reg_v_last} the quantity
$$2\int_0^T\int_{\M\times\R^d}\cF^*(x,v, \beta_n)\dd x\dd v\dd t  + 2\int_{\M\times\R^d}\cG^*(\beta_{T,n}) \dd x \dd v $$
before passing to the limit. Thus we obtain
\begin{align*}
&\liminf_{n\to+\infty}c_0\int_0^T \int_{\M\times\R^d}\left|D_vu_n^{-\d}- D_v u^\delta_n \right|^2m \dd x\dd v \dd t\\
&+\liminf_{n\to+\infty}2\int_0^T\int_{\M\times\R^d}\cF^*(x,v, \beta_n)\dd x\dd v\dd t  + \liminf_{n\to+\infty}2\int_{\M\times\R^d}\cG^*(\beta_{T,n}) \dd x \dd v\\
&\leq \limsup_{n\to+\infty}\int_{\M\times\R^d} \left[\b_{T,n}(m_T^\delta + m_T^{-\delta}) - 2g(x,v,m_T)m_T\right]\dd x\dd v+2  \int_{\M\times\R^d}u_0m_0\dd x\dd v\\
&+ \limsup_{n\to+\infty}\int_0^T \int_{\M\times\R^d} \left[\b_n^\delta + \b_n^{-\delta} - 2f(m)\right]m \dd x\dd v \dd t\\
&+\limsup_{n\to+\infty}\left(-2\int_{\M\times\R^d} u_{0,n}m_0\dd x\dd v+2\int_0^T\int_{\M\times\R^d}\cF^*(x,v, \beta_n)\dd x\dd v\dd t  + 2\int_{\M\times\R^d}\cG^*(\beta_{T,n}) \dd x \dd v\right)\\
&+ C|\d|^2.
\end{align*}

All the arguments in the previous steps allow us to pass to the limit. By the fact that $(u_n,\b_n,\b_{T,n})$ is a minimizing sequence, we get that 
\begin{align*}
&\limsup_{n\to+\infty}\left(-2\int_{\M\times\R^d} u_{0,n}m_0\dd x\dd v+2\int_0^T\int_{\M\times\R^d}\cF^*(x,v, \beta_n)\dd x\dd v\dd t  + 2\int_{\M\times\R^d}\cG^*(\beta_{T,n}) \dd x \dd v\right)\\
&\lim_{n\to+\infty}\left(-2\int_{\M\times\R^d} u_{0,n}m_0\dd x\dd v+2\int_0^T\int_{\M\times\R^d}\cF^*(x,v, \beta_n)\dd x\dd v\dd t  + 2\int_{\M\times\R^d}\cG^*(\beta_{T,n}) \dd x \dd v\right)\\
&=2\widetilde\cA(u,\b,\b_t)\\
&=2\left(-\int_{\M\times\R^d} u_{0}m_0\dd x\dd v+\int_0^T\int_{\M\times\R^d}\cF^*(x,v, \beta)\dd x\dd v\dd t  + 2\int_{\M\times\R^d}\cG^*(\beta_{T}) \dd x \dd v\right).
\end{align*}

So, after simplification, one obtains

\begin{align} \label{eq:space-regularity122}
c_0\int_0^T \int_{\M\times\R^d}&\left|D_vu^{-\d}- D_v u^\delta \right|^2m \dd x\dd v \dd t\\
&\leq \int_{\M\times\R^d} \left[g^\d(m_T^\d)+g^{-\d}(m_T^{-\d}) - 2g(x,v,m_T\right]m_T\dd x\dd v\\
&+ \int_0^T \int_{\M\times\R^d} \left[f^\d(m^\d) + f^{-\d}(m^{-\d}) - 2f(m)\right]m \dd x\dd v \dd t\\
&+ C|\d|^2.
\end{align}

\medskip

\medskip

Now, using \eqref{hyp:f_Lipschitz_x} and \eqref{hyp:f_strongly_monotone} the very same arguments as in \cite[computation (4.25)]{GraMes} yield
	\begin{multline} \label{eq:f_monotonicity_estimates}
	\int_{\M\times\R^d} \left(f^\delta(m^\delta) + f^{-\delta}(m^{-\delta}) - 2f(m)\right){m} \dd x\dd v
	\\
	\leq
	C|\delta|^2 \left(1+\int_{\M\times\R^d} \min\{ m^\delta,m\}^{q} \dd x\dd v\right)
	- \frac{c_f}{2}\int_{\M\times\R^d} \min\{ (m^\delta)^{q-2}, m^{q-2}\}| m^\delta - m|^2 \dd x\dd v.
	\end{multline}
Similarly, \eqref{hyp:g_Lipschitz_x} and \eqref{hyp:g_strongly_monotone} yield	
\begin{multline} \label{eq:g_monotonicity_estimates}
	\int_{\M\times\R^d} \left(g^\delta(m_T^\delta) + g^{-\delta}(m_T^{-\delta}) - 2g(m_T)\right){m_T} \dd x\dd v
	\\
	\leq
	C|\delta|^2 \left(1+\int_{\M\times\R^d} \min\{ m_T^\delta,m_T\}^{s} \dd x\dd v\right)
	- \frac{c_g}{2}\int_{\M\times\R^d} \min\{ (m_T^\delta)^{s-2}, m_T^{s-2}\}| m_T^\delta - m_T|^2 \dd x\dd v.
\end{multline}

	 Combining these estimates with \eqref{eq:space-regularity122}, we get
\begin{multline}\label{ineq:fundamental} 
	c_0\int_0^T \int_{\M\times\R^d}\left|D_vu^{-\d}- D_v u^\delta \right|^2m \dd x\dd v \dd t
	\\
	+ \frac{c_f}{2}\int_0^T \int_{\M\times\R^d} \min\{(m^\delta)^{q-2},m^{q-2}\}|m^\delta - m|^2 \dd x\dd v \dd t
	\\
	+ \frac{c_g}{2}\int_{\M\times\R^d} \min\{(m_T^\delta)^{s-2},m_T^{s-2}\}|m_T^\delta - m_T|^2 \dd x\dd v
	\leq    C |\d|^2.
\end{multline}	 
Dividing by $|\d|^2$ and letting $\d\to 0$, we easily obtain the result.
\end{proof}

\subsubsection{Recovering estimates on the operator $(tD_x+D_v)$ applied to solutions} By choosing a specific structure for the cut-off function $\zeta$, we can recover estimates on more particular differential operators. Suppose that $\zeta(t)=0$ for $t\in[0,t_0/2]$, and $\zeta(t)=1,$ for $ t\in(t_0,T]$ for some $t_0\in (0,T)$ (to be chosen to be arbitrary), in such a way that also $\eta(t_0) = t_0$. Then in Theorem \ref{thm:reg_v}, the operator $(\eta D_x+ \zeta D_v)$, for $t > t_0$ becomes $(tD_x+D_v)$. So, one can state the following local in time corollary.

\begin{corollary}\label{cor:cor_reg_1}
Suppose that the assumptions of Theorem \ref{thm:reg_v} take place. Then, there exists $\ov C>0$ such that
$$\|m^{\frac{q}{2} - 1}(tD_x+D_v)m\|_{L^2_{\rm{loc}}((0,T]\times\M\times\R^d)} \leq \ov C,\ \ \ \  \|m^{1/2}(tD_x+D_v)D_v u\|_{L^2_{\rm{loc}}((0,T]\times\M\times\R^d)} \leq \ov C$$
and 
$$
\|m_T^{\frac{s}{2} - 1}(TD_x+D_v)m_T\|_{L^2(\M\times\R^d)} \leq \ov C.
$$
\end{corollary}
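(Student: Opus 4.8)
The plan is to obtain Corollary~\ref{cor:cor_reg_1} as a direct specialisation of Theorem~\ref{thm:reg_v}, applied to the particular cut-off profile announced just before the statement. First I would fix $t_0\in(0,T)$ and build a smooth $\zeta=\zeta_{t_0}\colon[0,T]\to\R$ with $\zeta\ge 0$, $\zeta\equiv 0$ on $[0,t_0/2]$ and $\zeta\equiv 1$ on $[t_0,T]$, arranged so that in addition $\eta(t_0)=\int_0^{t_0}\zeta(s)\,\dd s=t_0$. Concretely, one rescales a fixed profile $\rho\in C^\infty([0,1])$, $\rho\ge 0$, with $\rho\equiv 0$ near $0$, $\rho\equiv 1$ near $1$ and $\int_0^1\rho=2$ --- such $\rho$ exists precisely because $\rho$ need not be bounded by $1$, so its integral can be prescribed by letting it overshoot on the interior --- and sets $\zeta(t)=\rho\bigl(2(t-t_0/2)/t_0\bigr)$ on $[t_0/2,t_0]$. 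Then $\int_{t_0/2}^{t_0}\zeta=(t_0/2)\int_0^1\rho=t_0$, and the matching of all derivatives at $t_0/2$ and $t_0$ makes $\zeta$ smooth. Since $\eta(t)=\eta(t_0)+(t-t_0)=t$ for all $t\in[t_0,T]$, in particular $\eta(T)=T$ and $\zeta(T)=1$; and $\zeta(0)=0=\eta(0)$, so $\zeta$ is admissible in Theorem~\ref{thm:reg_v} (only $\zeta\ge 0$, not strict positivity, is used in its proof).

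With this choice, Theorem~\ref{thm:reg_v} produces a constant $\ov C=\ov C(t_0)>0$ controlling the three quantities appearing there. On $\{t>t_0\}$ we have $\eta(t)=t$, $\zeta(t)=1$, so the operator $\eta D_x+\zeta D_v$ coincides with $tD_x+D_v$, and therefore the first two estimates of Theorem~\ref{thm:reg_v} restrict to
\[
\|m^{q/2-1}(tD_x+D_v)m\|_{L^2((t_0,T]\times\M\times\R^d)}\le \ov C(t_0),\qquad
\|m^{1/2}(tD_x+D_v)D_v u\|_{L^2((t_0,T]\times\M\times\R^d)}\le \ov C(t_0).
\]
Likewise $\eta(T)=T$, $\zeta(T)=1$ turn the terminal bound into $\|m_T^{s/2-1}(TD_x+D_v)m_T\|_{L^2(\M\times\R^d)}\le \ov C(t_0)$. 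Because $t_0\in(0,T)$ is arbitrary and every compact $K\subset(0,T]$ lies in some $[t_0,T]$, the first two estimates hold in $L^2_{\mathrm{loc}}((0,T]\times\M\times\R^d)$ as claimed; the terminal estimate, already global in $(x,v)$ and attached to the single time $t=T$, needs no such passage and holds with the fixed constant $\ov C(T/2)$, which is why it appears without the subscript $\mathrm{loc}$.

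The one substantive point --- and precisely the reason the bound cannot be pushed to $t=0$ --- is the $\zeta$-dependence of the constant in Theorem~\ref{thm:reg_v}. Tracing that proof, $\ov C$ enters through the Taylor-remainder terms of Step~1 (controlled by $\sup_{[0,T]}|\eta|$ and $\sup_{[0,T]}|\zeta|$, both finite for each fixed $t_0$) and, crucially, through the Young-inequality constant $C(c,T,\zeta')$ of Step~3, which scales with $\|\zeta'\|_{L^\infty}$; for $\zeta_{t_0}$ one has $\|\zeta'_{t_0}\|_{L^\infty}\lesssim t_0^{-1}$, so $\ov C(t_0)\to+\infty$ as $t_0\to 0$. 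This is consistent with the fact, emphasised before Theorem~\ref{thm:reg_v}, that $u_0$ is understood only in a weak sense that is not stable under the perturbations used here. Beyond this bookkeeping I expect no genuine obstacle: all remaining ingredients (the growth bounds on $H,\cF,\cG$ and the stronger hypotheses \eqref{hyp:coupling_strong}, \eqref{hyp:H_strong_coercive}, \eqref{hyp:D_xv^2L}) are insensitive to the choice of $\zeta$, and at the quantitative level the corollary is just the difference-quotient estimate \eqref{ineq:fundamental} read with the shifts $m^\delta,u^\delta$ built from $\eta_{t_0},\zeta_{t_0}$, followed by the division by $|\delta|^2$ and the limit $\delta\to 0$ exactly as in Theorem~\ref{thm:reg_v}.
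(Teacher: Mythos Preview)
Your proof is correct and follows the same approach as the paper: specialise Theorem~\ref{thm:reg_v} to a cut-off $\zeta$ that vanishes on $[0,t_0/2]$, equals $1$ on $[t_0,T]$, and is calibrated so that $\eta(t_0)=t_0$, whence $(\eta D_x+\zeta D_v)=(tD_x+D_v)$ on $(t_0,T]$, then let $t_0\in(0,T)$ be arbitrary to obtain the local-in-time statement. Your explicit construction of $\zeta$ via a profile $\rho$ with $\int_0^1\rho=2$, and your remark on the blow-up $\ov C(t_0)\sim\|\zeta'\|_{L^\infty}\sim t_0^{-1}$, are helpful elaborations that the paper leaves implicit.
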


\subsubsection{Recovering estimates on the operator $D_x$ applied to solutions}

Now suppose that $\eta(t)=0$ for $t\in[0,t_0/2]$ and $\eta(t)=1$ for $t\in(t_0,T]$ (where $t_0\in(0,T)$ can be chosen arbitrarily). We still require that $\zeta:=\eta'$. With this choice of cut-off functions $\eta,\zeta$, we can formulate the following result as a corollary of  Theorem \ref{thm:reg_v}.

\begin{corollary}\label{cor:cor_reg_2}
Suppose that the assumptions of Theorem \ref{thm:reg_v} take place. Then, there exists $\ov C>0$ such that
$$\|m^{\frac{q}{2} - 1}D_xm\|_{L^2_{\rm{loc}}((0,T]\times\M\times\R^d)} \leq \ov C,\ \ \ \  \|m^{1/2}D_xD_v u\|_{L^2_{\rm{loc}}((0,T]\times\M\times\R^d)} \leq \ov C$$
and 
$$
\|m_T^{\frac{s}{2} - 1}D_xm_T\|_{L^2(\M\times\R^d)} \leq \ov C.
$$

\end{corollary}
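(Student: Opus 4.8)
The plan is to obtain Corollary~\ref{cor:cor_reg_2} as a direct specialisation of Theorem~\ref{thm:reg_v} to the cut-off pair prescribed in the statement. First I would check that this pair is admissible for the shifting construction of Subsection~\ref{subsec:v-reg}: with $\eta\equiv 0$ on $[0,t_0/2]$, $\eta\equiv 1$ on $(t_0,T]$ and $\zeta:=\eta'$, the function $\zeta$ is smooth, nonnegative and supported in $[t_0/2,t_0]$, and in particular $\eta(0)=\zeta(0)=0$, so the shifted competitors still satisfy $m^{\delta}|_{t=0}=m_0$, $u^{\delta}|_{t=0}=u_0$, exactly as the perturbation argument requires. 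Consequently the entire proof of Theorem~\ref{thm:reg_v} goes through verbatim with this $\eta,\zeta$ and produces the difference-quotient inequality \eqref{ineq:fundamental}, the only change being that the constant $C$ now depends on $t_0$ through $\|\zeta'\|_{\infty}=\|\eta''\|_{\infty}$ (this enters via Young's inequality in Step~3 of that proof), which is exactly the source of the ``local in time'' character of the statement.

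Second, I would record the form of the shifts on $\{t>t_0\}$, where $\eta(t)=1$ and $\zeta(t)=0$: there $m^{\pm\delta}(t,\cdot,\cdot)=m(t,\cdot\pm\delta,\cdot)$ and $u^{\pm\delta}(t,\cdot,\cdot)=u(t,\cdot\pm\delta,\cdot)$ are pure translations in the $x$ variable, and likewise $m_T^{\pm\delta}=m_T(\cdot\pm\delta,\cdot)$ because $\eta(T)=1$, $\zeta(T)=0$. Since every integrand on the left-hand side of \eqref{ineq:fundamental} is nonnegative, restricting the time integrals there to $(t_0,T]$ only weakens the inequality; hence \eqref{ineq:fundamental} controls, by $C(t_0)|\delta|^2$, the weighted $x$-difference quotients of $D_vu$ and of $m$ over $(t_0,T]\times\M\times\R^d$ (with weights $m$ and $\min\{m(\cdot\pm\delta)^{q-2},m^{q-2}\}$, respectively) as well as the weighted $x$-difference quotient of $m_T$.

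Finally I would divide by $|\delta|^2$ and send $\delta\to0$ along each coordinate direction, passing to the limit exactly as at the end of the proof of Theorem~\ref{thm:reg_v}: weak lower semicontinuity of the weighted quadratic forms, combined in the subquadratic range $q<2$ (resp. $s<2$) with the elementary pointwise bound $\min\{a^{p-2},b^{p-2}\}|a-b|^2\gtrsim|a^{p/2}-b^{p/2}|^2$ and the convention $0^{p-2}=+\infty$ from Assumption~\ref{hyp:stronger}, just as in \cite[computation (4.25)]{GraMes}. This gives, for each fixed $t_0\in(0,T)$,
\be
\|m^{\frac{q}{2}-1}D_xm\|_{L^2((t_0,T]\times\M\times\R^d)}\le\ov C(t_0),\qquad \|m^{1/2}D_xD_vu\|_{L^2((t_0,T]\times\M\times\R^d)}\le\ov C(t_0),
\ee
together with $\|m_T^{\frac{s}{2}-1}D_xm_T\|_{L^2(\M\times\R^d)}\le\ov C$ on the terminal slice (for which any fixed admissible $t_0$ suffices). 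Letting $t_0$ run over $(0,T)$ turns the first two bounds into the asserted $L^2_{\mathrm{loc}}((0,T]\times\M\times\R^d)$ estimates, understood, as in the Remark after Theorem~\ref{thm:reg-main}, at the level of difference quotients. No real obstacle is expected: the only mildly delicate point, the degeneracy of the weights when $q<2$ or $s<2$, is handled precisely as in Theorem~\ref{thm:reg_v}.
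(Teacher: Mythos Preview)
Your proposal is correct and follows essentially the same approach as the paper: the paper's argument for this corollary is precisely the choice $\eta\equiv 0$ on $[0,t_0/2]$, $\eta\equiv 1$ on $(t_0,T]$, $\zeta:=\eta'$, followed by an application of Theorem~\ref{thm:reg_v}, and you have simply spelled out the verification that this pair is admissible and that the operator $(\eta D_x+\zeta D_v)$ reduces to $D_x$ on $(t_0,T]$ and at $t=T$.
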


\subsubsection{Proof of Theorem~\ref{thm:reg-main}}

Finally, the proof of Theorem~\ref{thm:reg-main} follows from the previous two corollaries and the inequality
\be
|D_v h| \leq   |(t D_x + D_v)h| + T |D_x h|   \quad \text{for all} \; t \in [0, T],
\ee
for any $h$ Sobolev function.

\appendix

\section{Time Regularity}\label{app:time_regularity}

In this appendix, we collect some facts about the regularity with respect to time of solutions $u$ of
\be\label{eq:app_HJ}
-\partial_t u - v\cdot D_x u+H(x,v,D_v u) \leq \beta, \quad \text{in } \sD'((0,T)\times\M\times\R^d).
\ee
By this we mean that, for any non-negative test function $0 \leq \phi \in C^\infty_c \left ( (0,T) \times \M \times \R^d \right )$,
\be\label{eq:app_HJ_weak}
\int_0^T \int_{\M \times \R^d} \left (\partial_t \phi + v\cdot D_x \phi \right ) u + \phi \, H(x,v, D_v u) \dd x \dd v \dd t \leq \int_0^T \int_{\M \times \R^d} \beta \phi  \dd x \dd v \dd t .
\ee
What we discuss is close to the standard theory of distributional solutions. However, in our case technical difficulties arise since, firstly, \eqref{eq:app_HJ} is an inequality and, secondly, we wish to work on the atypical domain $\cU_{m_0}$. We therefore found it useful to clarify several points. Our main goal is to give a precise sense to the specification of boundary data for this problem at time $t=T$, and to give a meaning to $u_0$ (the `value of $u$ at time $t=0$'), which appears in the functional $\widetilde\cA$ defining the variational problem.

Throughout this appendix we impose the following summability conditions on the pair $(u, \beta) \in L^1_{\rm{loc}}(\cU_{m_0}) \times L^1_{\rm{loc}}(\cU_{m_0})$ and that $H$ satisfies \eqref{hyp:H}. 
\begin{assumption} \label{hyp:app-summ}
The pair $(u, \beta) \in L^1_{\rm{loc}}(\cU_{m_0}) \times L^1_{\rm{loc}}(\cU_{m_0})$ satisfies the following assumptions:
\begin{itemize}
\item The positive part of $\beta$ satisfies $\beta_+ \in L^{q'}([0,T] \times \M \times \R^d)$;
\item $D_v u \in L^r_{\loc}(\cU_{m_0})$;
\end{itemize}
\end{assumption}

Under Assumption~\ref{hyp:app-summ}, by a density argument the weak form \eqref{eq:app_HJ_weak} extends additionally to test functions in $C^1_c((0,T)\times\M\times\R^d)$.

\begin{lemma}\label{lem:app_trace_1}
Let $(u, \beta) \in L^1_{\rm{loc}}(\cU_{m_0}) \times L^1_{\rm{loc}}(\cU_{m_0})$ be a distributional solution to \eqref{eq:app_HJ} satisfying Assumption~\ref{hyp:app-summ}.
Then 
\begin{enumerate}
\item for any $\phi\in C^1_c((0,T)\times\M\times\R^d)$ the function
\begin{equation}\label{eq:app_function}
(0,T)\ni t \mapsto \langle \phi(t), u(t) \rangle : = \int_{\M \times \R^d} \phi(t,x,v) u (t,x,v) \dd x \dd v 
\end{equation}
is of locally bounded variation and therefore has a right continuous representative with a countable number of jump discontinuities.
\item There exists a path $(0,T) \ni t \mapsto \tilde u_t \in C^1_c(\M \times \R^d)'$ which is right continuous with respect to the weak-star topology on $C^1_c(\M \times \R^d)'$ and such that $\tilde u_t = u_t$ as elements of $(C^1_c)'$ for almost every $t \in (0,T)$.
\end{enumerate}
\end{lemma}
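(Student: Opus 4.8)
The plan is to treat the Hamilton-Jacobi inequality as the statement that the distribution $\partial_t u$ is controlled in a weak sense, then invoke a standard characterization of $C^1$-valued functions of bounded variation. First I would fix a test function $\psi \in C^1_c((0,T)\times\M\times\R^d)$; actually it is cleaner to start from a separated form $\psi(t,x,v) = \chi(t)\varphi(x,v)$ with $\chi \in C^1_c((0,T))$ and $0 \leq \varphi \in C^1_c(\M\times\R^d)$, and then argue by linearity and density for general $\psi$. For such a product test function, the weak form \eqref{eq:app_HJ_weak} reads, after moving the time derivative around,
\[
\int_0^T \chi'(t)\, \langle \varphi, u(t)\rangle \dd t \leq \int_0^T \chi(t) \left[ \langle v\cdot D_x\varphi, u(t)\rangle - \langle \varphi, H(x,v,D_v u)(t)\rangle + \langle \varphi, \beta(t)\rangle \right] \dd t.
\]
The key observation is that, under Assumption~\ref{hyp:app-summ}, the bracketed quantity on the right is an $L^1_{\mathrm{loc}}((0,T))$ function of $t$: the term $\langle v \cdot D_x\varphi, u(t)\rangle$ is integrable because $u \in L^1_{\mathrm{loc}}(\cU_{m_0})$ and $v\cdot D_x\varphi$ is bounded with compact support; the term $\langle \varphi, H(x,v,D_v u)(t)\rangle$ is integrable because $H$ has $r$-growth in $p$ and $D_v u \in L^r_{\mathrm{loc}}(\cU_{m_0})$ via \eqref{hyp:H}; and $\langle \varphi, \beta(t)\rangle$ is integrable since $\beta_+ \in L^{q'}$ and $\beta_- \in L^1_{\mathrm{loc}}(\cU_{m_0})$ (the latter being one of the standing summability facts, or deducible from Corollary~\ref{cor:bounds_u} in the applications). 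Thus the distributional derivative of $t \mapsto \langle \varphi, u(t)\rangle$ is bounded \emph{from above} by an $L^1_{\mathrm{loc}}$ function, i.e. $\frac{\dd}{\dd t}\langle \varphi, u(t)\rangle \leq g_\varphi$ in $\sD'((0,T))$ for some $g_\varphi \in L^1_{\mathrm{loc}}$.

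The second step is to upgrade this one-sided bound to genuine bounded variation. Write $h_\varphi(t) := \langle \varphi, u(t)\rangle$; we have shown $h_\varphi' \leq g_\varphi$ in the sense of distributions. Consider $G_\varphi(t) := \int_{t_0}^t g_\varphi(s)\dd s$, which is absolutely continuous, and set $k_\varphi := h_\varphi - G_\varphi$. Then $k_\varphi' \leq 0$ in $\sD'((0,T))$, so $k_\varphi$ is (equal a.e. to) a non-increasing function; being non-increasing it is of bounded variation on compact subintervals and has a right-continuous representative with at most countably many jumps. Hence $h_\varphi = k_\varphi + G_\varphi$ is of locally bounded variation with the same structure, which is exactly assertion (1) for product test functions. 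For a general $\psi \in C^1_c((0,T)\times\M\times\R^d)$ one writes $\psi$ as a finite sum (or uniform-in-$C^1$ limit of finite sums) of products $\chi_i(t)\varphi_i(x,v)$ — using e.g. a partition of unity in $(x,v)$ or a Fourier/polynomial expansion on a box containing $\spt\psi$ — and notes that a uniformly-$C^1$-convergent combination of locally-BV functions with a uniform variation bound on each compact subinterval is again locally BV; I would make this rigorous by keeping track of the explicit bound $\mathrm{Var}_{[a,b]}(h_\psi) \leq C(a,b,K)\,\|\psi\|_{C^1}$ that the argument above produces, where $K = \spt\psi$, so that the map $\psi \mapsto h_\psi$ is continuous from $C^1_c$ (with a fixed compact support) into $BV_{\mathrm{loc}}$.

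For assertion (2), I would fix a countable dense subset $\{\psi_j\}$ of $C^1_c(\M\times\R^d)$ (dense in the $C^1$ norm on each fixed compact set, taking an exhaustion by compacts), let $N \subset (0,T)$ be the countable union over $j$ of the jump sets of the right-continuous representatives of $t \mapsto \langle \psi_j, u(t)\rangle$ together with the Lebesgue-null set where $u(t)$ fails to be the $L^1_{\mathrm{loc}}$-density, and define $\tilde u_t$ for $t \notin N$ simply as $u_t$, and for $t \in N$ by the weak-star right limit $\tilde u_t := \mathrm{w}^*\text{-}\lim_{s \downarrow t, s\notin N} u_s$. The uniform local variation bound $|\langle \psi, u_s\rangle - \langle\psi, u_{s'}\rangle| \leq C(K)\|\psi\|_{C^1}\,\mathrm{Var}_{[s',s]}$ combined with the fact that the total-variation measures of the $h_{\psi_j}$ are non-atomic off a common countable set, and an equicontinuity/density argument, shows this limit exists against every $\psi \in C^1_c$ and defines an element of $(C^1_c)'$; right continuity in the weak-star topology then follows from the construction. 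I expect the main obstacle to be the bookkeeping in passing from product test functions to arbitrary $\psi \in C^1_c$ while keeping the variation bounds uniform — i.e. making precise that $h_\psi$ inherits local bounded variation with a bound linear in $\|\psi\|_{C^1}$ and depending only on $\spt\psi$ — and then using exactly that uniformity to construct the weak-star right-continuous path $\tilde u_t$ simultaneously for all test functions; the one-sided-derivative-implies-BV trick and the non-atomicity needed to nail down the right limits on the exceptional countable set is the technical heart of it.
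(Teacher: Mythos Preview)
Your approach is correct but takes a somewhat more laborious route than the paper. The paper's key simplification is to observe at the outset that since $\beta + \partial_t u + v\cdot D_x u - H(x,v,D_v u)$ is a \emph{positive} distribution, it is represented by a Radon measure $\nu$; this turns the inequality into the identity
\[
\partial_t u = -v\cdot D_x u + H(x,v,D_v u) - \beta + \nu
\]
in $\sD'$. From here the distributional time derivative of $f_\phi(t)=\langle\phi(t),u(t)\rangle$ is immediately a signed Radon measure on $(0,T)$ for \emph{any} $\phi\in C^1_c((0,T)\times\M\times\R^d)$, giving part (1) directly without splitting into product test functions, without restricting to $\varphi\ge0$, and without the density/approximation step you flag as the main obstacle. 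The paper then obtains, for time-independent $\psi$ supported in a compact $K$, an explicit modulus $|\tilde f_\psi(t)-\tilde f_\psi(s)|\le \|\psi\|_{C^1}\mu_K((s,t])$ with $\mu_K$ a single Radon measure built from $u$, $\beta$, $H(\cdot,D_vu)$ and $\nu$; this uniform estimate drives the countable-dense-set construction of $\tilde u_t$ in the same way you outline.

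Your one-sided bound plus ``nonincreasing $+$ absolutely continuous'' decomposition is perfectly valid for non-negative $\varphi$, and the extension to signed $\varphi$ by writing $\varphi=(\varphi+C\chi)-C\chi$ with a non-negative bump $\chi$ works. But note that ``linearity'' alone does not extend the one-sided inequality to signed test functions; you need such a decomposition, and the uniform variation bound you need for the density argument in part (2) must then be assembled from the two pieces. All of this is subsumed, at no cost, by introducing $\nu$ first---that is the idea worth taking from the paper.
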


\begin{proof}

Since $\beta + \partial_t u + v\cdot D_x u - H(x,v,D_v u)$ is a positive distribution, it is given by some Radon measure $\nu$ on $(0,T) \times \M \times \R^d$. 
We have 
\be\label{eq:HJ-deficit-measure}
\partial_t u  = - v\cdot D_x u + H(x,v,D_v u) - \beta + \nu =: \mu,
\ee
which we will use to deduce weak time regularity for $u$.

Consider a test function $\phi \in C^1_c \left ((0,T) \times \M \times \R^d \right )$. 
The function
\begin{align} \label{app:weak-trajectory}
f_\phi : (0,T) &\rightarrow \R \\
t &\mapsto \langle \phi, u(t) \rangle : = \int_{\M \times \R^d} \phi(t,x,v) u (t,x,v) \dd x \dd v 
\end{align}
has distributional derivative
\be \label{app:weak-distributional-derivative}
f_\phi ' = \frac{\dd}{\dd t} \langle \phi, u\rangle = \langle \partial_t \phi + v \cdot D_x \phi, u\rangle + \langle \phi, H(x,v, D_vu) +  \nu - \beta \rangle .
\ee
By Assumption~\ref{hyp:app-summ}, $u$, $\beta$ and $H(x,v, D_vu)$ are all locally integrable functions on $\cU_{m_0}$ and so in particular on $(0,T) \times \M \times \R^d$. Thus the distributional derivative $f_\phi '$ defined in \eqref{app:weak-distributional-derivative} is a Radon measure on $(0,T)$, and so the path \eqref{app:weak-trajectory} is of locally bounded variation.

It follows that $f_\phi$ has a unique right-continuous version. That is, there exists a set $E_\phi \subset [0,T]$ of full measure and a right continuous function $ \tilde{f}_\phi$ such that $ \tilde{f}_\phi(t) = f_\phi(t)$ for all $t \in E_\phi$. The function $ \tilde{f}_\phi$ satisfies
\be
\tilde{f}_\phi(t) = \tilde{f}_\phi(s) + f_\phi ' ((s,t])
\ee
for all $0 < s < t < T$.

Now consider $\psi \in C^1_c(\M \times \R^d)$ (independent of time). The path $t \mapsto \langle u(t), \psi \rangle$ has time derivative
\be
 \frac{\dd}{\dd t} \langle \psi, u(t) \rangle = \langle  v \cdot \nabla_x \psi, u\rangle + \langle \psi, H(x,v, D_vu) +  \nu - \beta \rangle .
\ee

For each compact subset $K \subset \M \times \R^d$, we define the following Radon measure on $(0,T)$: for $A \subset (0,T)$ Borel,
\be\label{def:muK}
\mu_K(A) : = \sup_{v: \exists (x,v) \in K} |v| \|u\|_{L^1(A \times K)} + \| H(\cdot,\cdot,D_v u)-\beta\|_{L^1(A\times K)} + \nu(A \times K) .
\ee

For the right-continuous versions we have, for all $0<s<t<T$ and all $\psi \in C^1_c(\M\times\R^d)$ with support contained in $K$,
\be \label{est:right-continuity}
|\tilde{f}_\psi(t) - \tilde{f}_\psi(s)| \leq \| \psi \|_{C^1} \mu_K((s,t]) .
\ee

Using these estimates, it is possible to construct a right continuous version of $u$: that is, a path $t \mapsto \tilde u_t \in C^1_c(\M \times \R^d)'$ that is right continuous with respect to the weak-star topology.

Such a construction is classical, but because of the lack of a precise reference in our context, we sketch the main ideas here. Take a countable dense set $Z \subset C^1_c(\M\times\R^d)$; there is a full measure set $E \subset (0,T)$ such that $\langle \psi, u(t) \rangle = \widetilde f_\psi(t)$ for all $t \in E$ and all $\psi \in Z$, and moreover $u(t) \in L^1_\loc(\M \times \R^d)$ (the latter is true for almost all $t$ since $u$ is $L^1_\loc$). Then
\be
\langle \tilde u(t), \psi \rangle : = \tilde f_\psi(t)
\ee
defines a bounded linear functional on $Z$, for all $t \in E$. The estimate \eqref{est:right-continuity} can be used to show that this is in fact true for all $t \in (0,T)$. The resulting functional $\tilde u(t)$ extends by density to a continuous linear functional on $C^1_c(\M\times\R^d)$. Then the estimate \eqref{est:right-continuity} can be used to prove that $\langle \tilde u(t), \psi \rangle$ is right continuous for all $\psi \in C^1_c(\M\times\R^d)$, not just on $Z$.

\end{proof}

Next, we construct the extension of $\tilde u$ to the boundaries $t = 0,T$.

\begin{definition}[Transport shift]\label{def:shift} Let $t\in\R$.
The operator $\cT_t : C_c(\M \times \R^d) \rightarrow C_c(\M \times \R^d)$ is defined by
\be
\cT_t \phi (x,v) = \phi(x-tv, v) .
\ee
\end{definition}

\begin{remark}[Group property]
For any $s,t \in \R$, $\cT_s \cT_t = \cT_{s+t}$.
\end{remark}

\begin{lemma}\label{lem:finite_or_infinity}
Let $u$ be a solution to \eqref{eq:app_HJ} and let $\tilde u$ be its right continuous representative, obtained in Lemma \ref{lem:app_trace_1}. Let $\psi \in C^1_c(\M \times \R^d)$ be non-negative.
Consider the function $(0,T)\ni t \mapsto \langle \cT_t \psi, \tilde u(t)\rangle$. Then
\begin{enumerate}
\item As $t$ tends to $T-$, $\langle \cT_t \psi, \tilde u(t)\rangle$ either tends to a finite limit or to positive infinity.
\item As $t$ tends to $0+$, $\langle \cT_t \psi, \tilde u(t)\rangle$ either tends to a finite limit or to negative infinity.
\end{enumerate}

\end{lemma}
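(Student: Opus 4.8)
The plan is to differentiate, in the distributional sense on $(0,T)$, the function $g(t):=\langle \cT_t\psi,\tilde u_t\rangle$ along the free transport flow, and then to read off the one-sided limits at $t=0$ and $t=T$ from the sign structure of the resulting measure. The starting point is the elementary observation that the time-dependent test function $\zeta(\tau,x,v):=\cT_\tau\psi(x,v)=\psi(x-\tau v,v)$ satisfies $\partial_\tau\zeta+v\cdot D_x\zeta=0$, and that, since $\psi$ has compact support, all of the functions $\cT_\tau\psi$ for $\tau\in[0,T]$ are supported in a single fixed compact set $K\subset\M\times\R^d$, with $\sup_{\tau\in[0,T]}\|\cT_\tau\psi\|_{C^1}<+\infty$. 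Writing $\partial_t u=-v\cdot D_x u+H(x,v,D_v u)-\beta+\nu$ with $\nu\ge 0$ a Radon measure on $(0,T)\times\M\times\R^d$, as in the proof of Lemma~\ref{lem:app_trace_1}, I would test this identity against $\chi(\tau)\,\zeta(\tau,x,v)$ for arbitrary $\chi\in C^1_c((0,T))$ (which is admissible, being an element of $C^1_c((0,T)\times\M\times\R^d)$) and use $\partial_\tau\zeta+v\cdot D_x\zeta=0$ to cancel the transport contribution, exactly as in the derivation of \eqref{app:weak-distributional-derivative}. This shows that $g$ is of locally bounded variation on $(0,T)$, that its right continuous representative coincides with $t\mapsto\langle\cT_t\psi,\tilde u_t\rangle$ (here one uses the weak-$*$ right continuity of $t\mapsto\tilde u_t$, the strong continuity of $t\mapsto\cT_t\psi$ into $C^1_c$, and the local bound on $\|\tilde u_t\|_{(C^1_c)'}$ from \eqref{est:right-continuity}), and that for all $0<s<t<T$
\[
g(t)-g(s)=\int_s^t\big\langle \cT_\tau\psi,\,H(\cdot,\cdot,D_v u)(\tau)-\beta(\tau)\big\rangle\,d\tau+\sigma\big((s,t]\big),
\]
where $\sigma$ is the functional on $(0,T)$ defined by $\langle\sigma,\chi\rangle:=\int\chi(\tau)\,\cT_\tau\psi(x,v)\,d\nu(\tau,x,v)$. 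Since $\chi,\psi,\nu\ge 0$, this $\sigma$ is a nonnegative measure, and it is locally finite because $[s,t]\times K$ is a compact subset of the open set carrying $\nu$.

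For statement (1) I would fix $s_0\in(0,T)$ and observe that $[s_0,T]\times K$ is a compact subset of $\cU_{m_0}$ (since $(0,T]\times\M\times\R^d\subseteq\cU_{m_0}$). Because $\beta\in L^1_\loc(\cU_{m_0})$ and, by \eqref{hyp:H}, $-C_H\le H(x,v,D_v u)\le \tfrac{c}{r}|D_v u|^r+C_H$ with $D_v u\in L^r_\loc(\cU_{m_0})$, both $\tau\mapsto\langle\cT_\tau\psi,\beta(\tau)\rangle$ and $\tau\mapsto\langle\cT_\tau\psi,H(\cdot,\cdot,D_v u)(\tau)\rangle$ lie in $L^1((s_0,T))$. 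Hence, in the identity above with $s=s_0$, the integral term converges to a finite limit as $t\to T-$, while $\sigma((s_0,t])$ is nondecreasing in $t$ and so converges in $[0,+\infty]$. Therefore $g(t)$ has a limit in $\R\cup\{+\infty\}$, which is (1).

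For statement (2), the integrability argument is no longer available down to $t=0$, because $[0,t_0]\times K$ need not be contained in $\cU_{m_0}$; instead the monotonicity and one-sided bounds must do the work. Fixing $t_0\in(0,T)$ and rearranging the identity above for $s\in(0,t_0)$ (using $\langle\cT_\tau\psi,1\rangle=\|\psi\|_{L^1}$ and $\beta=\beta_+-\beta_-$), I would write
\begin{multline*}
g(s)=\Big(g(t_0)+C_H(t_0-s)\|\psi\|_{L^1}+\int_s^{t_0}\langle\cT_\tau\psi,\beta_+(\tau)\rangle\,d\tau\Big)\\
-\int_s^{t_0}\langle\cT_\tau\psi,H(\cdot,\cdot,D_v u)(\tau)+C_H\rangle\,d\tau-\int_s^{t_0}\langle\cT_\tau\psi,\beta_-(\tau)\rangle\,d\tau-\sigma\big((s,t_0]\big).
\end{multline*}
The first group converges to a finite limit as $s\to0+$: the first two terms are bounded and the third converges since $\beta_+\in L^{q'}([0,T]\times\M\times\R^d)\subseteq L^1([0,t_0]\times K)$. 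Each of the three subtracted terms is nonnegative — here one uses $H(x,v,D_v u)+C_H\ge 0$, $\beta_-\ge 0$ and $\sigma\ge 0$ — and nondecreasing as $s$ decreases, hence converges in $[0,+\infty]$. Consequently $g(s)$ converges in $\R\cup\{-\infty\}$, which is (2).

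The step I expect to be the main obstacle is the rigorous justification of this ``differentiation along the transport flow'' in the weak-solution-with-deficit-measure setting: one must verify that feeding the non-compactly-supported-in-time field $\zeta=\cT_\tau\psi$ (after multiplication by a temporal cutoff) genuinely produces the stated formula for $dg$ on all of $(0,T)$, including the identification of the $\nu$-contribution as a bona fide nonnegative Radon measure $\sigma$, and one must be careful that near $t=0$ no integrability of $\beta$ or $H(\cdot,\cdot,D_v u)$ is implicitly used over a set that leaves $\cU_{m_0}$ — all quantitative control there has to come from the global $L^{q'}$ bound on $\beta_+$, the one-sided bound $H\ge -C_H$, and the nonnegativity of $\nu$.
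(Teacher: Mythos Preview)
Your proposal is correct and follows essentially the same line as the paper's proof. Both compute the distributional time derivative of $g(t)=\langle\cT_t\psi,\tilde u(t)\rangle$ via $(\partial_t+v\cdot D_x)\cT_t\psi=0$, obtaining $g'=\langle\cT_t\psi,H(\cdot,\cdot,D_vu)+\nu-\beta\rangle$, and then exploit the sign structure: the negative part of $g'$ is dominated by $\langle\cT_t\psi,C_H+\beta_+\rangle\in L^1(0,T)$ globally, while the remainder is a nonnegative measure. The paper states this once and reads off both endpoints symmetrically (increasing part has monotone limits in $[-\infty,+\infty]$, decreasing part is absolutely continuous with finite limits).

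Your treatment of (2) is exactly this argument, written out term by term. For (1) you take a slightly different route: instead of invoking the same one-sided bound, you use that $[s_0,T]\times K$ is a compact subset of $\cU_{m_0}$, so that \emph{all} of $\beta$ and $H(\cdot,\cdot,D_vu)$ are integrable there, and the only possible divergence at $t=T-$ comes from $\sigma((s_0,t])\nearrow$. This is a legitimate variant that exploits the specific geometry of $\cU_{m_0}$ near $t=T$; it yields the same conclusion and, as a byproduct, isolates $\nu$ as the sole mechanism for blow-up at $T$. The paper's symmetric argument is shorter but does not make this distinction. Your caution about justifying the differentiation formula and about not inadvertently using integrability of $\beta_-$ near $t=0$ is well placed and correctly handled.
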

\begin{proof}
Observe that
\be
(\partial_t + v \cdot \nabla_x) \cT_t \psi = 0.
\ee

It follows that
\be
\frac{\dd}{\dd t} \langle \cT_t \psi, \tilde u(t) \rangle = \langle \cT_t \psi, H(x,v, D_vu) +  \nu - \beta \rangle .
\ee
Then the negative part of the time derivative satisfies
\be
\left [ \frac{\dd}{\dd t} \langle \cT_t \psi, \tilde u(t) \rangle \right ]_- \leq \langle \cT_t \psi, C_H + \beta_+ \rangle \in L^1(0,T).
\ee
Thus $\langle \cT_t \psi, \tilde u(t) \rangle$ can be written as the difference of monotone functions, where the decreasing part is absolutely continuous on $(0,T)$ and can be extended to finite limits at the endpoints. By monotonicity, the increasing part either has a finite limit at $t=T$, or tends to positive infinity; similarly, at $t=0$ it either has a finite limit or tends to negative infinity.

\end{proof}

\begin{definition}[Weak traces] \label{def:trace} For any $\psi_T, \psi_0 \in C^1_c(\M\times \R^d)$, let
\be \label{def:traces}
\langle \psi_T, u_T \rangle := \lim_{t \to T-}  \langle \cT_{t-T} \psi_T, \tilde u(t) \rangle, \qquad \langle \psi_0, u_0 \rangle := \lim_{t \to 0 +}  \langle \cT_t \psi_0, \tilde u(t) \rangle.
\ee
These define linear maps from $C^1_c(\M \times \R^d)$ to $\R \cup \{-\infty\}$ in the case of $u_0$, and $\R \cup \{+\infty\}$ in the case of $u_T$. 
\end{definition}

We now suppose that, in addition to the weak Hamilton-Jacobi inequality in the interior \eqref{eq:app_HJ_weak}, $u$ satisfies the following: for all non-negative test functions $\phi \in C^1_c((0, T] \times \M \times \R^d)$
\be \label{app:eq:HJ_weak}
\int_0^T\int_{\M\times\R^d}u[\partial_t\phi+\diver_x(v\phi)] + \phi H(x,v,D_vu)\dd x\dd v\dd t\le \int_0^T\int_{\M\times\R^d}\beta\phi\dd x\dd v\dd t+\int_{\M\times\R^d}\beta_T\phi_T \dd x\dd v .
\ee
In our setting, we will have that $\beta_T \in L^1_\loc(\M \times\R^d)$ is a given function whose positive part satisfies $(\beta_T)_+\in L^{s'}(\M\times\R^d)$.
In this case, we show below that the time trace $u_T$, enjoys some more properties.

\begin{lemma}\label{lem:app_boundary_condition}
If $u$ satisfies \eqref{app:eq:HJ_weak} with $\b_T\in\sM(\M\times\R^d)$, then $u_T$ as defined in \eqref{def:traces} is a bounded linear functional on $C^1_c(\M \times \R^d)$ and $u_T \leq \beta_T$ in the sense of distributions: that is, for all $\psi_T \in C^1_c(\M \times \R^d)$ non-negative,
\be
\langle \psi_T, u_T \rangle \leq \langle \psi_T, \beta_T \rangle.
\ee
In particular, we have that $\langle \psi, u_T\rangle = +\infty$ does not occur for any $\psi \in C^1_c(\M\times\R^d)$.
\end{lemma}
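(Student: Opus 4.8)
The plan is to exploit the already-constructed right-continuous representative $\tilde u$ and the weak formulation \eqref{app:eq:HJ_weak} which, crucially, is valid for test functions not vanishing at $t=T$. First I would fix a non-negative $\psi_T \in C^1_c(\M\times\R^d)$ and plug into \eqref{app:eq:HJ_weak} a test function of the form $\phi(t,x,v) = \chi_\epsilon(t)\,\cT_{t-T}\psi_T(x,v)$, where $\chi_\epsilon$ is a smooth non-negative cut-off with $\chi_\epsilon(T) = 1$, $\chi_\epsilon$ supported in $(T-\epsilon, T]$, and $\chi_\epsilon' \geq 0$ suitably. Since $(\partial_t + v\cdot D_x)\cT_{t-T}\psi_T = 0$, the transport term drops out, and \eqref{app:eq:HJ_weak} reduces to
\be
\int_{T-\epsilon}^T \chi_\epsilon'(t)\langle \cT_{t-T}\psi_T, u(t)\rangle \dd t + \int_{T-\epsilon}^T \chi_\epsilon(t)\langle \cT_{t-T}\psi_T, H(x,v,D_vu)\rangle \dd t \le \int_{T-\epsilon}^T \chi_\epsilon(t)\langle \cT_{t-T}\psi_T, \beta\rangle \dd t + \langle \psi_T, \beta_T\rangle .
\ee
The second term on the left is bounded below using $H \ge -C_H$, and the right-hand side integral over $(T-\epsilon,T)$ tends to $0$ as $\epsilon \to 0$ because $\beta_+ \in L^{q'}$ and $\psi_T$ is compactly supported (so $\langle \cT_{t-T}\psi_T, \beta\rangle$ is in $L^1_t$ after splitting $\beta = \beta_+ - \beta_-$; the $\beta_-$ part is handled by $L^1_\loc(\cU_{m_0})$ bounds since the support stays in the interior for $t<T$, combined with monotone/dominated convergence).

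Next I would analyze the term $\int_{T-\epsilon}^T \chi_\epsilon'(t)\langle \cT_{t-T}\psi_T, u(t)\rangle \dd t$. By Lemma \ref{lem:finite_or_infinity}, $t \mapsto \langle \cT_{t-T}\psi_T, \tilde u(t)\rangle$ either converges to a finite limit or to $+\infty$ as $t \to T-$; call this limit (in $\R\cup\{+\infty\}$) $\langle \psi_T, u_T\rangle$, matching Definition \ref{def:trace}. Choosing $\chi_\epsilon$ so that $\chi_\epsilon' \geq 0$ and $\int_{T-\epsilon}^T \chi_\epsilon'(t)\dd t = 1$ (e.g. $\chi_\epsilon$ increasing from $0$ to $1$), the quantity $\int_{T-\epsilon}^T \chi_\epsilon'(t)\langle \cT_{t-T}\psi_T, \tilde u(t)\rangle \dd t$ is a weighted average of $\langle \cT_{t-T}\psi_T, \tilde u(t)\rangle$ over $(T-\epsilon, T)$, and by right-continuity at $T-$ in the monotone-difference sense this average converges to $\langle \psi_T, u_T\rangle$ as $\epsilon \to 0$ (using that the decreasing part is absolutely continuous with finite limit and the increasing part is monotone, so Fatou/monotone convergence applies to the average). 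Passing to the limit $\epsilon \to 0$ in the inequality then yields
\be
\langle \psi_T, u_T\rangle - C_H\,\omega(\epsilon) \le \langle \psi_T, \beta_T\rangle,
\ee
hence $\langle \psi_T, u_T\rangle \le \langle \psi_T, \beta_T\rangle < +\infty$ for every non-negative $\psi_T \in C^1_c$. This rules out the value $+\infty$, so by Lemma \ref{lem:finite_or_infinity} the limit defining $u_T$ is always finite. Writing a general $\psi \in C^1_c$ as a difference of non-negative functions gives that $u_T$ is a well-defined linear functional into $\R$.

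Finally, to see $u_T$ is \emph{bounded} (i.e.\ extends to $\sM(\M\times\R^d)$ or at least is a distribution of order $0$), I would note that for non-negative $\psi_T$ with $\|\psi_T\|_\infty \le 1$ supported in a fixed compact $K$, the bound $\langle \psi_T, u_T\rangle \le \langle \psi_T, \beta_T\rangle \le \|(\beta_T)_+\|_{L^1(K)}$ gives an upper bound, while a lower bound comes from combining the interior $L^1_\loc(\cU_{m_0})$ estimate on $u$ near $t=T$ (Lemma \ref{lem:app_trace_1} plus the estimate \eqref{est:right-continuity}) with the representation of $\langle \cT_{t-T}\psi_T, \tilde u(t)\rangle$ as a difference of monotone functions; the increasing part has a finite (now known) limit and the decreasing part is controlled by $\|\beta_+\|_{L^{q'}}$ and $\mu_K$. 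This yields a two-sided bound $|\langle \psi_T, u_T\rangle| \le C(K)\|\psi_T\|_{C^0}$, so $u_T$ extends to a bounded Radon measure.

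\textbf{Main obstacle.} The delicate point is justifying the interchange of limits in the term $\int \chi_\epsilon'(t)\langle \cT_{t-T}\psi_T, \tilde u(t)\rangle\dd t \to \langle \psi_T, u_T\rangle$ when the limit is a priori only in $\R\cup\{+\infty\}$: one cannot assume boundedness before one has proved it, so the argument must be run with the monotone decomposition (decreasing part absolutely continuous, increasing part monotone) so that the averaging against $\chi_\epsilon' \ge 0$ is a genuine lower-semicontinuity/monotone-convergence statement that holds even if the limit is $+\infty$, and only afterwards does the inequality against $\langle\psi_T,\beta_T\rangle$ collapse the $+\infty$ case. Getting the cut-off $\chi_\epsilon$ and the sign conventions exactly right so that every error term has a favourable sign is the bookkeeping-heavy part; everything else is routine given Lemmas \ref{lem:app_trace_1} and \ref{lem:finite_or_infinity}.
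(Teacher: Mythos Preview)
Your proposal is correct and follows essentially the same approach as the paper: both insert the test function $\phi_\epsilon(t,x,v)=\eta_\epsilon(t)\,\cT_{t-T}\psi_T(x,v)$ (with $\eta_\epsilon$ a time cut-off supported near $T$, increasing, $\eta_\epsilon(T)\to 1$) into \eqref{app:eq:HJ_weak}, use that the transport derivative of $\cT_{t-T}\psi_T$ vanishes, and let $\epsilon\to 0$. The paper handles the possibility $\langle\psi_T,u_T\rangle=+\infty$ by a short contradiction argument before passing to the limit, whereas you carry the averaging through directly using the monotone decomposition from Lemma~\ref{lem:finite_or_infinity}; both are fine. One simplification you could adopt from the paper: for boundedness of $u_T$, once $u_T\le\beta_T$ is established, simply note that $\beta_T-u_T$ is a positive linear functional on $C^1_c$, hence represented by a Radon measure, so $u_T=\beta_T-(\beta_T-u_T)$ is a difference of Radon measures and automatically bounded --- this avoids the separate lower-bound argument you sketch.
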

\begin{remark}
Since then $\beta_T - u_T$ is a positive distribution, if $\beta_T \in L^1_\loc(\M\times\R^d)$ then we in fact have that $u_T$ is represented by a signed Radon measure with absolutely continuous positive part.
\end{remark}

\begin{proof}[Proof of Lemma \ref{lem:app_boundary_condition}]
In what follows, we will use the right continuous representative of $u$ constructed in Lemma \ref{lem:app_trace_1}. By the abuse of notation, we write simply $u$ for $\tilde u$.
Fix $\psi_T \in C^1_c(\M \times \R^d)$ non-negative. For each $\e >0$ small consider a smooth, non-negative test function $\eta_\e:[0,T]\to\R$, chosen such that $\eta(t) = 0$ for all $0\le t \leq T-\e$ and the derivative $\eta_\e ' $ satisfies
\be
0 \leq \eta_\e ' \leq \e^{-1}, \qquad \eta_\e ' (t) =
\begin{cases}
0 & t \in [0, T-\e] \\
\e^{-1} & t \in [T- \e +\e^2, T-\e^2],
\end{cases}
\ee
Note that as a consequence of the fundamental theorem of calculus, one has $\lim_{\e \to 0} \eta_\e(T) = 1$.

We define the following non-negative test function $\phi_\e \in C^1_c((0, T] \times \M \times \R^d)$:
\be
\phi_\e(t,x,v) = \eta_\e(t) \psi_T(x + (T-t)v,v).
\ee

Substituting this choice of $\phi$ into \eqref{app:eq:HJ_weak}, we obtain
\be \label{app:trace-inequality-1}
\int_{T-\e}^T \eta_\e '(t) \int_{\M\times\R^d} u(t)\psi_T(x + (T-t)v,v) \dd x\dd v\dd t \le \eta_\e(T) \int_{\M\times\R^d}\beta_T\psi_T \dd x\dd v + \alpha_1(\e) .
\ee
where
\be
\alpha_1(\e) = \int_{T-\e}^T\int_{\M\times\R^d}\beta\phi_\e \dd x\dd v\dd t  - \int_{T-\e}^T\int_{\M\times\R^d} \phi_\e H(x,v,D_vu)\dd x\dd v\dd t .
\ee
Note that $\lim_{\e \to0} \alpha_1(\e) = 0$, since $\beta$ and $H(x,v, D_v u)$ are locally integrable and $\phi_\e$ are bounded in $L^\infty$, uniformly in $\e$.

We exclude the possibility that $\langle \psi_T, u_T \rangle = + \infty$. Indeed, if this occurs, then for any $M > 0$, there exists $\e'$ such that for any $t \in [T- \e', T]$,
\be
M <  \int_{\M\times\R^d} u(t)\psi_T(x + (T-t)v,v) \dd x\dd v .
\ee
Then by bounding the left hand side of inequality \eqref{app:trace-inequality-1} from below we obtain that for any $\e < \e'$,
\be 
 M (1 - 2 \e) \le \eta_\e(T) \int_{\M\times\R^d}\beta_T\psi_T \dd x\dd v + \alpha_1(\e) .
\ee
Taking the limit $\e \to 0$ gives
\be 
 M \leq \int_{\M\times\R^d}\beta_T\psi_T \dd x\dd v .
\ee
Since this holds for any $M>0$, we derive a contradiction. Thus---using also Lemma \ref{lem:finite_or_infinity}---$u_T$ is in fact a linear map from $C^1_c(\M \times \R^d)$ to $\R$. We note also that the map
\be
t \mapsto \int_{\M\times\R^d} u(t) \psi_T(x + (T-t)v,v) \dd x\dd v 
\ee
extends to a function that is bounded and continuous (from the left) at $t = T$.

Next, we show that $u_T \leq \beta_T$ as functionals on $C^1_c(\M \times \R^d)$.
We have
\be  \label{app:trace-inequality-2}
\frac{1}{\e} \int_{T-\e}^T\int_{\M\times\R^d} u\psi_T(x + (T-t)v,v) \dd x\dd v \dd t \le \eta_\e(T) \int_{\M\times\R^d}\beta_T\psi_T \dd x\dd v + \alpha_2(\e) .
\ee
where
\be
\alpha_2(\e) := \alpha_1(\e) + \int_{T-\e}^T (\e^{-1} - \eta_\e ') \int_{\M\times\R^d} u\psi_T(x + (T-t)v,v) \dd x\dd v \dd t  .
\ee

For the second term here we have
\begin{align}
\left \lvert \int_{T-\e}^T (\e^{-1} - \eta_\e ')  \int_{\M\times\R^d} u\psi_T(x + (T-t)v,v) \dd x\dd v \dd t  \right \rvert & \leq \left \lvert \int_{T-\e}^{T-\e + \e^2} |\e^{-1} - \eta_\e '| |\langle u(t), \psi_T(x+(T-t)v, v) \rangle| \dd t \right \rvert \\
& \qquad + \left \lvert \int_{T-\e^2}^{T} |\e^{-1} - \eta_\e '| |\langle u(t), \psi_T(x+(T-t)v, v) \rangle| \dd t \right \rvert \\
& \leq 2 \e \| \langle u(t), \psi_T(x+(T-t)v, v) \rangle \|_{L^\infty[T-\e,T]},
\end{align}
which converges to zero as $\e \to 0$ since the trajectory $\langle u(t), \psi_T(x+(T-t)v, v)  \rangle$ is bounded near $t=T$. Thus $\lim_{\e \to 0} \alpha_2(\e) = 0$.

Taking the limit $\e \to 0$ in inequality \eqref{app:trace-inequality-2}, we conclude that
\be
\langle \psi_T, u_T \rangle \leq \langle \psi_T, \beta_T \rangle.
\ee
Since $\beta_T - u_T$ is a positive linear functional on $C^1_c(\M \times \R^d)$, it is bounded, and therefore $u_T$ is also a bounded linear functional on $C^1_c(\M \times \R^d)$.
\end{proof}

\begin{corollary} \label{cor:nufinite}
If $u$ satisfies \eqref{app:eq:HJ_weak} with $\b_T\in\sM(\M\times\R^d)$ then, in the notation of {\Blue equation~\eqref{eq:HJ-deficit-measure}}, the measure $\nu$ extends to a finite Radon measure on $(0,T] \times \M\times\R^d$ given by $\nu(A) = \nu(A\one_{(0,T)})$.
\end{corollary}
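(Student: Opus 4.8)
The plan is to rule out concentration of $\nu$ as $t\uparrow T$ by testing the distributional identity $\nu=\partial_t u+v\cdot D_x u-H(x,v,D_v u)+\beta\ge 0$ (cf. \eqref{eq:HJ-deficit-measure}) against functions transported along the characteristics $\dot x=v,\ \dot v=0$, and then invoking the finiteness of the terminal trace $u_T$ from Lemma~\ref{lem:app_boundary_condition}. Concretely, for a non-negative $\psi\in C^1_c(\M\times\R^d)$ I set $\Psi(t,x,v):=\psi(x+(T-t)v,v)=\cT_{t-T}\psi(x,v)$, which is non-negative, lies in $C^1_c(\M\times\R^d)$ for each fixed $t$, satisfies $(\partial_t+v\cdot\nabla_x)\Psi=0$, and has $\Psi(T,\cdot,\cdot)=\psi$. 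Writing $\tilde u$ for the right-continuous representative from Lemma~\ref{lem:app_trace_1} and arguing exactly as in the proof of Lemma~\ref{lem:finite_or_infinity} (with $\cT_{t-T}\psi$ in place of $\cT_t\psi$), the function $g(t):=\langle\Psi(t,\cdot,\cdot),\tilde u(t)\rangle$ is of locally bounded variation on $(0,T)$ and obeys the balance identity
\be
g(t)-g(s)=\int_s^t\langle\Psi(\tau,\cdot,\cdot),\,H(\cdot,\cdot,D_v u(\tau))-\beta(\tau)\rangle\,\dd\tau+\mu_\Psi\big((s,t]\big),\qquad 0<s<t<T,
\ee
where $\mu_\Psi$ is the non-negative Borel measure on $(0,T)$ defined by $\mu_\Psi(A):=\int_{A\times\M\times\R^d}\Psi\,\dd\nu$.

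Next I would fix $s\in(0,T)$ and let $t\uparrow T$. The left-hand side converges to $\langle u_T,\psi\rangle$ by Definition~\ref{def:trace}, and this limit is \emph{finite}: this is exactly where the hypothesis $\beta_T\in\sM(\M\times\R^d)$ enters, via Lemma~\ref{lem:app_boundary_condition}, which rules out $\langle u_T,\psi\rangle=+\infty$. The term $g(s)$ is a fixed real number since $\Psi(s,\cdot,\cdot)\in C^1_c(\M\times\R^d)$ and $\tilde u(s)\in(C^1_c)'$, and the integral converges to $\int_s^T\langle\Psi(\tau,\cdot,\cdot),H(\cdot,\cdot,D_vu(\tau))-\beta(\tau)\rangle\,\dd\tau$, which is finite because the set $\bigcup_{\tau\in[s,T]}\{\tau\}\times\spt\Psi(\tau,\cdot,\cdot)$ is a compact subset of $\cU_{m_0}$ (bounded in $(x,v)$ since $\psi$ is compactly supported, and contained in $[s,T]\times\M\times\R^d$ with $s>0$), while $\beta\in L^1_\loc(\cU_{m_0})$ by the standing assumption and $H(\cdot,\cdot,D_vu)\in L^1_\loc(\cU_{m_0})$ follows from $|H(x,v,p)|\le\frac{c}{r}|p|^r+C_H$ together with $D_vu\in L^r_\loc(\cU_{m_0})$. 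Since $t\mapsto\mu_\Psi((s,t])$ is non-decreasing, monotone convergence gives
\be
\int_{(s,T)\times\M\times\R^d}\Psi\,\dd\nu=\langle u_T,\psi\rangle-g(s)-\int_s^T\langle\Psi(\tau,\cdot,\cdot),H(\cdot,\cdot,D_vu(\tau))-\beta(\tau)\rangle\,\dd\tau<+\infty.
\ee

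Finally I would upgrade this to local finiteness of the extension up to $t=T$. Given a compact $K\subset(0,T]\times\M\times\R^d$, choose $s_0\in(0,T)$ and a compact $L\subset\M\times\R^d$ with $K\subset[s_0,T]\times L$, and pick $\psi\in C^1_c(\M\times\R^d)$ non-negative with $\psi\equiv 1$ on the compact set $\{(x+(T-\tau)v,v):\tau\in[s_0,T],\ (x,v)\in L\}$; then $\Psi\equiv 1$ on $[s_0,T]\times L$, and the previous bound yields $\nu\big(K\cap((0,T)\times\M\times\R^d)\big)\le\int_{(s_0,T)\times\M\times\R^d}\Psi\,\dd\nu<+\infty$. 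Hence $A\mapsto\nu\big(A\cap((0,T)\times\M\times\R^d)\big)$ is a locally finite, hence Radon, Borel measure on $(0,T]\times\M\times\R^d$, as claimed. I expect the only genuinely delicate point to be the bookkeeping around the transported test function $\Psi$ — choosing it so that the kinetic transport term drops out and the boundary term generated at $t=T$ is precisely the trace $u_T$ — together with the invocation of Lemma~\ref{lem:app_boundary_condition}; note that running the same computation towards $t=0$ fails, in accordance with the fact that $u_0$ may take the value $-\infty$, so the extension genuinely cannot be pushed across $\{t=0\}$.
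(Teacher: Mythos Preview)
Your proof is correct and follows essentially the same approach as the paper: both test the identity $\partial_t u = -v\cdot D_x u + H(\cdot,\cdot,D_v u) - \beta + \nu$ against a transported function annihilating the kinetic operator, integrate in time, and use the finiteness of the terminal trace $u_T$ from Lemma~\ref{lem:app_boundary_condition} to bound $\nu$ near $t=T$. The paper chooses test functions of the product form $\theta(t)\cT_t\psi(x,v)$ (acquiring an extra $\theta'(t)\langle\cT_t\psi,u\rangle$ term) and asserts that such functions suffice, whereas you take $\Psi=\cT_{t-T}\psi$ directly on $[s,T]$ and spell out the covering argument explicitly; this is a mild simplification but not a genuinely different route.
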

\begin{proof}
We show that, for any non-negative test function $\phi \in C^1_c((0,T]\times\M\times\R^d)$,
\be \label{nu-finite}
\langle\nu ,\phi \one_{(0,T)}\rangle < + \infty .
\ee

It suffices to prove \eqref{nu-finite} for test functions of the form $\phi(t,x,v) = \theta(t) \cT_t\psi(x,v)$, where $\theta \in C^1_c(0,T]$ and $\psi \in C^1_c(\M\times\R^d)$.

Then
\be
\frac{\dd}{\dd t} \langle \phi, u(t) \rangle = \theta '(t) \langle \cT_t\psi, u \rangle + \langle \phi, H(x,v, D_vu) +  \nu - \beta \rangle \geq \theta '(t) \langle \cT_t\psi, u \rangle + \langle \phi, \nu - C_H - \beta_+ \rangle .
\ee
It follows that (once again using the right continuous version of $u$), for $t < T$,
\be
\langle\nu,  \phi \one_{(0,t]} \rangle \leq \langle \phi, u(t) \rangle - \int_0^t \int_{\M\times\R^d} \theta'(s) \langle \cT_s\psi, u \rangle + \phi (C_H + \beta_+)  \dd x \dd v \dd s .
\ee
Taking the limit $t \to T$, by definition of $u_T$,
\be
\langle\nu, \phi \one_{(0,T)} \rangle \leq \langle \phi, u_T \rangle - \int_0^T \int_{\M\times\R^d} \theta'(s) \langle \cT_s\psi, u \rangle + \phi (C_H + \beta_+)  \dd x \dd v \dd s < + \infty .
\ee

\end{proof}

We now discuss the trace of $u$ at $t=0$: $u_0$ as defined in Definition~\ref{def:traces}. $\langle \psi, u_0\rangle $ is defined for all $\psi \in C^1_c(\M \times \R^d)$. Our aim is to give a meaning to the quantity $\langle m_0, u_0 \rangle$, which appears in the definition of the functional $\widetilde{\cA}$. In the case where $m_0 \in C^1_c(\M \times \R^d)$ this is straightforward, noting that we allow the possible value $-\infty$. We now consider the more general case where $m_0 \in C(\M \times \R^d)$.

\begin{lemma} \label{lem:trace0}
Assume that, for all $\phi \in C^1_c(\{ m_0 > 0 \})$, $\langle \phi, u_0 \rangle \neq - \infty$. Then $u_0$ is represented by a Radon measure on $\{ m_0 > 0\}$. Furthermore, the positive part $(u_0)_+$ has the property that
\be
\int_{\{m_0 > 0\}} m_0 \dd (u_0)_+(x,v) < + \infty .
\ee
\end{lemma}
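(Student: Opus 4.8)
The plan is to exploit the monotonicity structure already used in Lemma~\ref{lem:finite_or_infinity}: along the transported test functions $\cT_t\psi$ the map $t\mapsto\langle\cT_t\psi,\tilde u(t)\rangle$ equals a nondecreasing function minus an absolutely continuous one whose derivative is controlled by $C_H+\beta_+$. First I would note that, by Lemma~\ref{lem:finite_or_infinity}(2) together with the hypothesis, $\langle\psi,u_0\rangle$ is a finite real number for every nonnegative $\psi\in C^1_c(\{m_0>0\})$, and hence (writing an arbitrary $\psi$ as $(\psi+C\chi)-C\chi$ with $\chi\in C^1_c(\{m_0>0\})$ nonnegative, $\chi\ge 1$ on $\spt\psi$, $C=\|\psi\|_\infty$) that $u_0$ is a real-valued linear functional on $C^1_c(\{m_0>0\})$. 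Then I fix once and for all a time $t_0\in(0,T)$ which is \emph{admissible}, in the sense that $\tilde u(t_0)=u(t_0)\in L^1_\loc(\M\times\R^d)$ and $(u(t_0))_+\in(L^\infty+L^{q'})(\M\times\R^d)$; almost every $t_0$ qualifies, by Lemma~\ref{lem:app_trace_1}, Fubini, and the argument of Lemma~\ref{lem:value-ub}. Running the computation of Lemma~\ref{lem:finite_or_infinity} for nonnegative $\psi$, which gives $\tfrac{\dd}{\dd t}\langle\cT_t\psi,\tilde u(t)\rangle\ge-\langle\cT_t\psi,C_H+\beta_+\rangle$ in $\sD'((0,T))$ (using $H\ge-C_H$, $\beta\le\beta_+$, $\nu\ge0$), shows that $t\mapsto\langle\cT_t\psi,\tilde u(t)\rangle+\int_{t_0}^t\langle\cT_s\psi,C_H+\beta_+\rangle\,\dd s$ is nondecreasing on $(0,T)$, with finite limit $\langle\psi,u_0\rangle-\int_0^{t_0}\langle\cT_s\psi,C_H+\beta_+\rangle\,\dd s$ as $t\to0+$. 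Comparing this limit with the value at $t_0$ and using $\langle\cT_{t_0}\psi,u(t_0)\rangle\le\langle\cT_{t_0}\psi,(u(t_0))_+\rangle$ (as $\cT_{t_0}\psi\ge0$) yields the key bound
\be
\langle\psi,u_0\rangle\ \le\ \langle\cT_{t_0}\psi,(u(t_0))_+\rangle+\int_0^{t_0}\langle\cT_s\psi,C_H+\beta_+(s,\cdot,\cdot)\rangle\,\dd s\ =:\ \langle\psi,\mu_{t_0}\rangle \qquad\text{for all nonnegative }\psi\in C^1_c(\{m_0>0\}),
\ee
where $\mu_{t_0}$ denotes the nonnegative, locally finite measure on $\M\times\R^d$ whose action on test functions is the right-hand side above.

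Secondly, I would upgrade $u_0$ to a signed Radon measure. The bound above controls $u_0$ from above, on nonnegative test functions, by the locally finite measure $\mu_{t_0}$; a standard cut-off argument turns this into a two-sided bound. Given a compact $K\subset\{m_0>0\}$, fix $\chi_K\in C^1_c(\{m_0>0\})$ with $\chi_K\ge\one_K$, $\|\chi_K\|_\infty\ge 1$, supported in a compact $K'$. For $\psi$ supported in $K$ with $\|\psi\|_\infty\le 1$, the functions $\|\chi_K\|_\infty\chi_K\pm\psi$ are nonnegative and supported in $K'$, so applying the bound to them and using the finiteness of $\langle\chi_K,u_0\rangle$ gives $|\langle\psi,u_0\rangle|\le C_K$, hence $|\langle\psi,u_0\rangle|\le C_K\|\psi\|_{C^0}$ for all $\psi$ supported in $K$. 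By the Riesz representation theorem $u_0$ is represented by a signed Radon measure on $\{m_0>0\}$. In particular $\mu_{t_0}-u_0$ is a positive distribution there, hence a positive Radon measure.

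Thirdly, for the moment bound I would use a Hahn decomposition $\{m_0>0\}=P\cup N$ for $u_0$, so that $(u_0)_+=u_0\mres P$. Restricting the positive measure $\mu_{t_0}-u_0$ to $P$ gives $(u_0)_+\le\mu_{t_0}\mres P\le\mu_{t_0}$ as measures, whence $\int_{\{m_0>0\}}m_0\,\dd(u_0)_+\le\int_{\M\times\R^d}m_0\,\dd\mu_{t_0}$. It then remains to check the right-hand side is finite: a change of variables by the measure-preserving maps $\cT_s$ reduces it to
\be
\int_{\M\times\R^d}m_0(y,v)\,(u(t_0))_+(y+t_0v,v)\,\dd y\,\dd v+C_H t_0+\int_0^{t_0}\!\!\int_{\M\times\R^d}m_0(y,v)\,\beta_+(s,y+sv,v)\,\dd y\,\dd v\,\dd s,
\ee
and each term is finite by H\"older's inequality together with the facts that $m_0\in L^1\cap L^q$ (a bounded probability density, so $\|m_0\|_{L^1}=1$ and $m_0\in L^q$), $(u(t_0))_+\in L^\infty+L^{q'}$, and $\beta_+\in L^{q'}([0,T]\times\M\times\R^d)$ (the shifts $h\mapsto h(\cdot+\tau v,\cdot)$ preserving all the relevant $L^p$ norms). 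This gives $\int_{\{m_0>0\}}m_0\,\dd(u_0)_+<+\infty$, as claimed.

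The main obstacle I expect is that one must bound the \emph{positive part} $(u_0)_+$, not merely $u_0$ itself, by a measure with finite $m_0$-mass, while the inequality at hand only controls $u_0$ from above. This forces the two-step route above: one must \emph{first} know that $u_0$ is a genuine signed Radon measure, so that a Hahn decomposition is available, and only \emph{then} restrict the domination $u_0\le\mu_{t_0}$ to the positivity set. A secondary technical point, routine but requiring care, is the selection of the reference time $t_0$: it must simultaneously be a point where $\tilde u$ agrees with the slice $u(t_0)$, where that slice is locally integrable, and where the $(L^\infty+L^{q'})$-trace estimate holds, and one has to note that each requirement excludes only a null set of times so that such $t_0$ exist.
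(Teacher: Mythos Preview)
Your proof is correct and follows the same underlying idea as the paper: use the monotonicity inequality $\tfrac{\dd}{\dd t}\langle\cT_t\psi,\tilde u(t)\rangle\ge-\langle\cT_t\psi,C_H+\beta_+\rangle$ to dominate $u_0$ from above by a locally finite measure with finite $m_0$-mass, deduce that $u_0$ is a signed Radon measure, and then pass to the positive part via the Hahn--Jordan decomposition.

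The one genuine difference is the choice of comparison point. The paper integrates the monotonicity all the way to $t=T$ and uses the terminal condition $u_T\le\beta_T$ to obtain directly
\[
\langle\psi,u_0\rangle\ \le\ \int_0^T\langle\cT_t\psi,C_H+\beta_+\rangle\,\dd t+\langle\cT_T\psi,\beta_T\rangle\ =:\ S\psi,
\]
where $S$ is manifestly bounded on $C_c(\M\times\R^d)$; then $S-u_0$ is a positive functional, hence a Radon measure, and $u_0=S-\nu_0$ is immediately a signed Radon measure. You instead stop at an interior time $t_0$ and bound by $\langle\cT_{t_0}\psi,(u(t_0))_+\rangle$, which requires selecting an admissible $t_0$ and invoking the trace estimate of Lemma~\ref{lem:value-ub}. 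Both routes rely on the same hypotheses (the terminal condition and $(\beta_T)_+\in L^{s'}$ are needed either directly or through Lemma~\ref{lem:value-ub}), so neither is more general; the paper's version is simply a bit shorter, since it avoids the intermediate-time selection and the cut-off argument you use to upgrade the one-sided bound to local boundedness in $C^0$.
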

\begin{proof}
Let $\phi \in C^1_c(\M\times\R^d)$ be non-negative. Since
\be
 \frac{\dd}{\dd t} \langle \cT_t \phi, \tilde u(t) \rangle \geq - \langle \cT_t \phi, C_H + \beta_+ \rangle,
\ee
we have
\be
\langle \phi, u_0 \rangle \leq \int_0^T \langle \cT_t \phi, C_H + \beta_+ \rangle \dd t + \langle \cT_T \phi, \beta_T \rangle = : S\phi .
\ee

The right hand side is linear in $\phi$ and satisfies
\be
\left \lvert \int_0^T \langle \cT_t \phi, C_H + \beta_+ \rangle \dd t + \langle \cT_T \phi, \beta_T \rangle\right\rvert \leq \| \phi \|_{L^\infty} \left ( \| C_H + \beta_+ \|_{L^1(K_{[0,T]})} + \| \beta_T \|_{L^1(K_T)} \right );
\ee
here $K_T$ denotes the set
\be
K_T : = \{ (x+vT, v) : (x,v) \in K\},
\ee
where $K$ is the support of $\phi$, and $K_{[0,T]}$ is the set 
\be
K_{[0,T]} : = \{ (x+vt, v) : (x,v) \in K, t \in [0,T]\} .
\ee
Thus $S$ defines a bounded linear functional on $C_c(\M \times \R^d)$. In particular it is a distribution; moreover, it is represented by a signed Radon measure.

Observe next that $S - u_0$ is a positive linear functional on $C^1_c(\{ m_0 > 0 \})$, and thus bounded and a distribution. By positivity it is given by a Radon measure $\nu_0$ on $\{ m_0 > 0 \}$. We deduce that
\be
u_0 = S - \nu_0 .
\ee
That is, $u_0$ is a signed Radon measure. 

Moreover, from the definition of the Hahn-Jordan decomposition we have the following estimate for the positive part:
\be
\langle \phi, (u_0)_+ \rangle \leq \int_0^T \langle \cT_t \phi, C_H + \beta_+ \rangle \dd t + \langle \cT_T \phi, (\beta_T)_+ \rangle .
\ee

Let $\phi_n \in C_c(\M \times \R^d)$ be an increasing sequence of functions such that $\phi_n$ converges to $m_0$ as $n \to \infty$. Since
\begin{multline}
\sup_n \langle \phi_n, (u_0)_+ \rangle \leq \int_0^T \langle \cT_t m_0, C_H + \beta_+ \rangle \dd t + \langle \cT_T m_0, (\beta_T)_+ \rangle \leq C_H T \| m_0\|_{L^1} \\ + T^{1/q} \|\beta_+\|_{L^{q'}} \| m_0 \|_{L^q} + \|(\beta_T)_+\|_{L^{s'}} \|m_0\|_{L^s},
\end{multline}
we conclude that $\langle m_0, (u_0)_+\rangle$ is finite.

\end{proof}

Based on the previous lemma, we make the following definition.
\begin{definition} \label{def:u0m0}
We define $- \langle m_0, u_0 \rangle$ as follows:
\begin{enumerate}
\item If there exists $\phi \in C^1_c(\{m_0 >0\})$ such that $\langle \phi , u_0 \rangle = -\infty$, then we define 
\be
- \langle m_0, u_0 \rangle = +\infty .
\ee
\item Otherwise, let $\phi_n \in C_c(\{m_0>0\})$ be an increasing sequence of functions such that $\phi_n$ converges to $m_0$ as $n \to \infty$, and define
\be
- \langle m_0, u_0 \rangle = -\lim_{n \to \infty} \langle \phi_n, u_0 \rangle .
\ee

This is well-defined (allowing for the possible value $+\infty$) by Lemma~\ref{lem:trace0}.
\end{enumerate}
\end{definition}

\begin{lemma}\label{lem:app_u0m0_mean}
Suppose that the assumptions of Lemma \ref{lem:app_boundary_condition} hold and suppose in addition that 
\begin{align}\label{eq:HJ_weak_0T}
\int_0^T\int_{\M\times\R^d}u[\partial_t\phi+\diver_x(v\phi)] &+ \phi H(x,v,D_vu)\dd x\dd v\dd t\le \int_0^T\int_{\M\times\R^d}\beta\phi\dd x\dd v\dd t\\
\nonumber&+\int_{\M\times\R^d}\beta_T\phi_T \dd x\dd v - \int_{\M\times\R^d}\b_0\phi_0 \dd x\dd v,
\end{align}
holds for all $\phi\in C_c^1(\cU_{m_0})$, where $\b_0\in\sM(\{m_0>0\})$ is also given. Then for the trace $u_0$ of the right continuous version of $u$ we have
$$\b_0\le u_0,\ \ {\rm{in}}\ \sD'(\{m_0>0\}),$$
and in particular $\langle u_0,\psi\rangle\neq -\infty$ for any $\psi\in C_c^1(\{m_0>0\}).$

If in addition we suppose that $\b_0$ is such that  $\langle\b_0,m_0\rangle$ is meaningful and finite, then $\langle u_0,m_0\rangle$ is finite and 
$$\langle\b_0,m_0\rangle\le \langle u_0,m_0\rangle.$$
\end{lemma}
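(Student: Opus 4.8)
The plan is to adapt, at the initial time $t=0$, the test-function argument already used at $t=T$ in the proof of Lemma~\ref{lem:app_boundary_condition}, now exploiting the extra term $-\int_{\M\times\R^d}\b_0\phi_0$ present in \eqref{eq:HJ_weak_0T}. Throughout I would work with the right continuous representative $\tilde u$ from Lemma~\ref{lem:app_trace_1}, written simply $u$; note that \eqref{eq:HJ_weak_0T} contains \eqref{eq:app_HJ} (take $\phi$ supported in the open interior), so Lemma~\ref{lem:finite_or_infinity} applies. Fix a non-negative $\psi_0\in C^1_c(\{m_0>0\})$ — this is the only admissible class, since a test function supported near $t=0$ must, on the slice $t=0$, be supported inside $\{m_0>0\}$ in order to lie in $C^1_c(\cU_{m_0})$, which is exactly why the conclusion is confined to $\{m_0>0\}$. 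For small $\e>0$ take $\eta_\e\in C^1([0,T])$ non-negative with $\eta_\e\equiv0$ on $[\e,T]$, $-\e^{-1}\le\eta_\e'\le0$, and $\eta_\e'\equiv-\e^{-1}$ on $[\e^2,\e-\e^2]$, so that $\eta_\e(0)=\int_0^\e(-\eta_\e')\in[1-2\e,1]\to1$. Set $\phi_\e(t,x,v):=\eta_\e(t)\,\cT_t\psi_0(x,v)=\eta_\e(t)\psi_0(x-tv,v)$; using $(\partial_t+v\cdot\nabla_x)\cT_t\psi_0=0$, and checking that the support of $\phi_\e$ — a single compact set $\widetilde K\subset\cU_{m_0}$ for all $\e$ below a fixed $\e_0$ — meets $\{t=0\}$ only in $\{0\}\times\spt\psi_0$, one sees $\phi_\e\in C^1_c(\cU_{m_0})$ and $\|\phi_\e\|_{L^\infty}\le\|\psi_0\|_{L^\infty}$.

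Next I would substitute $\phi_\e$ into \eqref{eq:HJ_weak_0T}. Since $\partial_t\phi_\e+\diver_x(v\phi_\e)=\eta_\e'\,\cT_t\psi_0$, $\phi_\e(T,\cdot)=0$ and $\phi_\e(0,\cdot)=\eta_\e(0)\psi_0$, this rearranges to
\begin{equation*}
\eta_\e(0)\,\langle\b_0,\psi_0\rangle\ \le\ \int_0^\e(-\eta_\e'(t))\,\langle\cT_t\psi_0,u(t)\rangle\,\dd t\ +\ \int_0^\e\!\!\int_{\M\times\R^d}\eta_\e(t)\,\cT_t\psi_0\,\bigl(\beta-H(x,v,D_vu)\bigr)\,\dd x\dd v\dd t .
\end{equation*}
The last integral is $o(1)$ as $\e\to0$, because $\beta,H(\cdot,\cdot,D_vu)\in L^1_\loc(\cU_{m_0})$, $\|\phi_\e\|_{L^\infty}$ is bounded, and the supports shrink in time. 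By Lemma~\ref{lem:finite_or_infinity} the map $t\mapsto g(t):=\langle\cT_t\psi_0,u(t)\rangle$ has a limit $\langle\psi_0,u_0\rangle\in\R\cup\{-\infty\}$ as $t\to0+$. If that limit were $-\infty$, then $g\le-M$ near $0$ for arbitrary $M$, so for $\e$ small the first integral on the right is $\le-M\eta_\e(0)$; dividing by $\eta_\e(0)$ and letting $\e\to0$ would force $\langle\b_0,\psi_0\rangle\le-M$, impossible since $\b_0\in\sM(\{m_0>0\})$. Hence the limit is finite, and then the first integral — a weighted average of $g$ near $0$ with non-negative weights $-\eta_\e'$ of total mass $\eta_\e(0)\to1$ — converges to $\langle\psi_0,u_0\rangle$, yielding $\langle\b_0,\psi_0\rangle\le\langle\psi_0,u_0\rangle$ for every non-negative $\psi_0\in C^1_c(\{m_0>0\})$. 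Writing an arbitrary $\psi\in C^1_c(\{m_0>0\})$ as $(\psi+\|\psi\|_{L^\infty}\rho)-\|\psi\|_{L^\infty}\rho$ with $\rho\in C^1_c(\{m_0>0\})$, $0\le\rho\le1$, $\rho\equiv1$ on $\spt\psi$ (both summands non-negative) shows $\langle\psi,u_0\rangle\neq-\infty$ in all cases; hence $u_0$ is a genuine distribution on $\{m_0>0\}$, $\b_0\le u_0$ there, and — since $u_0-\b_0$ is then a positive distribution and $\b_0$ a measure — $u_0$ is itself a signed Radon measure with $u_0-\b_0\in\sM_+(\{m_0>0\})$.

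For the second statement I would first invoke Lemma~\ref{lem:trace0} (whose hypothesis has just been verified) to get $\int_{\{m_0>0\}}m_0\,\dd(u_0)_+<+\infty$. Writing $u_0=\b_0+\sigma$ with $\sigma:=u_0-\b_0\ge0$, the elementary fact that adding a non-negative measure cannot enlarge the negative part gives $(u_0)_-\le(\b_0)_-$; since ``$\langle\b_0,m_0\rangle$ meaningful and finite'' forces $\int m_0\,\dd(\b_0)_-<+\infty$, we obtain $\int m_0\,\dd(u_0)_-<+\infty$, so $\langle u_0,m_0\rangle=\int m_0\,\dd(u_0)_+-\int m_0\,\dd(u_0)_-$ is finite and agrees with Definition~\ref{def:u0m0}. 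Finally, for $\phi_n\in C_c(\{m_0>0\})$ with $\phi_n\uparrow m_0$ we have $\langle\phi_n,u_0\rangle=\langle\phi_n,\b_0\rangle+\langle\phi_n,\sigma\rangle\ge\langle\phi_n,\b_0\rangle$; passing to the limit by monotone convergence gives $\langle u_0,m_0\rangle\ge\langle\b_0,m_0\rangle$.

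I expect the main obstacle to be the first two steps: producing test functions that are genuinely admissible ($C^1_c(\cU_{m_0})$, with the delicate support restriction near $t=0$) while still carrying the initial trace, and then passing rigorously to the trace and excluding the value $-\infty$ — the exact mirror of the terminal analysis in Lemma~\ref{lem:app_boundary_condition}, with the dichotomy of Lemma~\ref{lem:finite_or_infinity} doing the essential work. Once $\b_0\le u_0$ and Lemma~\ref{lem:trace0} are in hand, the measure bookkeeping giving finiteness and the integrated inequality is routine.
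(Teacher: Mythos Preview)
Your proposal is correct and follows essentially the same approach as the paper: the same shifted test function $\phi_\e(t,x,v)=\eta_\e(t)\psi_0(x-tv,v)$ with the same cut-off profile, the same exclusion of $-\infty$ by contradiction with the finiteness of $\langle\b_0,\psi_0\rangle$, and the same passage to $\langle u_0,m_0\rangle$ via an increasing sequence $\phi_n\uparrow m_0$. You supply a bit more detail than the paper (the decomposition for signed $\psi$, the explicit invocation of Lemma~\ref{lem:trace0} and the bound $(u_0)_-\le(\b_0)_-$), but the argument is the same.
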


\begin{proof}
The proof of this result follows the same lines as the proof of Lemma \ref{lem:app_boundary_condition}, so we point out only the main differences. 
Let $u$ stand for the right continuous representative constructed in Lemma \ref{lem:app_trace_1}. Fix $\psi_0 \in C^1_c(\{m_0>0\})$ non-negative. For each $\e >0$ small consider a smooth, non-negative test function $\eta_\e:[0,T]\to\R$, chosen such that $\eta(t) = 0$ for all $\e\le t \leq T$ and the derivative $\eta_\e ' $ satisfies
\be
-\e^{-1} \leq \eta_\e ' \leq 0, \qquad \eta_\e ' (t) =
\begin{cases}
-\e^{-1} & t \in [\e^2, \e-\e^2],\\
0 & t \in [\e, T].
\end{cases}
\ee
Note that as a consequence of the fundamental theorem of calculus, one has $\lim_{\e \to 0} \eta_\e(0) = 1$.

We define the following non-negative test function $\phi_\e \in C^1_c(\cU_{m_0})$:
\be
\phi_\e(t,x,v) = \eta_\e(t) \psi_T(x -tv,v).
\ee

Substituting this choice of $\phi$ into \eqref{eq:HJ_weak_0T}, we obtain
\be \label{app:trace-inequality-1-2}
\int_{0}^\e \eta_\e '(t) \int_{\M\times\R^d} u(t)\psi_T(x -tv,v) \dd x\dd v\dd t \le -\eta_\e(0) \langle\beta_0,\psi_0\rangle + \alpha_1(\e) .
\ee
where
\be
\alpha_1(\e) = \int_{0}^\e\int_{\M\times\R^d}\beta\phi_\e \dd x\dd v\dd t  - \int_{0}^\e\int_{\M\times\R^d} \phi_\e H(x,v,D_vu)\dd x\dd v\dd t .
\ee
As before, we note that $\lim_{\e \to0} \alpha_1(\e) = 0.$
We exclude the possibility that $\langle u_0,\psi_0 \rangle = - \infty$. For this, we rewrite the previous inequality as 
$$
\eta_\e(0) \langle\beta_0,\psi_0\rangle - \alpha_1(\e)\le-\int_{0}^\e \eta_\e '(t) \int_{\M\times\R^d} u(t)\psi_T(x -tv,v) \dd x\dd v\dd t,
$$
and use the same arguments as when proving $\langle u_T,\psi_T\rangle \neq +\infty$ in the proof of Lemma  \ref{lem:app_boundary_condition}.
 
Therefore, $u_0$ defines a linear map on $C^1_c(\{m_0>0\})$. Having this, we can show the inequality $\langle\b_0,\psi_0\rangle\le \langle u_0,\psi_0\rangle$ in the same way as corresponding inequality in Lemma  \ref{lem:app_boundary_condition}.

Now, using the Definition \ref{def:u0m0}, $\langle u_0, m_0\rangle$ is meaningful, having also the possibility that it is $-\infty$. However, if the additional assumption that  $\langle\b_0,m_0\rangle$  is finite takes place, taking a an increasing sequence of test functions,  we find that 
$ \langle\b_0,m_0\rangle\le \langle u_0,m_0\rangle,$ so clearly, the latter term cannot be $-\infty$.
\end{proof}

{\Blue 
Through similar arguments it is possible to justify the existence of weak time traces for competitors $m$ in Problem~\ref{prob:density}, thereby giving meaning to the initial value problem
\be\label{eq:continuity_main_app}
\begin{cases} \partial_t m + v \cdot D_x m + \diver_v w = 0,\ \  {\rm{in}}\ \   \sD'((0,T)\times\M\times\R^d)  \\
m \vert_{t=0} = m_0 .
\end{cases}
\ee
Recall that in Remark~\ref{rmk:m_trace} we established that, in the cases of interest to us, there exists a function $V\in L^{r'}(m\dd x\dd v\dd t)$ such that $w=V m$, and so we may assume that $m$ is a distributional solution of the following equation:
$$
\partial_t m + \diver_{x}(vm)+ \diver_v (Vm) = 0,
$$
with $|V| m \in L^1([0,T] \times \M \times \R^d)$. 

This setting is much more standard since here the time derivatives $\frac{\dd}{\dd t} \langle \phi, m_t \rangle$ will be in $L^1[0,T]$ for any $\phi \in C^1_c(\M \times \R^d)$ rather than measures, that is, we expect absolutely continuous rather than right continuous trajectories. Moreover we can work on the whole space $[0,T] \times \M \times \R^d$ rather than only the reachable set $\cU_{m_0}$.

Deducing that $m$ has a narrowly continuous representative is essentially an application of \cite[Lemma 8.1.2]{AmbGigSav}. However, since we do not necessarily have
\be
\int_0^T \int_{\M \times \R^d} |v| m \dd x \dd v \dd t < + \infty,
\ee
due to the unbounded drift $v$, we cannot immediately apply this lemma. Below we briefly sketch the adaptation to our case.

\begin{lemma}[See~{\cite[Lemma 8.1.2]{AmbGigSav}}] \label{app:lem:m_trace}
Let $0 \leq m \in (L^1 \cap L^q)([0,T] \times \M \times \R^d)$ be a {\color{black}non-negative} function satisfying
\be \label{eq:m_appendix}
\partial_t m + \diver_{x}(vm)+ \diver_v (Vm) = 0
\ee
in the sense of distributions on $(0,T) \times \M \times \R^d$, {\color{black} where $V$ is given, such that $|V| m \in L^1([0,T] \times \M \times \R^d)$}.

Then there exists a continuous curve $\tilde m_{\bullet} : [0,T] \to (C^1_c(\M \times \R^d))'$ such that $\tilde m_t = m_t$ for almost all $t \in [0,T]$. Thus {\color{black}$\tilde m_0$} is well-defined as an element of $(C^1_c(\M \times \R^d))'$ (or in fact, by positivity, a Radon measure).

Furthermore, if {\color{black}$\tilde m_0$ is a probability measure}, then $\tilde m_\bullet$ extends uniquely to a narrowly continuous curve in the space of probability measures, i.e. $\tilde m_\bullet \in C([0,T] ; \cP(\M \times \R^d))$.

\end{lemma}
\begin{proof}
Since $m, |V| m \in L^1([0,T] \times \M \times \R^d)$, for any compact set $K \subset \M \times \R^d$ we have
\be \label{eq:app_loc_finite}
\int_0^T \int_{K} (|v| + |V|) m \dd x \dd v \dd t < + \infty .
\ee
It follows that, as in the proof of \cite[Lemma 8.1.2]{AmbGigSav}, we may select a dense subset $\{ \phi_n \}_{n \in \N}$ of $C^1_c(\M \times \R^d)$ and take a version $\tilde m_t$ of $m_t$ such that $t \mapsto \langle \phi_n, \tilde m_t \rangle$ is continuous with respect to $t$ for all $n$ and $\tilde m_t = m_t$ for almost all $t \in [0,T]$, and a define a unique weak-$\ast$ continuous extension of {\color{black}$\tilde m_\bullet$} to $(C^1_c(\M \times \R^d))'$. Thus $\tilde m_0$ is well-defined as the unique element of $(C^1_c(\M \times \R^d))'$ satisfying
\be
\langle \phi, \tilde m_0 \rangle : = \lim_{t \to 0} \langle \phi, \tilde m_t \rangle \quad \text{for all} \; \phi \in C^1_c(\M \times \R^d).
\ee
Moreover, since $\tilde m$ is {\color{black}non-negative}, in fact $\tilde m_t$ is a Radon measure on $\M \times \R^d$ for all $t \in [0,T]$.

Furthermore, it follows from the continuity of $\tilde m_\bullet$ in the weak-$\ast$ sense of $(C^1_c(\M \times \R^d))'$, and the fact that $\tilde m$ is locally finite, that, for any (N.B. now time-dependent) $\phi \in C^1_c([0,T] \times \M \times \R^d)$, the path $t \mapsto \langle \phi(t, \cdot), \tilde m_t \rangle$ is continuous. Thus we may also use the final argument from \cite[Lemma 8.1.2]{AmbGigSav}
(similar to our argument for the time traces at the boundary in Lemma~\ref{lem:app_boundary_condition}) to prove the following equality (c.f. \cite[{Equation (8.1.4)}]{AmbGigSav}): for any $\phi \in C^1_c([0,T] \times \M \times \R^d)$ and any $0 \leq t_1 \leq t_2 \leq T$,
\be \label{eq:app_AGS814}
\langle \phi(t_2, \cdot), \tilde m_{t_2} \rangle - \langle \phi(t_1, \cdot), \tilde m_{t_1} \rangle = \int_{t_1}^{t_2} \langle \partial_t \phi + v \cdot D_x \phi + V \cdot D_v \phi, \tilde m_t \rangle \dd t .
\ee

Next we wish to show that, if $\tilde m_0$ is a probability measure, then $\tilde m_t$ is a probability measure for all $t \in [0,T]$. If this is the case, then we may apply \cite[Remark 5.1.6]{AmbGigSav}---if $\tilde m_{t_n} \to \tilde m_t$ in the sense of distributions as $n$ tends to infinity, then this convergence also holds in the narrow sense---to deduce that $\tilde m_t$ is a narrowly continuous path in the space of probability measures, as desired.

To do this we use the argument of Lemma~\ref{lem:finite_or_infinity} to avoid the need for $v m$ to be integrable. First, fix a sequence $\zeta_R \in C^\infty_c(\M \times \R^d)$ of smooth, compactly supported functions, approximating the constant function $1$ in a monotone limit as $R$ tends to infinity---i.e., let $\zeta_R$ satisfy the assumptions given in equation \eqref{hyp:zeta}. We note in particular that $\| D \zeta_R \|_{L^\infty} \leq C/R$ for some constant $C>0$ independent of $R$.
Then, for each $t_\ast$, consider the test function $\cT_{t - t_\ast} \zeta_R$ {(recall $\cT$ from Definition \ref{def:shift})}, which satisfies
\be
(\partial_t + v \cdot D_x) \cT_{t - t_\ast} \zeta_R = 0, \quad \cT_{t - t_\ast} \zeta_R \vert_{t = t_\ast} = \zeta_R .
\ee
Using \eqref{eq:app_AGS814} with $t_1 = 0$, $t_2 = t_\ast$ and $\phi = \cT_{t - t_\ast} \zeta_R$, we find that
\be
\langle  \zeta_R, \tilde m_{t_\ast} \rangle = \langle \cT_{- t_\ast} \zeta_R, \tilde m_0 \rangle + \int_0^{t_\ast} \langle D_v ( \cT_{t - t_\ast} \zeta_R ), {V} \tilde m_t  \rangle  \dd t.
\ee
We observe that, for all $t \in [0, t_\ast]$,
\be
|D_v ( \cT_{t - t_\ast} \zeta_R )| = | [(t_\ast - t) D_x + D_v] \zeta_R| \leq (1 + t_\ast) \| D \zeta_R \|_{L^\infty} \leq \frac{C (1 + t_\ast)}{R} .
\ee
Thus (since $\tilde m_t = m_t$ for almost all $t$)
\be
\left | \int_0^{t_\ast} \langle D_v ( \cT_{t - t_\ast} \zeta_R ), {V} \tilde m_t  \rangle  \dd t \right | \leq \frac{C (1 + t_\ast)}{R} \int_0^T \int_{\M \times \R^d} |V| m_t \dd x \dd v \dd t .
\ee
The right hand side tends to zero as $R$ tends to infinity, since $|V| m \in L^1([0,T] \times \M \times \R^d)$. Moreover, since $\tilde m_0$ is a probability measure and $\cT_{- t_\ast} \zeta_R$ increases monotonically to $1$ pointwise as $R$ tends to infinity, it follows that
\be
\lim_{R \to 0} \langle \cT_{- t_\ast} \zeta_R, \tilde m_0 \rangle = 1 .
\ee

Finally, since $\tilde m_{t_\ast}$ is a Radon measure and $\lim_{R \to \infty} \langle \zeta_R, \tilde m_{t_\ast} \rangle$ is a monotone limit,
\be
\tilde m_{t_\ast} (\M \times \R^d) = \lim_{R \to \infty} \langle \zeta_R, \tilde m_{t_\ast} \rangle = 1.
\ee
That is, $\tilde m_{t_\ast}$ is a probability measure for all $t_\ast \in [0,T]$. This completes the proof.

\end{proof}

}

\section{Truncations and Maxima}\label{app:2}

Given a distributional solution to the Hamilton-Jacobi inequality
\begin{align}\label{eq:HJ-truncsec}
-\partial_t u - v\cdot D_x u+H(x,v,D_v u) \leq \beta, \quad &\text{in } \sD'((0,T)\times\M\times\R^d) \\
u_T \leq \beta_T, \quad &\text{in } \sD'(\M\times\R^d) .
\end{align}
in the sense of Definition~\ref{def:solution}, we show that the truncations of $u$ from below, i.e. the functions $\max\{ u , l\}$ for some $l < 0$, satisfy a similar inequality. In a similar vein, we also show that given $u^1$ and $u^2$ both satisfying \eqref{eq:HJ-truncsec}, their maximum satisfies the same inequality \eqref{eq:HJ-truncsec}.

\begin{lemma} \label{lem:truncation}
Let $u \in L^1_\loc((0,T) \times \M \times \R^d)$ satisfy \eqref{eq:HJ-truncsec} in the sense of distributions. 
Assume that $\beta \in L^1_\loc((0,T) \times \M \times \R^d)$ and $D_v u \in L^r_\loc((0,T) \times \M \times \R^d)$.
Then $u_l : = (u-l)_+$ satisfies
\begin{align} \label{eq:HJ-trunc-l}
-\partial_t u_l - v\cdot D_x u_l+H(x,v,D_v u_l) \one_{\{ u > l\}} \leq \beta \one_{\{ u > l\}}, \quad &{\rm{in}\ } \sD'((0,T)\times\M\times\R^d), \\
[(u-l)_+]_T \leq (\beta_T-l)_+ \quad &{\rm{in\ }} \sD'(\M\times\R^d).
\end{align}
\end{lemma}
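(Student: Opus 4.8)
The plan is to reduce the statement to a renormalization (chain rule) property for the kinetic transport operator $T := \partial_t + v \cdot D_x$, after absorbing the Hamiltonian into the source term. Set $G := \beta - H(x,v,D_v u)$, which lies in $L^1_\loc((0,T)\times\M\times\R^d)$ since $\beta \in L^1_\loc$ and $|H(x,v,D_v u)| \leq \tfrac{c}{r}|D_v u|^r + C_H \in L^1_\loc$. The interior inequality in \eqref{eq:HJ-truncsec} then reads $Tu \geq -G$ in $\sD'$, i.e. $\mu := Tu + G$ is a non-negative distribution, hence a non-negative Radon measure. The target interior inequality in \eqref{eq:HJ-trunc-l} is equivalent to $T u_l \geq -G\,\one_{\{u>l\}}$; here I would first record the elementary identity $H(x,v,D_v u_l)\one_{\{u>l\}} = H(x,v,D_v u)\one_{\{u>l\}}$, which follows from the Sobolev chain rule on the $v$-slices: for a.e. $(t,x)$ one has $u(t,x,\cdot) \in W^{1,r}_\loc(\R^d)$ with $D_v u_l = D_v u\,\one_{\{u>l\}}$ a.e., so $D_v u_l = D_v u$ on $\{u>l\}$ and $D_v u_l = 0$ elsewhere.

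For the interior part I would mollify $u$ in all variables, $u^\nu := \theta_\nu *_t \psi_\nu *_x \chi_\nu *_v u$, so that $u^\nu$ is smooth. Since $T$ has affine, divergence-free coefficients and commutes with $*_t$ and $*_x$, the only commutator is that of $v\cdot D_x$ with $\chi_\nu *_v$; by Lemma \ref{lem:commutator} it equals $D_x$ applied to a term of size $O(\nu)$ in $L^1_\loc$, so $T u^\nu = \rho_\nu * (Tu) + r_\nu = -\rho_\nu * G + \rho_\nu*\mu + r_\nu$ with $r_\nu \to 0$ in $L^1_\loc$ and $\rho_\nu*\mu \geq 0$ (here $\rho_\nu$ is the full mollifier). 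Let $\Phi_\kappa$ be a smooth convex non-decreasing approximation of $r \mapsto (r-l)_+$ with $0 \leq \Phi_\kappa' \leq 1$, $\Phi_\kappa'(r) = 0$ for $r \leq l$, $\Phi_\kappa \to (\cdot - l)_+$ locally uniformly, $\Phi_\kappa' \to \one_{\{\cdot > l\}}$ pointwise. The classical chain rule for the smooth function $u^\nu$ gives $T(\Phi_\kappa(u^\nu)) = \Phi_\kappa'(u^\nu)\, Tu^\nu$; testing against a non-negative $\phi \in C^1_c((0,T)\times\M\times\R^d)$ and discarding the non-negative term $\Phi_\kappa'(u^\nu)\rho_\nu*\mu$ yields $-\int \Phi_\kappa(u^\nu)(\partial_t\phi + v\cdot D_x\phi)\dd x\dd v\dd t \geq -\int \Phi_\kappa'(u^\nu)(\rho_\nu * G - r_\nu)\phi \dd x\dd v\dd t$. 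Passing $\nu \to 0$ (using $u^\nu \to u$ in $L^1_\loc$ and a.e. along a subsequence, $\Phi_\kappa$ Lipschitz, $\rho_\nu*G \to G$ in $L^1_\loc$, $r_\nu \to 0$ in $L^1_\loc$, dominated convergence) and then $\kappa \to 0$ gives $T u_l \geq -G\,\one_{\{u>l\}}$ in $\sD'$, which is the desired interior inequality after substituting $G = \beta - H(x,v,D_v u)$ and using the identity from the first paragraph.

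For the terminal inequality I would show that $u_l$ satisfies the weak formulation \eqref{app:eq:HJ_weak} with terminal datum $(\beta_T - l)_+$ and conclude via Lemma \ref{lem:app_boundary_condition}. Concretely, I would repeat the mollification argument with test functions $\phi \in C^1_c((0,T]\times\M\times\R^d)$ that need not vanish at $t = T$; to legitimize the $t$-mollification near $t = T$, I would extend $u$ to a slightly larger time interval by free transport of the trace $u_T$ (a Radon measure by Lemma \ref{lem:app_boundary_condition}), so that the extension is weak-$\ast$ continuous at $t = T$ and creates no defect at $\{t=T\}$. Tracking the boundary contribution at $t=T$ and using $u_T \leq \beta_T$ together with the monotonicity of $r \mapsto (r-l)_+$, the terminal datum $\beta_T$ is replaced by $(\beta_T - l)_+$ in the limit. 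Alternatively, one can argue directly on the trace from Appendix \ref{app:time_regularity}: by Definition \ref{def:trace} and Lemma \ref{lem:finite_or_infinity} the limit $(u_l)_T := \lim_{t\to T-}\langle \cT_{t-T}\psi_T, \widetilde{u_l}(t)\rangle$ exists, and using $\widetilde{u_l}(t) = (u(t)-l)_+$ for a.e. $t$ together with the interior inequality just proved for $u_l$, one shows $(u_l)_T$ is a bounded functional with $(u_l)_T \leq (\beta_T - l)_+$.

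I expect the main obstacle to be the rigour of the renormalization step at the low regularity available: $Tu$ is only a Radon measure (not an $L^1_\loc$ function) and $u$ is not bounded, so the DiPerna–Lions renormalization theorem does not apply off the shelf. The features that make the argument go through are that the transport field is smooth, so the commutator is controlled by Lemma \ref{lem:commutator}; that the defect $\mu$ is a non-negative measure, hence survives mollification with the correct sign; and that the $\Phi_\kappa$ are convex and non-decreasing, so composition with the smooth $u^\nu$ produces the correct one-sided inequality. A secondary delicate point is the terminal trace: since $u_T$ is merely a signed measure (its negative part may be singular), "the truncation of the trace" is not literally meaningful, and one must instead work with the trace of the truncation furnished by Appendix \ref{app:time_regularity}, which is why the extension-by-free-transport device (or the direct trace computation) is required.
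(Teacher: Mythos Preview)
Your interior argument is correct and follows the same renormalization template as the paper (mollify, apply the smooth chain rule to a convex non-decreasing approximation of $(\cdot-l)_+$, discard the non-negative deficit measure, pass to the limit), but with one genuine simplification: you absorb the Hamiltonian into the $L^1_\loc$ source $G=\beta-H(\cdot,\cdot,D_vu)$, whereas the paper first linearizes via Fenchel's inequality $H(x,v,D_vu)\ge -a\cdot D_vu-L(x,v,-a)$ for an auxiliary $a\in C^1_b$, renormalizes the linear operator $\partial_t+v\cdot D_x+a\cdot D_v$, and only at the very end recovers $H$ by letting $a\to -D_pH(\cdot,\cdot,D_vu_+)$ in $L^{r'}$. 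Your route is shorter and avoids the additional DiPerna--Lions commutator for $a\cdot D_v$; the paper's route has the virtue of working throughout with a linear transport operator. One point to tighten: with a single scale $\nu$ in all variables the commutator $r_\nu$ does not vanish---after shifting $D_x$ onto $\psi_\nu$ the bound is $\|D_x\psi_\nu\|_{L^1}\|v\chi_\nu\|_{L^1}\|u\|_{L^1}=O(\nu^{-1}\cdot\nu)=O(1)$. You need the velocity scale strictly smaller than the space scale (the paper takes $\e=\delta^2$) to get $r_\nu\to 0$ in $L^1_\loc$. For the terminal trace, the paper's device is more economical than either of your suggestions: it extends $u$ by zero on $[T,T+2\eta]$, which turns the terminal condition into an interior source $\beta_T\,\partial_t\one_{[T,T+2\eta]}$; after mollification and multiplication by $0\le\gamma_\alpha'\le1$ this is bounded above by $\varphi(\cdot-T)\ast_{x,v}(\beta_T)_+$, yielding $(\beta_T-l)_+$ in the limit without any manipulation of the measure-valued trace or free-transport extension.
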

A similar result holds for the truncation $u \vee l = (u-l)_+ + l$. Moreover it suffices to consider the case $l=0$.

\begin{lemma} \label{lem:maximum}
Let $u_1, u_2$ satisfy Assumption~\ref{hyp:app-summ} and \eqref{eq:HJ-truncsec}. Then $u = \max\{ u_1,u_2\}$ also satisfies \eqref{eq:HJ-truncsec}.
\end{lemma}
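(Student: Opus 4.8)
The plan is to reduce Lemma~\ref{lem:maximum} to the truncation result (Lemma~\ref{lem:truncation}) together with the elementary fact that the pointwise maximum of two subsolutions of a convex Hamilton-Jacobi inequality is again a subsolution, carried out in the weak distributional sense appropriate to Definition~\ref{def:solution}. First I would record the key structural observation: on the set where $u_1 > u_2$ we have $u = u_1$ and $D_v u = D_v u_1$ (by the chain rule for Sobolev functions, since $D_v u_i \in L^r_\loc$), and symmetrically on $\{u_2 > u_1\}$; on the coincidence set $\{u_1 = u_2\}$ both $D_v u_1 = D_v u_2 = D_v u$ almost everywhere. Hence $H(x,v,D_v u) = \one_{\{u_1 \geq u_2\}} H(x,v,D_v u_1) + \one_{\{u_2 > u_1\}} H(x,v,D_v u_2)$ a.e., and the kinetic transport part splits the same way since $v \cdot D_x u_1 = v \cdot D_x u$ on $\{u_1 > u_2\}$ etc. The terminal trace behaves analogously: $u_T = \max\{(u_1)_T,(u_2)_T\}$ in the sense of the weak traces of Definition~\ref{def:trace}, and $\max\{(u_1)_T,(u_2)_T\} \leq \beta_T$.

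The technical route to make this rigorous is the standard one: approximate $\max\{a,b\}$ by a smooth, convex, increasing-in-each-argument regularization $M_\e(a,b)$ with $\partial_a M_\e, \partial_b M_\e \geq 0$, $\partial_a M_\e + \partial_b M_\e = 1$, and $M_\e \to \max$ locally uniformly. Applying the standard commutation of smooth convex functions with the distributional Hamilton-Jacobi inequality — exactly the type of manipulation used in Lemma~\ref{lem:truncation} and Appendix~\ref{app:2} — one obtains that $M_\e(u_1,u_2)$ satisfies
\begin{align*}
-\partial_t M_\e(u_1,u_2) - v \cdot D_x M_\e(u_1,u_2) &+ \partial_a M_\e \, H(x,v,D_v u_1) + \partial_b M_\e \, H(x,v,D_v u_2) \\
&\leq \partial_a M_\e \, \beta + \partial_b M_\e \, \beta
\end{align*}
in $\sD'$, where the convexity of $H$ and the constraint $\partial_a M_\e + \partial_b M_\e = 1$ give $\partial_a M_\e \, H(\cdot,D_v u_1) + \partial_b M_\e \, H(\cdot,D_v u_2) \geq H(\cdot, \partial_a M_\e D_v u_1 + \partial_b M_\e D_v u_2) = H(\cdot, D_v M_\e(u_1,u_2))$. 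Then I would pass to the limit $\e \to 0$: $M_\e(u_1,u_2) \to u$ in $L^1_\loc$ and $D_v M_\e(u_1,u_2) \to D_v u$ in $L^r_\loc$ (here one uses that $\{u_1 = u_2\}$ contributes no issue since $D_v u_1 = D_v u_2$ there), while the products $\partial_a M_\e \, \beta, \partial_b M_\e \, \beta$ are dominated by $|\beta| \in L^1_\loc$ and converge a.e. to $\one_{\{u_1 \geq u_2\}}\beta, \one_{\{u_2 > u_1\}}\beta$; lower semicontinuity of $\int \phi H(x,v,D_v \cdot)$ along the $L^r$-weak limit (convexity of $H$) handles the Hamiltonian term. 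This yields the interior inequality for $u$. For the terminal condition one works with test functions in $C^1_c((0,T]\times\M\times\R^d)$ as in Definition~\ref{def:solution} and Lemma~\ref{lem:app_boundary_condition}, using that each $(u_i)_T \leq \beta_T$ and that the weak trace of $u = \max\{u_1,u_2\}$ is dominated by $\max\{(u_1)_T,(u_2)_T\}$, which follows by the same monotone approximation argument restricted to the $t=T$ boundary term.

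The main obstacle I anticipate is the bookkeeping with the weak time trace: the functions $u_i$ are only $L^1_\loc(\cU_{m_0})$ with distributional subsolution structure, and "$u_T = \max\{(u_1)_T,(u_2)_T\}$" must be interpreted via the right-continuous representatives constructed in Lemma~\ref{lem:app_trace_1} and the trace of Definition~\ref{def:trace}, rather than via a naive pointwise identity. One must check that the right-continuous representative of $M_\e(u_1,u_2)$ converges, as $\e \to 0$, to the right-continuous representative of $u$ in the appropriate weak-$*$ sense, and that the terminal trace inequality survives this limit — this requires invoking Lemma~\ref{lem:finite_or_infinity} and Lemma~\ref{lem:app_boundary_condition} to rule out the value $+\infty$ and to compare traces. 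A secondary, more routine point is verifying that $u = \max\{u_1,u_2\}$ still satisfies Assumption~\ref{hyp:app-summ} (so that Definition~\ref{def:solution} applies): $u_+ \leq (u_1)_+ + (u_2)_+$ handles the $L^1_\loc$ positive part, $D_v u = \one_{\{u_1 \geq u_2\}} D_v u_1 + \one_{\{u_2 > u_1\}} D_v u_2 \in L^r_\loc$ handles the velocity gradient, and $\beta_+$ is unchanged. Everything else is a direct adaptation of Lemma~\ref{lem:truncation}, whose proof is deferred to Appendix~\ref{app:2} and which already contains the convex-composition machinery in essentially the form needed here.
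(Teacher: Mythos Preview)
Your proposal is correct and follows essentially the same route as the paper: approximate $\max$ by a smooth $h_\alpha$ (your $M_\e$) with $\partial_1 h_\alpha + \partial_2 h_\alpha = 1$, invoke the regularization machinery of Appendix~\ref{app:2}, and pass to the limit. One point worth sharpening: you write the intermediate inequality for $M_\e(u_1,u_2)$ with Hamiltonian terms $\partial_a M_\e\, H(\cdot,D_v u_1) + \partial_b M_\e\, H(\cdot,D_v u_2)$ and then apply Jensen for $H$, whereas the paper first linearizes via Fenchel ($H(x,v,p) + L(x,v,-a) \geq -a\cdot p$ for an auxiliary $C^1_b$ field $a$), mollifies the resulting \emph{linear} transport inequality, applies the chain rule pointwise to the smooth objects, passes to the limit in the mollification and then in $\alpha$, and only at the end optimizes over $a$ to recover $H$. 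The Fenchel detour is what makes the regularization step go through cleanly: since $\partial_t u_i + v\cdot D_x u_i$ is only a measure, one cannot directly multiply the nonlinear inequality by $\partial_i M_\e(u_1,u_2)$, and after mollifying one has $(H(\cdot,D_v u_i))\ast\varphi$ rather than $H(\cdot,D_v(u_i\ast\varphi))$, which are not directly comparable. Your Jensen step is correct at the formal level, and since you explicitly defer to the manipulations of Appendix~\ref{app:2} (which already contain the Fenchel linearization), the argument is complete; just be aware that in the rigorous version the convexity of $H$ enters through Fenchel duality rather than through the Jensen inequality you display.
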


The result here is in the spirit of renormalisation \cite{DiPerna-Lions}. Bouchut \cite[Theorem 1.1]{Bouchut} proved a chain rule for the kinetic transport operator, i.e. the identity
\be \label{eq:chain-rule}
(\partial_t + v \cdot D_x) h(u) = h'(u) (\partial_t + v \cdot D_x) u 
\ee
that applies when $h$ is a Lipschitz function and $\partial_t u + v \cdot D_x u \in L^1_\loc$.
However, since in our case $\partial_t u + v \cdot D_x u $ may only be a measure, we are not able to use this result directly, or indeed prove a chain rule with equality as in equation \eqref{eq:chain-rule}. 
Nevertheless, the ideas of the proofs in \cite{Bouchut, GraMesSilTon} can be used to obtain the inequality that is sufficient for our case.

The argument proceeds in several steps.

\subsection{Extension}

We define the following time shift and extension of $u$ on the time interval $(-2\eta, T+2\eta)$ for $\eta > 0$:
\be
\tilde u(t,x,v) = \begin{cases}
u(t,x,v), & t \in (0,T) \\
0, & t \in [T, T+2\eta].
\end{cases}
\ee
Then, defining
\be
\tilde \beta(t,x,v) = \begin{cases}
\beta(t,x,v), & t \in (0,T) \\
H(x,v,0), & t \in [T, T+2\eta],
\end{cases}
\ee
$\tilde u$ satisfies the following inequality in the sense of distributions on $(0, T+2\eta) \times \M \times \R^d$ (see the similar construction in \cite[Section 6.3]{CarGraPorTon}):
\begin{align}\label{eq:HJ-truncsec-extended}
-\partial_t \tilde u - v\cdot D_x \tilde u+H(x,v,D_v \tilde u) \leq \tilde \beta + \beta_T \, \partial_t \one_{[T, T+2\eta]} , \quad &\text{in } \sD'((0,T+2\eta)\times\M\times\R^d) .
\end{align}

\subsection{Fenchel's Inequality}

Since $L$ is the Fenchel conjugate of $H$, for any continuously differentiable vector field $a \in C^1_b$ we have
\begin{align}\label{eq:HJ-truncsec-Fenchel}
-\partial_t \tilde u - v\cdot D_x \tilde u - a \cdot D_v \tilde u \leq  L(x,v, -a) + \tilde\beta + \beta_T \, \partial_t \one_{[T, T+2\eta]} , \quad &\text{in } \sD'((0,T+2\eta)\times\M\times\R^d) .
\end{align}

\subsection{Regularisation}

Fix non-negative, symmetric, unit mass mollifiers $\chi,\psi \in C^\infty_c(\R^d)$ and $\theta \in C^\infty_c(\R)$. 
Assume that $\theta$ is supported on the set $[-1,1]$.
Then define, for $\e, \delta, \eta > 0$,
\be
\chi_\e \left ( v \right ) : = \e^{-d} \chi \left ( \frac{v}{\e} \right ), \quad \psi_\delta(x) : = \delta^{-d} \psi \left ( \frac{x}{\delta} \right ) , \quad \theta_\eta(t) : = \eta^{-1} \theta \left ( \frac{t}{\eta} \right ) .
\ee
Then define the full mollifier $\varphi$ by
\be
\varphi(t,x,v) = \theta_\eta(t) \psi_\delta(x) \chi_\e(v) .
\ee
Then the regularisation $u_{\eta,\e,\delta} : = \tilde u \ast \varphi$ satisfies the following inequality, in a pointwise sense on $(\eta, T+\eta] \times \M \times \R^d$:
\be
-(\partial_t +v\cdot D_x + a \cdot D_v) u_{\eta,\e,\delta} \leq L_{\eta,\e,\delta} + \beta_{\eta,\e,\delta} + \varphi(t-T, \cdot) \ast_{x,v} \beta_T +  \cE_{\eta,\e,\delta} ,
\ee
where $\beta_{\eta,\e,\delta} : = \tilde \beta \ast \vphi$ and $L_{\eta,\e,\delta} : = L(x,v, -a)  \ast \vphi$, while $ \cE_{\eta,\e,\delta}$ denotes the commutator
\be
 \cE_{\eta,\e,\delta} : = \theta_\eta \ast_t \left [ \chi_\e \ast_v \psi_\delta \ast_{x} (v \cdot D_x + a \cdot D_v)  u  - (v \cdot D_x + a \cdot D_v ) \chi_\e \ast_v \psi_\delta \ast_{x} u \right ].
\ee

With the choice $\e = \delta^2$ this error converges to zero in $L^1_\loc$ by Lemma~\ref{lem:commutator} and \cite{DiPerna-Lions}.

\subsection{The Maximum Function}

We fix a smooth approximation of the functions $x_+ = \max\{x,0\}$ and $\max\{x_1, x_2\}$.
First, for each $\alpha > 0$ we fix a smooth function $\g_\alpha(x)$ approximating $x_+$ in such a way that $0 \leq \g_\alpha(x) \leq x_+$ and $0 \leq \g_\alpha'(x) \leq 1$ for all $\alpha > 0$ and $\lim_{\alpha\to0} \g_\alpha(x) = x_+$ for all $x$ and $\lim_{\alpha\to0} \g_\alpha '(x) = 1$ for $x > 0$. Note in particular that then $\g_\alpha ' (x) = 0$ for all $\alpha > 0$ and $x \leq 0$, so that $\g_\alpha ' $ converges pointwise to the function $\one_{\{x > 0\}}$ as $\alpha$ tends to zero.

We similarly define an approximation $h_\alpha$ of the maximum function by
\be
h_\alpha(x_1, x_2) : = x_2 + \g_\alpha(x_1 - x_2) .
\ee
Observe that $h_\alpha$ satisfies $0 \leq \partial_{x_i} h_\alpha \leq 1$ for $i=1,2$, and 
\be
\partial_{x_1} h_\alpha(x_1, x_2) + \partial_{x_2}  h_\alpha(x_1, x_2) = 1.
\ee

\subsection{Inequality for the Truncations and Maximum}

Since $\g_\alpha '$ is non-negative,
\be \label{HJ-ineq-truncation-smoothed}
-(\partial_t + v\cdot D_x + a \cdot D_v) u_{\eta,\e,\delta} \, \g_\alpha '(u_{\eta,\e,\delta}) \leq \left [\beta_{\eta,\e,\delta} + L_{\eta,\e,\delta} + \varphi(t-T, \cdot) \ast_{x,v} \beta_T  + \cE_{\eta,\e,\delta} \right ] \g_\alpha '(u_{\eta,\e,\delta}).
\ee
Thus, since $\g_\alpha \in C^1$ and $u_{\eta,\e,\delta}$ is smooth in all variables, applying the usual chain rule
\be
 -(\partial_t + v\cdot D_x + a \cdot D_v) \g_\alpha (u_{\eta,\e,\delta}) \leq \left [\beta_{\eta,\e,\delta} + L_{\eta,\e,\delta} + \varphi(t-T, \cdot) \ast_{x,v} \beta_T + \cE_{\eta,\e,\delta} \right ] \g_\alpha '(u_{\eta,\e,\delta}).
\ee

Similarly, given two subsolutions $u^1$ and $u^2$,
\begin{multline}
-(\partial_t + v\cdot D_x + a \cdot D_v) h_\alpha (u^1_{\eta,\e,\delta}, u^2_{\eta,\e,\delta})\\
 \leq \left [\beta_{\eta,\e,\delta} + L_{\eta,\e,\delta} + \varphi(t-T, \cdot) \ast_{x,v} \beta_T + \cE^1_{\eta,\e,\delta} \right ] \partial_1 h_\alpha (u^1_{\eta,\e,\delta}, u^2_{\eta,\e,\delta}) \\ 
 + \left [\beta_{\eta,\e,\delta} + L_{\eta,\e,\delta}+ \varphi(t-T, \cdot) \ast_{x,v} \beta_T  + \cE^2_{\eta,\e,\delta} \right ] \partial_2 h_\alpha (u^1_{\eta,\e,\delta}, u^2_{\eta,\e,\delta}).
\end{multline}
Thus
\begin{multline}
-(\partial_t + v\cdot D_x + a \cdot D_v) h_\alpha (u^1_{\eta,\e,\delta}, u^2_{\eta,\e,\delta}) \leq \beta_{\eta,\e,\delta} + L_{\eta,\e,\delta} + \varphi(t-T, \cdot) \ast_{x,v} \beta_T \\ + \cE^1_{\eta,\e,\delta} \partial_1 h_\alpha (u^1_{\eta,\e,\delta}, u^2_{\eta,\e,\delta}) +  \cE^2_{\eta,\e,\delta} \partial_2 h_\alpha (u^1_{\eta,\e,\delta}, u^2_{\eta,\e,\delta}).
\end{multline}

\subsection{Limits}

We now take the limit as the smoothing parameters tend to zero in the previous inequalities. This procedure yields the proofs of the Lemmas \ref{lem:truncation} and \ref{lem:maximum}. We continue to choose $\e = \delta^2$ to ensure convergence of the commutator. We detail the procedure in the case of the truncation \eqref{HJ-ineq-truncation-smoothed}; the case of the maximum function is similar.

\begin{proof}[Proof of Lemma \ref{lem:truncation}]

We first test the inequality \eqref{HJ-ineq-truncation-smoothed} with an arbitrary non-negative smooth function $\zeta \in C^\infty_c((0,T] \times \M \times \R^d)$. We fix an extension of $\zeta$ to a function $\zeta \in C^\infty_c((0,T+1] \times \M \times \R^d)$ and consider integrating over $[0,T+\eta]\times \M \times \R^d$. For all $\eta > 0$ small enough that the support of $\zeta$ is contained in $(\eta, T+1] \times \M \times \R^d$,
\begin{multline}
\int_0^{T+\eta} \int_{\M \times \R^d} \g_\alpha (u_{\eta,\e,\delta}) (\partial_t \zeta + v\cdot D_x \zeta + \diver_v(a \zeta)) \dd x \dd v \dd t\\
 \leq \int_0^{T+\eta} \int_{\M \times \R^d} \left [\beta_{\eta,\e,\delta} + L_{\eta,\e,\delta} + \cE_{\eta,\e,\delta} \right ] \g_\alpha '(u_{\eta,\e,\delta}) \zeta \dd x \dd v \dd t\\
+ \int_{T-\eta}^{T+\eta} \int_{\M \times \R^d} \g_\alpha' (u_{\eta,\e,\delta}(t,x,v)) \theta_\eta(T-t) \chi_\e \ast_v \psi_\delta \ast_x \beta_T   \zeta(t,x,v) \dd x \dd v  \dd t.
\end{multline}
We have used that $u_{\eta,\e,\d}(T+\eta,x,v) = 0$.
Since $0 \leq \g_\alpha ' \leq 1$, we may estimate the boundary term from above to obtain
\begin{multline}
\int_0^{T+\eta} \int_{\M \times \R^d} \g_\alpha (u_{\eta,\e,\delta}) (\partial_t \zeta + v\cdot D_x \zeta + \diver_v(a \zeta)) \dd x \dd v \dd t\\
 \leq \int_0^{T+\eta} \int_{\M \times \R^d} \left [\beta_{\eta,\e,\delta} + L_{\eta,\e,\delta} + \cE_{\eta,\e,\delta} \right ] \g_\alpha '(u_{\eta,\e,\delta}) \zeta \dd x \dd v \dd t\\
+ \int_{\M \times \R^d} [\beta_T(x,v)]_+ \varphi \ast \zeta (T,x,v)\dd x \dd v .
\end{multline}

Since $\tilde u, \tilde \beta, L(x,v, -a) \in L^1_\loc$, $u_{\eta,\e,\d}, \beta_{\eta,\e,\d}, L_{\eta,\e,\d}$ converge respectively to these strongly in $L^1_\loc$ by standard results on convolutions. We have already noted that $ \cE_{\eta,\e,\delta}$ converges to zero in $L^1_\loc$.
We therefore also obtain pointwise convergence along a subsequence.
Similarly, $\varphi \ast \zeta$ converges to $\zeta$ pointwise since $\zeta$ is smooth.
From this we obtain convergence of all terms, by continuity of $\g_\alpha$ and $\g_\alpha '$ and applying dominated convergence. Hence we obtain the following inequality:
\begin{multline}
\int_0^{T} \int_{\M \times \R^d} \g_\alpha (u) (\partial_t \zeta + v\cdot D_x \zeta + \diver_v(a \zeta)) \dd x \dd v \dd t \leq \int_0^{T} \int_{\M \times \R^d} \left [\beta + L(x,v,-a) \right ] \g_\alpha '(u) \zeta \dd x \dd v \dd t\\
+ \int_{\M \times \R^d} [\beta_T(x,v)]_+ \zeta (T,x,v)\dd x \dd v .
\end{multline}

The convergences as $\alpha \to 0$ all follow by dominated convergence: for example, since $u \in L^1_\loc((0,T]\times\M\times\R^d)$ and $\zeta$ has support contained in $(0,T]\times\M\times\R^d$, we have
\be
\g_\alpha (u) (\partial_t \zeta+ v\cdot D_x \zeta + \diver_v(a \zeta)) \leq |u| | (\partial_t + v\cdot D_x) \zeta | \in L^1 .
\ee
A similar argument is used for the term involving $\beta_T$.

For the remaining term, use that $|\g_\alpha'| \leq 1$ (the bound being uniform in $\alpha > 0$), and both $\beta$ and $H(x,v,D_v u)$ are in $L^1_\loc((0,T]\times\M\times\R^d)$ by assumption. Then
\begin{multline}
\int_0^{T} \int_{\M \times \R^d} u_+ (\partial_t \zeta + v\cdot D_x \zeta + \diver_v(a \zeta)) \dd x \dd v \dd t \leq \int_0^{T} \int_{\M \times \R^d} \left [\beta + L(x,v,-a) \right ] \one_{\{u > 0\}} \zeta \dd x \dd v \dd t\\
+ \int_{\M \times \R^d} [\beta_T]_+ \zeta_T \dd x \dd v .
\end{multline}
Finally, taking a sequence of vector fields $a$ converging in $L^{r'}((0,T) \times \M \times \R^d)$ to $-D_p H(x,v, D_v u_+)$, we conclude that
\begin{multline}
\int_0^{T} \int_{\M \times \R^d} u_+ (\partial_t \zeta + v\cdot D_x \zeta) \dd x \dd v \dd t \leq \int_0^{T} \int_{\M \times \R^d} \left [\beta - H(x,v, D_v u_+) \right ] \one_{\{u > 0\}} \zeta \dd x \dd v \dd t\\
+ \int_{\M \times \R^d} [\beta_T]_+ \zeta_T \dd x \dd v ,
\end{multline}
that is, the following holds in the sense of distributions:
\begin{align} \label{eq:HJ-positive}
-\partial_t u_+ - v\cdot D_x u_+ +H(x,v,D_v u_+) \one_{\{ u > 0\}} \leq \beta \one_{\{ u > 0\}}, \quad &{\rm{in}\ } \sD'((0,T)\times\M\times\R^d). \\
[u_+]_T \leq (\beta_T)_+ \quad &{\rm{in\ }} \sD'(\M\times\R^d).
\end{align}
\end{proof}

\medskip

\noindent {\it Acknowledgements:} We thank Mikaela Iacobelli for useful discussions regarding Lemma~\ref{lem:finite_or_infinity}. We thank the two anonymous referees for carefully reading our manuscript and for their valuable comments. 

\medskip

\bibliographystyle{abbrv}
\bibliography{MFGbib}

\begin{thebibliography}{10}

\bibitem{AchCarDelPorSan}
Y.~Achdou, P.~Cardaliaguet, F.~Delarue, A.~Porretta, and F.~Santambrogio.
\newblock {\em Mean field games}.
\newblock Lecture Notes in Mathematics, C.I.M.E. Foundation Subseries, Vol.
  2281, Springer, 2020.

\bibitem{AchKob:21}
Y.~Achdou and Z.~Kobeissi.
\newblock Mean field games of controls: finite difference approximations.
\newblock {\em Math. Eng.}, 3(3):Paper No. 024, 35, (2021).

\bibitem{AchManMarTch:20}
Y.~Achdou, P.~Mannucci, C.~Marchi, and N.~Tchou.
\newblock Deterministic mean field games with control on the acceleration.
\newblock {\em NoDEA Nonlinear Differential Equations Appl.}, 27(3):Paper No.
  33, 32, (2020).

\bibitem{AchManMarTch:21}
Y.~Achdou, P.~Mannucci, C.~Marchi, and N.~Tchou.
\newblock Deterministic mean field games with control on the acceleration and
  state constraints.
\newblock {\em arXiv:2104.07292}, (2021).

\bibitem{Amb:18}
D.~M. Ambrose.
\newblock Strong solutions for time-dependent mean field games with
  non-separable {H}amiltonians.
\newblock {\em J. Math. Pures Appl. (9)}, 113:141--154, (2018).

\bibitem{Amb:21}
D.~M. Ambrose.
\newblock Existence theory for non-separable mean field games in {Sobolev}
  spaces.
\newblock {\em Indiana U. Math. J.}, to appear.

\bibitem{AmbGigSav}
L.~Ambrosio, N.~Gigli, and G.~Savar\'e.
\newblock {\em Gradient flows in metric spaces and in the space of probability
  measures.}
\newblock Second edition. {L}ecture notes in {M}athematics {E}{T}{H}
  Z{\"u}rich. {B}irkh{\"a}user Verlag, Basel, 2008.

\bibitem{BarCar}
M.~Bardi and P.~Cardaliaguet.
\newblock Convergence of some mean field games systems to aggregation and
  flocking models.
\newblock {\em Nonlinear Anal.}, 204:Paper No. 112199, 24, (2021).

\bibitem{BouButSep}
G.~Bouchitt\'e, G.~Buttazzo, and P.~Seppecher.
\newblock Energies with respect to a measure and applications to
  low-dimensional structures.
\newblock {\em Calc. Var. Partial Differential Equations}, 5(1):37--54, (1997).

\bibitem{Bouchut}
F.~Bouchut.
\newblock Renormalized solutions to the {V}lasov equation with coefficients of
  bounded variation.
\newblock {\em Arch. Ration. Mech. Anal.}, 157(1):75--90, (2001).

\bibitem{Bouchut2002}
F.~Bouchut.
\newblock Hypoelliptic regularity in kinetic equations.
\newblock {\em J. Math. Pures Appl. (9)}, 81(11):1135--1159, (2002).

\bibitem{CanMen}
P.~Cannarsa and C.~Mendico.
\newblock Mild and weak solutions of mean field game problems for linear
  control systems.
\newblock {\em Minimax Theory Appl.}, 5(2):221--250, (2020).

\bibitem{Cardaliaguet2013}
P.~Cardaliaguet.
\newblock Weak solutions for first order mean field games with local coupling.
\newblock In {\em Analysis and geometry in control theory and its
  applications}, volume~11 of {\em Springer INdAM Ser.}, pages 111--158.
  Springer, Cham, (2015).

\bibitem{Cardaliaguet-Graber}
P.~Cardaliaguet and P.~Graber.
\newblock Mean field games systems of first order.
\newblock {\em ESAIM Control Optim. Calc. Var.}, 21(3):690--722, (2015).

\bibitem{CarGraPorTon}
P.~Cardaliaguet, P.~Graber, A.~Porretta, and D.~Tonon.
\newblock Second order mean field games with degenerate diffusion and local
  coupling.
\newblock {\em NoDEA Nonlinear Differential Equations Appl.}, 22(5):1287--1317,
  (2015).

\bibitem{CarMen}
P.~Cardaliaguet and C.~Mendico.
\newblock Ergodic behavior of control and mean field games problems depending
  on acceleration.
\newblock {\em Nonlinear Anal.}, 203:Paper No. 112185, 40, (2021).

\bibitem{CarMesSan}
P.~Cardaliaguet, A.~M\'esz\'aros, and F.~Santambrogio.
\newblock First order {M}ean {F}ield {G}ames with density constraints: pressure
  equals price.
\newblock {\em SIAM J. Control Optim.}, 54(5):2672--2709, (2016).

\bibitem{CarDel:13}
R.~Carmona and F.~Delarue.
\newblock Probabilistic analysis of mean-field games.
\newblock {\em SIAM J. Control Optim.}, 51(4):2705--2734, (2013).

\bibitem{CarDel:vol1}
R.~Carmona and F.~Delarue.
\newblock {\em Probabilistic theory of mean field games with applications.
  {I}}, volume~83 of {\em Probability Theory and Stochastic Modelling}.
\newblock Springer, Cham, (2018).
\newblock Mean field FBSDEs, control, and games.

\bibitem{CarDel:vol2}
R.~Carmona and F.~Delarue.
\newblock {\em Probabilistic theory of mean field games with applications.
  {II}}, volume~84 of {\em Probability Theory and Stochastic Modelling}.
\newblock Springer, Cham, 2018.
\newblock Mean field games with common noise and master equations.

\bibitem{CarDelLac}
R.~Carmona, F.~Delarue, and D.~Lacker.
\newblock Mean field games with common noise.
\newblock {\em Ann. Probab.}, 44(6):3740--3803, (2016).

\bibitem{CirGof:20}
M.~Cirant and A.~Goffi.
\newblock Lipschitz regularity for viscous {H}amilton-{J}acobi equations with
  {$L^p$} terms.
\newblock {\em Ann. Inst. H. Poincar\'{e} Anal. Non Lin\'{e}aire},
  37(4):757--784, (2020).

\bibitem{CirGof:21}
M.~Cirant and A.~Goffi.
\newblock Maximal {$L^q$}-regularity for parabolic {H}amilton-{J}acobi
  equations and applications to mean field games.
\newblock {\em Ann. PDE}, 7(2):Paper No. 19, 40, (2021).

\bibitem{DiPerna-Lions}
R.~J. DiPerna and P.-L. Lions.
\newblock Ordinary differential equations, transport theory and {S}obolev
  spaces.
\newblock {\em Invent. Math.}, 98(3):511--547, (1989).

\bibitem{DraFel}
F.~Dragoni and E.~Feleqi.
\newblock Ergodic mean field games with {H}\"{o}rmander diffusions.
\newblock {\em Calc. Var. Partial Differential Equations}, 57(5):Paper No. 116,
  22, (2018).

\bibitem{Dunford-Schwartz}
N.~Dunford and J.~T. Schwartz.
\newblock {\em Linear Operators, Part 1: General Theory}.
\newblock Wiley Classics Library. John Wiley, 1988.

\bibitem{Ekeland-Temam}
I.~Ekeland and R.~T{\'{e}}mam.
\newblock {\em Convex analysis and variational problems}, volume~28 of {\em
  Classics in Applied Mathematics}.
\newblock Society for Industrial and Applied Mathematics (SIAM), Philadelphia,
  PA, 1999.

\bibitem{FelGomTad}
E.~Feleqi, D.~Gomes, and T.~Tada.
\newblock Hypoelliptic mean field games---a case study.
\newblock {\em Minimax Theory Appl.}, 5(2):305--326, (2020).

\bibitem{GLPS}
F.~Golse, P.-L. Lions, B.~Perthame, and R.~Sentis.
\newblock Regularity of the moments of the solution of a transport equation.
\newblock {\em J. Funct. Anal.}, 76(1):110--125, (1988).

\bibitem{GPS}
F.~Golse, B.~Perthame, and R.~Sentis.
\newblock Un r\'{e}sultat de compacit\'{e} pour les \'{e}quations de transport
  et application au calcul de la limite de la valeur propre principale d'un
  op\'{e}rateur de transport.
\newblock {\em C. R. Acad. Sci. Paris S\'{e}r. I Math.}, 301(7):341--344,
  (1985).

\bibitem{GSR}
F.~Golse and L.~Saint-Raymond.
\newblock Velocity averaging in {L}1 for the transport equation.
\newblock {\em C. R. Acad. Sci. Paris, Ser. 1}, 334:557--562, (2002).

\bibitem{GSR_NS}
F.~Golse and L.~Saint-Raymond.
\newblock The {N}avier--{S}tokes limit of the {B}oltzmann equation for bounded
  collision kernels.
\newblock {\em Invent. Math.}, 155(1):81--161, (2004).

\bibitem{GomPimSan16}
D.~A. Gomes, E.~Pimentel, and H.~S{\'a}nchez-Morgado.
\newblock Time-dependent mean-field games in the superquadratic case.
\newblock {\em ESAIM Control Optim. Calc. Var.}, 22(2):562--580, (2016).

\bibitem{GomPimVos}
D.~A. Gomes, E.~Pimentel, and V.~Voskanyan.
\newblock {\em Regularity theory for mean-field game systems}.
\newblock SpringerBriefs in Mathematics. Springer, [Cham], (2016).

\bibitem{GomPimSan15}
D.~A. Gomes, E.~A. Pimentel, and H.~S{\'a}nchez-Morgado.
\newblock Time-dependent mean-field games in the subquadratic case.
\newblock {\em Comm. Partial Differential Equations}, 40(1):40--76, (2015).

\bibitem{GomVos}
D.~A. Gomes and V.~K. Voskanyan.
\newblock Extended deterministic mean-field games.
\newblock {\em SIAM J. Control Optim.}, 54(2):1030--1055, (2016).

\bibitem{Gra:14}
P.~J. Graber.
\newblock Optimal control of first-order {H}amilton-{J}acobi equations with
  linearly bounded {H}amiltonian.
\newblock {\em Appl. Math. Optim.}, 70(2):185--224, (2014).

\bibitem{GraMes}
P.~J. Graber and A.~R. M\'esz\'aros.
\newblock Sobolev regularity for first order mean field games.
\newblock {\em Ann. Inst. H. Poincar\'e Anal. Non Lin\'eaire}, 35:1557--1576,
  (2018).

\bibitem{GraMesSilTon}
P.~J. Graber, A.~R. M\'{e}sz\'{a}ros, F.~J. Silva, and D.~Tonon.
\newblock The planning problem in mean field games as regularized mass
  transport.
\newblock {\em Calc. Var. Partial Differential Equations}, 58(3):Paper No. 115,
  28, (2019).

\bibitem{GraMulPfe}
P.~J. Graber, A.~Mullenix, and L.~Pfeiffer.
\newblock Weak solutions for potential mean field games of controls.
\newblock {\em NoDEA Nonlinear Differential Equations Appl.}, 28(5):Paper No.
  50, 34, (2021).

\bibitem{Han-Kwan}
D.~Han-Kwan.
\newblock ${L}^1$ averaging lemma for transport equations with {L}ipschitz
  force fields.
\newblock {\em Kinet. Relat. Models}, 3(4):669--683, (2010).

\bibitem{HuaMalCai}
M.~Huang, R.~P. Malham{\'e}, and P.~E. Caines.
\newblock Large population stochastic dynamic games: closed-loop
  {M}c{K}ean-{V}lasov systems and the {N}ash certainty equivalence principle.
\newblock {\em Commun. Inf. Syst.}, 6(3):221--251, (2006).

\bibitem{Jabin}
P.-E. Jabin.
\newblock Averaging lemmas and dispersion estimates for kinetic equations.
\newblock {\em Riv. Mat. Univ. Parma (8)}, 1:71--138, (2009).

\bibitem{Kob}
Z.~Kobeissi.
\newblock On classical solutions to the mean field game system of controls.
\newblock {\em Comm. Partial Differential Equations}, to appear, (2021).

\bibitem{Kosygina-Varadhan}
E.~Kosygina and S.~R.~S. Varadhan.
\newblock Homogenization of {H}amilton-{J}acobi-{B}ellman equations with
  respect to time-space shifts in a stationary ergodic medium.
\newblock {\em Comm. Pure Appl. Math.}, 61(6):816--847, (2008).

\bibitem{LasLio06i}
J.-M. Lasry and P.-L. Lions.
\newblock Jeux \`a champ moyen {I}. {L}e cas stationnaire.
\newblock {\em C. R. Math. Acad. Sci. Paris}, 343:619--625, (2006).

\bibitem{LasLio06ii}
J.-M. Lasry and P.-L. Lions.
\newblock Jeux \`a champ moyen {II}. {H}orizon fini et contr\^ole optimal.
\newblock {\em C. R. Math. Acad. Sci. Paris}, 343:679--684, (2006).

\bibitem{LasLio07}
J.-M. Lasry and P.-L. Lions.
\newblock Mean field games.
\newblock {\em Jpn. J. Math.}, 2:229--260, (2007).

\bibitem{Lions-course}
P.-L. Lions.
\newblock \textit{Cours au {C}oll\`ege de {F}rance}.
\newblock {\em {\rm www.college-de-france.fr}}, (2007-2011).

\bibitem{Men}
C.~Mendico.
\newblock Singular perturbation problem of mean field game of acceleration.
\newblock {\em arXiv:2107.08479}, (2021).

\bibitem{MesMou}
A.~M\'esz\'aros and C.~Mou.
\newblock Mean field games systems under displacement monotonicity.
\newblock {\em arXiv:2109.06687}, (2021).

\bibitem{Mimikos-Stamatopoulos}
N.~Mimikos-Stamatopoulos.
\newblock Weak and renormalized solutions to a hypoelliptic mean field games
  system.
\newblock {\em arXiv:2105.05777}, (2021).

\bibitem{Mun:1}
S.~Munoz.
\newblock Classical and weak solutions to local first order mean field games
  through elliptic regularity.
\newblock {\em arXiv:2006.07367}, (2020).

\bibitem{Mun:2}
S.~Munoz.
\newblock Classical solutions to local first order extended mean field games.
\newblock {\em arXiv:2102.13093}, (2021).

\bibitem{NouCaiMal}
M.~Nourian, P.~E. Caines, and R.~P. Malham\'e.
\newblock Mean field analysis of controlled {Cucker-Smale} type flocking:
  Linear analysis and perturbation equations.
\newblock {\em Proceedings of the 18th IFAC WC, Milan, Italy}, pages
  4471--4476, (2011).

\bibitem{OrrPorSav}
C.~Orrieri, A.~Porretta, and G.~Savar\'{e}.
\newblock A variational approach to the mean field planning problem.
\newblock {\em J. Funct. Anal.}, 277(6):1868--1957, (2019).

\bibitem{Por15}
A.~Porretta.
\newblock Weak solutions to {F}okker-{P}lanck equations and mean field games.
\newblock {\em Arch. Ration. Mech. Anal.}, 216(1):1--62, (2015).

\bibitem{ProSan}
A.~Prosinski and F.~Santambrogio.
\newblock Global-in-time regularity via duality for congestion-penalized mean
  field games.
\newblock {\em Stochastics}, 89(6-7):923--942, (2017).

\bibitem{San}
F.~Santambrogio.
\newblock Regularity via duality in calculus of variations and degenerate
  elliptic {PDE}s.
\newblock {\em J. Math. Anal. Appl.}, 457(2):1649--1674, (2018).

\bibitem{SanShi}
F.~Santambrogio and W.~Shim.
\newblock A {C}ucker-{S}male inspired deterministic mean field game with
  velocity interactions.
\newblock {\em SIAM J. Control Optim.}, 59(6):4155--4187, (2021).

\bibitem{Sta}
G.~Stampacchia.
\newblock Le probl\`eme de {D}irichlet pour les \'{e}quations elliptiques du
  second ordre \`a coefficients discontinus.
\newblock {\em Ann. Inst. Fourier (Grenoble)}, 15(fasc. 1):189--258, (1965).

\end{thebibliography}

\end{document}